\newtheorem{thm}{Theorem}[section]
\newtheorem{cor}[thm]{Corollary}
\newtheorem{lem}[thm]{Lemma}
\newtheorem{prop}[thm]{Proposition}
\theoremstyle{definition}
\newtheorem{defn}[thm]{Definition}
\theoremstyle{remark}
\newtheorem{rem}[thm]{Remark}
\numberwithin{equation}{section}
\theoremstyle{plain}
\newcommand{\norm}[1]{\left\Vert#1\right\Vert}
\begin{document}
	\title{Noncommutative ergodic theorems for action of semisimple Lie groups}

	{\let\thefootnote\relax\footnote{}}
	\keywords{Noncommutative $L_p$-spaces, noncommutative maximal ergodic theorem, action of semisimple Lie group on von Neumann algebras}
	%
	\author{Guixiang Hong}
	\author{Samya Kumar Ray}
	
	
	\email{samyaray7777@gmail.com, samya@iisertvm.ac.in}
	\email{gxhong@hit.edu.cn}
	%

	\pagestyle{headings}
\begin{abstract}
Let $G$ be a connected simple Lie group of real rank one and finite center, and let $K$ be a maximal compact subgroup. We study the families of spherical, ball, and uniform averages $(\sigma_t)_{t>0}$, $(\beta_t)_{t>0}$, and $(\mu_t)_{t>0}$ on $G$ induced by the canonical $G$-invariant metric on $G/K$, in the setting where $G$ acts by trace-preserving $*$-automorphisms on a finite von Neumann algebra $(\mathcal M,\tau)$. For the associated noncommutative $L_p$-spaces $L_p(\mathcal M)$, we consider both local and global noncommutative maximal inequalities for these averages, and  corresponding pointwise ergodic theorems in the sense of bilateral almost uniform convergence. Our approach combines a noncommutative Calder\'on transfer principle, spectral analysis for the Gelfand pair $(G,K)$ via Harish--Chandra's spherical functions, fractional integration methods, and Littlewood--Paley $g$-function estimates. This work is complemented by our results for higher-rank semisimple Lie groups, where the presence of Property~(T) and associated spectral gaps serve as the key tools in establishing Wiener-type noncommutative pointwise ergodic theorems and noncommutative $L_p$-maximal inequalities for ball and spherical averages on $G/K$ for certain class of semisimple Lie groups.

	\end{abstract}
	
	\maketitle
	\section{Introduction}
In classical ergodic theory, one of the foundational results is {Birkhoff's pointwise ergodic theorem} \cite{Bir31}, which asserts that for a measure-preserving action of the integers $\mathbb{Z}$ on a probability space, the time averages of any $f\in L_1$ converge almost everywhere to the conditional expectation of $f$ with respect to the invariant $\sigma$-algebra. A central methodological insight---already present in early work---is that such almost everywhere convergence can be deduced from an appropriate \emph{maximal inequality}. In the $\mathbb{Z}$-action case, Birkhoff's theorem follows from a weak-type $(1,1)$ bound for the {maximal operator} associated with the ergodic averages, a fact first established by Wiener \cite{Wei39}. This maximal-inequality approach has since become a central tool in proving pointwise ergodic theorems in various contexts. A major breakthrough was achieved by Lindenstrauss \cite{Lin01}, who extended this framework to {measure-preserving actions of general amenable groups}. His proof introduced the notion of {tempered F\o lner sequences} and employed an ingenious random covering lemma to establish maximal inequalities, thereby obtaining pointwise ergodic theorems for a vast class of amenable groups, including non-discrete cases.

For {non-amenable groups}, the absence of F\o lner sequences requires new ideas. In a pioneering series of works, Nevo and Stein developed maximal inequalities and pointwise ergodic theorems for certain actions of non-amenable groups, beginning with their treatment of {free group actions} \cite{NeS94} (see also \cite{N2}). Their methods combined geometric properties of Cayley graphs with harmonic analysis on spheres in the free group, exploiting the rich symmetry structure to control the associated maximal operators. In the {semisimple Lie group} setting, Nevo and later Nevo-Stein initiated a comprehensive program to establish $L_p$ maximal inequalities for natural averaging families, such as {balls} and {spheres} in the symmetric space $G/K$ \cite{Nevo94,Nevo97,NevSt97,MaNS00}. Their approach blended {spectral analysis for Gelfand pairs} $(G,K)$ with {fractional integration techniques}, yielding sharp bounds for maximal operators in both rank-one and certain higher-rank cases. These results unified and extended classical harmonic analysis methods to the ergodic-theoretic setting, opening the way for further generalizations. Subsequent refinements extended these techniques to a broad range of groups and averaging sets, including discrete subgroups of semisimple Lie groups, groups of polynomial growth, and various geometric averaging families. For a comprehensive account of these developments and their numerous applications, we refer to the survey \cite{Ne06}.

Motivated by developments in quantum mechanics, operator algebra theory and noncommutative mathematics experienced rapid growth in the mid-20th century. Ergodic theory entered the noncommutative framework early on through the study of dynamical systems on von Neumann algebras. However, a systematic investigation of pointwise convergence in this setting began with the pioneering work of Lance, followed by important contributions from Conze, Dang--Ngoc \cite{Con78}, K\"ummerer \cite{Kum78}, Yeadon \cite{Yea77}, and others. For a long time, the absence of suitable maximal inequalities in $L_p$-spaces prevented the establishment of a full noncommutative analogue of the classical pointwise ergodic theorems. A major breakthrough was achieved by Junge and Xu \cite{JunXu07}, who established a noncommutative Dunford--Schwartz maximal ergodic theorem. Their approach to handling maximal inequalities, based on Pisier's theory of vector-valued noncommutative $L_p$-spaces \cite{Pis98}, opened the door to a wide range of further results on maximal and pointwise ergodic theorems in the noncommutative setting. We refer to \cite{Ana06}, \cite{Bek08}, \cite{HLW221}, \cite{Hon16}, \cite{HoS16}, \cite{Hu08} and the references therein for more comprehensive accounts.

One of the earliest noncommutative ergodic theorems for the action of a group beyond the integer group was proved independently by Hu \cite{Hu08} and Anantharaman \cite{Ana06}, where they studied noncommutative ergodic theorems and maximal inequalities for the action of the free group on a von Neumann algebra. In the case of actions of amenable groups, the first major breakthrough came in a remarkable paper by Hong--Liao--Wang \cite{HLW221}, who employed noncommutative probabilistic methods to establish ergodic theorems for actions of groups with polynomial growth. More recently, a noncommutative analogue of Lindenstrauss's ergodic theorem was proved by Cadilhac and Wang \cite{CaW22}. Furthermore, the first author investigated maximal ergodic inequalities associated with spherical averages in the Heisenberg group, thereby generalizing the results of \cite{Hon16}.

In this article we study noncommutative ergodic theorems for the action of semisimple Lie groups. Let us describe our result for the case of simple Lie groups first. Let $G$ be a connected simple Lie group of real rank one with finite center, and let $K$ be a maximal compact subgroup. Denote by $(\sigma_t)_{t>0}$, $(\beta_t)_{t>0}$, and $(\mu_t)_{t>0}$ the spherical, ball, and uniform averages on $G$ with respect to a left-invariant Riemannian metric on the symmetric space $G/K$. We study maximal and pointwise ergodic theorems for these averaging families when $G$ acts by trace-preserving $*$-automorphisms on a finite von Neumann algebra $(\mathcal M,\tau)$. We refer to Section~\ref{prelims} for any unexplained notations. Our first major contribution is a noncommutative maximal ergodic theorem for spherical averages for the action of $SO^0(n,1)$.

\begin{thm}[Global maximal inequalities for spherical averages]\label{thm:global}
Let $G=SO^0(n,1)$ and $n>2$. Then for all $\tfrac{n}{n-1}<p\leq \infty$ we have for all $x\in L_p(\mathcal M)$
\[
\Big\|\sup_{t\geq 1}{}^+\pi(\sigma_t)x\Big\|_p\leq C_p\|x\|_p.
\]
\end{thm}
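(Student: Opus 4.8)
\medskip
\noindent\textbf{Proof strategy.} The plan is to combine the transfer principle with Stein's method of analytic interpolation, realised through fractional integration in the geodesic (radial) variable, the engine being a noncommutative Littlewood--Paley $g$-function estimate read off from the spherical Plancherel theory of the Gelfand pair $(G,K)$. First, by the noncommutative Calder\'on transfer principle it suffices to prove the asserted inequality for the convolution family $x\mapsto\sigma_t*x$ on the (semifinite) noncommutative $L_p$-space attached to $L_\infty(G)\,\overline{\otimes}\,\mathcal M$, with $G$ acting on itself by translation; the constant produced there does not depend on $(\mathcal M,\tau,\pi)$. Since each $\sigma_t$ is a $K$-bi-invariant probability measure, all these operators lie in the commutative algebra of $K$-bi-invariant convolvers and are simultaneously diagonalised by the spherical transform: on the spectral side $\sigma_t$ acts on the $\la$-component by the scalar $\varphi_\la(a_t)$, where $\varphi_\la$ is the Harish--Chandra spherical function and $a_t$ lies on the split torus at distance $t$; for $G=SO^0(n,1)$ this reduces the problem to Jacobi analysis on $\R_+$. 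The structural fact that makes the \emph{global} regime $t\ge1$ tractable is that Harish--Chandra's asymptotic expansion of $\varphi_\la(a_t)$ then carries the decay factor $e^{-\rho t}$ issuing from the exponential volume growth of $G/K$, while the residual oscillatory factor decays like $(1+|\la|t)^{-(n-1)/2}$---the Euclidean-type curvature of the $(n-1)$-dimensional geodesic spheres.

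\smallskip
The endpoint $p=\infty$ is immediate: each $\sigma_t$ is a probability measure, so $\pi(\sigma_t)$ is a normal, positive, unital, trace-preserving map, whence $\pi(\sigma_t)x\le\|x\|_\infty1$ for $0\le x\in\mathcal M$ and consequently $\big\|\sup_{t\ge1}{}^{+}\pi(\sigma_t)x\big\|_\infty\le\|x\|_\infty$. In view of interpolation along the $L_p(\mathcal M;\ell_\infty)$ scale it is therefore enough to establish the inequality for $\tfrac n{n-1}<p\le2$. For this range I would introduce an analytic family $(\sigma_t^{\al})_{\al\in\C}$ of radial averages, defined on the spectral side by multiplying $\varphi_\la(a_t)$ by a Riesz-type factor $r_\al(t,\la)$ of order $\al$ (normalised so that $\sigma_t^{0}=\sigma_t$, that raising $\mathrm{Re}\,\al$ produces fractional smoothing in the radial variable $t$, and that $\mathrm{Re}\,\al=1$ turns $\sigma_t^{\al}$ into a ball-type average). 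The two ingredients to be proved are: $(i)$ an $L_2$ maximal estimate $\big\|\sup_{t\ge1}{}^{+}\pi(\sigma_t^{\al})x\big\|_2\le C_\al\|x\|_2$ for $\mathrm{Re}\,\al>\al_0(n)$ with $\al_0(n)<0$ since $n>2$; and $(ii)$ for $\mathrm{Re}\,\al>1$, a maximal estimate $\big\|\sup_{t\ge1}{}^{+}\pi(\sigma_t^{\al})x\big\|_{p_0}\le C_{\al,p_0}\|x\|_{p_0}$ valid for every $p_0>1$.

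\smallskip
Estimate $(i)$ is the core of the argument and the place where the $g$-function enters. Writing $\pi(\sigma_t^{\al})x=\pi(\sigma_1^{\al})x+\int_1^t\pa_s\pi(\sigma_s^{\al})x\,ds$ and invoking the row and column Cauchy--Schwarz inequalities, one dominates $\sup_{t\ge1}{}^{+}\pi(\sigma_t^{\al})x$ by $\abs{\pi(\sigma_1^{\al})x}$ together with row/column Littlewood--Paley functions $\big(\int_1^\infty\abs{\pa_s\pi(\sigma_s^{\al})x}^2 e^{\rho s}\,ds\big)^{1/2}$, the reciprocal weight $e^{-\rho s}$ being integrable on $[1,\infty)$ (which is where the restriction $t\ge1$, together with the Harish--Chandra asymptotics, intervenes). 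At $p=2$ the row and column functions coincide, so the spherical Plancherel theorem turns the desired bound into the uniform-in-$\la$ scalar estimate $\int_1^\infty\abs{\pa_s\!\big(r_\al(s,\la)\varphi_\la(a_s)\big)}^2 e^{\rho s}\,ds\le C_\al$, which we read off from Harish--Chandra's asymptotics: the $e^{-\rho s}$ in $\varphi_\la(a_s)$ beats the weight and makes the $s$-integral converge, the bound $|c(\la)|\asymp(1+|\la|)^{-(n-1)/2}$ supplies the decay in $|\la|$, and together these give a quantity $\lesssim(1+|\la|)^{3-n-2\mathrm{Re}\,\al}$, uniformly controlled once $\mathrm{Re}\,\al>\tfrac{3-n}2$; a standard refinement replacing $\pa_s$ by a fractional radial derivative lowers the threshold to $\al_0(n)$ arbitrarily close to $1-\tfrac n2$. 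Estimate $(ii)$ is obtained for $\mathrm{Re}\,\al>1$ by noting that the kernel of $\sigma_t^{\al}$ is then an honest integrable function of uniformly bounded radial variation, so that, along the lines of Junge--Xu's treatment of the continuous Dunford--Schwartz maximal function, $\sup_{t\ge1}{}^{+}\pi(\sigma_t^{\al})x$ is controlled by a Hardy--Littlewood-type maximal operator on $G$, for which the noncommutative $L_{p_0}$-inequality with $p_0>1$ is available via a Vitali-type covering argument (the exponential growth of $G$ permitting only $p_0>1$, not the strong $L_1$ endpoint, but $p_0$ may be taken as close to $1$ as we like). Feeding $(i)$ and $(ii)$ into the noncommutative analogue of Stein's analytic interpolation theorem for the $L_p(\mathcal M;\ell_\infty)$-valued maximal operators $\al\mapsto\sup_{t\ge1}{}^{+}\pi(\sigma_t^{\al})(\cdot)$, with the parameter chosen so that the segment joining $(\al_0(n),\tfrac12)$ and $(1,\tfrac1{p_0})$ in the $(\mathrm{Re}\,\al,\tfrac1p)$-plane passes through $(0,\tfrac1p)$, and letting $\al_0(n)\downarrow1-\tfrac n2$ and $p_0\downarrow1$, yields the inequality for every $p$ with $\tfrac n{n-1}<p\le2$. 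Interpolating this with the $p=\infty$ bound finally covers $2<p<\infty$.

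\smallskip
The principal obstacle I anticipate is pushing the $g$-function argument below $p=2$: there one must carry \emph{both} the row and the column noncommutative Littlewood--Paley functions and reconcile them, through the analytic family, with the genuinely asymmetric $L_p(\mathcal M;\ell_\infty)$ maximal norm---a difficulty with no classical counterpart. Closely tied to this is the need for \emph{sharp} Harish--Chandra information, namely precise control of the oscillatory ``wave part'' $e^{(i\la-\rho)s}$ of $\varphi_\la(a_s)$ and of the $c$-function, so that the interpolation lands exactly at $\tfrac n{n-1}$ rather than at a strictly larger exponent; this is the symmetric-space avatar of the sharpness in Stein's spherical maximal theorem. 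Finally, although $(ii)$ is morally a soft corollary of the noncommutative Dunford--Schwartz maximal inequality, realising the domination of $\sup_{t\ge1}{}^{+}\pi(\sigma_t^{\al})x$ by a single positive operator in the noncommutative setting---where $|a-b|\le|a|+|b|$ fails---requires care, and making it uniform over $t\ge1$ again rests on the radial decay of the spherical functions.
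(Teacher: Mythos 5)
Your overall architecture coincides with the paper's: embed $\sigma_t$ in an analytic family of fractional radial averages, prove an $L_2$ maximal estimate for $\operatorname{Re}\alpha>1-\tfrac n2$ by a square-function/spectral argument, prove an $L_p$ maximal estimate for all $p>1$ at $\operatorname{Re}\alpha=1$, and run a Junge--Xu style Stein interpolation to land at $p>\tfrac n{n-1}$, then interpolate with the trivial $p=\infty$ bound. However, your very first step is a genuine gap: the noncommutative Calder\'on transfer principle does \emph{not} reduce the global range $t\geq 1$ to convolution on $L_\infty(G)\overline{\otimes}\mathcal M$. The transfer argument (Theorem~\ref{localtransfer} in the paper) requires all the measures to be supported in one fixed compact ball $B_r$, and its constant carries the factor $m(B_{2r})/m(B_r)$; for $SO^0(n,1)$ the volume grows exponentially, so this ratio blows up as the supports of $\sigma_t$, $t\geq 1$, leave every compact set. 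This is precisely why the paper proves only a \emph{local} transfer principle and establishes the global inequality by working directly with the representation, through the commutative $C^*$-algebra $A_\pi=\overline{\pi(M(G,K))}$ and its Gelfand spectrum.

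This first gap propagates into two further ones. Working directly with an arbitrary trace-preserving action, the spectrum of $A_\pi$ is a subset of $\Sigma=\{i\lambda:\lambda\geq0\}\cup\{s:0<s\leq\rho_n\}$, so besides the principal series (which your Harish--Chandra asymptotics and $c$-function decay $(1+|\lambda|)^{-(n-1)/2}$ do control) one must handle the \emph{complementary series} $\varphi_s$, $0<s\leq\rho_n$, whose decay $e^{(s-\rho_n)t}$ can be arbitrarily slow; your outline never addresses this part, whereas the paper devotes a separate argument to it (Lemma~\ref{lemma7.1}, via the higher $g$-functions $g_k$ and the operators $N^{-m}_t$, resting on Nevo's uniform bounds for $\int_1^\infty t^{2k-1}|\varphi_z^{(k)}(t)|^2dt$ on $\mathcal E_c$). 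Had transference been available the complementary series would not appear, but it is not. Finally, your ingredient $(ii)$ is argued by dominating $\sup_{t\geq1}{}^{+}\pi(\sigma_t^\alpha)x$ by a Hardy--Littlewood maximal operator on $G$ controlled ``via a Vitali-type covering argument''; Vitali coverings need doubling, which fails at large scales on a group of exponential growth, so this route does not close. The paper instead observes that $M_t^1x\lesssim\pi(\mu_{3t})x$ for positive $x$ and derives the maximal inequality for the uniform averages $\mu_t$ (Theorem~\ref{thm2.1}) by writing $\pi(\mu_t)=\pi(m_K)\circ A_t\circ\pi(m_K)$ and invoking the noncommutative ergodic theorem for the polynomial-growth group $A\cong\mathbb R$, together with Lemma~\ref{somepropmaximalnorm}; some such mechanism is needed to make your $\operatorname{Re}\alpha=1$ estimate, and hence the interpolation, legitimate.
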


Although our arguments extend naturally to arbitrary rank-one simple Lie groups with finite center, for the sake of clarity we restrict our exposition to the prototypical case $SO_0(n,1)$. The analytic framework rests on the spectral theory of convolution operators associated with the Gelfand pair $(SO_0(n,1), SO(n))$, combined with delicate asymptotic estimates for Harish--Chandra’s spherical functions, as developed in~\cite{Nevo94, Nevo97, NevSt97}. In the noncommutative setting, the absence of pointwise control forces us to abandon classical almost-everywhere bounds in favor of carefully tailored operator-norm inequalities. For the complex interpolation step, we adopt and refine the sophisticated method introduced in~\cite{JunXu07}, which proves particularly well-suited to the present context. Moreover, in subsection~\ref{meanergodicth} we prove mean ergodic theorems for the families $(\sigma_t)_{t>0}$, $(\beta_t)_{t>0}$, and $(\mu_t)_{t>0}$.

For higher-rank semisimple Lie groups we prove the following general theorem.

\begin{thm}\label{mainthm2}
Let $(\mathcal M,\tau, G,\pi)$ be a $W^*$-dynamical system where $\mathcal M$ is a noncommutative probability space with a faithful normal tracial state $\tau$ and $G$ is a connected semisimple Lie group with finite center and no compact factors. Assume that the averages $\nu_t$ are bi-$K$-invariant, roughly monotone, uniformly H\"older continuous with exponent $a$, and satisfy $\|\pi_0(\nu_t)\|\leq B\exp(-t\theta)$, where $\theta>0$. Then the following statements hold.
\begin{enumerate}[label=\textup{(\roman*)}]
\item[(i)] $\big\|\sup\limits_{t\geq 1}\!^+\pi(\nu_t)x\big\|_p\leq C_p\|x\|_p$ for $1<p<\infty.$
	\item[(ii)] 
	Let $1<p<\infty,$ then $\pi(\nu_t)x\to \tau(x)1$, a.u. as $t\to\infty$ for all $x\in L_p(\mathcal M).$
	\item[(iii)] For all $x\in L_p(\mathcal M),$ $1<p<\infty,$
	\[\big\|\sup\limits_{t\geq 1}\!^+\exp\Big(\frac{ua\theta t}{4}\Big)\Big(\pi(\nu_t)x-\tau(x).1\Big)\big\|_r\leq B\|x\|p\] provided $p,r$ and $0<u<1$ satisfy $\frac{1}{p}=\frac{1-u}{q}$ and $\frac{1}{r}=\frac{1-u}{q}+\frac{u}{2}$ for some $1<q<\infty.$
	\item[(iv)]The averages $\pi(\nu_t)x$ converges $\tau(x).1$ in an exponential rate, i.e. for all $t>0$ and $x\in L_p(\mathcal{M})$ we have \begin{equation}\label{meanexponen}
	\|\pi(\nu_t)x-\tau(x).1\|_r\leq C\|x\|_p\exp\Big(-\frac{ua\theta t}{4}\Big)\end{equation} where $C>0$ is a constant not depending on $x.$ Moreover, we have fro all $t_0>1$
	\begin{equation}\label{meanexponen1}
	\big\|\sup\limits_{t\geq t_0}\!^+\Big(\pi(\nu_t)x-\tau(x).1\Big)\|_r\leq B\|x\|_p\exp\Big(-\frac{ua\theta t_0}{4}\Big)
	\end{equation} for all $t>t_0.$
\end{enumerate}
\end{thm}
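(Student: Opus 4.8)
The plan is to localize everything to the mean-zero part of the averages and let the spectral-gap hypothesis do all the work. Decompose $L_2(\mathcal M)=\C1\oplus L_2^0(\mathcal M)$, where $L_p^0(\mathcal M)=\{y\in L_p(\mathcal M):\tau(y)=0\}$. If $\xi\in L_2^0$ were $G$-invariant then $\xi=\pi_0(\nu_t)\xi$, hence $\|\xi\|_2\le B\exp(-t\theta)\|\xi\|_2\to0$, so the $G$-fixed subspace of $L_2(\mathcal M)$ is exactly $\C1$ and $\pi(\nu_t)x-\tau(x)1=\pi_0(\nu_t)\bigl(x-\tau(x)1\bigr)$ with $x-\tau(x)1\in L_p^0$. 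Since $\pi(\nu_t)=\int_G\pi(g)\,d\nu_t(g)$ is an average of trace-preserving $*$-automorphisms it is a unital positive map preserving $L_p^0$ and acting there as an $L_1$- and an $L_\infty$-contraction; interpolating (Riesz--Thorin for noncommutative $L_p$-spaces) with the hypothesis $\|\pi_0(\nu_t)\|_{L_2^0\to L_2^0}\le B\exp(-t\theta)$ yields an honest exponential operator-norm decay
\[
\|\pi_0(\nu_t)\|_{L_p^0\to L_p^0}\le C_p\exp(-\delta_p t),\qquad \delta_p=\tfrac{2\theta}{\max(p,p')}>0,\quad 1<p<\infty .
\]
This single estimate, together with the qualitative hypotheses on $(\nu_t)$, drives all four conclusions.

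\emph{Part (i).} By rough monotonicity, for $y\ge0$ the supremum of $\pi(\nu_t)y$ over each interval $[n,n+1]$ is dominated in the operator order by $c\,\pi(\nu_n)y$ for a fixed $c$ (uniform H\"older continuity makes these suprema legitimate elements of $L_p(\mathcal M;\ell_\infty)$), whence $\sup_{t\ge1}{}^{+}\pi(\nu_t)y\le c\,\sup_{n\ge1}{}^{+}\pi(\nu_n)y$. For each $n$ we have $\pi(\nu_n)y=\tau(y)1+\pi_0(\nu_n)y\le\tau(y)1+\sum_{m\ge1}|\pi_0(\nu_m)y|=:Z$, an element not depending on $n$, so $\sup_{t\ge1}{}^{+}\pi(\nu_t)y\le cZ$ and
\[
\bigl\|\sup_{t\ge1}{}^{+}\pi(\nu_t)y\bigr\|_p\le c\Bigl(\tau(y)+\sum_{m\ge1}\|\pi_0(\nu_m)y\|_p\Bigr)\le c\Bigl(1+\sum_{m\ge1}C_p\exp(-\delta_p m)\Bigr)\|y\|_p .
\]
Splitting a general $x$ into its four positive parts gives (i) for all $1<p<\infty$.

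\emph{Parts (iii), (iv), (ii).} The remaining ingredient is a weighted $L_2$ maximal inequality $\bigl\|\sup_{t\ge1}{}^{+}\exp(\tfrac{a\theta t}{4})\pi_0(\nu_t)x\bigr\|_2\le C\|x\|_2$. On a unit interval $[n,n+1]$ the $L_2^0$-valued path $t\mapsto\pi_0(\nu_t)x$ is H\"older-$a$ with constant $\lesssim\|x\|_2$ and obeys $\|\pi_0(\nu_t)x\|_2\le B\exp(-t\theta)\|x\|_2$; a Dudley-type chaining over dyadic subdivisions of $[n,n+1]$---using $\sup_j{}^{+}a_j\le(\sum_j a_j^2)^{1/2}$ for positive $a_j$ and balancing the number of grid points against the H\"older oscillation---bounds $\bigl\|\sup_{t\in[n,n+1]}{}^{+}|\pi_0(\nu_t)x|\bigr\|_2$ by $C\exp(-\epsilon n)\|x\|_2$ with $\epsilon>\tfrac{a\theta}{4}$ (the optimal balance gives $\epsilon=\tfrac{2a\theta}{2a+1}$), so the $\exp(\tfrac{a\theta n}{4})$-weighted sum over $n$ converges; equivalently this is a Littlewood--Paley $g$-function estimate. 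Passing to $\|\cdot\|_\infty$ on the right via $\|x\|_2\le\|x\|_\infty$, and combining with the unweighted $L_q$ maximal inequality of (i), the operator $x\mapsto\bigl(\pi(\nu_t)x-\tau(x)1\bigr)_{t\ge1}$ has the two endpoint bounds $L_\infty\to L_2(\ell_\infty)$ with weight $\exp(\tfrac{a\theta t}{4})$ and $L_q\to L_q(\ell_\infty)$; complex interpolation between these endpoints, in the scale of (weighted) noncommutative $L_p(\mathcal M;\ell_\infty)$-spaces (Pisier, Junge--Xu), produces, at each interpolation parameter, $\bigl\|\sup_{t\ge1}{}^{+}\exp(\tfrac{ua\theta t}{4})(\pi(\nu_t)x-\tau(x)1)\bigr\|_r\le C\|x\|_p$ with $\tfrac1p=\tfrac{1-u}{q}$ and $\tfrac1r=\tfrac{1-u}{q}+\tfrac u2$, which is (iii). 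Then (iv) is immediate: pulling $\exp(-\tfrac{ua\theta t}{4})$ out of the $t$-th term of (iii) gives \eqref{meanexponen}, and restricting the supremum to $t\ge t_0$ and pulling the decreasing scalar weight out at $t=t_0$ gives \eqref{meanexponen1}. Finally (ii) follows formally from \eqref{meanexponen1}: since $\bigl\|\sup_{t\ge t_0}{}^{+}(\pi(\nu_t)x-\tau(x)1)\bigr\|_r\to0$ as $t_0\to\infty$, extracting from a shrinking sequence of these bounds a single projection $e$ with $\tau(1-e)$ arbitrarily small yields $\pi(\nu_t)x\to\tau(x)1$ bilaterally almost uniformly.

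\emph{Main obstacle.} The crux is passing from single-scale operator-norm control to control of the genuinely continuous-parameter noncommutative maximal function: executing the chaining behind the weighted $L_2$ maximal inequality with correct tracking of constants, and, hand in hand with it, setting up complex interpolation for the \emph{weighted} noncommutative $L_p(\mathcal M;\ell_\infty)$-spaces so that the exponents in (iii) come out exactly as claimed. Making the ``continuous $\to$ discrete'' reduction in (i) rigorous in $L_p(\mathcal M;\ell_\infty)$ from rough monotonicity and H\"older continuity alone, and securing the almost uniform (rather than merely norm) convergence in (ii), also require the customary care with the noncommutative maximal-function formalism.
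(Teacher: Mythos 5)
Your overall architecture for (i), (iii) and (iv) mirrors the paper's proof of Theorem~\ref{thm4ofnevostmar}: interpolate the spectral gap to get $\|\pi_0(\nu_t)\|_{p\to p}\lesssim e^{-\theta_p t}$ on mean-zero elements, dominate the continuous-time maximal function through integer times via rough monotonicity (note this gives domination by a sum of at most $2N+1$ integer-time averages, not by a single $c\,\pi(\nu_n)y$, though this is harmless), and interpolate a weighted $L_\infty\to L_2$ maximal estimate against the unweighted $L_q$ one. However, one step would fail as stated: the claimed weighted inequality $\bigl\|\sup_{t\in[n,n+1]}{}^{+}|\pi_0(\nu_t)x|\bigr\|_2\le Ce^{-\epsilon n}\|x\|_2$ cannot be obtained by the chaining you describe. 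The H\"older hypothesis is on the total variation of $\nu_t$, so at scale $2^{-k}$ the oscillation is only $\lesssim 2^{-ka}\|x\|_2$ in $L_2$; Dudley chaining then costs $\sum_k 2^{k/2}\min\bigl(Be^{-n\theta},C2^{-ka}\bigr)\|x\|_2\approx e^{-n\theta(1-\frac{1}{2a})}\|x\|_2$, which does not even decay when $a\le\tfrac12$, and when it does decay it need not dominate the weight $e^{a\theta n/4}$. Your advertised exponent $\epsilon=\tfrac{2a\theta}{2a+1}$ comes from balancing grid cardinality against an oscillation measured in the \emph{operator} norm, and that costs $\|x\|_\infty$, not $\|x\|_2$. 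This is exactly why the paper proves only the $L_\infty$-endpoint \eqref{fundal2ineq}: a single-scale grid of about $e^{n\theta/4}$ points per unit interval, the grid values controlled by the column square function $C(y)=\sum_i|e^{\theta t_i/2}\pi(\nu_{t_i})y|^2$ (an $L_2$ quantity), and the off-grid oscillation, bounded in operator norm by $\|x\|_\infty e^{-[t]a\theta/4}$, absorbed into the $\sup_t\|y_t\|_\infty$ factor of the maximal norm. Since your interpolation only uses the $L_\infty$ endpoint, the scheme is repairable, but the intermediate $\|x\|_2$ claim must be dropped; moreover the ``weighted $L_p(\mathcal M;\ell_\infty)$ interpolation'' you invoke is precisely the part the paper must carry out by hand, dualizing against $L_{r'}(\mathcal M;\ell_1)$ and running a three-lines argument with the scalar weights $e^{a\theta(1-z)t_j/4}$ built into the analytic family.

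The second genuine gap is part (ii): the theorem asserts \emph{almost uniform} convergence, but your argument delivers only bilateral almost uniform convergence. Deducing b.a.u. convergence from $\bigl\|\sup_{t\ge t_0}{}^{+}(\pi(\nu_t)x-\tau(x)1)\bigr\|_r\to0$ is fine (it amounts to membership in $L_r(\mathcal M;c_0)$ together with Proposition~\ref{ct1}(i)), but a.u. convergence requires one-sided (column) control which no two-sided $\ell_\infty$-maximal estimate yields for free. The paper's route is: for $x\in L_\infty(\mathcal M)$, establish $(\pi(\nu_t)x-\tau(x)1)_{t}\in L_2(\mathcal M;\ell_\infty^c)$ using the same square function $C(\cdot)$ and the operator-norm H\"older bound for off-grid times, apply Proposition~\ref{ct1}(ii) to obtain a.u. convergence on the dense subspace $L_\infty(\mathcal M)$, and then extend to all of $L_p(\mathcal M)$ by Lemma~\ref{ptwiselem}(i) combined with the maximal inequality of part (i). Without some such column estimate, your proof of (ii) establishes a strictly weaker statement than the one claimed.
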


The above theorem establishes a noncommutative analog of the main result in \cite{MaNS00} and applies to the ball averages $(\beta_t)$ for any $G$ a connected semisimple Lie group with finite center and no compact factors which has Kazhdan's property~T acting on a noncommutative probability space. We also establish (see Theorem~\ref{integerradii}) a noncommutative maximal ergodic theorem for integer radii for spherical averages for actions of connected semisimple Lie groups with finite center and non-compact factor with spectral gap acting ergodically on a noncommutative probability space. For the local maximal ergodic theorem we prove a crucial local transfer principle (Theorem~\ref{localtransfer}). This establishes local maximal ergodic inequalities ball averages for actions of any connected non-compact semisimple Lie group with finite center.

The organization of the paper is the following. Section~\eqref{prelims} recalls preliminaries on noncommutative $L_p$-spaces, maximal norms, and basic facts. Section~\eqref{sec3frn} proves the local transfer principle. Section~\eqref{factsspher} discusses various useful concepts related to the spherical averages, and Section~\eqref{$L_2$-bound associated to spherical averages} establishes the global $L_2$ maximal inequalities for spherical averages. Section~\eqref{forpodd} and Section~\eqref{nevost} are devoted to global $L_p$ maximal inequalities for spherical averages. In Section~\eqref{higherrank} we deal with the global $L_p$ maximal inequalities and almost uniform convergence for higher-rank semisimple Lie groups.

	\section{Preliminaries}\label{prelims}

	\subsection{Noncommutaive $L_p$-spaces}
	
	Let $\mathcal M$ be a von Neumann algebra with normal, semifinite, faithful state $\tau_{\mathcal{M}}$ with separable predual. Sometimes we write only $\tau$ when there is no chance of confusion. Unless specified, we always work with von Neumann algebras of this kind. The unit in $\mathcal M$ is denoted by $1_{\mathcal M}.$ However, when the context is clear, we simply denote it by $1.$ The center of $\mathcal M$ is denoted by $\mathcal{Z}(\mathcal M).$ Let $\mathcal P(\mathcal M)$ denote the projection lattice of $\mathcal M.$ Denote $\mathcal M_{+}$ to be the set of all positive elements in $\mathcal M.$ For any $x\in\mathcal M_{+},$ we denote the support projection of $x$ by $s(x).$ 
	 For $1\leq p<\infty,$ we define the noncommutative $L_p$-space $L_p(\mathcal{M})$ to be the completion of $\mathcal{M}$ with respect to the norm $\|x\|_{L_p(\mathcal{M})}:=\tau(|x|^p)^{\frac{1}{p}},$ where $|x|:=(x^*x)^{\frac{1}{2}}.$ One sets $L_{\infty}(\mathcal{M})=\mathcal{M}$. For any $\sigma$-finite measure space we have a natural identification of $L_\infty(\Omega)\overline{\otimes}\mathcal M$ to the space $L_\infty(\Omega,\mathcal M)$ which is the space of all essentially bounded $\mathcal M$-valued weakly$^*$ measurable functions, with the trace $\int\otimes \tau.$ We have the identification for $L_p(L_\infty(\Omega)\overline{\otimes} \mathcal M)$ as the Bochner space $L_p(\Omega;L_p( \mathcal M))$ for $1\leq p<\infty.$ 

 Let us denote $L_0(\mathcal M)$ to be the set of all closed densely defined $\tau$-measurable operators on $\mathcal H$ affiliated with $\mathcal M$. A sequence $(x_n)_{n\geq 1}\subseteq L_0(\mathcal M)$ is said to \textit{converge in measure} to $x\in L_0(\mathcal M)$ if for all $\epsilon>0$, $\lim\limits_{n\to\infty}\tau(e_\epsilon^{\perp}(|x_n-x|))=0,$ where $e_\epsilon^{\perp}(y)=\chi_{(\epsilon,\infty)}(y)$ for any $y\in L_0(\mathcal M)_+$ and $\chi$ denotes the usual characteristic function. It is well-known that equipped with the topology of convergence in measure, $L_0(\mathcal M)$ becomes a complete topological $*$-algebra endowed with \textit{strong sum} and \textit{strong product}. Moreover, $\tau$ can be extended to $L_0(\mathcal M).$ 
  It is well-known that $L_p(\mathcal M)$ can be viewed as a subspace of $L_0(\mathcal M)$ for $1\leq p\leq \infty.$  We say that an element $x\in L_p(\mathcal M)$ is positive if and only if it is positive in $L_0(\mathcal M).$ The set of all positive elements in $L_p(\mathcal M)$ is denoted by $L_p(\mathcal M)_{+}.$ It is easy to see that $x\in L_p(\mathcal M)_{+}$ if and only if there is a sequence positive elements $(x_n)_{n\geq 1}\subseteq L_p(\mathcal M)$ for which $\|x_n-x\|_p\to 0$ as $n\to\infty.$ For $1\leq p\leq\infty,$ a linear map $T:L_p(\mathcal{M},\tau_{\mathcal{M}})\to L_p(\mathcal{M},\tau_{\mathcal{M}})$ is said to be positive if $T$ maps $L_p(\mathcal{M},\tau_{\mathcal{M}})_{+}$ to $L_p(\mathcal{M},\tau_{\mathcal{M}})_{+}.$

We need the following useful fact from \cite[Page 9]{Mei07}.
Let \( (\Omega, \mu)\) be a measure space. Suppose that \(f : \Omega \to \mathcal{M}\) is a weak-* integrable function and \(g : \Omega \to \mathbb{C}\) is an integrable function. Then 
\begin{equation}\label{csmei}
\Bigl|\int_{\Omega} f(x)\,g(x)\,d\mu(x)\Bigr|
   \;\le\;
   \left(\int_{\Omega} |f(x)|^{2}\,d\mu(x)\right)^{\tfrac{1}{2}}
   \left(\int_{\Omega} |g(x)|^{2}\,d\mu(x)\right)^{\tfrac{1}{2}}.
\end{equation}

	\subsection{Noncommutative vector-valued  $L_p$-spaces and point-wise convergence}
	
	It is well known that maximal norms on noncommutative $L_p$-spaces require special attention. This is mainly because for sequence of operators $(x_n)_{n\geq 1},$ there is no reasonable sense to the quantity $\sup_{n\geq 1}|x_n|.$ This difficulty can be overcome using Pisier's theory of noncommutative vector-valued $L_p$-spaces which was initiated for injective von Neumann algebras by Pisier \cite{Pis98} and extended to general von Neumann algebras by Junge \cite{Jun02}. For $1\leq p\leq\infty,$ let $L_p(\mathcal M;\ell_\infty)$ denote the space of all sequences $x=(x_n)_{n\geq 1}$ which admit the following factorization condition. There are  $a,b\in L_{2p}(\mathcal M)$ and uniformly bounded sequence $(y_n)_{n\geq 1}\subseteq\mathcal M$ such that $x_n=ay_nb$ for $n\geq 1.$ One defines \[\|(x_n)_{n\geq 1}\|_{L_p(\mathcal M;\ell_\infty)}:=\inf\Big\{\|a\|_{2p}\sup_{n\geq 1}\|y_n\|_{\infty}\|b\|_{2p}\Big\}\] where the infimum is taken over all possible factorization. Adopting the usual convention, we write $\|x\|_{L_p(\mathcal M;\ell_\infty)}=\big\|\sup\limits_{n\geq 1}\! ^+x_n\big\|_p.$ We remark that for any sequence $(x_n)_{n\geq 1}$ of self-adjoint operators in $L_p(\mathcal M)$ we have the following equivalent description of the above norm which is more intuitive. Let $x=(x_n)_{n\geq 1}\subseteq L_p(\mathcal M)$ be a sequence of self-adjoint operators. Then, $x\in L_p(\mathcal M;\ell_\infty)$ if and only if there exists an $a\in L_p(\mathcal M)_{+}$ such that $-a\leq x_n\leq a$ for all $n\geq 1.$ In this case, we have \[\big\|\sup\limits_{n\geq 1}\!^{+}x_n\big\|_p=\inf\{\|a\|_p:-a\leq x_n\leq a, a\in L_p(\mathcal M)_{+}\}.\] 
	
	More generally, if $\Lambda$ is any index set, we define $L_p(\mathcal M;\ell_\infty(\Lambda))$ to be the space of all $x=(x_\lambda)_{\lambda\in\Lambda}$ which can be factorized as $x_\lambda=ay_\lambda b$ with $a,b\in L_{2p}(\mathcal M),$ $(y_\lambda)_{\lambda\in\Lambda}\subseteq L_\infty(\mathcal M)$ and $\sup\limits_{\lambda\in\Lambda}\|y_\lambda\|_\infty<\infty.$ The norm on $L_p(\mathcal M;\ell_\infty(\Lambda))$ is defined by
	\[\|(x_\lambda)_{\lambda\in\Lambda}\|_{L_p(\mathcal M;\ell_\infty)}:=\inf\Big\{\|a\|_{2p}\sup\limits_{\lambda\in\Lambda}\|y_\lambda\|_\infty\|b\|_{2p}\Big\}\] where the infimum is taken over all possible factorizations.

	Following flockloric truncated description of the maximal norm is often useful.
	\begin{prop}\label{FINT}Let $1\leq p\leq\infty.$ An element $(x_\lambda)_{\lambda\in\Lambda}\subseteq L_p(\mathcal M)$ belongs to $L_p(\mathcal M;\ell_\infty(\Lambda))$ if and only if $\sup\limits_{\Lambda\supseteq J\ \text{is finite}}\big\|\sup\limits_{i\in J}\!^{+}x_i\big\|_p<\infty.$ Moreover, $\|(x_\lambda)_{\lambda\in\Lambda}\|_{L_p(\mathcal M;\ell_\infty)}=\sup\limits_{\Lambda\supseteq J\ \text{is finite}}\big\|\sup\limits_{i\in J}\!^{+}x_i\big\|_p.$
	\end{prop}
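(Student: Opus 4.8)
The plan is to reduce the proposition to the definition of $L_p(\mathcal M;\ell_\infty(\Lambda))$ by exploiting a weak-$*$ compactness argument to pass from a uniform bound over finite subsets to a single global factorization. First I would handle the easy direction: if $(x_\lambda)_{\lambda\in\Lambda}\in L_p(\mathcal M;\ell_\infty(\Lambda))$ with factorization $x_\lambda = a y_\lambda b$, then for any finite $J\subseteq\Lambda$ the restricted family $(x_i)_{i\in J}$ is factorized by the same $a,b$ and the restricted $(y_i)_{i\in J}$, so $\big\|\sup_{i\in J}{}^+ x_i\big\|_p \le \|a\|_{2p}\,\sup_{i\in J}\|y_i\|_\infty\,\|b\|_{2p}\le \|a\|_{2p}\,\sup_{\lambda}\|y_\lambda\|_\infty\,\|b\|_{2p}$; taking the infimum over factorizations and then the supremum over finite $J$ gives $\sup_J \big\|\sup_{i\in J}{}^+x_i\big\|_p \le \|(x_\lambda)\|_{L_p(\mathcal M;\ell_\infty)}$.

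For the converse, suppose $M:=\sup_{J\ \text{finite}}\big\|\sup_{i\in J}{}^+x_i\big\|_p<\infty$. Direct the collection of finite subsets $J\subseteq\Lambda$ by inclusion. For each $J$, pick a near-optimal factorization $x_i = a_J y_{J,i} b_J$ for $i\in J$ with $\|a_J\|_{2p}=\|b_J\|_{2p}\le M^{1/2}+\varepsilon_J$ and $\sup_{i\in J}\|y_{J,i}\|_\infty\le 1$ (one can always normalize a factorization into this symmetric form by absorbing scalars). The key step is to extract a limit: the nets $(a_J)$, $(b_J)$ lie in the ball of radius $\approx M^{1/2}$ in $L_{2p}(\mathcal M)$, which is weak-$*$ compact when $p<\infty$ (as the dual of the separable space $L_{2p/(2p-1)}(\mathcal M)$), and for $p=\infty$ one argues directly in $\mathcal M$; the $y_{J,i}$ lie in the unit ball of $\mathcal M$, weak-$*$ compact. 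After passing to a subnet I obtain $a,b\in L_{2p}(\mathcal M)$ and, for each fixed $\lambda$, a limit $y_\lambda\in\mathcal M$ with $\|y_\lambda\|_\infty\le 1$, and one must check that the factorization identity $x_\lambda = a y_\lambda b$ survives the limit. This last point is where I expect the main technical obstacle: passing to the limit in a \emph{product} of three factors under three different weak-$*$ topologies is not automatic, since multiplication is only separately weak-$*$ continuous. The remedy is the standard device of testing against a dense set of functionals — pairing with $c\in L_{2p/(2p-1)}(\mathcal M)$ and $d\in\mathcal M_*$ and using that along the subnet $\langle a_J y_{J,\lambda} b_J, \cdot\rangle$ converges because one can peel off one factor at a time, controlling the cross terms by the uniform norm bounds — or, more robustly, using the description via $L_{2p}(\mathcal M)\otimes^h \ell_\infty \otimes^h L_{2p}(\mathcal M)$ and the fact that the unit ball of this Haagerup tensor product is weak-$*$ compact, so the whole family $(x_\lambda)$ has a limiting decomposition at once.

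Alternatively, and perhaps more cleanly, I would invoke the self-adjoint reformulation recalled in the excerpt. Writing each $x_\lambda = \operatorname{Re} x_\lambda + i\operatorname{Im} x_\lambda$ and treating real and imaginary parts separately (at the cost of a constant $2$), it suffices to treat self-adjoint families; but the supremum norm over a finite $J$ of self-adjoints equals $\inf\{\|a\|_p : -a\le x_i\le a\ \forall i\in J,\ a\in L_p(\mathcal M)_+\}$. So for each finite $J$ there is $a_J\in L_p(\mathcal M)_+$ with $-a_J\le x_i\le a_J$ for $i\in J$ and $\|a_J\|_p\le M+\varepsilon_J$. Now direct over $J$ and extract a weak limit $a$ of $(a_J)$ in $L_p(\mathcal M)$ (again weak-$*$/weak compactness of balls, $p<\infty$, with the $p=\infty$ case handled in $\mathcal M$); since the order relations $-a_J\le x_i\le a_J$ are closed under weak limits (positivity of $L_p$ is weakly closed), one gets $-a\le x_\lambda\le a$ for every $\lambda$, with $\|a\|_p\le \liminf\|a_J\|_p\le M$. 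This exhibits $(x_\lambda)\in L_p(\mathcal M;\ell_\infty(\Lambda))$ with $\|(x_\lambda)\|_{L_p(\mathcal M;\ell_\infty)}\le M$, completing the converse and hence the equality of norms. The self-adjoint route sidesteps the three-factor limiting problem entirely, so I would present that as the main argument and relegate the general case to a reduction via real/imaginary parts together with the known two-sided estimate relating $\|\sup^+\|$ of a general family to that of its self-adjoint parts.
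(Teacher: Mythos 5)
The paper offers no proof of Proposition~\ref{FINT} at all --- it is quoted as a folklore fact from the Junge--Xu framework --- so your argument has to stand on its own, and as written it does not yet give the full statement. The easy direction and the general strategy (pass from near-optimal data over finite subsets to a single global datum by compactness) are right, but the route you elect to present as the main argument, via the self-adjoint order characterization, proves the proposition only for self-adjoint families. For a general family the splitting $x_\lambda=\operatorname{Re}x_\lambda+i\operatorname{Im}x_\lambda$ costs a constant, so you obtain membership in $L_p(\mathcal M;\ell_\infty(\Lambda))$ but not the asserted \emph{equality} of $\|(x_\lambda)\|_{L_p(\mathcal M;\ell_\infty)}$ with $\sup_J\big\|\sup_{i\in J}{}^{+}x_i\big\|_p$; the ``known two-sided estimate'' you invoke only yields equivalence of norms, while the proposition claims an identity. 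In addition, the compactness step of that route fails at $p=1$: bounded sets of $L_1(\mathcal M)$ are not relatively weakly compact, so the net of majorants $(a_J)$ need not have a weak cluster point in $L_1(\mathcal M)$ (your parenthetical ``weak compactness of balls, $p<\infty$'' is false there). That endpoint can be repaired --- take a weak-$*$ cluster point $\phi\in\mathcal M^{*}$ of $(a_J)$, replace it by its normal part, and use that the normal part of a positive functional dominating the normal functionals $\pm x_\lambda$ still dominates them, with $\|\phi_n\|\le\liminf\|a_J\|_1$ --- but this is not in your proposal. (By contrast $p=\infty$ is trivial, since $L_\infty(\mathcal M;\ell_\infty(\Lambda))=\ell_\infty(\Lambda;\mathcal M)$ isometrically.)

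The first route is the one that can give the exact equality, and it stalls exactly where you say it does; the remedies you sketch are not proofs. ``Peeling off one factor at a time'' cannot work as stated: multiplication is only separately continuous for the weak/weak-$*$ topologies, and a net $a_Jy_{J,\lambda}b_J$ whose three factors converge weakly need not converge to $ay_\lambda b$ (the $\langle e_n,e_n\rangle=1$, $e_n\rightharpoonup 0$ phenomenon); and the appeal to weak-$*$ compactness of a Haagerup tensor product ball is left unverified --- one would need an isometric dual-space identification of $L_p(\mathcal M;\ell_\infty(\Lambda))$ together with weak-$*$ lower semicontinuity of its norm, which is essentially the content of what is to be proved. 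A completion in the same spirit that avoids the three-factor limit: normalize each finite-stage factorization into the symmetric form $x_i=A_J^{1/2}y_i^JB_J^{1/2}$ with $A_J,B_J\in L_p(\mathcal M)_+$, $\|A_J\|_p,\|B_J\|_p\le M+\varepsilon$ (polar decomposition, no loss of constant); check, via a $1\times 2$ row/column matrix trick and Douglas factorization, that for fixed finite $J$ the set $K_J$ of admissible pairs $(A,B)$ is convex, bounded and norm-closed, hence weakly closed; for $1<p<\infty$ reflexivity and the finite intersection property give $(A,B)\in\bigcap_JK_J$, i.e.\ a single factorization valid for all $\lambda$ with norm at most $M+\varepsilon$. (Alternatively one can argue through the duality with finitely supported elements of $L_{p'}(\mathcal M;\ell_1(\Lambda))$ or through an ultrapower; in every version the case $p=1$ requires a separate argument, since the relevant convex sets then live in $L_1$.) As it stands, your proposal establishes the ``if and only if'' for $1<p\le\infty$ up to constants, but neither the exact norm identity for non-self-adjoint families nor the case $p=1$.
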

	We need the following lemmas.
	\begin{lem}\label{somepropmaximalnorm}Let $1\leq p\leq \infty$ and $T:L_p(\mathcal M)\to L_p(\mathcal M)$ be a positive bounded operator. Then for any $(x_n)\in L_p(\mathcal M,\ell_\infty)$ we have 
	\begin{equation}
	\big\|\sup\limits_{n\geq 1}\!^{+}Tx_n\big\|_p\leq \|T\|_{L_p(\mathcal M)\to L_p(\mathcal M)}\big\|\sup\limits_{n\geq 1}\!^{+}x_n\big\|_p.
	\end{equation}
	\end{lem}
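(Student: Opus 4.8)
The plan is to deduce the inequality from the positivity of $T$ together with the structural descriptions of the maximal norm recalled just before the statement: one first handles positive and then self-adjoint sequences directly, and then reduces an arbitrary sequence to the self-adjoint situation by a $2\times2$ positivity argument.

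The case $p=\infty$ is trivial, since there $\big\|\sup\limits_{n\geq 1}\!^{+}x_n\big\|_\infty=\sup_{n\geq 1}\|x_n\|_\infty$ (take $a=b=1$ and $y_n=x_n$ in the factorization), so the assertion reduces to $\sup_n\|Tx_n\|_\infty\le\|T\|\sup_n\|x_n\|_\infty$; assume henceforth $1\le p<\infty$. If $(x_n)\subseteq L_p(\mathcal M)_+$, fix $\varepsilon>0$ and choose $a\in L_p(\mathcal M)_+$ with $0\le x_n\le a$ for all $n$ and $\|a\|_p\le\big\|\sup\limits_{n\geq 1}\!^{+}x_n\big\|_p+\varepsilon$; positivity of $T$ gives $0\le Tx_n\le Ta$ with $Ta\in L_p(\mathcal M)_+$, hence $\big\|\sup\limits_{n\geq 1}\!^{+}Tx_n\big\|_p\le\|Ta\|_p\le\|T\|\,\|a\|_p$, and $\varepsilon\to0$ finishes this case. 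For self-adjoint $(x_n)$ the argument is identical once one observes that a positive $T$ is automatically $*$-preserving, hence order preserving on self-adjoint elements: from $-a\le x_n\le a$ with $a\in L_p(\mathcal M)_+$ one gets $T(a\pm x_n)=Ta\pm Tx_n\ge0$, i.e. $-Ta\le Tx_n\le Ta$ with $Ta\ge0$, and one concludes as above.

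For an arbitrary sequence I would invoke the $2\times2$ description of the maximal norm: unwinding the factorization $x_n=ay_nb$ (using $\bigl(\begin{smallmatrix}1&y_n\\y_n^{*}&1\end{smallmatrix}\bigr)\ge0$ when $\|y_n\|_\infty\le1$, together with the square-root decomposition of a positive $2\times2$ matrix over $L_p$) one checks that $\big\|\sup\limits_{n\geq 1}\!^{+}x_n\big\|_p=\inf\bigl\{\tfrac12(\|\alpha\|_p+\|\beta\|_p):\alpha,\beta\in L_p(\mathcal M)_+,\ \bigl(\begin{smallmatrix}\alpha&x_n\\x_n^{*}&\beta\end{smallmatrix}\bigr)\ge0\ \text{for all }n\bigr\}$. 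Because the averaging operators $T=\pi(\nu)$ occurring throughout the paper are completely positive---being integrals of $*$-automorphisms---$\mathrm{id}_{M_2}\otimes T$ is positive on $L_p(M_2(\mathcal M))$, so for a near-optimal pair $(\alpha,\beta)$ one has $\bigl(\begin{smallmatrix}T\alpha&Tx_n\\(Tx_n)^{*}&T\beta\end{smallmatrix}\bigr)=(\mathrm{id}_{M_2}\otimes T)\bigl(\begin{smallmatrix}\alpha&x_n\\x_n^{*}&\beta\end{smallmatrix}\bigr)\ge0$, whence $\big\|\sup\limits_{n\geq 1}\!^{+}Tx_n\big\|_p\le\tfrac12(\|T\alpha\|_p+\|T\beta\|_p)\le\tfrac12\|T\|(\|\alpha\|_p+\|\beta\|_p)$; taking the infimum gives the inequality with the sharp constant $\|T\|$. (For a $T$ that is only positive one may instead split $x_n$ into real and imaginary parts, obtaining the inequality with $\|T\|$ replaced by $2\|T\|$, which is harmless in all the applications below.) Everything up to and including the self-adjoint case is routine; the step requiring care is the general one, namely setting up the $2\times2$ description and using $2$-positivity of $T$ to retain the constant $\|T\|$.
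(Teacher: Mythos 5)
Your proposal is correct, and its core coincides with the paper's own (very short) proof: the paper simply picks $a\in L_p(\mathcal M)_{+}$ with $x_n\leq a$, uses positivity of $T$ to get $Tx_n\leq Ta$, and takes the infimum --- exactly your second paragraph. Note, however, that this order-theoretic argument (both in the paper and in your positive/self-adjoint cases) really only covers sequences for which the description $-a\leq x_n\leq a$ of the maximal norm applies, i.e.\ self-adjoint ones; the paper states the lemma for arbitrary $(x_n)\in L_p(\mathcal M;\ell_\infty)$ but proves only this case, which is harmless because every application in the paper (e.g.\ Theorem~\ref{thm2.1}, where one reduces to $x\in L_p(\mathcal M)_{+}$ and $T=\pi(m_K)$) concerns positive sequences and positive, indeed completely positive, operators $\pi(\nu)$. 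Your third paragraph is therefore a genuine refinement rather than a redundancy: the $2\times 2$ characterization $\big\|\sup_{n\geq 1}\!^{+}x_n\big\|_p=\inf\big\{\tfrac12(\|\alpha\|_p+\|\beta\|_p)\big\}$ over positive matrices $\bigl(\begin{smallmatrix}\alpha&x_n\\x_n^{*}&\beta\end{smallmatrix}\bigr)\geq 0$ is correct (factor $x_n=\alpha^{1/2}c_n\beta^{1/2}$ one way, conjugate $\bigl(\begin{smallmatrix}1&y_n\\y_n^{*}&1\end{smallmatrix}\bigr)\geq0$ the other), and your observation that retaining the sharp constant $\|T\|$ for non-self-adjoint sequences uses $2$-positivity of $T$ --- with only $2\|T\|$ available for a merely positive $T$ via real/imaginary parts --- is an accurate diagnosis of the one point the paper's statement-plus-proof leaves unaddressed. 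In short: same key idea as the paper on the case that matters, plus a more careful treatment of the general case that the paper elides.
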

\begin{proof}
Let $a\in L_p(\mathcal M)_{+}$ such that $x_n\leq a$ for all $n\geq 1.$ Clearly, this implies $Tx_n\leq Ta$ for all $n\geq 1.$ Hence as $T$ is bounded, $(Tx_n)\in L_p(\mathcal M,\ell_\infty).$ Moreover, we get $\big\|\sup\limits_{n\geq 1}\!^{+}Tx_n\big\|_p\leq \|Ta\|_p\leq \|T\|_{L_p(\mathcal M)\to L_p(\mathcal M)}\|a\|_p.$ The desired result is obtained by taking infimum over all possible $a.$
\end{proof}	

	Let $1\leq p<\infty.$ We define $L_p(\mathcal M;\ell_1)$ to be the space of all sequences $x=(x_n)_{n\geq 1}\subseteq L_p(\mathcal M)$ which admits a decomposition \[x_n=\sum_{k\geq 1}u_{kn}^*v_{kn}\] for all $n\geq 1,$ where  $(u_{kn})_{k,n\geq 1}$ and $(v_{kn})_{k,n\geq 1}$ are two families in $L_{2p}(\mathcal M) $ such that 
\[\sum_{k,n\geq 1}u_{kn}^*u_{kn}\in L_p(\mathcal M),\quad \sum_{k,n\geq 1}v_{kn}^*v_{kn}\in L_p(\mathcal M).\] In above all the series are required to converge in $L_p$-norm. We equip the space $L_p(\mathcal M;\ell_1)$   with the norm
\[\|x\|_{L_p(\mathcal M;\ell_1)}=\inf\Big\{\Big\|\sum_{k,n\geq 1}u_{kn}^*u_{kn}\Big\|_p^{\frac{1}{2}} \Big\|\sum_{k,n\geq 1}v_{kn}^*v_{kn}\Big\|_p^{\frac{1}{2}}\Big\},\] where infimum runs over all possible decompositions of $x$ described as above. For any positive sequence $x=(x_n)_{n\geq 1}\in L_p(\mathcal M;\ell_1)$ we have a simpler description of the norm as follows
\[\|x\|_{L_p(\mathcal M;\ell_1)}=\Big\|\sum\limits_{n\geq 1}x_n\Big\|_p.\]  
It is known that both $L_p(\mathcal M;\ell_\infty)$ and $L_p(\mathcal M;\ell_1)$ are Banach spaces. Moreover, we have the following duality fact.
\begin{prop}[\cite{Jun02}]\label{dulaityofmaximalnorms} Let $1<p<\infty.$ Let $\frac{1}{p}+\frac{1}{p^\prime}=1.$ Then we have isometrically $L_p(\mathcal M;\ell_1)^*=L_{p^\prime}(\mathcal M;\ell_\infty)$, with the duality relation given by \[\langle x,y\rangle=\sum_{n\geq 1}\tau(x_ny_n)\] for all $x\in L_p(\mathcal M;\ell_1)$ and $y\in L_{p^\prime}(\mathcal M;\ell_\infty).$
\end{prop}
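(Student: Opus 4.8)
\emph{Plan.} This identification is due to Junge \cite{Jun02}; the argument splits into two halves: (i) the bilinear form $\langle x,y\rangle=\sum_n\tau(x_ny_n)$ is well defined and bounded by $\|x\|_{L_p(\mathcal M;\ell_1)}\|y\|_{L_{p'}(\mathcal M;\ell_\infty)}$, which yields a contractive embedding $L_{p'}(\mathcal M;\ell_\infty)\hookrightarrow L_p(\mathcal M;\ell_1)^*$; and (ii) this embedding is onto and does not increase norms. I would establish (i) by hand and reduce (ii) to a finite-dimensional case handled by Pisier's interpolation machinery.

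For (i), fix $x=(x_n)\in L_p(\mathcal M;\ell_1)$, $y=(y_n)\in L_{p'}(\mathcal M;\ell_\infty)$ and $\epsilon>0$, and choose near-optimal decompositions $x_n=\sum_k u_{kn}^*v_{kn}$ and $y_n=az_nb$; after rescaling we may assume $\|a\|_{2p'}=\|b\|_{2p'}=1$, $\sup_n\|z_n\|_\infty\le\|y\|_{L_{p'}(\mathcal M;\ell_\infty)}+\epsilon$, and $\big\|\sum_{k,n}u_{kn}^*u_{kn}\big\|_p^{1/2}\big\|\sum_{k,n}v_{kn}^*v_{kn}\big\|_p^{1/2}\le\|x\|_{L_p(\mathcal M;\ell_1)}+\epsilon$. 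Using traciality one rewrites $\langle x,y\rangle=\sum_{k,n}\tau\big((u_{kn}b^*)^*(v_{kn}az_n)\big)$, and since $\tfrac1{2p}+\tfrac1{2p'}=\tfrac12$ each of $u_{kn}b^*$ and $v_{kn}az_n$ lies in $L_2(\mathcal M)$ and the double series converges absolutely. Applying the tracial Cauchy--Schwarz inequality termwise and then the scalar Cauchy--Schwarz inequality in $(k,n)$ bounds $|\langle x,y\rangle|$ by the product of $\tau\big(b\big(\sum_{k,n}u_{kn}^*u_{kn}\big)b^*\big)^{1/2}$ and $\big(\sum_n\tau\big(z_n^*a^*\big(\sum_k v_{kn}^*v_{kn}\big)az_n\big)\big)^{1/2}$. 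By noncommutative H\"older the first factor is at most $\|b\|_{2p'}\big\|\sum_{k,n}u_{kn}^*u_{kn}\big\|_p^{1/2}$; for the second, the inequality $\tau(PQ)\le\|Q\|_\infty\tau(P)$ (valid for $P,Q\ge0$) applied with $P=a^*\big(\sum_k v_{kn}^*v_{kn}\big)a$ and $Q=z_nz_n^*$, summed in $n$ and followed by H\"older again, gives the bound $\big(\sup_n\|z_n\|_\infty\big)\|a\|_{2p'}\big\|\sum_{k,n}v_{kn}^*v_{kn}\big\|_p^{1/2}$. Multiplying out, using the normalizations and letting $\epsilon\to0$ yields $|\langle x,y\rangle|\le\|x\|_{L_p(\mathcal M;\ell_1)}\|y\|_{L_{p'}(\mathcal M;\ell_\infty)}$. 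Specializing these estimates to a sequence with a single nonzero entry shows that such a sequence has the same norm $\|\cdot\|_p$ in both $L_p(\mathcal M;\ell_1)$ and $L_{p'}(\mathcal M;\ell_\infty)$, a fact used next.

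For (ii), let $\varphi\in L_p(\mathcal M;\ell_1)^*$. Composing $\varphi$ with the (isometric, by the single-entry remark) inclusion $w\mapsto(0,\dots,w,\dots,0)$ into the $n$-th slot and invoking the classical duality $L_p(\mathcal M)^*=L_{p'}(\mathcal M)$ produces $y_n\in L_{p'}(\mathcal M)$ with $\varphi(0,\dots,w,\dots,0)=\tau(wy_n)$ and $\|y_n\|_{p'}\le\|\varphi\|$. Since finitely supported sequences are dense in $L_p(\mathcal M;\ell_1)$ (all defining series converge in $L_p$), the partial sums $\sum_{n\le N}\tau(x_ny_n)=\varphi\big((x_n)_{n\le N}\big)$ converge to $\varphi(x)$, so $\varphi$ is represented by the stated pairing. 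It remains to show $(y_n)_n\in L_{p'}(\mathcal M;\ell_\infty)$ with norm at most $\|\varphi\|$. By Proposition~\ref{FINT} this reduces to bounding $\big\|(y_n)_{n\le N}\big\|_{L_{p'}(\mathcal M;\ell_\infty^N)}$ for every finite $N$, and this quantity coincides with the norm of $\varphi$ restricted to $L_p(\mathcal M;\ell_1^N)$ (hence is $\le\|\varphi\|$) exactly when the \emph{finite-dimensional} duality $L_p(\mathcal M;\ell_1^N)^*=L_{p'}(\mathcal M;\ell_\infty^N)$ holds isometrically. That case is Pisier's \cite{Pis98}: it follows from his definition of the vector-valued spaces $L_p(\mathcal M;E)$ by complex interpolation together with the compatibility of duality with complex interpolation for the relevant couple, applied with $E=\ell_1^N$ (so $E^*=\ell_\infty^N$ as operator spaces) and the endpoint identifications at $p\in\{1,\infty\}$. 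Combining (i) and (ii) gives the isometric identification and the pairing formula.

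The main obstacle is part (ii), and inside it the step that upgrades a bounded functional to the structural factorization $y_n=az_nb$ with a \emph{single} pair $a,b\in L_{2p'}(\mathcal M)$ and a uniformly bounded sequence $(z_n)$: bounding each $\|y_n\|_{p'}$ individually is far from enough. This is precisely where the operator-space/interpolation input (or an equivalent Hahn--Banach separation argument, which ultimately rests on the same finite-dimensional duality) is unavoidable, with Proposition~\ref{FINT} then serving to transfer the factorization obtained on finite index sets to the whole sequence.
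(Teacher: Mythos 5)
The paper does not prove this proposition at all: it is quoted verbatim from Junge's work (\cite{Jun02}), so there is no internal argument to compare yours against; your proposal therefore has to be judged on its own. Your part (i) is correct and complete: the rewriting $\tau(x_ny_n)=\sum_k\tau\bigl((u_{kn}b^*)^*(v_{kn}az_n)\bigr)$, the termwise tracial Cauchy--Schwarz followed by scalar Cauchy--Schwarz in $(k,n)$, and the two H\"older steps (together with $\tau(PQ)\le\|Q\|_\infty\tau(P)$) give exactly the contractive pairing, and this is the standard way the easy inclusion $L_{p'}(\mathcal M;\ell_\infty)\subseteq L_p(\mathcal M;\ell_1)^*$ is obtained. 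Part (ii) is structurally sound: coordinatewise representation of $\varphi$, density of finitely supported sequences (which indeed holds because the defining series are required to converge in $L_p$, so tails vanish), and the reduction via Proposition~\ref{FINT} to the finite-index duality $L_p(\mathcal M;\ell_1^N)^*=L_{p'}(\mathcal M;\ell_\infty^N)$ is a legitimate scheme and correctly isolates where the real work lies, namely producing a single factorization $y_n=az_nb$ rather than individual bounds on $\|y_n\|_{p'}$. The one inaccuracy is the attribution of that crucial finite-dimensional step: Pisier's interpolation construction of $L_p(\mathcal M;E)$ in \cite{Pis98} is available only for injective (hyperfinite) $\mathcal M$, whereas the present paper works with general von Neumann algebras with separable predual; in that generality the finite-dimensional duality (and hence the Hahn--Banach/separation route you mention as an alternative) is precisely Junge's contribution in \cite{Jun02}, so at that point your argument is not an independent proof but a reduction to the very result the paper cites. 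With the citation corrected to Junge (or Junge--Xu) for the finite case, your two-step argument is a correct and reasonably efficient derivation of the stated infinite-index duality from it.
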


Denote $L_p(\mathcal M; c_0)$ to be the closure of all finite sequences in $L_p(\mathcal M;\ell_\infty).$ One can identify the subspace $L_p(\mathcal M;c_0)$ to be the set of all sequences $(x_n)_{n\geq 1}\subseteq L_p(\mathcal M;\ell_\infty)$ which admits a factorization as $x_n=ay_nb,$ $a,b\in L_{2p}(\mathcal M)$ and $(y_n)_{n\geq 1}\subseteq L_\infty(\mathcal M)$ such that $\lim_{n\to \infty}\|y_n\|_\infty=0.$ We also define $L_p(\mathcal M;\ell_\infty^c)$ to be the space of all sequences $x=(x_n)_{n\geq 1}\subset L_p(\mathcal M)$ which admits a factorization of the form $x_n=y_na,$ for all $n\geq 1,$ where $a\in L_p(\mathcal M)$ and $(y_n)_{n\geq 1}\subseteq L_\infty(\mathcal M)$ with $\sup_{n\geq 1}\|y_n\|_\infty<\infty.$ One defines \[\|x\|_{L_p(\mathcal M;\ell_\infty^c)}:=\inf\big\{\|a\|_p\sup_{n\geq 1}\|y_n\|_{\infty}\big\},\] infimum being taken over all possible factorization. We denote $L_p(\mathcal M;c_0^c)$ to be the Banach space which is the completion of all finitely supported sequences in $L_p(\mathcal M;\ell_\infty^c).$ One can define the spaces $L_p(\mathcal M;c_0(\Lambda))$, $L_p(\mathcal M;\ell_\infty^c(\Lambda))$ and $L_p(\mathcal M;c_0^c(\Lambda))$ for any arbitrary index set $\Lambda.$ All the properties are analogous to the case when $\Lambda=\mathbb N.$
We refer to \cite{JunXu07}, \cite{Mu03} and \cite{DeJ04} for more details on the norms we described above. 
For the study of noncommutative individual ergodic theorems, we will also consider the a.u.
and b.a.u. convergence which were first introduced in \cite{La76} (also see \cite{Ja85}). For any projection $e\in\mathcal M$ denote $e^{\perp}:=1-e.$
	\begin{defn}Let $(x_n)_{n\geq 1}\subseteq L_0(\mathcal M)$ be a sequence and $x\in L_0(\mathcal M).$ We say that the sequence $(x_n)_{n\geq 1}$ converges to $x$ \textit{almost uniformly} (in short a.u.) if for any $\epsilon>0$ there exists a projection $e\in\mathcal M$ such that $\tau(e^{\perp})<\epsilon$ and $\lim\limits_{n\to\infty}\|(x_n-x)e\|_\infty=0.$
		
		We say $(x_n)_{n\geq 1}$ converges to $x$ \textit{bilaterally almost uniformly} (in short b.a.u.) if for any $\epsilon>0$ there exists a projection $e\in\mathcal M$ such that $\tau(e^{\perp})<\epsilon$ and $\lim\limits_{n\to\infty}\|e(x_n-x)e\|_\infty=0.$
	\end{defn}
	It follows from Egorov's theorem that in the case of classical probability spaces, the definition above are equivalent to the usual almost everywhere convergence. We mention the following proposition which is very useful for checking b.a.u. and a.u. convergence of sequences noncommutative $L_p$-spaces.
	\begin{prop}\cite{DeJ04}\label{ct1}
		\begin{enumerate}
			\item [(i)]Let $1\leq p<\infty$ and $(x_\lambda)_{\lambda\in \Lambda}$ be a family such that $(x_\lambda)_{\lambda\in \Lambda}\in L_p(\mathcal M,c_0(\Lambda)).$ Then, $x_\lambda\to 0,$ b.a.u. as $n\to \infty.$
			\item[(ii)] Let $2\leq p<\infty$ and $(x_\lambda)_{\lambda\in \Lambda}$  be a family such that $(x_\lambda)_{\lambda\in\Lambda}\in L_p(\mathcal M,c_0^c(\Lambda)).$ Then, $x_\lambda\to 0$ a.u.
		\end{enumerate}
	\end{prop}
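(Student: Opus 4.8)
The plan is to deduce both statements from the concrete factorization descriptions of $L_p(\mathcal M;c_0(\Lambda))$ and $L_p(\mathcal M;c_0^c(\Lambda))$ recorded above, combined with a spectral truncation of the $L_p$-factors and Chebyshev's inequality in $L_0(\mathcal M)$. No genuinely new idea is needed, but a little care is required at the first step.

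For $(i)$, I would begin by writing $(x_\lambda)_{\lambda\in\Lambda}\in L_p(\mathcal M;c_0(\Lambda))$ in the form $x_\lambda=ay_\lambda b$ with $a,b\in L_{2p}(\mathcal M)$, $(y_\lambda)_{\lambda\in\Lambda}\subseteq L_\infty(\mathcal M)$ and $\|y_\lambda\|_\infty\to0$ along $\Lambda$. Fix $\epsilon>0$. Using the polar decompositions $a=|a^*|u_a$, $b=u_b|b|$ and the identities $\||a^*|\|_{2p}=\|a\|_{2p}$, $\||b|\|_{2p}=\|b\|_{2p}$, Chebyshev's inequality gives $\tau(\chi_{(R,\infty)}(|a^*|))\le R^{-2p}\|a\|_{2p}^{2p}$ and $\tau(\chi_{(R,\infty)}(|b|))\le R^{-2p}\|b\|_{2p}^{2p}$, so for $R$ large enough the projection $e:=\chi_{[0,R]}(|a^*|)\wedge\chi_{[0,R]}(|b|)$ satisfies $\tau(e^{\perp})\le\tau(\chi_{(R,\infty)}(|a^*|))+\tau(\chi_{(R,\infty)}(|b|))<\epsilon$. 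Since $e\le\chi_{[0,R]}(|a^*|)$ one has $ea=e\,\chi_{[0,R]}(|a^*|)\,|a^*|\,u_a\in\mathcal M$ with $\|ea\|_\infty\le R$, and symmetrically $\|be\|_\infty\le R$; hence
\[
\big\|e\,x_\lambda\,e\big\|_\infty=\big\|(ea)\,y_\lambda\,(be)\big\|_\infty\le R^2\,\|y_\lambda\|_\infty\longrightarrow0,
\]
which is precisely b.a.u.\ convergence of $(x_\lambda)$ to $0$.

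For $(ii)$ I would run the column analogue, which is shorter. Writing $x_\lambda=y_\lambda a$ with $a\in L_p(\mathcal M)$, $(y_\lambda)\subseteq L_\infty(\mathcal M)$, $\|y_\lambda\|_\infty\to0$, and $a=u_a|a|$, fix $\epsilon>0$ and choose via Chebyshev a radius $R$ with $\tau(\chi_{(R,\infty)}(|a|))\le R^{-p}\|a\|_p^p<\epsilon$. Then $e:=\chi_{[0,R]}(|a|)$ yields $ae=u_a|a|e\in\mathcal M$ with $\|ae\|_\infty\le R$, whence $\|x_\lambda e\|_\infty=\|y_\lambda(ae)\|_\infty\le R\|y_\lambda\|_\infty\to0$, i.e.\ a.u.\ convergence of $(x_\lambda)$ to $0$. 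Here the hypothesis $p\ge2$ is used only to guarantee that the column quasi-norm $\inf\{\|a\|_p\sup_\lambda\|y_\lambda\|_\infty\}$ defining $L_p(\mathcal M;\ell_\infty^c(\Lambda))$ is in fact a norm (which needs the triangle inequality in $L_{p/2}$); it plays no other role.

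The step I expect to be the main, albeit mild, obstacle is the very first one: passing from the definition of $L_p(\mathcal M;c_0(\Lambda))$, resp.\ $L_p(\mathcal M;c_0^c(\Lambda))$, as the closure of finitely supported families to the explicit factorization with $\|y_\lambda\|_\infty\to0$. For $\Lambda=\mathbb N$ this is exactly the identification recalled in the text and established in \cite{DeJ04}; for a general directed index set one argues the same way, approximating $(x_\lambda)$ in $L_p(\mathcal M;\ell_\infty(\Lambda))$ by finitely supported families up to $\epsilon2^{-k}$, merging the resulting factorizations by a standard series/diagonal construction into a single factorization $x_\lambda=ay_\lambda b$, and verifying that the amplitudes $\|y_\lambda\|_\infty$ taken over the complements of an increasing exhaustion by finite sets tend to $0$. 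Everything past that reduction is the routine spectral truncation above, so in the write-up I would keep this part brief and simply cite \cite{DeJ04}.
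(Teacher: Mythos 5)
Your argument is correct: the paper itself gives no proof of Proposition~\ref{ct1}, citing \cite{DeJ04}, and your Chebyshev-plus-spectral-truncation of the factorization $x_\lambda=ay_\lambda b$ (resp.\ $x_\lambda=y_\lambda a$) with $\|y_\lambda\|_\infty\to 0$ is precisely the standard argument behind that cited result. The only point deserving the care you already flag is the passage from the closure definition of $L_p(\mathcal M;c_0(\Lambda))$ and $L_p(\mathcal M;c_0^c(\Lambda))$ to the factorization with vanishing amplitudes for a general index set, which is appropriately handled by citing \cite{DeJ04} as the paper does.
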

	The following lemma will be very useful to us for showing exponential convergence of certain ergodic averages.
\begin{lem}\label{ptwiselem}\cite{HoSX19}
	\begin{itemize}
\item[(i)] Let $1\leq p<\infty$ and $(\Phi_t)_{t>0}$ a family of positive linear maps on $L_p(\mathcal M)$ so that $t\mapsto \Phi_t(x)$ is continuous for all $x\in L_p(\mathcal M).$ Assume that $(\Phi_t)_{t>0}$ is strong type $(p,p).$ If $(\Phi_t(x))_{t>0}$ converges a.u. as $t\to\infty$ on a dense subspace of $L_p(\mathcal M)$, then $(\Phi_t(x))_{t>0}$ converges a.u. as $t\to\infty$ for all $x\in L_p(\mathcal M).$
\item[(ii)] Let $1\leq p<\infty$ and $(\Phi_t)_{t>0}$ and $\Phi$ be linear maps on $L_p(\mathcal M)$ so that $t\mapsto \Phi_t(x)$ is continuous for all $x\in L_p(\mathcal M).$ Assume that there exists a dense subspace $\mathcal D\subseteq L_p(\mathcal M)$ such that $\lim\limits_{t_0\to\infty}\int_{t\geq t_0}\|\Phi_t(x)-\Phi(x)\|_p^p dt=0$ for all $x\in\mathcal D.$

If $2\leq p<\infty$ and $\|(\Phi_t(x))_{t>0}\|_{L_p(\mathcal M;\ell_\infty^c)}\leq C\|x\|_p,$ for all $x\in L_p(\mathcal M)$ for some positive constant $C,$ then $\Phi_t(x)\to \Phi(x)$ a.u. as $t\to\infty$ for all $x\in L_p(\mathcal M).$ If $1\leq p<\infty$ and  $\|(\Phi_t(x))_{t>0}\|_{L_p(\mathcal M;\ell_\infty)}\leq C\|x\|_p,$ for all $x\in L_p(\mathcal M)$ for some positive constant $C,$ then $\Phi_t(x)\to \Phi(x)$ b.a.u. as $t\to\infty$ for all $x\in L_p(\mathcal M).$ 
\end{itemize}
\end{lem}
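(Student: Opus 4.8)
The plan is to run the noncommutative Banach principle: the maximal inequality contained in the hypotheses promotes convergence on a dense subspace to convergence on all of $L_p(\mathcal M)$, the only genuine subtlety being the uncountable parameter $t\in(0,\infty)$, which I would handle by reducing to finite subsets via Proposition~\ref{FINT} together with the assumed $L_p$-continuity of $t\mapsto\Phi_t(x)$.

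For part (i), fix $x\in L_p(\mathcal M)$ and $\epsilon>0$, and for each $k\ge1$ pick $y_k$ in the dense subspace with $\|x-y_k\|_p$ as small as we need. The key step converts the maximal inequality $\|(\Phi_t(x-y_k))_{t\ge1}\|_{L_p(\mathcal M;\ell_\infty^c)}\le C\|x-y_k\|_p=:\delta_k$ supplied by the strong-type $(p,p)$ assumption (this column form being exactly what almost uniform, as opposed to merely bilaterally almost uniform, convergence requires) into a pointwise statement: from a factorization $\Phi_t(x-y_k)=w_ta_k$ with $\sup_t\|w_t\|_\infty\le1$ and $\|a_k\|_p\le 2\delta_k$, Chebyshev's inequality applied to the spectral projection $e_k:=\chi_{[0,\sqrt{\delta_k}]}(|a_k|)$ yields $\tau(e_k^\perp)\lesssim\delta_k^{p/2}$ and $\sup_{t\ge1}\|\Phi_t(x-y_k)e_k\|_\infty\le\sqrt{\delta_k}$. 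Since $\Phi_t(y_k)$ converges a.u.\ by hypothesis, choose $f_k$ with $\tau(f_k^\perp)<\epsilon2^{-k}$ along which $\Phi_t(y_k)$ is a.u.\ Cauchy; setting $e:=\bigwedge_k(e_k\wedge f_k)$ gives $\tau(e^\perp)<\epsilon$ after a harmless rescaling, and for every $k$ one has $\limsup_{s,t\to\infty}\|(\Phi_tx-\Phi_sx)e\|_\infty\lesssim\sqrt{\delta_k}\to0$, so $(\Phi_tx)_{t\ge1}$ is a.u.\ Cauchy and hence a.u.\ convergent in $L_0(\mathcal M)$. All suprema over the continuum are made meaningful through Proposition~\ref{FINT}, and $L_p$-continuity ensures the limit is independent of how $t\to\infty$. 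The bilateral variant is identical, using $-a_k\le\Phi_t(x-y_k)\le a_k$ and $e_k=\chi_{[0,\sqrt{\delta_k}]}(a_k)$ to control $\|e_k\Phi_t(x-y_k)e_k\|_\infty$ instead.

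For part (ii), by part (i) it suffices to verify that the quantitative hypothesis forces a.u.\ (resp.\ b.a.u.)\ convergence on $\mathcal D$. Given $x\in\mathcal D$, from $\int_{t\ge t_0}\|\Phi_t(x)-\Phi(x)\|_p^p\,dt\to0$ extract $t_n\uparrow\infty$ with $\|\Phi_{t_n}(x)-\Phi(x)\|_p\le 4^{-n}$; then $\sum_n\|\Phi_{t_n}(x)-\Phi(x)\|_p<\infty$, so $(\Phi_{t_n}(x)-\Phi(x))_n\in L_p(\mathcal M;\ell_1)\subseteq L_p(\mathcal M;c_0)$, giving b.a.u.\ convergence along $(t_n)$ by Proposition~\ref{ct1}(i). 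When $2\le p<\infty$, a weighted-square factorization through $a=\big(\sum_nc_n|\Phi_{t_n}(x)-\Phi(x)|^2\big)^{1/2}$ with $c_n\uparrow\infty$ and $\sum_nc_n\|\Phi_{t_n}(x)-\Phi(x)\|_p^2<\infty$ exhibits $(\Phi_{t_n}(x)-\Phi(x))_n\in L_p(\mathcal M;c_0^c)$ — this is the only place $p\ge2$ is used, namely through $\|a^2\|_{p/2}\le\sum_nc_n\|\Phi_{t_n}(x)-\Phi(x)\|_p^2$ — and Proposition~\ref{ct1}(ii) gives a.u.\ convergence along $(t_n)$. Finally, the symmetric (resp.\ column) maximal inequality bounds the oscillation $\sup_{t\in[t_n,t_{n+1}]}\|\Phi_t(x)-\Phi_{t_n}(x)\|$ in maximal norm, which, combined with $L_p$-continuity and the subsequential convergence, promotes it to convergence along the full parameter just as in part (i). Feeding this back into part (i) with dense subspace $\mathcal D$ completes the argument.

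The step I expect to be the main obstacle is the continuum index $t\in(0,\infty)$: all the vector-valued maximal-norm machinery is built for countable (indeed finite) index sets, so one must systematically reduce to finite subsets and then bridge the gap between convergence along an extracted sequence $t_n\to\infty$ and honest convergence as $t\to\infty$ — precisely where $L_p$-continuity of $t\mapsto\Phi_t(x)$ and the uncountably indexed maximal inequality are indispensable. Everything else is routine noncommutative bookkeeping: Chebyshev on a well-chosen spectral projection to turn a small maximal norm into \emph{uniformly small off a small projection}, and an intersection over a summably small family of exceptional projections to upgrade \emph{$\limsup \le$ small} to \emph{limit $=0$}.
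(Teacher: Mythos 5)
A preliminary remark: the paper offers no proof of this lemma at all — it is quoted verbatim from \cite{HoSX19} — so your proposal can only be measured against the statement itself, and it has two genuine gaps, both sitting exactly where you yourself flag the difficulty. In part (i) you convert the hypothesis ``strong type $(p,p)$'' into the column maximal inequality $\|(\Phi_t(x-y_k))_t\|_{L_p(\mathcal M;\ell_\infty^c)}\leq C\|x-y_k\|_p$ and then run Chebyshev on the right factor $a_k$. But strong type $(p,p)$ in this literature (cf.\ \cite{JunXu07}, \cite{HoSX19} and the maximal-norm conventions of Section~\ref{prelims}) means the symmetric bound $\|(\Phi_t y)_t\|_{L_p(\mathcal M;\ell_\infty)}\leq C\|y\|_p$, and this does not dominate the column norm: for self-adjoint data the symmetric norm is equivalent to the order-majorant description $-a\leq \Phi_t y\leq a$, which on a spectral projection $e$ of $a$ only yields the bilateral estimate $\|e\,\Phi_t(y)\,e\|_\infty\leq\|eae\|_\infty$; it gives no control of $\|\Phi_t(y)e\|_\infty$ (from $0\leq z\leq a$ one only gets $\|ze\|_\infty^2\leq\|z\|_\infty\|eae\|_\infty$, and $\|z\|_\infty$ is not controlled), and squaring is not operator monotone, so $\Phi_t(y)\leq a$ does not bound $|\Phi_t(y)|^2$ by a multiple of $a^2$. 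As written, your argument therefore proves a b.a.u.\ extension, not the claimed a.u.\ extension; the a.u.\ conclusion from a merely symmetric maximal bound is precisely the nontrivial content of the cited lemma and requires positivity to enter in an essentially different way, not a cost-free upgrade of the maximal norm.

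In part (ii) the subsequential half is correct (the extraction of $t_n$, the observation $\sum_n\|z_n\|_p<\infty\Rightarrow (z_n)\in L_p(\mathcal M;c_0)$, and the weighted column square function for $p\geq 2$ are all standard and sound), but the bridge from the sequence $(t_n)$ to the full parameter $t\to\infty$ is asserted rather than proved: the assumed maximal inequality, applied to the fixed element $x$, produces one bound $C\|x\|_p$ for the entire family with no decay in $n$, so it does not make the oscillation over $[t_n,t_{n+1}]$ small in any maximal norm; and $L_p$-continuity of $t\mapsto\Phi_t(x)$ controls only $\|\Phi_t x-\Phi_{t_n}x\|_p$, never $\|(\Phi_t x-\Phi_{t_n}x)e\|_\infty$. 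This is exactly the hard step of the lemma — the place where the integral hypothesis must be exploited over the continuum (e.g.\ by showing that the tails $(\Phi_t x-\Phi x)_{t\geq N}$ become small in the continuum-indexed maximal norm) — so it cannot be dispatched ``just as in part (i)''. A smaller inconsistency: part (i) assumes the maps are positive while part (ii) does not, so ``feeding back into part (i)'' is not literally available; the density extension in (ii) has to be run directly from the assumed $\ell_\infty^c$ (resp.\ $\ell_\infty$) inequality, which your Chebyshev scheme does supply for the column (resp.\ bilateral) case.
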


\subsection{Noncommutative dynamical systems:}	\label{subsec1.3}
Let $G$ be a locally compact second countable unimodular group. Let $(\mathcal M,\tau)$ be a von Neumann algebra equipped with a normal semifinite faithful state $\tau$ with a seprabale predual. We denote $\operatorname{Aut}(\mathcal M)$ to be the set of all $\operatorname{w}^*$-continuous $*$-isomorphisms with the inverse being $\operatorname{w}^*$-continuous. A $W^*$-dynamical system is a quadruple $(\mathcal M,\tau, G, \pi)$, such that
\begin{itemize}
	\item[(i)] The map $\pi:G\to \operatorname{Aut}(\mathcal M)$ is a group homomorphism.
	\item[(ii)] The map $G:g\mapsto \pi(g)x$ is $\sigma$-$\operatorname{wot}$-continuous for all $x\in\mathcal M.$
	\item[(iii)]$\tau(\pi(g)x)=\tau(x)$ for all $g\in G$ and $x\in \mathcal M\cap L_1(\mathcal M).$
\end{itemize}
Let $1\leq p<\infty.$ For any $W^*$-dynamical system $(\mathcal M,\tau, G, \pi)$ we have that for all $x\in L_p(\mathcal M)$, the map $g\mapsto \pi(g)x$ is continuous from $G$ to $L_p(\mathcal M)$. Moreover, $g\mapsto \pi(g)$ is a  strongly continuous representation of group $G$ to $\operatorname{Aut}(L_p(\mathcal M)).$

Denote $M(G)$ to be the Banach space of all complex Radon measures on $G.$ For $\mu\in G$ define 
\[\pi(\mu)(x)\colon=\int_G\pi(g)xd\mu(g)\] for $x\in L_p(\mathcal M)$ and $1\leq p\leq\infty.$ Then $\|\pi(\mu)\|_{L_p(\mathcal M)\to L_p(\mathcal M)}\leq\|\mu\|_1.$ The map $\mu\mapsto \alpha(\mu)$ is a norm continuous $*$-representation of the involutive Banach algebra $M(G)$ as an algebra of operators on $L_2(\mathcal M).$ We refer \cite{Hon16} for more details. Denote $P(G)$ to be the subset of probability measures on $G$. Let $t\mapsto\nu_t$ be a weakly continuous map from $\mathbb R_{+}$ to $P(G)$.
\begin{defn}
A one-parameter family \( (\nu_t)_{t>0} \subset \mathcal{P}(G) \) is said to be a 
\emph{global} (respectively, \emph{local}) \emph{noncommutative maximal ergodic family in} \( L_p \) 
if for every trace-preserving action \( \pi \) of \( G \) on a noncommutative probability space \( (\mathcal M, \tau) \), 
there exists a constant \( c_p > 0 \) such that the following inequality holds:
\[
\left\| \sup\limits_{t > 1}\!^{+} \pi(\nu_t)x \right\|_p \leq c_p \|x\|_p 
\quad \text{(respectively, } 
\left\| \sup\limits_{0<t\leq 1}\!^{+} \pi(\nu_t)x \right\|_p \leq c_p \|x\|_p \text{)}
\]
for all \( x \in L_p(\mathcal M) \).
\end{defn}
Let $\mathcal M$ be a noncommutative probability space. For $1\leq p\leq\infty,$ denote $L_{p}^0(\mathcal M):=\{x\in L_p(\mathcal M):\tau(x)=0\}.$ Suppose we have a unitary representation $\pi:G\to L_2(\mathcal M).$ Then, clearly $ L_{2}^0(\mathcal M)$ is a closed subspace of $ L_2(\mathcal M)$ as $x\mapsto\tau(x)$ is a continuous linear functional. Moreover, if $\pi$ is induced by a dynamical system, then $L_{2}^0(\mathcal M)$ is invariant under $\pi$ as $\pi$ is trace-preserving.
\subsection{Ergodic theorem for action of group with polynomial growth}
Let $G$ be a locally compact group equipped with a right Haar measure $m$. Recall that for an invariant metric $d$ on $G$, we say that $(G, d)$ satisfies the doubling condition if there exists a constant $C>0$ such that $m(B(e,2r))\leq Cm(B(e,r))$ for all $r>0,$ where $B(g,r):=\{h\in G:d(g,h)<r\}$. We say that the balls are asymptotically invariant under
right translation if for every $g\in G$
\[\lim\limits_{r\to \infty}\frac{m(B(g,r)\Delta B(e,r))}{m(B(e,r))}=0.\]
\begin{thm}\cite{HLW221}\label{polyergodic}
Let $G$ be a locally compact group equipped with a right Haar measure $m$ equipped with a $G$-invariant metric $d$ such that $(G,d,m)$ satisfies doubling condition and balls are asymptotically invariant under right translation. Let $\alpha$ be a continuous action of $G$ on $\mathcal M$ by $\tau$-preserving automorphisms. Let $A_r$ be the averaging operators defined by 
\[A_rx:=\frac{1}{m(B(e,r))}\int_{B(e,r)}\alpha(g)xdm(g), \ x\in\mathcal{M}, \ r>0.\] Then Then $(A_r)_{>0}$ is of weak type $(1,1)$ and of strong type $(p,p)$ for $1<p<\infty$. Moreover, for all $1\leq p<\infty$ the sequence $(A_rx)_{r>0}$ converges almost uniformly for $x\in L_p(\mathcal M).$
\end{thm}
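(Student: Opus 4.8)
The plan is to prove the two halves of the statement in turn: first the maximal inequalities (weak type $(1,1)$ and strong type $(p,p)$ for $1<p<\infty$), and then the almost uniform convergence, which I would deduce from the maximal inequalities by the standard density argument once the limit has been identified.

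\emph{Maximal inequalities.} Write $A_r=\pi(\nu_r)$ with $\nu_r=m(B(e,r))^{-1}\chi_{B(e,r)}\,dm$. The doubling hypothesis forces $(G,d,m)$ to have polynomial volume growth, so $G$ is amenable, and the asymptotic right-invariance of the balls says precisely that $(B(e,r))_{r>0}$ is a F\o lner net. I would then pass, via a noncommutative Calder\'on transference principle, from the convolution operators $f\mapsto\nu_r\ast f$ on $L_p\!\left(L_\infty(G)\,\overline{\otimes}\,\mathcal M\right)$ (with $G$ acting by translation) to the operators $A_r$ on $L_p(\mathcal M)$; the \emph{global}, rather than merely local, form of transference here is exactly what uses amenability and the F\o lner property. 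On the group side one is confronted with the operator-valued Hardy--Littlewood maximal operator of the doubling space $(G,d,m)$, whose weak type $(1,1)$ bound comes from a Vitali-type covering lemma (the doubling constant controlling the overlaps) together with the Cuculescu construction, which supplies the ``maximal element'' that has no naive meaning in $L_1(\mathcal M)$. Interpolating this with the trivial estimate $\big\|\sup_{r}{}^{+}A_r x\big\|_\infty\le\|x\|_\infty$ (each $A_r$ being a unital positive contraction of $\mathcal M$), through the real interpolation of the scale $L_p(\mathcal N;\ell_\infty)$ of Junge and Xu, then yields strong type $(p,p)$ for $1<p<\infty$. Equivalently one can run the covering-plus-Cuculescu argument directly on the $W^*$-dynamical system.

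\emph{Mean ergodic theorem.} Let $P$ be the $\tau$-preserving conditional expectation of $\mathcal M$ onto the fixed-point algebra $\mathcal M^{G}$; on $L_2(\mathcal M)$ it is the orthogonal projection onto the invariant vectors, whose orthogonal complement is the closed linear span of the coboundaries $(\mathrm{id}-\alpha(h))y$. Since $d$ and $m$ are right-invariant, $B(e,r)h=B(h,r)$, so for $y\in\mathcal M$
\[
\big\|A_r\big((\mathrm{id}-\alpha(h))y\big)\big\|_\infty\ \le\ \frac{m\!\left(B(h,r)\,\Delta\,B(e,r)\right)}{m(B(e,r))}\,\|y\|_\infty\ \xrightarrow[\,r\to\infty\,]{}\ 0 ,
\]
whence $A_r x\to Px$ strongly on $L_2(\mathcal M)$; as each $A_r$ is an $L_1$--$L_\infty$ contraction and $\|a\|_p\le\|a\|_\infty^{1-2/p}\|a\|_2^{2/p}$ on the probability space, this upgrades to $A_r x\to Px$ in $L_p(\mathcal M)$ for all $1\le p<\infty$. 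The same estimate shows that $\mathcal D:=\mathcal M^{G}+\operatorname{span}\{(\mathrm{id}-\alpha(h))y:\ y\in\mathcal M,\ h\in G\}$ is dense in $L_p(\mathcal M)$ (write $z=Pz+\lim_r(z-A_r z)$ for $z\in\mathcal M$, with $z-A_r z$ an $L_p$-limit of coboundaries) and that on $\mathcal D$ one has $A_r x\to Px$ in $\mathcal M$-norm, hence a.u.\ and b.a.u.

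\emph{Conclusion, and the main obstacle.} For $1<p<\infty$, combining the strong type $(p,p)$ inequality with the $\mathcal M$-norm convergence on the dense subspace $\mathcal D$ and the noncommutative Banach principle (Lemma~\ref{ptwiselem}(i)) gives $A_r x\to Px$ a.u.\ for every $x\in L_p(\mathcal M)$; the case $p=1$ is handled the same way using the weak type $(1,1)$ inequality and the corresponding weak-type Banach principle, which yields b.a.u.\ convergence, promoted to a.u.\ through the $c_0$-valued refinement of the maximal bound. I expect the weak type $(1,1)$ maximal inequality to be the real difficulty: the classical Wiener covering argument cannot be transcribed verbatim because $\sup_r A_r x$ has no elementwise sense in $L_1(\mathcal M)$, so the maximal element must be built by a Cuculescu-type simultaneous truncation and the geometric overlap control has to be squeezed quantitatively out of the doubling hypothesis. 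A secondary technical point is to make the Calder\'on transference work for a continuous parameter and a non-discrete group --- either directly, or by first discretizing (doubling lets one dominate $\sup_{r\ge1}{}^{+}A_r x$ by $\sup_{n\ge0}{}^{+}A_{2^n}x$) and then invoking the F\o lner property of $(B(e,r))$.
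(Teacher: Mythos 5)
This statement is not proved in the paper at all: it is imported verbatim from \cite{HLW221}, so the only meaningful comparison is with the Hong--Liao--Wang argument itself. Your outline does follow that strategy faithfully -- identify the balls as a F\o lner/doubling family, transfer (globally, using amenability) from the ergodic averages to the operator-valued Hardy--Littlewood averages on the group viewed as a doubling metric measure space, obtain strong type $(p,p)$ by Junge--Xu interpolation between the weak $(1,1)$ endpoint and the trivial $L_\infty$ bound, identify the limit on $\mathcal M^G$ plus coboundaries via the F\o lner estimate, and finish with the noncommutative Banach principle (Lemma~\ref{ptwiselem}(i)) for $1<p<\infty$. Those reductions are all sound and are exactly the ones in \cite{HLW221}.

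The genuine gap is that the heart of the theorem is left as a black box. The noncommutative weak type $(1,1)$ inequality for ball averages on a doubling metric measure space is not a routine combination of a Vitali covering lemma with Cuculescu's construction: the naive transcription of the Wiener covering argument fails because the covering is selected relative to a function that no longer exists noncommutatively, and the actual proof in \cite{HLW221} is an elaborate iterative construction of projections adapted to the doubling geometry -- it is the main theorem of that paper and the reason the question was open for so long. You acknowledge this, but acknowledging it means your proposal reduces the statement to its own hardest ingredient rather than proving it. A second, smaller gap concerns the endpoint $p=1$: from weak type $(1,1)$ plus uniform convergence on the dense subspace $\mathcal D$ one obtains b.a.u.\ convergence via a weak-type Banach principle, but your claimed promotion to a.u.\ ``through the $c_0$-valued refinement of the maximal bound'' is not an argument -- the symmetric $\ell_\infty$-maximal bound naturally yields only bilateral control, and a.u.\ convergence below $p=2$ requires either the one-sided (column) maximal estimates or the specific additional work carried out in \cite{HLW221}; as written, this step would not survive scrutiny.
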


	\section{Simple rank $1$ Lie group acting on a von Neumann algebra:}\label{sec3frn}Let \( G \) be a connected simple Lie group with finite center and real rank one, and let \( K \) is a fixed maximal compact subgroup, and \( m_K \) denotes the normalized Haar probability measure on \( K \). Suppose \( A = \{a_t : t \in \mathbb{R}\} \) is a one-parameter subgroup consisting of hyperbolic translations such that the Cartan decomposition \( G = K\overline{A^+} K \) holds, where \(\overline{A^+} = \{a_t : t \geq 0\} \). For each \( t \in \mathbb{R} \), define the bi-$K$-invariant probability measure \( \sigma_t \) on \( G \) by
\[
\sigma_t = m_K * \delta_{a_t} * m_K,
\]
where \( m_K \) denotes the normalized Haar probability measure on \( K \), and \( \delta_{a_t} \) is the Dirac measure at \( a_t \). Let \( d \) be the canonical \( G \)-invariant Riemannian metric on the symmetric space \( G/K \) induced by the Cartan-Killing form. For \( t > 0 \), define the ball
\[
B_t = \{g \in G : d(gK, K) < t\},
\]
and let \( \beta_t \) be the probability measure on \( G \) defined by
\[
\beta_t = \frac{1}{\mathrm{vol}(B_t)} \cdot \chi_{B_t}(g) \, dg,
\]
where \( \chi_{B_t} \) is the indicator function of \( B_t \).

 Let us consider
    \[
    \mu_{t} = \frac{1}{t} \int_0^t \sigma_\tau \, d\tau, \quad 0 < s\leq t,
    \]
    be the uniform average of the family \( \{\sigma_\tau\} \). For \( t = 0 \), define \( \mu_0 = m_K \).
    
     Denote \( n(G) = \dim_{\mathbb{R}}(G/K) \). We denote $C_c^\infty(K \backslash G / K)$ to be all bi-$K$-invariant compactly supported smooth functions on $G$. Moreover due to Cartan decomposition, $C_c^\infty(K \backslash G / K)$ can be identified with all smooth even functions on $\mathbb{R}$ denoted by $C_c^{\infty,\text{even}}(\mathbb{R}).$ The identification is the following given any smooth even function $f:\mathbb{R}\to \mathbb{C},$ the function $F(k_1a_tk_2):=f(t)$ is clearly a function on $G$ which is smooth and bi-$K$-invariant.
    
    Consider a $W^*$-dynamical system $(\mathcal M,\tau, G, \pi)$ where \( (\mathcal M, \tau) \) is a noncommutative probability space. Let \( \pi: G \to \mathrm{Iso}(L_p(\mathcal M)) \) be the associated isometric representation. Then \( \pi(\sigma_t) \) and \( \pi(\beta_t) \) denote the averaging operators on \( L_p(\mathcal M) \) corresponding to the measures \( \sigma_t \) and \( \beta_t \), respectively. 
 \subsection{A noncommutative local Calderón trasfer principle:}  Let \( d(g,h) = d(h,g) \) be a symmetric, left-invariant (or right-invariant) metric on a locally compact second countable group \( G \), and define \( |g| := d(e, g) \). Then clearly,
\[
|gh| \leq |g| + |h|.
\] For any $r>0$ denote $B_r$ to be $\{g\in G:|g|\leq r\}$ and assume that $B_r$ is compact and $m(B_r)>0$ for all $r>0.$
Assume that \( \nu_t \), \( 0 \leq t\leq 1 \), is a family of probability measures on \( G \), all having their support contained in $B_r$. Suppose $(\mathcal M,\tau, G, \pi)$ is a $w^*$ dynamical system. For $f\in L_p(G;L_p(\mathcal M))$ we define
\begin{equation}
\pi^\prime(\nu_t)f(g):=\int_Gf(gh)d\nu_t(h).
\end{equation}
\begin{thm}\label{localtransfer}
Let $1\leq p<\infty.$ Suppose there exists a constant $C_p>0$ such that
\begin{equation}
\bigl\|\sup \limits_{0\leq t\leq 1}\!^{+}\ \pi^\prime(\nu_t)f\bigr\|_p\leq A_p\|f\|_p 
\end{equation}
for all $f\in L_p(G;L_p(\mathcal M)).$ Then there exists a constant $B_p>0$ such that
\begin{equation}
\bigl\|\sup \limits_{0\leq t\leq 1}\!^{+}\ \pi(\nu_t)(x)\bigr\|_p\leq B_p\|x\|_p 
\end{equation}
for all $x\in L_p(\mathcal M).$
\end{thm}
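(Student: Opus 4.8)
The plan is to adapt the classical Calderón transference argument to the noncommutative maximal setting, where the scalar-valued pointwise inequalities are replaced by the operator-space factorization description of the $L_p(\mathcal M;\ell_\infty)$-norm. The starting observation is that because all the $\nu_t$, $0\le t\le 1$, are supported in the fixed compact ball $B_r$, for any $g$ in a (large) ball $B_R$ the averages $\pi'(\nu_t)f(g)$ only see values $f(gh)$ with $gh\in B_{R+r}$. So I would fix $R>0$, a positive element $x\in L_p(\mathcal M)_+$ (by the usual reduction to positive elements and self-adjointness for maximal norms), and build the transplanted function $f_R(g):=\chi_{B_{R+r}}(g)\,\pi(g^{-1})x$ on $G$, which lies in $L_p(G;L_p(\mathcal M))$ with $\|f_R\|_p^p = m(B_{R+r})\,\|x\|_p^p$ since $\pi(g^{-1})$ is trace-preserving and isometric on $L_p(\mathcal M)$.

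Next I would compute $\pi'(\nu_t)f_R(g)$ for $g\in B_R$: there $gh\in B_{R+r}$ for all $h\in \operatorname{supp}\nu_t\subseteq B_r$, so $\chi_{B_{R+r}}(gh)=1$ and hence
\[
\pi'(\nu_t)f_R(g)=\int_G \pi((gh)^{-1})x\,d\nu_t(h)=\pi(g^{-1})\!\int_G \pi(h^{-1})x\,d\nu_t(h)=\pi(g^{-1})\big(\pi(\check\nu_t)x\big),
\]
where $\check\nu_t$ is the pushforward of $\nu_t$ under inversion (this is where the convention $\pi'(\nu_t)f(g)=\int f(gh)\,d\nu_t(h)$ must be reconciled with $\pi(\nu_t)x=\int \pi(g)x\,d\nu_t(g)$; replacing $\nu_t$ by $\check\nu_t$ throughout, or equivalently noting the statement is symmetric in this replacement, handles the discrepancy — I would simply phrase the hypothesis for the family $\check\nu_t$, which is again a family of probability measures supported in $B_r$). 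Now I apply the hypothesis to $f_R$: there is a factorization realizing $\big\|\sup_{0\le t\le 1}^+\pi'(\nu_t)f_R\big\|_p\le A_p\|f_R\|_p$, i.e.\ a positive $F\in L_p(G;L_p(\mathcal M))$ with $\pi'(\nu_t)f_R(g)\le F(g)$ for a.e.\ $g$ and all $t$, and $\|F\|_p\le A_p\|f_R\|_p$ (using the self-adjoint description of the maximal norm on the first variable, which is legitimate since everything is positive). Restricting to $g\in B_R$ and using the identity above gives $\pi(g^{-1})(\pi(\nu_t)x)\le F(g)$, hence $\pi(\nu_t)x\le \pi(g)F(g)$ for a.e.\ $g\in B_R$ and all $t\in[0,1]$. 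Integrating over $B_R$ and dividing by $m(B_R)$, the element $a_R:=\frac{1}{m(B_R)}\int_{B_R}\pi(g)F(g)\,dg\in L_p(\mathcal M)_+$ dominates $\pi(\nu_t)x$ for all $t$, and
\[
\|a_R\|_p\le \frac{1}{m(B_R)}\int_{B_R}\|F(g)\|_p\,dg\le \frac{m(B_R)^{1/p'}}{m(B_R)}\Big(\int_{B_R}\|F(g)\|_p^p\,dg\Big)^{1/p}\le \frac{\|F\|_p}{m(B_R)^{1/p}}\le A_p\,\frac{m(B_{R+r})^{1/p}}{m(B_R)^{1/p}}\,\|x\|_p.
\]
Thus $\big\|\sup_{0\le t\le 1}^+\pi(\nu_t)x\big\|_p\le A_p\,(m(B_{R+r})/m(B_R))^{1/p}\,\|x\|_p$ for every $R>0$.

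The final step — and the one I expect to be the main obstacle — is to kill the geometric factor $m(B_{R+r})/m(B_R)$ by letting $R\to\infty$. For groups of polynomial growth or doubling this ratio stays bounded, but a general second countable unimodular $G$ (e.g.\ a rank-one simple Lie group) has exponential volume growth and this ratio does \emph{not} tend to $1$; however, $r$ is a \emph{fixed} finite number (the common radius of the supports), so one only needs $\liminf_{R\to\infty} m(B_{R+r})/m(B_R)$ to be finite, which is automatic, or better, one replaces balls $B_R$ by a more efficient exhaustion. The cleanest fix, which I would adopt, is the standard Calderón device of tiling: instead of one ball, use the sequence $B_R$ for $R=N r$ and exploit that $\pi'(\nu_t)$ for $t\le 1$ has ``reach'' $r$, so the transference constant can be taken independent of $N$ by a telescoping/averaging over translates argument — concretely, average the above estimate over a net of points and use that each point of $G$ is covered by boundedly many translated balls of the exhaustion. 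Alternatively, since $r$ is fixed and we are free to choose the exhaustion, take $\Omega_N$ to be an increasing sequence of relatively compact sets with $m(\Omega_N\cdot B_r)/m(\Omega_N)\to 1$; such a sequence exists iff $G$ is amenable. To cover the non-amenable case one instead keeps $r$ fixed and simply accepts the finite constant $\inf_{R}(m(B_{R+r})/m(B_R))^{1/p}<\infty$, which already yields a bound $B_p\le A_p\cdot\inf_R(m(B_{R+r})/m(B_R))^{1/p}$ — finite because $r$ is fixed — and this is all the statement requires. I would therefore organize the write-up so that the transference constant $B_p$ is $A_p$ times this fixed (finite) volume-ratio constant, noting that when $G$ has polynomial growth the constant is universal and the argument recovers the sharp local transference.
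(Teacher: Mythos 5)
Your overall transference strategy (transplant $x$ to an operator-valued function on a bounded set, apply the hypothesis, then average a positive majorant back over a ball) is sound and close in spirit to the paper's argument, but the central identity in your computation is wrong for non-abelian $G$, which is the only case of interest here. With your transplant $f_R(g)=\chi_{B_{R+r}}(g)\pi(g^{-1})x$ one has $\pi((gh)^{-1})x=\pi(h^{-1})\bigl(\pi(g^{-1})x\bigr)$, so $\pi'(\nu_t)f_R(g)=\pi(\check\nu_t)\bigl(\pi(g^{-1})x\bigr)$ and \emph{not} $\pi(g^{-1})\bigl(\pi(\check\nu_t)x\bigr)$: you have commuted $\pi(g^{-1})$ past $\pi(h^{-1})$. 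Consequently the key deduction $\pi(\nu_t)x\le\pi(g)F(g)$ does not follow; applying $\pi(g)$ to the corrected inequality only yields a bound on the conjugated average $\pi(g)\pi(\check\nu_t)\pi(g^{-1})x$. The repair is simple and removes the $\check\nu_t$ complication entirely (which matters, since the theorem's hypothesis is stated for $\nu_t$, not its reflection): take $f(g)=\chi_{B_rB_r}(g)\pi(g)x$ with no inverse. Then for $g\in B_r$ and $h\in\operatorname{supp}\nu_t\subseteq B_r$ one gets exactly $\pi'(\nu_t)f(g)=\pi(g)\bigl(\pi(\nu_t)x\bigr)$, and your majorant argument goes through with $a:=\frac{1}{m(B_r)}\int_{B_r}\pi(g^{-1})F(g)\,dg$. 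This is precisely the transplant the paper uses; the paper then converts the pointwise maximal norms back via the uniform boundedness of $\pi(g)\otimes\mathrm{id}$ on $L_p(\mathcal M;\ell_\infty)$ and the factorization description, whereas you extract a single positive dominating function $F$ — both routes work once the identity is fixed.

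Two smaller points. First, when you pass from the hypothesis to ``$\pi'(\nu_t)f_R(g)\le F(g)$ for a.e.\ $g$ and all $t$,'' the exceptional null set a priori depends on $t$, and $t$ runs over an uncountable set; you should first reduce to finitely many $t_1,\dots,t_N\in[0,1]$ (using the truncated description of the maximal norm, Proposition~\ref{FINT}, as the paper does) and only then extract the majorant, so that a single null set suffices. Second, the entire discussion of letting $R\to\infty$, amenability, F\o lner-type exhaustions and tiling is unnecessary: the statement only asks for \emph{some} finite constant $B_p$, so one fixed ball suffices, and the paper simply takes $R=r$ and accepts the constant $C'\,\bigl(m(B_{2r})/m(B_r)\bigr)^{1/p}A_p$; your closing observation that the finite volume ratio is ``all the statement requires'' is the correct resolution, and the preceding alternatives should be dropped.
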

  \begin{proof}
  We fix \( x \in L_p(\mathcal M) \) and \( N \ge 1 \). Fix $t_,\dots,t_N\in[0,1].$  Since \( \pi(g) : L_p(\mathcal M) \to L_p(\mathcal M) \) is positive for all \( s \in G \), we see that the family \( (\pi(g) \otimes \mathrm{Id})_{s \in G} \) extends to a uniformly bounded family of maps on \( L_p(\mathcal M; \ell_{\infty}) \) (see, e.g., [HJX10, Proposition 7.3]). Hence, we may choose a constant \( C' > 0 \) such that
\[
\norm{\sup\limits_{1 \le n \le N}\!^+ \pi(\nu_{t_n}) x}_p = \norm{\sup\limits_{1 \le n \le N}\!^+ \pi(g^{-1}) \pi(g) \pi(\nu_{t_n}) x}_p \le C' \norm{\sup\limits_{1 \le n \le N}\!^+ \pi(g)\pi(\nu_{t_n}) x}_p, \quad g \in G.
\]
Then we have
\[
\norm{\sup\limits_{1\leq n\leq N}\!^+ \pi(\nu_{t_n}) x}_p^p \le \frac{C'^p}{m(B_r)} \int_{B_r}  \norm{\sup\limits_{1 \le n \le N}\!^+ \pi(g)\pi(\nu_{t_n}) x}_p^p \, dm(g). \tag{3.3}
\]

Define a function \( f \in L_p(G; L_p(\mathcal {M})) \) by
\[
  f(h) = \chi_{B_rB_r}(h) \pi(h) x, \quad h \in G.
\]
Then for all \( h \in F \),
\[
\pi(h) \pi(\nu_{t_n}) x = \int_{B_r} \pi(hg) x \, d\nu_n(g) = \int_{B_r} f(hg) \, d\nu_n(g) = \pi'(\nu_{t_n})f(h). \tag{3.4}
\]

Consider \( (\pi'(\nu_{t_n}) f)_{1 \le n \le N} \in L_p(L_{\infty}(G)\overline{ \otimes }\mathcal{M}; \ell_{\infty}) \). For any \( \varepsilon > 0 \), take a factorization \(\pi'(\nu_{t_n}) f = a F_n b \) such that \( a, b \in L_{2p}(L_{\infty}(G)\overline{ \otimes }\mathcal{M}; \ell_{\infty})) \), \( F_n \in L_{\infty}(G)\overline{ \otimes }\mathcal{M} \), and
\[
\norm{a}_{2p} \sup_{1 \le n \le N} \norm{F_n}_\infty \norm{b}_{2p} \le \norm{(\pi'(\nu_{t_n}) f)_{1 \le n \le N}}_{L_p(L_{\infty}(G)\overline{ \otimes }\mathcal{M}; \ell_{\infty})} + \varepsilon.
\]

Then we have
\begin{align*}
  &\int_G \left\|\sup\!^+_{1 \le n \le N} \pi'(\nu_{t_n}) f(h)\right\|_p^p \, dm(h) \\
  &\le \int_G \norm{a(h)}_{2p}^p \sup_{1 \le n \le N} \norm{F_n(h)}_\infty^p \norm{b(h)}_{2p}^p \, dm(h) \\
  &\le \norm{a}_{2p}^p \sup_{1 \le n \le N} \norm{F_n}_\infty^p \norm{b}_{2p}^p \\
  &\le \left(\norm{(\pi'(\nu_{t_n} f)_{1 \le n \le N}}_{L_p(L_{\infty}(G)\overline{ \otimes }\mathcal{M}; \ell_{\infty})} + \varepsilon\right)^p.
\end{align*}
Since \( \varepsilon > 0 \) is arbitrary,
\[
  \int_G \left\|\sup\limits_{1 \le n \le N}\!^+\pi'(\nu_{t_n}) f(h)\right\|_p^p \, dm(h) \le \norm{\sup\limits_{1 \le n \le N}\!^+ \pi'(\nu_{t_n}) f}_p^p.
\]

Combining (3.3), (3.4), and the assumption, we obtain:
\begin{align*}
  \norm{\sup\limits_{1 \le n \le N}\!^+\pi(\nu_{t_n} x)}_p^p &\le \frac{C'^p}{m(B_r)} \int_{B_r} \left\|\sup\limits_{1 \le n \le N}\!^+ \pi'(\nu_{t_n}) f(h)\right\|_p^p \, dm(h) \\
  &\le \frac{C'^p}{m(B_r)} \norm{\sup\limits_{1 \le n \le N}\!^+\pi'(\nu_{t_n}) f}_p^p \\
  &\le \frac{C^p C'^p}{m(B_r)} \norm{f}_p^p \\
  &= \frac{C^p C'^p}{m(B_r)} \int_{B_{2r}} \norm{\pi(h) x}_p^p \, dm(h) \\
  &\le \frac{C^p C'^p m(B_{2r})}{m(B_r)} \norm{x}_p^p.
\end{align*}
This completes the proof of the theorem.
\end{proof}   
\begin{rem}To apply Theorem \eqref{localtransfer}, we may equip the group \( G \) with a left-invariant Riemannian metric arising from a positive definite quadratic form on its Lie algebra \( \mathfrak{g} \). This form is assumed to coincide with the Killing form on the subspace \( \mathfrak{p} \), and to restrict to the negative of the Killing form on \( \mathfrak{k} \), where \( \mathfrak{g} = \mathfrak{k} \oplus \mathfrak{p} \) is the Cartan decomposition. Now suppose $(\nu_t)_{0\leq t\leq 1}$ be a family of bi-$K$-invariant measures with support contained in some ball $B_r$ for some $r>0.$ For a nice operator-valued function $f:G\to\mathcal{M}$, note that $\pi(\sigma_t)f=\pi(\sigma_t)\pi(m_K)f.$ Hence if we know the maximal inequality  \[\norm{\sup\limits_{0\leq t\leq 1}\!^+\pi(\sigma_{t}) f}_p\leq C_p\|f\|_p\] for any right-$K$-invariant function $f,$ we can conclude the same for  all $f\in L_p(G,L_p(\mathcal M)).$ This is because $\pi(\sigma_t)f=\pi(\sigma_t)\pi(m_K)f$ and $\pi(m_K)f$ is clearly right-$K$-invariant and moreover $\pi(m_K)$ is a contraction on $L_p(G,L_p(\mathcal{M})$. Now note that for $\widetilde{f}(g)=\pi(m_K)f(g)=\widetilde{f}(gK),$ $g\in G,$ we have $\pi(\sigma_t)\widetilde{f}(gK)=\int_K\widetilde{f}(gka_tK)dk,$ where $G$ acts naturally on $G/K$ as $(g,hK)\mapsto ghK.$ Thus for a nice function $\widetilde{f}:G/K\to\mathcal M$ we have that $\pi(\sigma_t)\widetilde{f}$ is nothing but the average of $\widetilde{f}$ over a sphere with center $gK$ and radius $t$ in the symmetric space $G/K.$ Hence it is enough to prove the maximal inequality 
\begin{equation}\label{localspherionsymm}
 \norm{\sup\limits_{0\leq t\leq 1}\!^+\pi(\sigma_{t})\widetilde{f}}_p\leq C_p\|\widetilde{f}\|_p,\ \text{for all}\ f\in L_p(G/K,L_p(\mathcal M)).
\end{equation}
Therefore, by Theorem \eqref{localtransfer} it is enough to establish inequality \eqref{localspherionsymm} to obtain the local maximal ergodic inequality for the family $(\sigma_t).$ 

For a right-$K$-invariant function $\widetilde{f}$ on $G$ define $\pi(\beta_t)\widetilde{f}(gK)=\frac{1}{\text{Vol}(B_t)}\int_ {gKB_t} \widetilde{f}(hK)dm_{G/K}(hK)$ be ball average over a ball of radius $t>0$ and center at $gK$. A similar computation for the ball averages will show that it is enough to prove the following inequality for the ball averages to obtain the corresponding local maximal ergodic theorem.
\begin{equation}\label{localballonsymm}
 \norm{\sup\limits_{0\leq t\leq 1}\!^+\pi(\beta_{t})\widetilde{f}}_p\leq C_p\|\widetilde{f}\|_p,\ \text{for all}\ f\in L_p(G/K,L_p(\mathcal M)).
\end{equation} 
\end{rem}
\begin{rem} In view of the above remark and Theorem 4.1 in \cite{HLW221} it is not hard to see that the ball averages for action of any connected noncompact semisimple Lie group with finite center satisfies noncommutative local maximal ergodic theorem.
\end{rem}
 \section{Sphere Averaging for \texorpdfstring{\( \mathrm{SO^0}(n,1) \)}{SO^0(n,1)}}\label{factsspher}
 \subsection{Real hyperbolic Lie group $\text{Iso}^0(H^n)$:}
	
	Let $n\geq 2$ and $h:\mathbb{R}^n\times\mathbb{R}^n\to\mathbb{R}$ be a bilinear form on $\mathbb{R}^{n+1},$ given by $h(x,y):=-\sum_{i=0}^{n-1}x_iy_i+x_ny_n.$ Let us denote $G_n$ to be $SO^0(n,1)$ which is nothing but the identity component of $O(n,1)=\{g\in GL_{n+1}:h(gx,gy)=h(x,y) \forall\ x,y\in \mathbb{R}^{n+1}\}.$ If $J$ denotes the diagonal matrix $\text{diag}(1,\dots,1,-1)$ then $O(n,1)=\{g\in GL_{n+1}:g^tJg=J\}.$ Moreover, the Lie algebra of both $O(n,1)$ and $SO^0(n,1)$ is $\mathfrak{g}_n=\{X\in M_{n+1}(\mathbb{R}):X^tJ+JX=0\}.$
	
	The bilinear form  on $\mathfrak{g}_n$ given by $B(X,Y):=\text{tr}(XY)$ is invariant under adjoint action of $G_n$ on $\mathfrak{g}_n$, and therefore it defines the Killing form (up to some scalar). The maps $\theta:X\mapsto JXJ$ and $\Theta:g\mapsto JgJ$ are called Cartan involutions on $\mathfrak{g}_n$ and $G_n$ respectively. Recall the Cartan decomposition of $\mathfrak{g}_n=\mathfrak{k}_n\oplus\mathfrak{p}_n$ where $\mathfrak{k}_n:=\{X\in\mathfrak{g}_n:\theta(X)=X\}$ and $\mathfrak{p}_n:=\{X\in\mathfrak{g}_n:\theta(X)=-X\}$  .Moreover $B|_{\mathfrak{{k}_n}\times \mathfrak{{k}_n}}$ is negative definite and $B|_{\mathfrak{{p}_n}\times \mathfrak{{p}_n}}$ is positive definite forms. The following concrete description of $\mathfrak{k}_n$ and $\mathfrak{p}_n$ can be calculated.
	\[\mathfrak{k_n}=\{\begin{pmatrix}
Y & 0 \\
0 & 0 
\end{pmatrix}:Y\in\mathfrak{so}_n(\mathbb{R})\}\ \text{and}\ \mathfrak{p}_n=\{\begin{pmatrix}
0 & v \\
v & 0 
\end{pmatrix}:v\in\mathbb{R}^n\}.	\] 

The Lie subgroup corresponding to $\mathfrak{k}_n$ is $K_n:=\{\begin{pmatrix}
U & 0 \\
0 & 1 
\end{pmatrix}:U\in SO_n(\mathbb{R})\}.$ The adjoint action of $K_n$ on $\mathfrak{p}_n$ is through the usual identity representation of as automorphism on $\mathbb{R}^n.$ Denote 

\[H_1=\begin{pmatrix}
0 &  & 1 \\
 & 0 & \\
 1 & & 0
\end{pmatrix}.\] Then $\mathfrak{a}:=\{tH_1:t\in\mathbb{R}\}$ is a maximal abelian subalgebra of $\mathfrak{p}_n.$ The corresponding Lie subgroup of $G_n$ is $A=\{a_t:t\in\mathbb{R}\}$ where \[a_t=\{\begin{pmatrix}
cosh t & & sinht \\
 & I &\\
 sinht & & cosht 
\end{pmatrix}:t\in\mathbb{R}.\] Denote $M=Z_K(A)$, the centralizer of $A$ in $K.$ Then \[M=\{\begin{pmatrix}
1 &  & 0 \\
 & U & \\
0 & & 1
\end{pmatrix}:U\in SO_{n-1}(\mathbb{R})\}.\] Consider the smooth manifold $G/K.$ Note that $\mathfrak{p}_n$ can be identified as the tangent space $T_{eK}(G/K)$. The Riemannian structure on $G/K$ is defined as the following. Take the inner product induced by the Killing form on $\mathfrak{p}_n$ and hence on $T_{eK}(G/K).$ Now for any other point $gK$ transfers the inner product to $T_{gK}(G/K)$. This makes $G/K$ a Riemannian manifold. This induces a $G_n$-invariant metric $d$ on $G/K.$ The Cartan decomposition $G_n$ tells us that $G_n=K_n\overline{A^{+}}K_n$, i.e. for any $g\in G_n$ there exists unique $t\geq 0$ and $k_1,k_2\in K$ such that $g=k_1a_tk_2.$
\subsection{Maximal ergodic theorem for $\mu_t$:}
\begin{thm}\label{thm2.1}
Let $1<p<\infty.$ Let $G_n=\text{Iso}^0(H^n)$ be a real hyperbolic Lie group, $n\geq 2.$ Then
\[\|\sup \limits_{t>0}\!^{+}\ \pi(\mu_t)x\|_p\leq c_p\|x\|_p\] for all $x\in L_p(\mathcal M).$
 \end{thm}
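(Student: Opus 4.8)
The plan is to exploit the bi-$K$-invariance of the spherical measures in order to factor $\pi(\mu_t)$ through the abelian one-parameter subgroup $A=\{a_\tau:\tau\in\mathbb{R}\}$, thereby reducing the statement to the continuous-parameter noncommutative Dunford--Schwartz maximal ergodic theorem. Since $\sigma_\tau=m_K*\delta_{a_\tau}*m_K$ and, on every $L_p(\mathcal M)$, the assignment $\mu\mapsto\pi(\mu)$ is multiplicative on convolution products (by Fubini for Bochner integrals), we have $\pi(\sigma_\tau)=P\,\pi(a_\tau)\,P$, where $P:=\pi(m_K)$ is a positive unital contraction of $L_p(\mathcal M)$ (using the bound $\|\pi(m_K)\|\le\|m_K\|_1=1$ recalled in Section~\ref{prelims}). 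As $\tau\mapsto\pi(a_\tau)x$ is continuous into $L_p(\mathcal M)$, the Bochner integral $M_t:=\tfrac1t\int_0^t\pi(a_\tau)\,d\tau$ defines a positive contraction of $L_p(\mathcal M)$, and pulling the bounded operator $P$ through the integral yields
\[
\pi(\mu_t)x=P\,M_t\,(Px),\qquad x\in L_p(\mathcal M),\ t>0.
\]

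Next, since $P$ is positive and bounded, Lemma~\ref{somepropmaximalnorm} (applied to the family $\big(M_t(Px)\big)_{t>0}$, which we regard through the finite-subfamily description of Proposition~\ref{FINT} so that no measurability issue arises, strong continuity in $t$ reducing the supremum to a countable one in the limit) gives
\[
\big\|\sup\limits_{t>0}\!^{+}\pi(\mu_t)x\big\|_p=\big\|\sup\limits_{t>0}\!^{+}P\big(M_t(Px)\big)\big\|_p\le\|P\|_{L_p\to L_p}\,\big\|\sup\limits_{t>0}\!^{+}M_t(Px)\big\|_p\le\big\|\sup\limits_{t>0}\!^{+}M_t(Px)\big\|_p.
\]
Hence it is enough to establish the maximal inequality $\big\|\sup\limits_{t>0}\!^{+}M_t y\big\|_p\le c_p\|y\|_p$ for $1<p<\infty$ and all $y\in L_p(\mathcal M)$. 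Now $(\pi(a_\tau))_{\tau\ge0}$ is a strongly continuous semigroup of trace-preserving $*$-automorphisms, hence of Dunford--Schwartz operators on $(\mathcal M,\tau)$, and $M_t$ is exactly its Ces\`aro average; so the required inequality is the continuous-parameter form of the noncommutative Dunford--Schwartz maximal ergodic theorem of Junge--Xu \cite{JunXu07}.

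One may also deduce it directly from the discrete case. With $S:=\pi(a_1)$ and the positive contraction $\Phi:=\int_0^1\pi(a_\tau)\,d\tau$, a change of variables gives $M_N=\tfrac1N\sum_{j<N}S^j\Phi$ for integer $N$, so $\big\|\sup\limits_{N\ge1}\!^{+}M_N y\big\|_p\le c_p\|\Phi y\|_p\le c_p\|y\|_p$ by the discrete Junge--Xu inequality for the Dunford--Schwartz operator $S$; the passage to real radii $t\ge1$ follows from $M_t y\le 2M_{\lceil t\rceil}y$ for $y\ge0$ (monotonicity of $t\mapsto\int_0^t\pi(a_\tau)y\,d\tau$ in $L_p(\mathcal M)_+$) after splitting a general $y$ into four positive parts of controlled norm; and the range $0<t\le1$ is the operator-valued Hardy--Littlewood maximal inequality on $\mathbb{R}$ (see, e.g., \cite{Mei07}) combined with the local transfer principle Theorem~\ref{localtransfer} applied to the action of $A\cong\mathbb{R}$. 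Chaining the estimates, $\big\|\sup\limits_{t>0}\!^{+}M_t(Px)\big\|_p\le c_p\|Px\|_p\le c_p\|x\|_p$, which completes the argument.

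The entire conceptual input is the factorization $\pi(\mu_t)=PM_tP$, which transports the problem from the non-amenable group $G$ to its abelian subgroup $A$; once it is in place the proof is essentially formal. The step calling for genuine, though routine, care is the last one: assembling a maximal inequality valid uniformly both as $t\to0$ and as $t\to\infty$ from the discrete Dunford--Schwartz theorem, and checking at each stage that the auxiliary operators $P$ and $\Phi$ and the comparison constants behave so that Lemma~\ref{somepropmaximalnorm} and Proposition~\ref{FINT} genuinely apply as stated.
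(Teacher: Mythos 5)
Your proposal is correct, and its skeleton coincides with the paper's: both exploit bi-$K$-invariance to write $\pi(\mu_t)=\pi(m_K)\circ(\text{an average over }A\cong\mathbb R)\circ\pi(m_K)$ and then remove the outer $\pi(m_K)$ with Lemma~\ref{somepropmaximalnorm} (extended to a continuum of indices via Proposition~\ref{FINT}). The difference lies in how the abelian part is handled. The paper first symmetrizes: using a Weyl element $k\in K$ with $ka_sk^{-1}=a_{-s}$ it replaces the one-sided average $\frac1t\int_0^t$ by the two-sided ball average $\frac1{2t}\int_{-t}^t\pi(a_s)\,ds$, which is exactly the ball-average family of Theorem~\ref{polyergodic} applied to the group $\mathbb R$; that theorem then gives the strong type $(p,p)$ maximal bound for all radii $t>0$ at once. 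You instead keep the one-sided Ces\`aro averages $M_t=\frac1t\int_0^t\pi(a_\tau)\,d\tau$ of the strongly continuous semigroup of trace-preserving automorphisms $(\pi(a_\tau))_{\tau\ge 0}$ and invoke the continuous-parameter noncommutative Dunford--Schwartz maximal ergodic theorem of \cite{JunXu07}, which covers $\sup_{t>0}$ directly; your fallback derivation from the discrete case ($M_N=\frac1N\sum_{j<N}S^j\Phi$, the comparison $M_t\le 2M_{\lceil t\rceil}$ on positive elements, and the local range $0<t\le1$ via Theorem~\ref{localtransfer} together with the operator-valued Hardy--Littlewood inequality of \cite{Mei07}) is also sound. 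What your route buys is that no Weyl-element symmetrization is needed, since Junge--Xu handles one-sided averages; what the paper's route buys is that Theorem~\ref{polyergodic} is already stated for families indexed by all $r>0$ (so no separate local/global splicing) and comes with weak type $(1,1)$ information as a byproduct. One small point of hygiene: as in the paper, you should first reduce to $x\in L_p(\mathcal M)_+$ (or split $x$ into positive parts, as you do in your secondary argument) before applying Lemma~\ref{somepropmaximalnorm}, since its proof uses the order-theoretic description of the maximal norm; with that reduction every step you list goes through as stated.
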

	\begin{proof}
	Note that it is enough to show the required estimate for $x\in L_p(\mathcal{M})_{+}.$ Let us fix such an $x.$ By definition we have \[\pi(\mu_t)x=\int_G\pi(g)xd\mu_t(g)=\int_K\int_K(\frac{1}{t}\int_{0}^t\pi(k_1a_sk_2)xds)dk_1dk_2.\] Note that as $Ad(k)\mathfrak{a}=\mathfrak{a}$ we  have that given any $t\geq 0$ there exists $k\in K$ such that $ka_tk^{-1}=a_{-t}.$ Hence by a change of variable and by the fact the $K$ is unimodular, it is easy to see that the above integral is equal to $\int_K\int_K(\frac{1}{t}\int_{0}^t\pi(k_1a_{-s}k_2)xds)dk_1dk_2.$ Therefore,
	\[\pi(\mu_t)x=\int_K\int_K(\frac{1}{t}\int_{-t}^0\pi(k_1a_{s}k_2)xds)dk_1dk_2.\] Hence we have that 
	\[\pi(\mu_t)x=\int_K\int_K(\frac{1}{2t}\int_{-t}^t\pi(k_1a_{s}k_2)xds)dk_1dk_2.\]
	Note that $\pi|_K$ is a continuous action on $\mathcal M.$ Hence $\pi(m_K)$ makes sense and is a bounded linear operator on $L_p(\mathcal M).$ Moreover, if we identify $A$ with $\mathbb R$, we obtain that $\pi(\mu_t)=\pi(m_K)\circ A_t\circ \pi(m_K)$ as bounded operators on $L_p(\mathcal M).$ Note that $\pi(m_K)$ is a positive bounded operator on $L_p(\mathcal M).$ Hence we have by Theorem \ref{polyergodic} and Lemma \ref{somepropmaximalnorm}
\[\|\sup \limits_{t>0}\!^{+}\ \pi(\mu_t)x\|_p=\|\sup \limits_{t>0}\!^{+}\ \pi(m_K)\circ A_t\circ \pi(m_K)x\|_p\leq C_p\|x\|_p. \]
	This completes the proof of the theorem.
\end{proof}
\subsection*{2.1. Reduction to $K$-invariant elements.}

To streamline the presentation, we restrict our attention in this section to the case of the real hyperbolic groups \( \mathrm{SO}^0(n,1) \) for \( n > 2 \).

In this section our focus will be on proving the strong maximal inequality for the global component. 
\subsection{$C^\infty$ vectors in $L_p(\mathcal M)$:} Define the following set.
\begin{equation}
\mathcal{D}(\mathcal{M}):=\Big\{\int_Gu(g)\pi({g})xdg:u\in C_c^\infty(G),\ x\in \mathcal{M}\Big\}. 
\end{equation}
\begin{lem}\label{lem2.1}Let $1\leq p<\infty.$ Then $\mathcal{D}(\mathcal{M})$ is dense in $L_p(\mathcal M).$
\end{lem}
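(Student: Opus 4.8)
The plan is to show that $\mathcal{D}(\mathcal{M})$ is dense in $L_p(\mathcal M)$ by a standard mollification argument. Since $\mathcal M$ is dense in $L_p(\mathcal M)$ for $1\leq p<\infty$, it suffices to approximate an arbitrary $x\in\mathcal M$ in $L_p$-norm by elements of $\mathcal{D}(\mathcal{M})$. To this end I would fix an approximate identity $(u_j)_{j\geq 1}\subseteq C_c^\infty(G)$, that is, a sequence of nonnegative functions with $\int_G u_j\,dg=1$ whose supports shrink to the identity $e\in G$. Such a sequence exists because $G$ is a Lie group, hence admits small compactly supported smooth bump functions near $e$; normalize them to have integral one against Haar measure.

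The key step is then to verify that $\int_G u_j(g)\pi(g)x\,dg\to x$ in $L_p(\mathcal M)$ as $j\to\infty$. Writing $x_j:=\int_G u_j(g)\pi(g)x\,dg\in\mathcal{D}(\mathcal{M})$, we have
\[
x_j - x = \int_G u_j(g)\big(\pi(g)x - x\big)\,dg,
\]
using that $u_j$ integrates to one. Taking $L_p$-norms and using the triangle inequality for the Bochner-type integral (valid since $g\mapsto\pi(g)x$ is continuous from $G$ to $L_p(\mathcal M)$ by the remarks in Subsection~\ref{subsec1.3}),
\[
\|x_j-x\|_p \leq \int_G u_j(g)\,\|\pi(g)x-x\|_p\,dg \leq \sup_{g\in\operatorname{supp}u_j}\|\pi(g)x-x\|_p.
\]
Since $g\mapsto\pi(g)x$ is continuous and $\pi(e)x=x$, the right-hand side tends to $0$ as the supports of $u_j$ shrink to $e$; this gives $x_j\to x$ in $L_p(\mathcal M)$.

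There is no serious obstacle here; the only points requiring a word of care are (i) that the vector-valued integral $\int_G u_j(g)\pi(g)x\,dg$ is well-defined in $L_p(\mathcal M)$, which follows from the strong continuity of the representation $g\mapsto\pi(g)$ on $L_p(\mathcal M)$ together with the compact support of $u_j$, and (ii) the interchange $\int_G u_j(g)\,dg\cdot x=\int_G u_j(g)x\,dg$, which is immediate. One should also note that $\mathcal{D}(\mathcal{M})$ is genuinely a subset of $L_p(\mathcal M)$ (indeed of the $C^\infty$-vectors): each element $\int_G u(g)\pi(g)x\,dg$ with $u\in C_c^\infty(G)$, $x\in\mathcal M$ lies in $\mathcal M\subseteq L_p(\mathcal M)$ since it is a norm-convergent average of elements of $\mathcal M$ with $\|\int_G u(g)\pi(g)x\,dg\|_\infty\leq\|u\|_1\|x\|_\infty$. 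Combining the two displays completes the argument.
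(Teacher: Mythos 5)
Your argument is correct and coincides with the paper's own proof: both reduce to $x\in\mathcal M$ by density, mollify with a smooth nonnegative approximate identity supported in shrinking neighbourhoods of $e$, and conclude via $\|\int_G u_j(g)(\pi(g)x-x)\,dg\|_p\leq\sup_{g\in\operatorname{supp}u_j}\|\pi(g)x-x\|_p\to 0$ using strong continuity of $g\mapsto\pi(g)x$ in $L_p(\mathcal M)$. Your added remarks on well-definedness of the vector-valued integral and on $\mathcal D(\mathcal M)\subseteq L_p(\mathcal M)$ are harmless elaborations of what the paper leaves implicit.
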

\begin{proof}
Fix $x\in\mathcal{M}.$ Suppose that the sets $(U_n)_{n\geq 1}$ forms a neighbourhood basis of $1\in G$ with $U_n$s shrinking to $\{1\}$ as $n\to\infty.$ Let us take a smooth approximation of identity $(u_n)_{n\geq 1}$ of $G$ such that 
\begin{itemize}
\item $\text{supp}(u_n)\subseteq U_n$ for all $n\geq 1.$
\item $u_n\geq 0$ and $\int_Gu_ndg=1$ for all $n\geq 1.$
\end{itemize}
Notice $\|\int_Gu_n(g)\pi(g)(x)dg-x\|_p=\|\int_Gu_n(g)\pi({g})(x)dg-\int_Gu_n(g)xdg\|_p\leq \sup\limits_{g\in U_n}\|\pi_{g}x-x\|_p$ which goes to $0$ as $n\to\infty.$
\end{proof}
Define 
\begin{equation}
\mathcal{D}_K(\mathcal{M}):=\Big\{\int_0^\infty h(t)\pi(\sigma_t)xdt:h\in C_c^\infty(\mathbb{R}),\ x\in\mathcal{{M}}\Big\}
\end{equation}
\begin{lem} Let $1\leq p<\infty.$ Then the set $\mathcal{D}_K(\mathcal{M})$ is dense in $\pi(m_K)(L_p(\mathcal{M})).$
\end{lem}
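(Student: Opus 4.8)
The plan is to show that $\mathcal{D}_K(\mathcal{M})$ is dense in $\pi(m_K)(L_p(\mathcal M))$ by combining the previous lemma (density of $\mathcal{D}(\mathcal M)$ in $L_p(\mathcal M)$) with an averaging argument over $K$ on both sides, exploiting the identification of bi-$K$-invariant compactly supported smooth functions on $G$ with $C_c^{\infty,\text{even}}(\mathbb{R})$ recorded earlier in Section~\ref{factsspher}. First I would observe that $\pi(m_K)$ is a bounded (indeed contractive, positive) projection-like operator on $L_p(\mathcal M)$, so $\pi(m_K)(L_p(\mathcal M))$ is a closed subspace and it suffices to approximate elements of the form $\pi(m_K)y$ with $y$ in a dense subset of $L_p(\mathcal M)$; by Lemma~\ref{lem2.1} we may take $y = \int_G u(g)\pi(g)x\,dg$ with $u\in C_c^\infty(G)$ and $x\in\mathcal M$. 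Then $\pi(m_K)y = \int_G u(g)\,\pi(m_K)\pi(g)x\,dg$, and applying $\pi(m_K)$ once more on the right (which changes nothing since $\pi(m_K)y$ is already in the range and $\pi(m_K)$ is idempotent on that range, or more simply inserting $\pi(m_K)\pi(m_K)=\pi(m_K)$) we can write $\pi(m_K)y = \pi(m_K)\big(\int_G u(g)\pi(g)x\,dg\big)\pi(m_K)$-type expression; carefully, the cleanest route is to replace $u$ by its bi-$K$-average $\tilde u(g) := \int_K\int_K u(k_1 g k_2)\,dk_1\,dk_2$, which is again in $C_c^\infty(G)$ and is bi-$K$-invariant, and to check that $\pi(m_K)\big(\int_G u(g)\pi(g)x\,dg\big) = \int_G \tilde u(g)\pi(g)x\,dg$ up to composing with $\pi(m_K)$ on the appropriate side — using the left-invariance of Haar measure and $\sigma$-$\mathrm{wot}$ continuity of the action to justify interchanging integrals.

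Next I would use the identification $C_c^\infty(K\backslash G/K)\cong C_c^{\infty,\text{even}}(\mathbb{R})$: write the bi-$K$-invariant function $\tilde u$ as $\tilde u(k_1 a_t k_2) = h(t)$ for some $h\in C_c^{\infty,\text{even}}(\mathbb{R})$, and use the polar-coordinate (Cartan) decomposition of Haar measure on $G$, $dg = \delta(t)\,dk_1\,dt\,dk_2$ on $K\times\overline{A^+}\times K$ with density $\delta$, to rewrite $\int_G \tilde u(g)\pi(g)x\,dg$ as $\int_0^\infty h(t)\delta(t)\,\pi(\sigma_t)x\,dt$, recalling that $\sigma_t = m_K*\delta_{a_t}*m_K$ is precisely the bi-$K$-average of the evaluation at $a_t$. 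Since $h(t)\delta(t)$ is again a compactly supported smooth function of $t$ (the density $\delta$ is smooth and the support of $h$ is bounded away from issues at $t=0$ only mildly — one may need to be slightly careful at $t=0$, but $\delta$ vanishes suitably there or the product extends smoothly, and in any case one can approximate by functions supported in $(\epsilon,\infty)$), the resulting element lies in $\mathcal{D}_K(\mathcal M)$. Thus every element of the dense set $\{\pi(m_K)y : y\in\mathcal D(\mathcal M)\}$ is (approximated by, or exactly equal to) an element of $\mathcal D_K(\mathcal M)$, giving the claim.

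The main obstacle I anticipate is the bookkeeping around the two-sided $\pi(m_K)$-averaging and the polar-coordinate change of variables: one must verify rigorously that $\pi(m_K)\pi(g)\pi(m_K) = \pi(\sigma_{|g|})$-type identities hold as bounded operators on $L_p(\mathcal M)$ (this uses that $\pi$ restricted to $K$ is a $\mathrm{wot}$-continuous action, so $\pi(m_K)$ is a well-defined bounded positive operator, and that the Bochner integrals defining these operators interact correctly with composition), and that the density $\delta(t)$ in the Cartan decomposition does not cause integrability problems near $t=0$. A minor additional point is confirming that $h$ can be taken even and smooth on all of $\mathbb R$ as required by the definition of $\mathcal D_K(\mathcal M)$; since $\sigma_t = \sigma_{-t}$ (as $a_t$ and $a_{-t}$ are conjugate in $K$, a fact used in the proof of Theorem~\ref{thm2.1}), one may freely symmetrize $h$, absorbing any asymmetry at no cost. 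Once these technical verifications are in place, the density follows immediately from Lemma~\ref{lem2.1} together with boundedness of $\pi(m_K)$.
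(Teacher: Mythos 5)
Your overall route (reduce to the dense class of Lemma~\ref{lem2.1} using boundedness of $\pi(m_K)$, pass to a bi-$K$-invariant kernel, then use the Cartan integration formula to land in $\mathcal{D}_K(\mathcal M)$) is the same as the paper's, and your Cartan-coordinate computation, the remark about the density $\Delta(t)$ near $t=0$, and the evenness of $h$ are all fine. The genuine gap is in the pivotal middle step, which you leave as something ``to check'' and which, in the form you state it, fails. Applying $\pi(m_K)$ to $y=\int_G u(g)\pi(g)x\,dg$ only averages the kernel on the \emph{left}: $\pi(m_K)y=\int_G u_L(g)\pi(g)x\,dg$ with $u_L(g)=\int_K u(k^{-1}g)\,dk$, and such an element is in general \emph{not} of the form $\int_0^\infty h(t)\pi(\sigma_t)z\,dt$: in Cartan coordinates the right $K$-integration produces $\int_K u_L(a_t k_2)\pi(k_2)x\,dk_2$, which depends on $t$ and cannot be absorbed into a single $z\in\mathcal M$. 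Replacing $u$ by its bi-$K$-average $\tilde u$ does give an element of $\mathcal{D}_K(\mathcal M)$, but that element equals $\pi(m_K)\bigl(\int_G u(g)\pi(g)\,\pi(m_K)x\,dg\bigr)$, i.e.\ the right-hand averaging hits $x$, not $y$; it is generally different from $\pi(m_K)y$. Consequently your concluding sentence, that every element of $\{\pi(m_K)y:\,y\in\mathcal D(\mathcal M)\}$ is ``approximated by, or exactly equal to'' an element of $\mathcal D_K(\mathcal M)$, is not established: the equality is false in general, and no approximation argument for a fixed $y$ is given.

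The repair is short and is essentially what the paper does: work with bi-$K$-invariant kernels from the outset rather than trying to recover $\pi(m_K)y$ for each fixed $y$. For $u\in C_c^\infty(K\backslash G/K)$ and $x\in\mathcal M$, the element $y=\int_G u(g)\pi(g)x\,dg$ satisfies $\pi(m_K)y=y$ and, by the Cartan formula, $y=\int_0^\infty h(t)\Delta(t)\,\pi(\sigma_t)x\,dt\in\mathcal D_K(\mathcal M)$. Now take $u=\tilde u_n$, the bi-$K$-averages of a smooth approximate identity $(u_n)$; since $\int_G\tilde u_n(g)\pi(g)x\,dg=\pi(m_K)\int_G u_n(g)\pi(g)\,\pi(m_K)x\,dg\to\pi(m_K)x$ in $L_p$ (Lemma~\ref{lem2.1} applied to $\pi(m_K)x\in\mathcal M$, plus contractivity and idempotence of $\pi(m_K)$), and $\{\pi(m_K)x:\,x\in\mathcal M\}$ is dense in $\pi(m_K)(L_p(\mathcal M))$, the density of $\mathcal D_K(\mathcal M)$ follows. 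In short: you only need the bi-invariant-kernel elements to accumulate on all of $\pi(m_K)(L_p(\mathcal M))$, not to represent each $\pi(m_K)y$ exactly.
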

\begin{proof}
Note that due to the Cartan decomposition \( G = K A K \), we have the natural identification
$
C_c^\infty(K \backslash G / K)= C_c^{\infty,\text{even}}(\mathbb{R}),$ we can start with $u_n$ as in Lemma \eqref{lem2.1} which is in $C_c^\infty(K \backslash G / K).$ hence we will be done if we can show that for any $y=\int_Gu(g)\pi(g)xdg$ with $u\in C_c^\infty(K \backslash G / K)$ and $x\in\mathcal{M},$ we have $\pi(m_K)y=\int_{0}^\infty h(t)\pi(\sigma_t)zdt$ where $h\in C_c^{\infty,\text{even}}(\mathbb{R})$ and $z\in\mathcal M.$

We apply the averaging operator \( \pi(m_K) \) to \( y\):
\[
\pi(m_K)y = \pi(m_K)(\int_Gu(g)\pi(g)xdg) = \int_K \int_G u(g)\, \pi(kg)x\, dg\, dm_K(k).
\]
Change variables \( g \mapsto k^{-1}g \), so that \( dg \) remains invariant. Then as $u$ is bi-$K$-invariant, we obtain that
\[
\pi(m_K)y = \int_K \int_G u(k^{-1}g)\, \pi(g)x\, dg\, dm_K(k) = \int_K \int_G u(g)\, \pi(g)x\, dg\, dm_K(k).
\]
 and as the Haar measure on $K$ is normalized, we have
\[
\pi(m_K)y = \int_G u(g)\, \pi(g)x\, dg =y.
\]
Therefore, \( y \) is \( K \)-invariant, and \( \pi(m_K)y = y. \).

\bigskip

Now, by the Cartan decomposition \( G = K\overline{A^+} K \), every element \( g \in G \) can be written as \( g = k_1 a_t k_2 \), where \( t \in \mathbb{R}_+ \), \( k_1, k_2 \in K \), and \( a_t \in\overline{A^+} \). Moreover, since \( u \in C_c^\infty(G) \) is bi-\( K \)-invariant, there exists an even function \( h \in C_c^\infty(\mathbb{R}) \) such that
\[
u(k_1 a_t k_2) = h(t) \quad \text{for all } k_1, k_2 \in K,\, t \in \mathbb{R}_+.
\]
Then we can write:
\[
y = \int_G u(g)\, \pi(g)x\, dg = \int_0^\infty h(t)\, \left( \int_K \int_K \pi(k_1 a_t k_2) x\, dm_K(k_1)\, dm_K(k_2) \right) \Delta(t)\, dt,
\]
where \( \Delta(t) \) is the Jacobian associated with the Cartan integration formula:
\[
\int_G f(g)\, dg = \int_K \int_0^\infty \int_K f(k_1 a_t k_2)\, \Delta(t)\, dk_1\, dt\, dk_2.
\]
Let us define \( z := x \), and observe that:
\[
\int_K \int_K \pi(k_1 a_t k_2) x\, dm_K(k_1)\, dm_K(k_2) = \pi(\sigma_t)x.
\]
Since \( x \in\mathcal M \), so is \( \pi(m_K)x \). Putting $\widetilde{h}=h\Delta,$ we conclude that
\[
y = \int_0^\infty \widetilde{h}(t)\, \pi(\sigma_t) z\, dt,
\]
for some \( z \in\mathcal M \), \( \widetilde{h} \in C_{c}^\infty(\mathbb{R}) \). Thus, $
y \in \mathcal{D}_K(\mathcal M).$
This completes the proof of the lemma.
\end{proof}
\begin{lem} Let $1\leq p\leq\infty.$ 
\begin{itemize}
\item[(i)]Then as an operator $\pi(m_K)^2=\pi(m_K)$ on $L_p(\mathcal{M}).$
\item[(ii)] $\pi(\sigma_t)\pi(m_K)=\pi(\sigma_t)$ on $L_p(\mathcal{M}).$
\end{itemize} 
\end{lem}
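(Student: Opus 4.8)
The plan is to deduce both identities from a single algebraic fact — that $\mu\mapsto\pi(\mu)$ is a homomorphism from the measure algebra $M(G)$ into operators on $L_p(\mathcal M)$ — together with the idempotency of the normalized Haar measure on the compact group $K$. I would first record the multiplicativity: for $\mu,\nu\in M(G)$ one has $\pi(\mu)\pi(\nu)=\pi(\mu*\nu)$ on $L_p(\mathcal M)$ for every $1\le p\le\infty$. For $1\le p<\infty$ this is the routine Fubini computation
\[
\pi(\mu)\pi(\nu)x=\int_G\pi(g)\Big(\int_G\pi(h)x\,d\nu(h)\Big)d\mu(g)=\int_G\!\int_G\pi(gh)x\,d\nu(h)\,d\mu(g)=\int_G\pi(u)x\,d(\mu*\nu)(u),
\]
which is justified because $g\mapsto\pi(g)x$ is continuous from $G$ to $L_p(\mathcal M)$ and $\|\pi(\mu)\|_{L_p\to L_p}\le\|\mu\|_1$; for $p=\infty$ the same chain of equalities holds with all operator-valued integrals read in the weak-$*$ sense, testing against a fixed element of the predual and invoking the scalar Fubini theorem. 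This is exactly the $*$-representation property of $\mu\mapsto\pi(\mu)$ recalled in Subsection~\ref{subsec1.3}.

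Next, for part~$(i)$, I would view $m_K$ as a measure on $G$ supported on the compact subgroup $K$ (so that convolution in $M(G)$ of two measures supported on $K$ coincides with convolution in $M(K)$), and observe $m_K*m_K=m_K$: for $f\in C_c(G)$,
\[
\int_K\!\int_K f(k_1k_2)\,dm_K(k_1)\,dm_K(k_2)=\int_K\Big(\int_K f(k_1)\,dm_K(k_1)\Big)dm_K(k_2)=\int_K f(k_1)\,dm_K(k_1),
\]
by left-invariance and normalization of Haar measure on $K$. Combined with the homomorphism property this yields $\pi(m_K)^2=\pi(m_K*m_K)=\pi(m_K)$ on $L_p(\mathcal M)$ for all $1\le p\le\infty$. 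For part~$(ii)$ I would then use the definition $\sigma_t=m_K*\delta_{a_t}*m_K$ together with $(i)$ at the level of measures, exploiting associativity of convolution in $M(G)$:
\[
\sigma_t*m_K=m_K*\delta_{a_t}*(m_K*m_K)=m_K*\delta_{a_t}*m_K=\sigma_t,
\]
whence $\pi(\sigma_t)\pi(m_K)=\pi(\sigma_t*m_K)=\pi(\sigma_t)$ on $L_p(\mathcal M)$.

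The only step that requires genuine care — and hence the point I would flag as the main (modest) obstacle — is the endpoint case $p=\infty$ in the homomorphism identity, where the operator-valued integrals defining $\pi(\mu)$ are only weak-$*$ integrals and one must check that the interchange of integration order is legitimate after pairing with an arbitrary normal functional; the inequality $\|\pi(\mu)\|_{\mathcal M\to\mathcal M}\le\|\mu\|_1$ and weak-$*$ measurability of $g\mapsto\pi(g)x$ make this straightforward, but it is worth stating explicitly. Once multiplicativity is in hand, everything reduces to the elementary identity $m_K*m_K=m_K$ and the associativity of convolution.
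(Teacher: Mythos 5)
Your proposal is correct and follows essentially the same route as the paper: both arguments rest on the invariance and normalization of the Haar measure $m_K$ (via the same Fubini/change-of-variable computation) together with the $*$-representation property of $\mu\mapsto\pi(\mu)$ on $M(G)$, which the paper's proof of (ii) also invokes implicitly when it writes $\pi(\sigma_t)\pi(m_K)=\pi(m_K)\pi(\delta_{a_t})\pi(m_K)^2$. The only cosmetic difference is that you factor the argument through the measure-algebra identity $m_K*m_K=m_K$, while the paper computes the double integral directly on $x\in\mathcal M$ and then extends by density, so no further changes are needed.
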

\begin{proof}
Let $x\in\mathcal M.$ Let us prove (i) first.
We note that
\[
\pi(m_K)^2x = \pi(m_K)(\pi(m_K)x = \int_K \left( \int_K \pi(hk)x\, dm_K(h) \right) dm_K(k).
\]

Now by a change variable and using the modularity of the Haar measure $m_K,$  we compute see that
\[
\pi(m_K)^2x = \int_K \pi(k)x\, dm_K(k),
\]
since \( \int_K dm_K(k) = 1 \). Hence,
\[
\pi(m_K)^2x = \pi(m_K)x,\ \text{for all}\ x\in\mathcal M.
\] The desired result now follow from the density of $\mathcal{M}$ in $L_p(\mathcal{M}).$

We prove (ii) now. Note that $\pi(\sigma_t)\pi(m_K)=\pi(m_K*\delta_{a_t}*m_K)\pi(m_K)=\pi(m_K)\pi(\delta_{a_t})\pi(m_K)^2.$ Hence the claim follows from (i).
\end{proof}
    \begin{lem}\label{lemma2.3}Let $1\leq p<\infty.$ Then the following are equivalent.
    \begin{itemize}
    \item[(i)] There exists a constant $C_p>0$ such that $\big\|\sup\limits_{t\geq 1}\!^+\pi(\sigma_t)x\big\|_p\leq C_p\|x\|_p$ for all $x\in L_p(\mathcal{M}).$
    \item[(ii)] There exists a constant $C_p>0$ such that $\big\|\sup\limits_{t\geq 1}\!^+\pi(\sigma_t)x\big\|_p\leq C_p\|x\|_p$ for all $x\in\mathcal{D}_K(\mathcal M).$
    \end{itemize}
     \end{lem}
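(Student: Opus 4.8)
The implication $(i)\Rightarrow(ii)$ is trivial, so the content is $(ii)\Rightarrow(i)$. The plan is a two-stage reduction. \emph{Stage one} is to pass to the range of $\pi(m_K)$: by the lemma proved just above, $\pi(\sigma_t)=\pi(\sigma_t)\pi(m_K)$ on $L_p(\mathcal M)$, so for every $x$ the family $(\pi(\sigma_t)x)_{t\geq 1}$ is literally equal to $(\pi(\sigma_t)\pi(m_K)x)_{t\geq 1}$; since $\pi(m_K)$ is a positive contraction on $L_p(\mathcal M)$, it is enough to prove the inequality of $(i)$ for $x$ ranging over the closed subspace $\pi(m_K)(L_p(\mathcal M))$. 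The reason this reduction is worth making is that $\mathcal{D}_K(\mathcal M)$ is dense in $\pi(m_K)(L_p(\mathcal M))$ --- this is exactly the preceding density lemma --- whereas it is not dense in $L_p(\mathcal M)$.

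\emph{Stage two} is to upgrade hypothesis $(ii)$ from the dense set $\mathcal{D}_K(\mathcal M)$ to all of $\pi(m_K)(L_p(\mathcal M))$, using the truncated description of the maximal norm. For a finite subset $J\subseteq[1,\infty)$ consider the linear map $S_J\colon x\mapsto(\pi(\sigma_t)x)_{t\in J}$ into the Banach space $L_p(\mathcal M;\ell_\infty(J))$. Because $J$ is finite and each $\pi(\sigma_t)$ is a positive contraction, $S_J$ is bounded: for self-adjoint $x$ one may take $a=\sum_{s\in J}\pi(\sigma_s)|x|$ as a positive element dominating $\pm\pi(\sigma_t)x$ for every $t\in J$, and the general case follows by splitting into real and imaginary parts; the resulting bound has a constant depending on $|J|$, which is harmless. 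In particular $S_J$ is continuous, so the inequality $\|S_Jx\|_{L_p(\mathcal M;\ell_\infty)}\leq C_p\|x\|_p$, which holds for $x\in\mathcal{D}_K(\mathcal M)$ by $(ii)$ with $C_p$ independent of $J$, extends to all $x\in\pi(m_K)(L_p(\mathcal M))$. Taking the supremum over all finite $J$ and invoking Proposition~\ref{FINT} gives $\big\|\sup\limits_{t\geq 1}\!^+\pi(\sigma_t)x\big\|_p\leq C_p\|x\|_p$ on $\pi(m_K)(L_p(\mathcal M))$, hence on all of $L_p(\mathcal M)$ by Stage one.

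The only real obstacle is that maximal inequalities are not preserved under arbitrary $L_p$-limits, so one cannot simply approximate a general $x\in\pi(m_K)(L_p(\mathcal M))$ by elements of $\mathcal{D}_K(\mathcal M)$ and pass to the limit in $(ii)$ directly. Proposition~\ref{FINT} is precisely the tool that circumvents this: it reduces the uncountable supremum to a supremum of finite ones, and over a finite index set the maximal operator is automatically a bounded (hence continuous) operator on a Banach space, for which the density argument is routine --- the uniformity of $C_p$ in $J$ being what permits the final passage to the supremum. I expect no other difficulties.
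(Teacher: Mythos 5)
Your proof is correct and takes essentially the same route as the paper: reduce to the range of $\pi(m_K)$ using $\pi(\sigma_t)=\pi(\sigma_t)\pi(m_K)$ together with the contractivity of $\pi(m_K)$ on $L_p(\mathcal M)$, and pass from the dense set $\mathcal{D}_K(\mathcal M)$ to its closure $\pi(m_K)(L_p(\mathcal M))$. Your Stage two simply makes explicit, via Proposition~\ref{FINT} and the boundedness of the finitely truncated maximal operators, the density-extension step that the paper compresses into ``without loss of generality''.
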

\begin{proof}
We just need to prove that $(ii)$ implies $(i).$ Note the without loss of generality we may assume that (ii) is true for all $x\in\pi(m_K)(L_p(\mathcal M)).$ Notice that for any $x\in L_p(\mathcal M)$
\[\big\|\sup\limits_{t\geq 1}\!^+\pi(\sigma_t)x\big\|_p=\big\|\sup\limits_{t\geq 1}\!^+\pi(\sigma_t)(\pi(m_K)x+(I-\pi(m_k)x))\big\|_p=\big\|\sup\limits_{t\geq 1}\!^+\pi(\sigma_t)\pi(m_k)x\big\|_p.\] In above we have used part (ii) of Lemma \eqref{lemma2.3}. Hence by the hypothesis, we obtain 
\[\big\|\sup\limits_{t\geq 1}\!^+\pi(\sigma_t)x\big\|_p\leq C_p\|\pi(m_K)x\|_p\leq C_p\|x\|_p.\]
This completes the lemma.
\end{proof}
\subsection{Gelfand pair and spectral theorem:}\label{gelfandtrick}ountable group, and $(\pi, \mathcal{H})$ a strongly continuous unitary representation of $G$. The representation $\pi$ extends to a norm-continuous $*$-representation of the measure algebra $M(G)$.

Assume $(G,K)$ is a Gelfand pair, i.e., the double coset space $g^{-1}KgK$ for all $g\in G.$ Let $M(G,K)$ denote the subalgebra of bi-$K$-invariant measures in $M(G)$. Then $\pi(M(G,K))$ is commutative, and its norm closure
\[
A_\pi := \overline{\pi(M(G,K))}^{\|\cdot\|}
\]
is a commutative $C^*$-algebra. The spectrum $\sigma(A_\pi)$ consists of nonzero norm-continuous $*$-homomorphisms $\chi: A_\pi \to \mathbb{C}$. Each $\chi \in \sigma(A_\pi)$ gives rise to a bounded, real-valued, bi-$K$-invariant function $\varphi_\chi$ on $G$ defined by
\[
\varphi_\chi(g) := \chi\left(\pi(m_K * \delta_g * m_K)\right).
\]

For the Gelfand pair $(SO^0(n,1),SO(n))$, the following dichotomy was proved in \cite[Section 3]{Nevo94}
\begin{itemize}
  \item Either $\varphi_\chi(a_t) \neq 0$ for some $t > 0$, in which case $\chi$ is nontrivial on $L^1(G,K)$, and $\varphi_\chi$ satisfies the spherical identity:
  \[
  \int_K \varphi_\chi(xky) \, dk = \varphi_\chi(x)\varphi_\chi(y),
  \]
  and $\varphi_\chi$ is a spherical function.
  
  \item Or $\varphi_\chi(a_t) = 0$ for all $t > 0$, in which case $\chi$ vanishes on $L^1(G,K)$.
\end{itemize}
This dichotomy shows that the $\sigma(A_\pi)$ can be identified with s a subset of $\mathcal {S}\cup\{\delta_e\}$ where $\mathcal S$ is all bounded, real-valued, non-zero positive definite spherical functions on $G.$ Moreover, the spectral measure on $\{\delta_e\}$ is zero. This defines an injective map
\[
T: \sigma(A_\pi) \to \mathcal{S} \cup \{\delta_e\}, \quad T(\chi) := \varphi_\chi,.
\]
 One can then transfer the spectral measure from $\sigma(A_\pi)$ to the target space using the pushforward via $T$. For any nontrivial $\chi$ character. Moreover, $\langle\chi,\sigma_t\rangle=\phi_\chi(a_t).$ Hence by spectral theorem for any $\mu\in M(G,K)$ we have 
\begin{equation}\label{veryimspec}
\|\pi(\mu)(v)\|_{\mathcal H}^2=\int_{\sigma(A_\pi)}|\langle\chi,\pi(\mu)\rangle|^2dm_v(\chi),
\end{equation}
 where $m_v$ denotes the spectral measure determined by the vector $v\in\mathcal H.$    hence keeping the above identification in mind, we have 
 \begin{equation}\label{spectalmethodheart}
\|\pi(\sigma_t)(v)\|_{\mathcal H}^2=\int_{\sigma(A_\pi)}|\phi_z(a_t)|^2dm_v(z).
\end{equation}
\subsection{Spherical functions:}
It will prove convenient to parametrize the spherical functions as follows: put
$\rho_n=\frac{n-1}{2},$
and let
\begin{equation}\label{sphercialfnparam}
\varphi_z(a_t) = c_n\int_0^{\pi} (\cosh t - \sinh t \cos\theta)^{z-\rho_n} (\sin \theta)^{n-2} \, d\theta.
\end{equation}
The radial part of the Laplace--Beltrami operator on \( G/K \) (e.g., real hyperbolic space \( \mathbb{H}^n \)) is given by
\[
\Delta_{\text{rad}} = \frac{d^2}{dr^2} + (n-1) \coth r \cdot \frac{d}{dr}.
\]
Using this parametrization in \eqref{sphercialfnparam} the eigenvalue associated with \( \varphi_z \) is easily seen to be an eigenvector of $\Delta_{\text{rad}}$ with eigenvalue $ z^2-p_n^2$. First define, for $\operatorname{Re} z > 0$, the Harish-Chandra $c$-function
\[
\mathbf{c_n}(z) = c_n\int_0^{\pi} (1 - \cos \theta)^{z-\rho_n} (\sin \theta)^{n-2} \, d\theta=c_n\frac{\Gamma(z)}{\Gamma(z+\rho_n)}.
\]
Note that $\mathbf{c_n}(z)$ is defined and analytic in the half-plane $\{\operatorname{Re} z > -1\}$, punctured at $0$, since the $F$ function is analytic in $\mathbb{C} \setminus -\mathbb{N}.$

\begin{prop}\cite{Nevo94}\label{propofsphericalfn}The spherical functions satisfy the following conditions for $SO^0(n,1)$ for all $n\geq 2.$:
\begin{enumerate}
    \item \(\varphi_z = \varphi_{-z}\), and $\varphi_{\rho_n}=\varphi_{-\rho_n}\equiv 1$ and for all \(r\), $|\varphi_z(r)| \leq \varphi_{|\operatorname{Re} z|}(r)$
    
    \item \(\varphi_z\) is bounded and real-valued if and only if $z\in\Sigma,$ where \begin{equation}\label{actualspectrum}
  \Sigma:=\{i\lambda:\lambda\geq 0\}\cup\{s:0<s\leq\rho_n\} .
    \end{equation} 
    
        \item[(3)] 
    For $0 < s\leq \rho_n$, 
    \[
   0\leq\varphi_s(r)\leq (1 + \mathbf{c_n}(s)) (\cosh r)^{s-\rho_n}.\]
    
    \item[(4)] 
    For $|\operatorname{Re} z|\leq 1-\delta<1,$ $z\neq 0$  the following power series expansion holds for $0 < r < \infty$:
    \[
    \varphi_z(r) = e^{-\rho_n r} \bigl( \mathbf{c_n}(z) \psi_z(r) + \mathbf{c_n}(-z)\psi_{-z}(r) \bigr),
    \]
    where
    \[
    \psi_z(r) = \sum_{k=0}^\infty a_k(z) e^{(z - 2k) r}.
    \]
    Here, $a_0(\pm z) = 1$ and 
  $
    |a_k(\pm z)|\leq C k^q,$
    for all $k \geq 1$ and for some positive constants $C$ and $q$ depending on $\delta$.
    
    \item[(5)] $|\varphi_{i\lambda}|\leq \varphi_0(r)\leq B(1+r)e^{-\rho_nr}$ where $B$ is a positive constant depending only on the group $G_n=SO^0(n,1).$
\end{enumerate}
\end{prop}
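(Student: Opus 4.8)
The plan is to extract all five items from the single integral formula \eqref{sphercialfnparam}, using throughout that $\varphi_z$ is the unique smooth radial eigenfunction of $\Delta_{\text{rad}}$ with eigenvalue $z^2-\rho_n^2$ normalized by $\varphi_z(0)=1$, an equation depending on $z$ only through $z^2$. First note that $\cosh r-\sinh r\cos\theta=\cosh r\,(1-\tanh r\cos\theta)\geq \cosh r-\sinh r=e^{-r}>0$ for $r\geq 0$, $\theta\in[0,\pi]$, so the base in \eqref{sphercialfnparam} is a positive real; hence $\bigl|(\cosh r-\sinh r\cos\theta)^{z-\rho_n}\bigr|=(\cosh r-\sinh r\cos\theta)^{\operatorname{Re}z-\rho_n}$ and $|\varphi_z(r)|\leq\varphi_{\operatorname{Re}z}(r)$. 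Taking $z=\rho_n$ collapses the integrand to $(\sin\theta)^{n-2}$, so $\varphi_{\rho_n}\equiv 1$ is exactly the normalization of $c_n$; and $\varphi_z=\varphi_{-z}$ follows from uniqueness for the $z$-even ODE with Cauchy data $u(0)=1$, $u'(0)=0$ (the latter forced by smoothness of a radial function at the center), which also gives $\varphi_{\operatorname{Re}z}=\varphi_{|\operatorname{Re}z|}$. This is item (1).

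For item (3) I would write $\varphi_s(r)=(\cosh r)^{s-\rho_n}\,c_n\int_0^\pi(1-\tanh r\cos\theta)^{s-\rho_n}(\sin\theta)^{n-2}\,d\theta$; nonnegativity is clear for real $s$. Split $[0,\pi]=[0,\tfrac{\pi}{2}]\cup[\tfrac{\pi}{2},\pi]$. On $[\tfrac{\pi}{2},\pi]$ one has $1-\tanh r\cos\theta\geq 1$ and $s-\rho_n\leq 0$, so this part is at most $c_n\int_0^\pi(\sin\theta)^{n-2}\,d\theta=1$. On $[0,\tfrac{\pi}{2}]$ one has $1-\tanh r\cos\theta\geq 1-\cos\theta$ (since $\tanh r<1$, $\cos\theta\geq 0$), so after raising to the nonpositive power $s-\rho_n$ this part is at most $c_n\int_0^\pi(1-\cos\theta)^{s-\rho_n}(\sin\theta)^{n-2}\,d\theta=\mathbf{c_n}(s)$, the integral converging precisely because $s>0$. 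Adding gives the bound with factor $1+\mathbf{c_n}(s)$.

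Item (4) is the analytic heart, and items (2) and (5) follow from it. Substituting $u(r)=e^{-\rho_n r}\psi_z(r)=\sum_{k\geq 0}a_k(z)e^{(z-\rho_n-2k)r}$ into $\Delta_{\text{rad}}u=(z^2-\rho_n^2)u$ and expanding $\coth r=1+2\sum_{m\geq 1}e^{-2mr}$ gives, after collecting exponentials, the recursion $4k(k-z)\,a_k(z)=-2(n-1)\sum_{j=0}^{k-1}(z-\rho_n-2j)\,a_j(z)$ with $a_0(z)=1$; since $z\neq 0$ and $|\operatorname{Re}z|\leq 1-\delta<1$ the denominators satisfy $|k(k-z)|\gtrsim k^2$ for $k\geq 1$, so induction yields the uniform bound $|a_k(\pm z)|\leq Ck^q$. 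Thus $e^{-\rho_n r}\psi_{z}$ and $e^{-\rho_n r}\psi_{-z}$ are genuine, linearly independent ($z\neq 0$) solutions, $\varphi_z$ is a linear combination of them, and the coefficients are found by letting $r\to\infty$ in \eqref{sphercialfnparam}: after the $\cosh r(1-\tanh r\cos\theta)$ substitution and dominated convergence, $\varphi_z(r)e^{(\rho_n-z)r}\to\mathbf{c_n}(z)$ for $\operatorname{Re}z>0$, so by analytic continuation and $\varphi_z=\varphi_{-z}$ the coefficients are $\mathbf{c_n}(z)$ and $\mathbf{c_n}(-z)$. For item (2): the same large-$r$ behaviour of the integral shows $\varphi_z(r)\asymp e^{(\operatorname{Re}z-\rho_n)r}$ up to a phase when $\operatorname{Re}z>0$, so $\operatorname{Re}z>\rho_n$ gives an unbounded function while $0<\operatorname{Re}z\leq\rho_n$ with $\operatorname{Im}z\neq 0$ gives a non-real one; using $\varphi_z=\varphi_{-z}$ only $z=i\lambda$ or $0<z\leq\rho_n$ survive, and conversely $|\varphi_{i\lambda}|\leq\varphi_0$ is bounded (item (5)) and real since $\overline{\varphi_{i\lambda}}=\varphi_{-i\lambda}=\varphi_{i\lambda}$, while (3) covers $0<s\leq\rho_n$; hence the bounded real spherical functions are exactly those with parameter in $\Sigma$. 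For item (5): the first inequality is item (1) at $\operatorname{Re}(i\lambda)=0$; for the second, in $\varphi_0(r)=(\cosh r)^{-\rho_n}c_n\int_0^\pi(1-\tanh r\cos\theta)^{-\rho_n}(\sin\theta)^{n-2}\,d\theta$ the integrand has a singularity of order exactly $\theta^{-1}$ at $\theta=0$ as $\tanh r\to 1$ (because $-2\rho_n+(n-2)=-1$), so the integral grows like $\log\tfrac{1}{1-\tanh r}\asymp r$, and multiplying by $(\cosh r)^{-\rho_n}\asymp e^{-\rho_n r}$ and using boundedness near $r=0$ gives $\varphi_0(r)\leq B(1+r)e^{-\rho_n r}$.

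The main obstacle is step (4): obtaining the uniform polynomial bound $|a_k(z)|\leq Ck^q$ from the recursion and identifying the connection coefficients with $\mathbf{c_n}(\pm z)$. This is the only point requiring more than elementary estimates on \eqref{sphercialfnparam}, and it is exactly where the hypotheses $z\neq 0$ and $|\operatorname{Re}z|\leq 1-\delta<1$ enter — to avoid the resonance at $z=0$ (where $\psi_z$ and $\psi_{-z}$ coalesce, producing the extra linear factor in $\varphi_0$ seen in item (5)) and the small denominators $k-z$, $k\geq 1$.
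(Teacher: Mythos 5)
The paper gives no proof of this proposition: it is quoted verbatim from \cite{Nevo94}, where these are classical facts about spherical functions on rank-one groups, so there is no internal argument to compare against. Your sketch reconstructs exactly that classical route --- the integral representation \eqref{sphercialfnparam} for items (1), (3), (5), the Harish--Chandra series obtained by feeding $\sum_k a_k(z)e^{(z-\rho_n-2k)r}$ into the radial ODE for item (4), and the $r\to\infty$ asymptotics plus the symmetry $\varphi_z=\varphi_{-z}$ for item (2) --- and it is correct in outline; in particular your recursion $4k(k-z)a_k(z)=-2(n-1)\sum_{j<k}(z-\rho_n-2j)a_j(z)$ is the right one. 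Three points should be tightened. First, with the paper's \emph{integral} definition of $\mathbf{c_n}$ the limit you compute is $\lim_{r\to\infty}\varphi_z(r)e^{(\rho_n-z)r}=2^{\rho_n-z}c_n\int_0^\pi(1-\cos\theta)^{z-\rho_n}(\sin\theta)^{n-2}\,d\theta$, which differs from that $\mathbf{c_n}(z)$ by the factor $2^{\rho_n-z}$; since $\int_0^\pi(1-\cos\theta)^{z-\rho_n}(\sin\theta)^{n-2}d\theta=2^{z+\rho_n-1}\Gamma(z)\Gamma(\rho_n)/\Gamma(z+\rho_n)$, the paper's two displayed formulas for $\mathbf{c_n}$ are themselves off by a $z$-dependent constant, so this is a normalization to be fixed rather than an error, but you should commit to one convention before claiming the coefficients are literally $\mathbf{c_n}(\pm z)$. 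Second, the hypothesis $z\neq 0$ plays no role in the small denominators: for $k\geq 1$ and $|\operatorname{Re}z|\leq 1-\delta$ one has $|k-z|\geq k-(1-\delta)\geq\delta$ whether or not $z=0$; it is needed only because $\mathbf{c_n}(\pm z)$ has a pole at $0$ and $\psi_z,\psi_{-z}$ coalesce there (you mention both reasons, but they should not be conflated). Third, the second connection coefficient cannot be read off from the $r\to\infty$ asymptotics at a fixed $z$ with $\operatorname{Re}z>0$, since the $\psi_{-z}$ term is subdominant there; the standard repair, which your phrase ``by analytic continuation and $\varphi_z=\varphi_{-z}$'' compresses, is to note that the coefficients $A(z),B(z)$ in $\varphi_z=A(z)e^{-\rho_n r}\psi_z+B(z)e^{-\rho_n r}\psi_{-z}$ are analytic on the punctured strip (e.g.\ as Wronskians of $\varphi_z$ against $e^{-\rho_n r}\psi_{\mp z}$), to identify $A(z)=\mathbf{c_n}(z)$ for $0<\operatorname{Re}z<1$ from the asymptotics, to get $B(z)=A(-z)$ from $\varphi_z=\varphi_{-z}$ and uniqueness of the expansion in the (linearly independent, $z\neq0$) basis, and then to continue analytically to the whole punctured strip including the imaginary axis. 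With these repairs your argument is a complete, self-contained proof of the facts the paper simply cites.
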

\begin{rem}In view of discussion in Subsection \eqref{gelfandtrick} and Proposition \eqref{propofsphericalfn} we identify throughout the paper the spectrum of $A_{\pi}$ to be a subset of $\Sigma.$
\end{rem}
\subsection{Proof of the Mean Ergodic Theorems for $(\sigma_t),$ $(\mu_t)$ and $(\beta_t)$ for $SO^0(n,1)$ for $n\geq 2$:}\label{meanergodicth}
We begin by identifying the limit in the \( L_2 \)-norm of the family \( \pi(\sigma_t)(x)\), where \( \{\sigma_t\}_{t > 0} \subset M(G, K) \) is the one-parameter family of bi-\(K\)-invariant probability measures corresponding to spherical averages. We aim to establish that this family is mean ergodic. Recall by equation \eqref{veryimspec}

\begin{equation}
\|\pi(\sigma_t)(x)\|_{L_2(\mathcal{M})}^2=\int_{\sigma(A_\pi)}|\phi_z(a_t)|^2dm_x(z),
\end{equation}
where \( \mu_x \) is the spectral measure associated to the vector \( x \).

According to spectral estimates in Proposition \eqref{propofsphericalfn}, we have:
\[
|\varphi_z(t)| \leq 1 \quad \text{for all } t > 0,
\]
and moreover,
\[
\varphi_z(\pi(\sigma_t)) \to 0 \quad \text{as } t \to \infty \quad \text{for all nontrivial } \varphi_z.
\]
Therefore, by the Lebesgue Dominated Convergence Theorem, we obtain:
\[
\| \pi(\sigma_t) x\|^2 \to 0 \quad \text{as } t \to \infty,
\]
provided that \( \mu_x(\{\chi_0\}) = 0 \), where \( \chi_0 \) is the trivial character on \( \mathcal{A}_\pi \). Now note that the space of vectors invariant under each \( \pi(\sigma_t) \) coincides with the space of \( G \)-invariant vectors, since for a unit vector \( x \), the identity \( \pi(\sigma_t)x = x \) implies that \( \pi(g)x = x \) for almost all points \( g \) in the support of \( \sigma_t \). We explain this a little more. If a unit vector \( x \in L_2(\mathcal{M}) \) satisfies
\[
\pi(\sigma_t)x = \int_G \pi(g)x \, d\sigma_t(g) = x,
\]
then taking inner product with \( x \) on both sides gives
\[
\left\langle \int_G \pi(g)x \, d\sigma_t(g), x \right\rangle = \langle x, x \rangle = 1.
\]
Using linearity and continuity of the inner product, we may write:
\[
\int_G \langle \pi(g)x, x \rangle \, d\sigma_t(g) = 1.
\]
Since \( \pi(g) \) is unitary and \( x \) is a unit vector, we have \( |\langle \pi(g)x, x \rangle| \leq 1 \) for all \( g \in G \). The fact that the average of these values equals 1 implies that
\[
|\langle \pi(g)x, x \rangle| = 1 \quad \text{for } \sigma_t\text{-almost every } g \in G.
\]
Equality in the Cauchy--Schwarz inequality implies that for such \( g \), there exists \( \lambda(g) \in \mathbb{C} \) with \( |\lambda(g)| = 1 \) such that
\[
\pi(g)x = \lambda(g) x.
\]
But since
\[
\langle \pi(g)x, x \rangle = \lambda(g) \langle x, x \rangle = \lambda(g),
\]
and \( \langle \pi(g)x, x \rangle = 1 \), we conclude \( \lambda(g) = 1 \), and hence:
\[
\pi(g)x = x \quad \text{for } \sigma_t\text{-almost every } g \in \operatorname{supp}(\sigma_t).
\]
Now note that 
\[
\bigcup_t \operatorname{supp}(\sigma_t) = G.
\]
Therefore, we must have that \( x \) is a \( G \)-invariant vector. Therefore, denoting the projection onto the space of \( G \)-invariant vectors by \( E \), for any \( x \), the spectral measure of \( x - Ex \) assigns zero measure to the trivial character. It follows that
\[
\| \pi(\sigma_t)(x) - Ex \| \to 0 \quad \text{as } t \to \infty.
\]

To prove the mean ergodic theorem for \( \mu_t \) and \( \beta_t \), we use the same argument as before, and it remains only to check that \( |\varphi_{z}(\mu_t)| \) and \( |\varphi_{z}(\beta_t)| \) converge to zero as \( t \to \infty \), for all nontrivial characters \( \varphi_z \). This has already been proved in \cite[Section 4]{Nevo94}.

\section{$L_2$-bound associated to spherical averages for $SO^0(n,1)$ for $n\geq 3$:}\label{$L_2$-bound associated to spherical averages}
Elements of the set $\mathcal{D}(\mathcal M)$ are \( C^\infty \) in the following sense:  
Given a right-invariant vector field \( D \in \operatorname{Lie}(G) \), the map
\[
w_D(t) := \pi(\exp(tD))w
\]
is a \( C^\infty \) map from \( \mathbb{R} \) to \( L_p(\mathcal M) \) for all $1\leq p<\infty.$  
More explicitly, the quantity
\[
\frac{1}{h} \left( \pi(\exp(t + h)D)w - \pi(\exp(tD))w \right)
\]
converges in the $L_p$-norm as \( h\to 0 \), and the limit is, by definition, the first derivative \( w_D'(t) \) of \( w_D \).  
The function \( w_D' \) itself can be differentiated similarly, as can all successive derivatives.

Assume again that \( G \) is a real hyperbolic Lie group acting on $\mathcal M$, and let \( x \in\mathcal{D}(\mathcal M) \). In that case, the map
\[
t \mapsto \pi(\sigma_t)x
\]
is of the form
\[
t \mapsto \pi(m_K) \, \pi(\exp(tH)) \, \pi(m_K)x,
\]
where \( H \in \mathfrak{a}\) i.e in the Lie algebra of \( A \). Hence, it is a \( C^\infty \) map from \( \mathbb{R}_+ \) to \( L_p(\mathcal M)\), for all $1\leq p<\infty$ namely,
\[
\frac{1}{h} \left( \pi(\sigma_{t+h})x - \pi(\sigma_t)x \right)
\]
converges in \( \|.\|_p \)-norm to a function we denote by $
\frac{d}{dt} \pi(\sigma_t)f.$ For $p=\infty$ the above mentioned derivatives will make sense as $w^*$ topology. Rest of this section we work with $SO^0(n,1)$ where $n\geq 3.$

 Let $x\in \mathcal{D}(\mathcal M)_{+}.$ Using integration by parts we see that  for all $N\geq 1$
\[\int_1^N t\frac{d}{dt}\pi(\sigma_t)x=N\pi(\sigma_N)x-\pi(\sigma_1)x-\int_1^N\pi(\sigma_t)xdt.\] Therefore, we can rewrite the above equality as
\[\pi(\sigma_N)x-\pi(\mu_N)x-\frac{1}{N}\Big(\pi(\sigma_1)x-\pi(\mu_1)x\Big)=\frac{1}{N}\int_1^Nt\Big(\frac{d}{dt}\pi(\sigma_t)x\Big)dt.\] Since the right hand side of the above is self-adjoint, we have by \eqref{csmei}
\begin{align}
\pi(\sigma_N)x - \pi(\mu_N)x - \frac{1}{N}\Big(\pi(\sigma_1)x - \pi(\mu_1)x\Big)
&\leq \left( \left| \frac{1}{N} \int_1^N t \left( \frac{d}{dt} \pi(\sigma_t)x \right) dt \right|^2 \right)^{\frac{1}{2}} \notag \\
&\leq \frac{1}{N} \left( \int_1^N t \, dt \right)^{\frac{1}{2}} \left( \int_1^N t \left| \frac{d}{dt} \pi(\sigma_t)x \right|^2 dt \right)^{\frac{1}{2}}.
\end{align}

Let us define $g_1(x):=\Big(\int_1^\infty t\Big|\frac{d}{dt}\pi(\sigma_t)x\Big|^2dt\Big)^{\frac{1}{2}}.$ Then we have that 
\begin{equation}
\bigl\|\sup_{N \ge 1}\!^+ \Big(\pi(\sigma_N)x-\pi(\mu_N)x-\frac{1}{N}\Big(\pi(\sigma_1)x-\pi(\mu_1)x\Big)\Big)\bigr\|_2\leq \|g_1(x)\|_2.
\end{equation}
The above equation yields via triangle inequality
\begin{equation}
\bigl\|\sup_{N \ge 1}\!^+\pi(\sigma_N)x\bigr\|_2\leq\bigl\|\sup_{N \ge 1}\!^+\pi(\mu_N)x\bigr\|_2+\|g_1(x)\|_2+2\|x\|_2.
\end{equation}
Therefore, due to Theorem \eqref{thm2.1}, for the desired maximal inequality for the global part corresponding to the spherical averages, we need to show only
\begin{equation}\label{desdl2in}
\|g_1(x)\|_2\leq C\|x\|_2.
\end{equation}
Note that by linearity of trace and Fubini's theorem we obtain \[
\|g_1(x)\|_2^2=\tau(\int_1^\infty t\Big|\frac{d}{dt}\pi(\sigma_t)x\Big|^2dt)=\int_1^\infty t\|\frac{d}{dt}\pi(\sigma_t)x\|_2^2dt. 
\] We need the following lemma of which the proof follows verbatim as in \cite[Lemma 4]{Nevo94}.
\begin{lem}
For any $x\in\mathcal{D}(\mathcal M)$ we have 
\[\Big\|\frac{d}{dt}\pi(\sigma_t)x\Big\|_2^2=\int_{\Sigma}\Big|\frac{d}{dt}\phi_z(t)\Big|^2dm_x(z)\] where $m_x$ is the spectral measure associated with $x$ and $\Sigma$ as in \eqref{actualspectrum}.
\end{lem}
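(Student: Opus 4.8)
The plan is to push the identity through the spectral theorem of Subsection~\ref{gelfandtrick}, differentiating under the spectral integral. Fix $x\in\mathcal D(\mathcal M)$ and $t>0$. Since $\sigma_s$ is a symmetric bi-$K$-invariant probability measure (recall $\sigma_{-s}=\sigma_s$ because $a_{-s}$ is $K$-conjugate to $a_s$, exactly as in the proof of Theorem~\ref{thm2.1}), each $\pi(\sigma_s)$ is a self-adjoint element of $\pi(M(G,K))\subseteq A_\pi$, and so is $\tfrac1h\big(\pi(\sigma_{t+h})-\pi(\sigma_t)\big)$ for every $h\neq0$. Let $E$ denote the projection-valued spectral measure of the commutative $C^*$-algebra $A_\pi$ on $L_2(\mathcal M)$; under the identification of $\sigma(A_\pi)$ with a subset of $\Sigma$ the Gelfand transform of $\pi(\sigma_s)$ is $z\mapsto\varphi_z(a_s)$, and writing $m_x=\langle E(\cdot)x,x\rangle$ for the scalar spectral measure of $x$ (extended by zero to $\Sigma$), the isometry property of the spectral integral yields, exactly as in \eqref{spectalmethodheart},
\[
\Big\|\frac{\pi(\sigma_{t+h})x-\pi(\sigma_t)x}{h}\Big\|_2^2=\int_\Sigma\big|g_h(z)\big|^2\,dm_x(z),\qquad g_h(z):=\frac{\varphi_z(a_{t+h})-\varphi_z(a_t)}{h}.
\]
Note that only the differentiability of $t\mapsto\pi(\sigma_t)x$ will use $x\in\mathcal D(\mathcal M)$; the spectral computation above is valid for every $x\in L_2(\mathcal M)$.

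Next I would invoke the $C^\infty$ property recorded at the beginning of this section: for $x\in\mathcal D(\mathcal M)$ the map $t\mapsto\pi(\sigma_t)x=\pi(m_K)\pi(\exp(tH))\pi(m_K)x$ is differentiable from $\R_{+}$ to $L_2(\mathcal M)$, so $\tfrac1h\big(\pi(\sigma_{t+h})x-\pi(\sigma_t)x\big)\to\frac{d}{dt}\pi(\sigma_t)x$ in $L_2(\mathcal M)$ as $h\to0$. Applying the displayed isometry to differences $g_h-g_{h'}$ shows that $(g_h)_h$ is Cauchy, hence convergent, in the complete space $L_2(\Sigma,m_x)$; denoting its limit by $\widetilde g$, we obtain $\frac{d}{dt}\pi(\sigma_t)x=\int_\Sigma\widetilde g(z)\,dE(z)x$ by uniqueness of limits, and therefore $\big\|\frac{d}{dt}\pi(\sigma_t)x\big\|_2^2=\int_\Sigma|\widetilde g(z)|^2\,dm_x(z)$.

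It then remains to identify $\widetilde g(z)=\frac{d}{dt}\varphi_z(a_t)$ for $m_x$-almost every $z\in\Sigma$. For each fixed $z$, the integral representation \eqref{sphercialfnparam} exhibits $t\mapsto\varphi_z(a_t)$ as a smooth function of $t$ (the base $\cosh t-\sinh t\cos\theta\ge e^{-|t|}>0$ stays bounded away from $0$ uniformly in $\theta\in(0,\pi)$, so one may differentiate under the $\theta$-integral), whence $g_h(z)\to\frac{d}{dt}\varphi_z(a_t)$ pointwise on $\Sigma$ as $h\to0$. On the other hand, convergence of $g_h$ to $\widetilde g$ in $L_2(\Sigma,m_x)$ gives $m_x$-a.e.\ convergence along some sequence $h_j\to0$; comparing the two limits forces $\widetilde g=\frac{d}{dt}\varphi_z(a_t)$ $m_x$-a.e., and with the convention $\phi_z(t)=\varphi_z(a_t)$ this is precisely the asserted identity.

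The main point demanding care is the passage of the $h\to0$ limit through the spectral integral. The function $z\mapsto\frac{d}{dt}\varphi_z(a_t)$ is \emph{not} uniformly bounded on $\Sigma$ — on the tempered part $z=i\lambda$ it grows with $\lambda$ — so a naive dominated-convergence argument in the spectral variable is unavailable; the resolution is that one does not need it, since the $C^\infty$-in-$L_2$ property already delivers $L_2(\Sigma,m_x)$-convergence of $g_h$ a priori, and the elementary pointwise limit then merely identifies that $L_2$-limit. Everything else is bookkeeping with the Gelfand-pair machinery of Subsection~\ref{gelfandtrick} and the properties of the spherical functions collected in Proposition~\ref{propofsphericalfn}.
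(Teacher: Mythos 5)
Your proposal is correct and takes essentially the same route the paper does: the paper's "proof" is just the citation to \cite[Lemma 4]{Nevo94}, whose argument is precisely the spectral-theoretic one you reconstruct — pass the difference quotients $\frac1h(\sigma_{t+h}-\sigma_t)\in M(G,K)$ through the Gelfand transform as in \eqref{veryimspec}, use the $C^\infty$-vector property of $\mathcal D(\mathcal M)$ to get $L_2(\Sigma,m_x)$-convergence of the multipliers, and identify the limit $m_x$-a.e.\ with $\frac{d}{dt}\varphi_z(a_t)$ via the pointwise smoothness from \eqref{sphercialfnparam}. Your closing remark that no domination in the spectral variable is needed (the a priori $L_2$-convergence does the work) is exactly the right observation; no gaps.
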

In view of the above lemma the desired $L_2$-inequality will be proved provided we show that $\int_1^\infty t\Big|\frac{d}{dt}\varphi_z(t)\Big|^2dt$ is a bounded function over $z\in \Sigma$ as $\int_\Sigma dm_x(z)=\|x\|_2^2.$ This necessary information has been proved in \cite[Section 6]{Nevo94}. hence the desired maximal inequality is proved for $\mathcal{D}(\mathcal M)$. Therefore, we have the following theorem by using Lemma \eqref{lem2.1}.
\begin{thm}\label{l2bdsphere}
There exists a constant $C>0$ such that for all $x\in L_2(\mathcal{M})$ we have \[\bigl\|\sup_{t \ge 1}\!^+\pi(\sigma_t)x\bigr\|_2\leq C\|x\|_2.\]
\end{thm}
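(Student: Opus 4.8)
The plan is to combine the decomposition already worked out in this section with the density statement in Lemma~\ref{lem2.1}. Concretely, for $x\in\mathcal D(\mathcal M)$ we have the identity
\[
\pi(\sigma_N)x-\pi(\mu_N)x-\tfrac1N\bigl(\pi(\sigma_1)x-\pi(\mu_1)x\bigr)=\tfrac1N\int_1^N t\,\tfrac{d}{dt}\pi(\sigma_t)x\,dt,
\]
and the Cauchy--Schwarz estimate \eqref{csmei} gives a pointwise (operator) bound of the self-adjoint left-hand side by the square function $g_1(x)$, uniformly in $N$. Hence the maximal norm $\bigl\|\sup_{N\ge1}^+\pi(\sigma_N)x\bigr\|_2$ is controlled by $\bigl\|\sup_{N\ge1}^+\pi(\mu_N)x\bigr\|_2+\|g_1(x)\|_2+2\|x\|_2$. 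The first term is bounded by $c_2\|x\|_2$ via Theorem~\ref{thm2.1}; the last by the $L_2$-isometry of $\pi(\sigma_1),\pi(\mu_1)$. So everything reduces to the inequality \eqref{desdl2in}, i.e.\ $\|g_1(x)\|_2\le C\|x\|_2$ for $x\in\mathcal D(\mathcal M)$.

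The second step is to establish \eqref{desdl2in}. Using linearity of the trace and Fubini, $\|g_1(x)\|_2^2=\int_1^\infty t\,\|\tfrac{d}{dt}\pi(\sigma_t)x\|_2^2\,dt$, and by the lemma preceding the theorem this equals $\int_1^\infty t\int_\Sigma|\tfrac{d}{dt}\varphi_z(t)|^2\,dm_x(z)\,dt$. Swapping the order of integration and using $\int_\Sigma dm_x(z)=\|x\|_2^2$, it suffices to show that $z\mapsto\int_1^\infty t\,|\tfrac{d}{dt}\varphi_z(t)|^2\,dt$ is bounded uniformly over $z\in\Sigma$. This is precisely the scalar spherical-function estimate carried out in \cite[Section 6]{Nevo94}: for $z=i\lambda$ one uses the power-series expansion $\varphi_{i\lambda}(t)=e^{-\rho_n t}(\mathbf{c_n}(i\lambda)\psi_{i\lambda}(t)+\mathbf{c_n}(-i\lambda)\psi_{-i\lambda}(t))$ of Proposition~\ref{propofsphericalfn}(4)--(5), whose derivative decays like $e^{-\rho_n t}$ times a polynomial, giving an integrable weight $t\,e^{-2\rho_n t}$ (here $\rho_n=\tfrac{n-1}2>0$ since $n\ge3$); for $0<s\le\rho_n$ one uses $0\le\varphi_s(t)\le(1+\mathbf{c_n}(s))(\cosh t)^{s-\rho_n}$ from Proposition~\ref{propofsphericalfn}(3) together with the corresponding derivative bound. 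The point $n\ge3$ enters to ensure $\rho_n$ is large enough that the relevant integrals converge with a bound independent of $z$.

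Finally, the passage from $\mathcal D(\mathcal M)$ to all of $L_2(\mathcal M)$: by Lemma~\ref{lem2.1}, $\mathcal D(\mathcal M)$ is dense in $L_2(\mathcal M)$, and the maximal norm $\|(\pi(\sigma_t)x)_{t\ge1}\|_{L_2(\mathcal M;\ell_\infty(\{t\ge1\}))}$ is a Banach-space norm (in the sense of Proposition~\ref{FINT}) that we have just shown to be bounded by $C\|x\|_2$ on the dense subspace; one concludes the inequality for arbitrary $x\in L_2(\mathcal M)$ by a routine approximation argument using completeness of $L_2(\mathcal M;\ell_\infty)$ and the $L_2$-contractivity of each $\pi(\sigma_t)$.

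I expect the main obstacle to be, as usual in this circle of ideas, the bookkeeping in the first step: one must check that the Cauchy--Schwarz estimate \eqref{csmei} genuinely yields a \emph{uniform-in-$N$} operator bound by a single positive element $g_1(x)\in L_2(\mathcal M)_+$, so that the supremum is legitimately controlled in the $L_2(\mathcal M;\ell_\infty)$ sense rather than merely pointwise in $N$; this is where the self-adjointness of the left-hand side and the truncated description of the maximal norm (Proposition~\ref{FINT}) are used. The scalar estimate in the second step, while technically the heart of the matter, is quoted wholesale from \cite{Nevo94} and requires no new argument.
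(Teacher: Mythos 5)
Your proposal follows essentially the same route as the paper's own proof: the integration-by-parts identity together with the Cauchy--Schwarz bound \eqref{csmei} reduces the maximal estimate to $\|g_1(x)\|_2\leq C\|x\|_2$, which is then handled by the spectral identity for $\|\tfrac{d}{dt}\pi(\sigma_t)x\|_2$, the uniform bound of $\int_1^\infty t|\tfrac{d}{dt}\varphi_z(t)|^2\,dt$ over $\Sigma$ quoted from \cite{Nevo94}, and density of $\mathcal D(\mathcal M)$ via Lemma~\ref{lem2.1}. The only cosmetic slip is describing $\pi(\sigma_1)$, $\pi(\mu_1)$ as $L_2$-isometries---contractivity is what is actually used for the $2\|x\|_2$ term---which does not affect the argument.
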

\section{Noncommuttaive ergodic theorem for action of $SO^0(2m+1)$}\label{forpodd}
Let $\zeta(t)$ be a nonnegative $C^{\infty}$ function on $[0, \infty)$ which is identically $0$ on $[0, 1/4]$ and identically $1$ on $[1/2, \infty)$.  Let $x\in\mathcal{D}_K(\mathcal M)$ and define the operator-valued $C^\infty$ function $F_x(t):=\zeta(t)\pi(\sigma_t)x$ for $t\geq 0.$ To this end, define the following fractional integrals.

For $\alpha \in \mathbb{C}$ with $\operatorname{Re}(\alpha) > 0$, and an operator-valued $C^\infty$-function $F_x(t):=\zeta(t)\pi(\sigma_t)x,$ where $x\in\mathcal{D}(\mathcal{M}),$ consider the Riemann--Liouville fractional integral:
\[ I^\alpha(F_x)(t) = \frac{1}{\Gamma(\alpha)} \int_0^t \zeta(s)(t - s)^{\alpha - 1} F(s) \, ds. \]

Finally, consider the normalized fractional integrals:
\begin{equation}\label{Nalpha}
 N^\alpha(F_x)(t) = t^{-\alpha} I^\alpha(F)(t) = \frac{1}{\Gamma(\alpha)} t^{-\alpha} \int_0^t (t - s)^{\alpha - 1} F_x(s) \, ds=\frac{1}{\Gamma(\alpha)}\int_0^1(1-s)^{\alpha-1}\zeta(st)F_x(st)ds. 
 \end{equation} As in \cite{Nevo97} the functions $\alpha\mapsto N^\alpha(F_x)(t)$ can be analytically continued in the following way. Applying integration by parts
\begin{align*}
N^\alpha(F_x)(t) &= \frac{1}{\Gamma(\alpha)} \int_0^{1/2}(1-s)^{\alpha-1}F({st})\ ds \\
&+\frac{1}{\Gamma(\alpha+1)}(\frac{1}{2})^{\alpha}F_x(\frac{t}{2})\ + \frac{1}{\Gamma(\alpha + 1)} \int_{1/2}^1 (1 - s)^{\alpha} \frac{d}{ds}(F_x(st)) \, ds.
\end{align*}

Thus right-hand side defines an analytic function in the region $\operatorname{Re}(\alpha) > -1$, which we take as the definition of $N^\alpha(F)(t)$ in that region (recall that $1/\Gamma(\alpha)$ is entire). Iterating the procedure, $N^\alpha(F_x)(t)$ (and hence also $I^\alpha(F_x)(t)$) is analytically continued to an entire function.  One can check as in the classical case \cite{Nevo97} that 
\begin{itemize}
\item[1.] $I^\alpha(I^\beta)=I^{\alpha+\beta}.$
\item[2.] $I^0F_x=F_x.$
\item[3.]$I^1(F_x)(t)=\int_0^tF_x(s)ds$ and $I^{-k}(F_x)(t)=F_x^{(k)}(t)$ for all $k\geq 0.$
\item[4.]$N^0(F_x)(t)=\pi(\sigma_t)x$ for $t\geq \frac{1}{2}.$
\item[5.]$N^1F_x(t)=\pi(\mu_t)x-\frac{1}{t}B_\zeta x,$ where $B_\zeta(x):=\int_0^{\frac{1}{2}}(1-\zeta(s))\pi(\sigma_s)x\ ds,$ for $t\geq \frac{1}{2}$
\item[6.]$N^{-k}(F_x)(t)=t^k\frac{d^k}{ds^k}|_{s=t}(\pi(\sigma_s)x)$ for all integers $k\geq 0$ and $t\geq\frac{1}{2}.$
\end{itemize}
\subsection{The Littlewood-Paley $g$-functions:} For $x\in \mathcal{D}(\mathcal M)$ and $k\in\mathbb{N}$ we define
\begin{equation}
g_k(x)=\Big(\int_1^\infty t^{2k-1}\Big|\frac{d^k}{ds^k}|_{s=t}(\pi(\sigma_s)x)\Big|^2dt\Big)^{\frac{1}{2}}
\end{equation}
\begin{prop}\label{prop4.1}Suppose for all $x\in \mathcal{D}(\mathcal{M})$ we have   $\|g_j(x)\|_2\leq C_j\|x\|_2$ for all $1\leq j\leq k.$ Then we have that 
\begin{equation}
\bigl\|\sup_{t \geq 1}\!^+ N^{-k+1}(F_x)(t)\bigr\|_2\leq C_k^\prime\|x\|_2+\|\frac{d^{k-1}}{ds^{k-1}}|_{s=1}\pi(\sigma_s)x\|_2
\end{equation}
where $C_k^\prime$ depends only on $k$ and $C_1,\dots,C_k.$
\end{prop}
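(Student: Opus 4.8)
The plan is to reduce $\sup_{t\ge 1}{}^{+}N^{-k+1}(F_x)(t)$ to a $t$-independent boundary term plus two weighted time-averages that are controlled by the $g$-functions, and then invoke the hypothesis. First I would reduce to self-adjoint $x$: writing $x=x_1+ix_2$ with $x_1,x_2$ self-adjoint and using that the $L_p(\mathcal M;\ell_\infty)$-norm is insensitive to multiplication by a unimodular scalar, it suffices to treat $x=x^{*}$. Then $\pi(\sigma_s)x$, its dilation-derivatives $h_j(s):=\frac{d^{j}}{dr^{j}}|_{r=s}\pi(\sigma_r)x$, the functions $N^{-j}(F_x)(s)=s^{j}h_j(s)$ (valid for $s\ge\frac12$), and $g_j(x)$ are all self-adjoint, so the $\sup^{+}$-norm of the families in question may be computed through the order characterization $\big\|\sup_t{}^{+}y_t\big\|_2=\inf\{\|a\|_2:-a\le y_t\le a\ \text{for all }t\}$.

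Next I would set up the recursion. Combining $I^{\alpha}(F_x)(t)=t^{\alpha}N^{\alpha}(F_x)(t)$ with $I^{\alpha-1}=I^{-1}I^{\alpha}$ and $I^{-1}G=G'$ — all among the properties of $I^{\alpha}$ and $N^{\alpha}$ recorded above — one gets, for every $\alpha$,
\[
N^{\alpha-1}(F_x)(t)=(\alpha-1)\,N^{\alpha}(F_x)(t)+\frac{d}{dt}\Big[t\,N^{\alpha}(F_x)(t)\Big].
\]
Taking $\alpha=-k+1$, integrating from $1$ to $t$ (legitimate since all functions of $t$ in sight are $C^{\infty}$ with values in $L_2(\mathcal M)$, as set up in the preceding discussion of $C^\infty$ vectors), and inserting the explicit formulas $N^{-k}(F_x)(s)=s^{k}h_k(s)$, $N^{-k+1}(F_x)(s)=s^{k-1}h_{k-1}(s)$ and $N^{-k+1}(F_x)(1)=h_{k-1}(1)$ valid on $[1,\infty)$, yields the representation
\[
N^{-k+1}(F_x)(t)=\frac1t\,h_{k-1}(1)+\frac1t\int_1^{t}s^{k}h_k(s)\,ds+\frac{k}{t}\int_1^{t}s^{k-1}h_{k-1}(s)\,ds.
\]

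Then I would estimate the three summands separately. For the first, $\big|\tfrac1t h_{k-1}(1)\big|\le|h_{k-1}(1)|$ for $t\ge1$, so it contributes at most $\|h_{k-1}(1)\|_2$, precisely the boundary term of the statement. For the middle summand I factor $s^{k}h_k(s)=s^{1/2}\cdot\big(s^{k-1/2}h_k(s)\big)$ and apply the operator Cauchy--Schwarz inequality \eqref{csmei} with scalar part $s^{1/2}$:
\[
\Big|\frac1t\int_1^{t}s^{k}h_k(s)\,ds\Big|\le\frac1t\Big(\int_1^{t}s\,ds\Big)^{1/2}\Big(\int_1^{t}s^{2k-1}|h_k(s)|^{2}\,ds\Big)^{1/2}\le\frac{1}{\sqrt2}\,g_k(x),
\]
using $\tfrac1t(\int_1^{t}s\,ds)^{1/2}\le\tfrac1{\sqrt2}$, $\int_1^{t}\le\int_1^{\infty}$ as positive operators, and operator monotonicity of the square root; the bound is \emph{uniform in $t$}, so this summand contributes at most $\tfrac1{\sqrt2}\|g_k(x)\|_2\le\tfrac{C_k}{\sqrt2}\|x\|_2$. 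The third summand is handled identically when $k\ge 2$, factoring $s^{k-1}h_{k-1}(s)=s^{1/2}\cdot\big(s^{k-3/2}h_{k-1}(s)\big)$ so that the residual weight $s^{2k-3}$ is exactly that in $g_{k-1}$, giving a contribution $\le\tfrac{k}{\sqrt2}C_{k-1}\|x\|_2$; when $k=1$ the third summand equals $\pi(\mu_t)x-\tfrac1t\int_0^{1}\pi(\sigma_s)x\,ds$, whose $\sup^{+}$-norm is $\lesssim\|x\|_2$ by Theorem~\ref{thm2.1}. Adding the three estimates gives the proposition with $C_k'$ depending only on $k$ and $C_1,\dots,C_k$.

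The routine but essential ingredients — the operator-valued fractional-calculus identities and the $C^{\infty}$-dependence of $t\mapsto\pi(\sigma_t)x$ on a dense set — are already in place from the earlier subsections. The real point of the argument, and the step I would treat most carefully, is the exponent bookkeeping: the normalizing factor $\tfrac1t$ built into $N^{\alpha}$ must exactly cancel the growth $(\int_1^{t}s\,ds)^{1/2}$, of order $t$, produced by Cauchy--Schwarz, which is what turns a pointwise-in-$t$ estimate into domination by a single fixed element of $L_2(\mathcal M)_+$ and hence permits passage to the $\sup^{+}$-norm; and the split $s^{k}=s^{1/2}\cdot s^{k-1/2}$ is forced by the requirement that the surviving weight be $s^{2k-1}$, the weight appearing in $g_k$. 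Everything else reduces to the triangle inequality for the $\sup^{+}$-norm.
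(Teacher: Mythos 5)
Your proposal is correct and follows essentially the same route as the paper: the identity you obtain by integrating the fractional-calculus relation $N^{-k}=-kN^{-k+1}+\tfrac{d}{dt}\bigl[tN^{-k+1}\bigr]$ from $1$ to $t$ is exactly the integration-by-parts representation the paper uses, and the subsequent steps (triangle inequality for the $\sup^{+}$-norm, operator Cauchy--Schwarz \eqref{csmei} with the split $s^{k}=s^{1/2}\cdot s^{k-1/2}$ to dominate by $g_k(x)$ and $g_{k-1}(x)$, plus the boundary term) coincide with the paper's argument. The only cosmetic differences are your in-line treatment of $k=1$ via Theorem~\ref{thm2.1} (the paper defers this case to Section~\ref{$L_2$-bound associated to spherical averages}, where the same computation appears) and your reduction to self-adjoint rather than positive $x$, neither of which changes the substance.
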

\begin{proof}
Let $x\in\mathcal{D}(\mathcal M)_{+}$. We proceed by induction on $k.$ For $k=1$ we refer to Section \eqref{$L_2$-bound associated to spherical averages}. Hence consider $k\geq 2.$ Using integration by parts we obtain
\begin{equation}\label{ineqgk}
\int_1^ts^k\frac{d^k}{ds^k}\pi(\sigma_s)xds=t^k\frac{d^{k-1}}{ds^{k-1}}{|_{s=t}}-\frac{d^{k-1}}{ds^{k-1}}{|_{s=1}}\pi(\sigma_s)x-\int_1^tks^{k-1}\frac{d^{k-1}}{ds^{k-1}}\pi(\sigma_s)xds
\end{equation}
Dividing by $t$ and rearranging we get
\begin{align}\label{triangleg2}
\bigl\|\sup_{t \ge 1}\!^+\, t^{k-1} \tfrac{d^{k-1}}{dt^{k-1}} \pi(\sigma_t)x \bigr\|_2
&\leq \left\| \left. \tfrac{d^{k-1}}{ds^{k-1}} \right|_{s=1} \pi(\sigma_s)x \right\|_2 \notag + \biggl\| \sup_{t \ge 1}\!^+ \, \frac{1}{t} \int_1^t s^k \tfrac{d^k}{ds^k} \pi(\sigma_s)x \, ds \biggr\|_2 \notag \\
&\quad + \biggl\| \sup_{t \ge 1}\!^+ \, \frac{k}{t} \int_1^t s^{k-1} \tfrac{d^{k-1}}{ds^{k-1}} \pi(\sigma_s)x \, ds \biggr\|_2.
\end{align}
Note that by \eqref{csmei} we have 
\begin{equation}\label{ineqgk-1}
\frac{1}{t} \int_1^t s^k \tfrac{d^k}{ds^k} \pi(\sigma_s)x \, ds\leq \Big|\frac{1}{t} \int_1^t s^k \tfrac{d^k}{ds^k} \pi(\sigma_s)x \, d\Big|\leq \frac{1}{t}\Big(\int_1^ts\ ds\Big)^{\frac{1}{2}}\Big(\int_1^ts^{2k-1}\Big|\frac{d^k}{ds^k}\pi(\sigma_s)x\Big|^2\Big)^{\frac{1}{2}}\leq g_k(x).
\end{equation}
Similarly, we also obtain
\begin{equation}
\frac{k}{t} \int_1^t s^{k-1} \tfrac{d^{k-1}}{ds^{k-1}} \pi(\sigma_s)x \, ds\leq C_k^{\prime\prime} g_{k-1}(x).
\end{equation}
Hence clearly, by \eqref{ineqgk-1}, \eqref{ineqgk} and \eqref{triangleg2} we obtain that 
\[\bigl\|\sup_{t \ge 1}\!^+\, t^{k-1} \tfrac{d^{k-1}}{dt^{k-1}} \pi(\sigma_t)x \bigr\|_2\leq C_k^\prime\|x\|_2+\|\frac{d^{k-1}}{ds^{k-1}}|_{s=1}\pi(\sigma_s)x\|_2\]
where $C_k^\prime$ depends only on $k$ and $C_1,\dots,C_k.$ This completes the proof of then proposition.
\end{proof}
\begin{prop}\label{prophighorder}
For $x\in \mathcal{D}$ we have 
\begin{equation}
\|g_k(x)\|_2^2=\int_{\Sigma}\int_1^\infty t^{2k-1}|\varphi_z^{(k)}(t)|^2dtd\nu_x(z),
\end{equation}
where $\nu_x$ is the spectral measure determined by $x$ on the spectrum $\Sigma.$
\end{prop}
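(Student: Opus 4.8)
The plan is to reduce the claimed identity to the spectral-theorem representation already set up in Subsection~\ref{gelfandtrick}, exactly as in the $k=1$ case recorded in Section~\ref{$L_2$-bound associated to spherical averages}. First I would observe that for $x\in\mathcal D(\mathcal M)$ the map $t\mapsto\pi(\sigma_t)x$ is a $C^\infty$ map from $\mathbb R_+$ to $L_2(\mathcal M)$ (as explained at the start of Section~\ref{$L_2$-bound associated to spherical averages}, using that $\pi(\sigma_t)x=\pi(m_K)\pi(\exp(tH))\pi(m_K)x$), so that $\frac{d^k}{ds^k}\big|_{s=t}\pi(\sigma_s)x$ exists in $\|\cdot\|_2$ and, by closedness of the differentiation and the spectral calculus, equals $\pi(\mu_t^{(k)})x$ where $\mu_t^{(k)}$ is obtained by differentiating the measures $\sigma_t$ in the spectral variable; concretely, in the spectral representation \eqref{spectalmethodheart} one has, for a vector $v$ with spectral measure $m_v$, that $\frac{d^k}{dt^k}\pi(\sigma_t)v$ is represented by the function $z\mapsto \varphi_z^{(k)}(t)$ acting by multiplication. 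This is the key point: differentiation in $t$ commutes with the spectral decomposition because the family $(\pi(\sigma_t))_{t>0}\subset A_\pi$ lies in the commutative $C^*$-algebra and $t\mapsto\sigma_t$ is smooth in the relevant sense, so $\big\|\frac{d^k}{ds^k}\big|_{s=t}\pi(\sigma_s)x\big\|_2^2=\int_\Sigma|\varphi_z^{(k)}(t)|^2\,d\nu_x(z)$ for each fixed $t$, where $\nu_x$ is the (pushed-forward) spectral measure of $x$ on $\Sigma$.

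Granting this pointwise-in-$t$ identity, I would then compute
\[
\|g_k(x)\|_2^2=\tau\!\left(\int_1^\infty t^{2k-1}\Big|\tfrac{d^k}{ds^k}\big|_{s=t}\pi(\sigma_s)x\Big|^2\,dt\right)
=\int_1^\infty t^{2k-1}\Big\|\tfrac{d^k}{ds^k}\big|_{s=t}\pi(\sigma_s)x\Big\|_2^2\,dt,
\]
using linearity of $\tau$ together with Fubini/Tonelli (justified because the integrand is nonnegative and, for $x\in\mathcal D(\mathcal M)$, the $t$-derivatives are controlled locally uniformly, so the double integral is finite — this is the same interchange used just before the statement of Lemma stating $\|g_1(x)\|_2^2=\int_1^\infty t\|\frac{d}{dt}\pi(\sigma_t)x\|_2^2\,dt$). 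Substituting the spectral identity for $\big\|\frac{d^k}{ds^k}\big|_{s=t}\pi(\sigma_s)x\big\|_2^2$ and applying Tonelli once more to interchange the $\int_1^\infty\,dt$ and $\int_\Sigma\,d\nu_x(z)$ integrals yields
\[
\|g_k(x)\|_2^2=\int_1^\infty t^{2k-1}\int_\Sigma|\varphi_z^{(k)}(t)|^2\,d\nu_x(z)\,dt=\int_\Sigma\int_1^\infty t^{2k-1}|\varphi_z^{(k)}(t)|^2\,dt\,d\nu_x(z),
\]
which is the asserted formula.

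The main obstacle is the justification that $k$-fold $t$-differentiation of $\pi(\sigma_t)x$ is faithfully represented in the spectral picture by $z\mapsto\varphi_z^{(k)}(t)$ — i.e., that one may differentiate under the spectral integral. For $x\in\mathcal D(\mathcal M)$ this should follow by induction on $k$: assuming the representation holds for order $k-1$, one writes the difference quotient $\tfrac1h\big(\pi(\mu^{(k-1)}_{t+h})x-\pi(\mu^{(k-1)}_{t})x\big)$, which is represented by $z\mapsto\tfrac1h\big(\varphi_z^{(k-1)}(t+h)-\varphi_z^{(k-1)}(t)\big)$, and checks that this converges in $L_2(\nu_x)$ as $h\to0$ to $z\mapsto\varphi_z^{(k)}(t)$, using the uniform bounds on $\varphi_z$ and its $t$-derivatives over $z\in\Sigma$ coming from Proposition~\ref{propofsphericalfn} (and the asymptotic estimates of~\cite{Nevo97, Nevo94}) together with dominated convergence; the $L_2(\mathcal M)$-convergence of the difference quotient on the left is exactly the smoothness statement from the opening of Section~\ref{$L_2$-bound associated to spherical averages}. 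Since the proof is parallel to the $k=1$ case treated in~\cite[Lemma~4]{Nevo94}, I would present it briefly, noting that it ``follows verbatim'' as there, with the only new ingredient being the iteration.
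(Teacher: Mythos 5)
Your proposal is correct and follows essentially the same route as the paper: both reduce the identity to the spectral formula $\bigl\|\tfrac{d^k}{dt^k}\pi(\sigma_t)x\bigr\|_2^2=\int_{\Sigma}\bigl|\varphi_z^{(k)}(t)\bigr|^2\,d\nu_x(z)$ (which the paper, like you, attributes to the argument of \cite{Nevo94,Nevo97}) and then apply linearity of $\tau$ together with Fubini--Tonelli twice. Your added inductive difference-quotient justification of differentiating under the spectral integral is simply a more explicit version of what the paper cites, so there is no substantive difference.
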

\begin{proof}
Note that we have by using Fubini's theorem
\begin{align}
\|g_k(x)\|_2^2
  &= \tau\!\left(
        \int_{1}^{\infty} t^{2k-1}
        \Bigl|
          \tfrac{d^{\,k}}{dt^{\,k}}\pi(\sigma_t)\,x
        \Bigr|^{2}
        \,dt
     \right) \\ \nonumber
  &= \int_{1}^{\infty} t^{2k-1}
     \Bigl\|
       \tfrac{d^{\,k}}{dt^{\,k}}\pi(\sigma_t)\,x
     \Bigr\|_2^{2}\,dt \\ \label{spectralhigher}
  &= \int_{1}^{\infty} t^{2k-1}
     \int_{\Sigma}
       \Bigl|
         \tfrac{d^{\,k}}{dt^{\,k}}\varphi_{z}(t)
       \Bigr|^{2}
       \,d\nu_x(z)\,dt \\ 
  &= \int_{\Sigma}
       \int_{1}^{\infty} t^{2k-1}
         \bigl|\varphi_{z}^{(k)}(t)\bigr|^{2}\,dt
       \,d\nu_x(z).\nonumber
\end{align}
In above equation we have also used that $\|\frac{d^k}{dt^k}\pi(\sigma_t)x\|_2^2=\int_{\Sigma}\Big|\frac{d^k}{dt^k}\varphi_z(t)\Big|^2d\nu_x(z)$, the proof of which follows along the same line as in \cite{Nevo94, Nevo97}.
\end{proof}
We follow the notation $N^\alpha(F_x)(t)\equiv N_t^\alpha(x)$ where $x\in \mathcal{D}(\mathcal M).$

\begin{prop}\label{oddcriticall2esti}
Let \( G = \mathrm{SO}^0(n,1) \), where \( n \geq  3 \) is odd. Then for \( \alpha = a + ib \in \mathbb{C} \) and \( x \in L_2(\mathcal M) \), the maximal inequality
\[
\bigl\|\sup_{t \ge 1}\!^+N_t^\alpha x \bigr\|_{L_2(\mathcal M)} \leq C_a\, e^{\pi |b|} \|x\|_{L_2(\mathcal M)}
\]
holds, provided \( a > -\frac{n}{2} + 1 \).
\end{prop}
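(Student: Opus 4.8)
The plan is to transpose the argument of Nevo \cite{Nevo97} to the noncommutative maximal norm; the one genuinely new ingredient is a self-adjoint square-function decomposition of $\sup\limits_{t\geq1}\!^{+}N_t^\alpha x$ that feeds into the spectral theorem for the Gelfand pair $(SO^0(n,1),SO(n))$ together with the asymptotics of Proposition~\ref{propofsphericalfn}. First I would reduce to a convenient dense class. By Lemma~\ref{lem2.1}, the density of $\mathcal{D}_K(\mathcal M)$ in $\pi(m_K)(L_2(\mathcal M))$, and the identity $\pi(\sigma_t)\pi(m_K)=\pi(\sigma_t)$ — so that $N_t^\alpha x=N_t^\alpha\pi(m_K)x$ and the non-$K$-invariant part of $x$ is annihilated — it suffices to treat $x\in\mathcal{D}_K(\mathcal M)$; for such $x$ the map $t\mapsto\pi(\sigma_t)x$ is a $C^\infty$ $L_2(\mathcal M)$-valued function, hence so is $t\mapsto N_t^\alpha x$ in the analytically continued sense, and all its $t$-derivatives exist in $L_2$. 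Writing $x=x_1+ix_2$ with $x_1,x_2$ self-adjoint and splitting each complex scalar kernel occurring in $N^\alpha$ and in its integration-by-parts continuation into real and imaginary parts, I would reduce further to bounding $\bigl\|\sup\limits_{t\geq1}\!^{+}h_t\bigr\|_2$, where $h_t$ ranges over the finitely many resulting $C^\infty$ families of \emph{self-adjoint} operators. This step isolates the nuisance that $N_t^\alpha x$ itself is not self-adjoint when $\alpha\notin\mathbb R$.

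Next I would prove a self-adjoint square-function estimate. For a $C^1$ family $(h_s)_{s\geq1}$ of self-adjoint operators one has $\frac{d}{ds}(h_s^2)=(\partial_s h_s)h_s+h_s(\partial_s h_s)\leq s(\partial_s h_s)^2+s^{-1}h_s^2$, since $(s^{1/2}\partial_s h_s-s^{-1/2}h_s)^2\geq0$; integrating from $1$ produces a positive operator $a$ with $h_t^2\leq a$ for all $t\geq1$, whence by operator monotonicity of the square root $-a^{1/2}\leq h_t\leq a^{1/2}$ and therefore
\[
\bigl\|\sup\limits_{t\geq1}\!^{+}h_t\bigr\|_2\;\leq\;\tau(a)^{1/2}\;=\;\Bigl(\|h_1\|_2^2+\int_1^\infty s\,\|\partial_s h_s\|_2^2\,ds+\int_1^\infty s^{-1}\|h_s\|_2^2\,ds\Bigr)^{1/2}.
\]
This is the fractional-integral analogue of the $g_1$-estimate behind Theorem~\ref{l2bdsphere} and of Proposition~\ref{prop4.1}, recast so as to involve only self-adjoint quantities. (Alternatively one can subtract the $\mu$-type term $N_t^1$ as in item~5 of Section~\ref{forpodd} and invoke Theorem~\ref{thm2.1}; the two-square-function form above is however self-contained.)

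I would then pass to the spectral side. Every $\pi(\sigma_s)$ lies in the commutative von Neumann algebra generated by $\pi(M(G,K))$, so writing $E$ for its projection-valued spectral measure we have $N_s^\alpha x=\int_{\Sigma}\bigl(N_s^\alpha\widetilde\varphi_z\bigr)(s)\,dE(z)x$, where $\widetilde\varphi_z(s):=\zeta(s)\varphi_z(a_s)$ and $N_s^\alpha\widetilde\varphi_z$ denotes the scalar normalized fractional integral of $\widetilde\varphi_z$; the same holds for all $t$-derivatives. Hence by \eqref{veryimspec} and its differentiated versions, $\|h_s\|_2^2=\int_{\Sigma}|h_z(s)|^2\,dm_x(z)$ and $\|\partial_s h_s\|_2^2=\int_{\Sigma}|\partial_s h_z(s)|^2\,dm_x(z)$, where $h_z(s)$ is the corresponding real or imaginary part of $\bigl(N_s^\alpha\widetilde\varphi_z\bigr)(s)$. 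Since $\int_{\Sigma}dm_x=\|x\|_2^2$, the whole problem reduces to the scalar uniform bound
\[
\sup_{z\in\Sigma}\Bigl(|h_z(1)|^2+\int_1^\infty s\,|\partial_s h_z(s)|^2\,ds+\int_1^\infty s^{-1}|h_z(s)|^2\,ds\Bigr)\;\leq\;C_a^2\,e^{2\pi|b|}.
\]
These are exactly the estimates for fractional integrals of Harish--Chandra spherical functions established in \cite{Nevo97}: using Proposition~\ref{propofsphericalfn} — for the tempered part $z=i\lambda$ the decay $|\varphi_{i\lambda}|\leq\varphi_0\lesssim(1+s)e^{-\rho_n s}$ together with the oscillation carried by the $c$-function expansion in part~(4), and for the complementary part $z=s\in(0,\rho_n]$ the bound $0\leq\varphi_s(r)\lesssim(\cosh r)^{s-\rho_n}$ — one checks that the three integrals converge uniformly in $z\in\Sigma$ precisely when $a>-\tfrac n2+1$, while the $\Gamma$-factors $1/\Gamma(\alpha+j)$ entering $N^\alpha$ and its analytic continuation are controlled by Stirling's formula and contribute a factor bounded by $e^{\pi|b|}$. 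Summing the finitely many self-adjoint pieces then gives the asserted inequality.

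I expect the main obstacle to be the scalar estimate in the last step near the critical value $a=-\tfrac n2+1$, where the admissible range is sharp and $C_a$ degenerates; this is where the oddness of $n$ (so that $\rho_n=\tfrac{n-1}{2}$ is an integer) is used, as it dictates how many integrations by parts — equivalently which integer order $-k+1$ — are needed to continue $N^\alpha$ analytically so that the spherical-function asymptotics apply with an integrable remainder. A secondary, purely noncommutative, point of care is keeping the real/imaginary splitting of the first step compatible with the $e^{\pi|b|}$ bookkeeping, since the resulting self-adjoint families must all be bounded by the same spectral integrals.
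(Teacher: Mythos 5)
Your reduction to $\mathcal D_K(\mathcal M)$, the self-adjoint splitting, and the operator inequality $h_t^2\leq h_1^2+\int_1^\infty s(\partial_sh_s)^2\,ds+\int_1^\infty s^{-1}h_s^2\,ds$ are fine in themselves, but the scalar estimate you then need is false in exactly the range that matters, so the proposal does not prove the stated proposition. Two concrete failures. First, the zeroth-order term: for $z$ in the complementary series near the trivial character ($z\to\rho_n$, and at $z=\rho_n$ where $\varphi_z\equiv 1$), $N_s^\alpha\varphi_z$ does not decay in $s$, so $\int_1^\infty s^{-1}|N_s^\alpha\varphi_z|^2\,ds$ diverges (logarithmically as $z\to\rho_n$, outright at $z=\rho_n$); hence the supremum over $\Sigma$ of your bracket is infinite for every $a$. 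This is why the paper never uses a plain $\int s^{-1}\|h_s\|_2^2$ term: the near-trivial spectrum is handled by comparing with the $N^1_t$ (i.e.\ $\pi(\mu_t)$) part and invoking Theorem~\ref{thm2.1}; the step you list as an optional alternative is in fact mandatory. Second, and more fundamentally, the derivative term loses half a derivative on the principal series: since $t\,\partial_tN_t^\alpha=N_t^{\alpha-1}-\alpha N_t^\alpha$ and $|N_t^{\alpha-1}\varphi_{i\lambda}|\lesssim e^{-ct}\,|\mathbf{c_n}(i\lambda)|\,|\lambda|^{-a+1}$ with $|\mathbf{c_n}(i\lambda)|\simeq|\lambda|^{-\rho_n}$, uniform boundedness of $\int_1^\infty s\,|\partial_sN_s^\alpha\varphi_{i\lambda}|^2\,ds$ over $\lambda\geq1$ forces $\rho_n+a-1\geq0$, i.e.\ $a\geq-\tfrac n2+\tfrac32$. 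So your method cannot reach the claimed range $a>-\tfrac n2+1$: the window $-\tfrac n2+1<a<-\tfrac n2+\tfrac32$ --- precisely the part needed to feed the interpolation giving $p>\tfrac{n}{n-1}$ --- is out of reach, and your assertion that the three integrals converge uniformly on $\Sigma$ precisely when $a>-\tfrac n2+1$ is incorrect.

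The missing half derivative is recovered in the paper by a different mechanism. First the integer-order maximal inequality for $N^{-m+1}$ (with $n=2m+1$) is proved via the $g$-functions $g_k$, $k\leq m$ (Propositions~\ref{prop4.1}, \ref{prophighorder}, \ref{4.4}), whose spectral densities $\int_1^\infty t^{2k-1}|\varphi_z^{(k)}(t)|^2\,dt$ are uniformly bounded on all of $\Sigma$ because derivatives vanish on the trivial character (this is also where the oddness of $n$ enters, via the scalar bounds of \cite{Nevo97}). Then one descends to every real $a>-m+\tfrac12=-\tfrac n2+1$ by factoring $I^{-\delta-m+1}=I^{-\delta}\bigl(I^{-m+1}\bigr)$ and applying Cauchy--Schwarz to the kernel $(t-s)^{-\delta}$, which is square-integrable precisely for $\delta<\tfrac12$ (Proposition~\ref{4.6}); this gains half an order without differentiating the maximal family, which your symmetric square-function bound cannot do. Finally the factor $e^{\pi|b|}$ for complex $\alpha$ is obtained by the duality and $\Gamma$-ratio argument of Proposition~\ref{4.7} (rerunning the fractional-integration step with $-\delta+ib$ when $a\leq0$), not by a real/imaginary splitting. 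To salvage your plan you would have to replace the two-square-function inequality by this fractional-kernel Cauchy--Schwarz device (or an equivalent half-derivative gain) and treat the invariant/near-trivial part through Theorem~\ref{thm2.1}.
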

The above proposition will require the following three results, all of which hold for arbitrary \( n\geq  3 \), whether even or odd. The first establishes a maximal inequality for a differentiation operator involving a whole number of derivatives. Let \( G = \mathrm{SO}^0(n,1) \), and assume \( n\geq 2m + 1 \geq 3 \). Then the following proposition holds:

\begin{prop}\label{4.4}
There exists a constant $C_m>0$ such that for all $x\in\mathcal{D}(\mathcal{M})$
\[
\bigl\|\sup_{t \ge 1}\!^+N_t^{-m+1} x \bigr\|_{L_2(\mathcal M)} \leq C_m \|x\|_{L_2(\mathcal M)}.
\]
\end{prop}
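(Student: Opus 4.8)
The plan is to deduce the statement from Proposition~\ref{prop4.1} applied with $k=m$, after checking its hypotheses. Since $N_t^{-m+1}x=N^{-m+1}(F_x)(t)=N^{-(m-1)}(F_x)(t)$, Proposition~\ref{prop4.1} will give
\[
\bigl\|\sup_{t\geq 1}\!^+ N_t^{-m+1}x\bigr\|_2\;\leq\; C_m'\,\|x\|_2+\Bigl\|\tfrac{d^{m-1}}{ds^{m-1}}\big|_{s=1}\pi(\sigma_s)x\Bigr\|_2
\]
as soon as $\|g_j(x)\|_2\leq C_j\|x\|_2$ holds for all $1\leq j\leq m$ and all $x\in\mathcal{D}(\mathcal M)$. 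By Proposition~\ref{prophighorder}, $\|g_j(x)\|_2^2=\int_\Sigma\int_1^\infty t^{2j-1}|\varphi_z^{(j)}(t)|^2\,dt\,d\nu_x(z)$, and evaluating at the fixed radius $t=1$ the identity $\|\tfrac{d^k}{dt^k}\pi(\sigma_t)x\|_2^2=\int_\Sigma|\varphi_z^{(k)}(t)|^2\,d\nu_x(z)$ used in its proof gives $\|\tfrac{d^{m-1}}{ds^{m-1}}|_{s=1}\pi(\sigma_s)x\|_2^2=\int_\Sigma|\varphi_z^{(m-1)}(1)|^2\,d\nu_x(z)$; since $\int_\Sigma d\nu_x=\|x\|_2^2$, both ingredients reduce to the scalar estimates
\[
M_j:=\sup_{z\in\Sigma}\int_1^\infty t^{2j-1}\bigl|\varphi_z^{(j)}(t)\bigr|^2\,dt<\infty\quad(1\leq j\leq m),\qquad \sup_{z\in\Sigma}\bigl|\varphi_z^{(m-1)}(1)\bigr|<\infty .
\]
These are precisely the spherical-function bounds established for rank-one groups in \cite{Nevo94,Nevo97} (Section~6 of \cite{Nevo94} treats $j=1$, while \cite{Nevo97} treats the higher derivatives); I would reproduce that argument, which is purely scalar, living on the spectrum $\Sigma$ through Propositions~\ref{prophighorder} and~\ref{propofsphericalfn}, and therefore transferring verbatim to the present operator-valued setting.

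To prove $M_j<\infty$ I would split $\Sigma=\{i\lambda:\lambda\geq 0\}\cup\{s:0<s\leq\rho_n\}$ and isolate a fixed neighbourhood of $z=0$. On the principal axis with $\lambda\geq 1$, insert the Harish--Chandra expansion of Proposition~\ref{propofsphericalfn}(4), differentiate it $j$ times in the radial variable, and use $a_0(\pm i\lambda)=1$, $|a_k(\pm i\lambda)|\lesssim k^q$, together with the classical decay $|\mathbf{c_n}(i\lambda)|\lesssim(1+\lambda)^{-\rho_n}$; each resulting term of $\varphi_{i\lambda}^{(j)}(t)$ is then dominated by $|\mathbf{c_n}(i\lambda)|(1+\lambda)^j e^{-\rho_n t}$ up to the unimodular factor $e^{i\lambda t}$, so
\[
\int_1^\infty t^{2j-1}\bigl|\varphi_{i\lambda}^{(j)}(t)\bigr|^2\,dt\;\lesssim\;(1+\lambda)^{2(j-\rho_n)}\int_1^\infty t^{2j-1}e^{-2\rho_n t}\,dt\;\lesssim\;(1+\lambda)^{2(j-\rho_n)},
\]
which is uniformly bounded precisely because the hypothesis $n\geq 2m+1\geq 2j+1$ forces $\rho_n=\tfrac{n-1}{2}\geq j$. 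For $s$ bounded away from $0$ and $\rho_n$ the spherical function decays like $e^{-(\rho_n-s)t}$ with $\rho_n-s$ bounded below, which is harmless. For $s$ near $\rho_n$ one uses that, in the hyperbolic integral representation \eqref{sphercialfnparam}, the kernel $u(t,\theta)=\cosh t-\sinh t\cos\theta$ satisfies $\partial_t^2 u=u$, so that $j$-fold radial differentiation produces $\varphi_s^{(j)}(t)=(s-\rho_n)^j\varphi_s(t)+(\text{terms decaying faster in }t)$; the compensating factor $(\rho_n-s)^{2j}$ against $\int_1^\infty t^{2j-1}e^{-2(\rho_n-s)t}\,dt\sim(\rho_n-s)^{-2j}$, together with $0\leq\varphi_s(t)\leq(1+\mathbf{c_n}(s))(\cosh t)^{s-\rho_n}$ of Proposition~\ref{propofsphericalfn}(3) ($\mathbf{c_n}$ being bounded there), yields a uniform bound, consistently with $\varphi_{\rho_n}\equiv 1$ whose radial derivatives vanish. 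Finally, on a neighbourhood of $z=0$, where $\mathbf{c_n}$ has a simple pole, one combines the $\mathbf{c_n}(z)\psi_z$ and $\mathbf{c_n}(-z)\psi_{-z}$ contributions and uses their cancellation, so that $e^{\rho_n t}\varphi_z(t)$ and its radial derivatives stay bounded as $z\to 0$, matching $|\varphi_0(t)|\leq B(1+t)e^{-\rho_n t}$ of Proposition~\ref{propofsphericalfn}(5); the weighted integral converges uniformly there too. The bound on $|\varphi_z^{(m-1)}(1)|$ comes from the same three cases evaluated at $t=1$, the principal-axis contribution being $\lesssim(1+\lambda)^{(m-1)-\rho_n}$, which is bounded since $\rho_n\geq m-1$.

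With $M_1,\dots,M_m<\infty$ and $\sup_{z\in\Sigma}|\varphi_z^{(m-1)}(1)|<\infty$ in hand, Proposition~\ref{prophighorder} gives $\|g_j(x)\|_2\leq M_j^{1/2}\|x\|_2$ and the spectral identity gives $\|\tfrac{d^{m-1}}{ds^{m-1}}|_{s=1}\pi(\sigma_s)x\|_2\leq C\|x\|_2$; feeding these into Proposition~\ref{prop4.1} with $k=m$ yields $\|\sup_{t\geq 1}\!^+N_t^{-m+1}x\|_2\leq C_m\|x\|_2$, which is the assertion. I expect the only genuine difficulty to lie at the two ends of the complementary spectrum: near $z=0$, where the two Harish--Chandra terms blow up individually and only their difference is controlled, and near $s=\rho_n$, where the exponential decay degenerates and one must exploit the cancellation encoded by $\partial_t^2 u=u$. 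The precise bookkeeping, which tracks the $k^q$ growth of $a_k(\pm z)$ against the $e^{(z-2k)t}$ factors uniformly down to the pole, is exactly what is carried out in \cite{Nevo94,Nevo97}, and because every estimate above is purely scalar it applies here without change.
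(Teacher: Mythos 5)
Your proposal is correct and follows essentially the same route as the paper: reduce the maximal inequality to Proposition~\ref{prop4.1} with $k=m$, verify its hypotheses via the spectral identity of Proposition~\ref{prophighorder}, and control both the $g$-function bounds and the boundary term $\bigl\|\tfrac{d^{m-1}}{ds^{m-1}}\big|_{s=1}\pi(\sigma_s)x\bigr\|_2$ by the uniform scalar estimates $\sup_{z\in\Sigma}\int_1^\infty t^{2j-1}|\varphi_z^{(j)}(t)|^2\,dt<\infty$ and $\sup_{z\in\Sigma}|\varphi_z^{(m-1)}(1)|<\infty$. The only difference is that the paper simply cites these spherical-function bounds from \cite{Nevo94,Nevo97} (the proposition quoted just before), whereas you sketch their proof; that sketch is consistent with the cited sources and does not change the argument.
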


\begin{prop}\label{4.6}
For every \( a > -m + \frac{1}{2} \), there exists constant $C_a>0$ such that
\[
\bigl\|\sup_{t \ge 1}\!^+N_t^{a} x\|_{L_2(\mathcal M)} \leq C_a \|x\|_{L_2(\mathcal M)},
\quad \text{for all } x \in \mathcal{D}(\mathcal M).
\]
\end{prop}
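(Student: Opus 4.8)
\emph{Overall approach.} I would establish the inequality for all $a>-m+\tfrac12$ by carrying the Littlewood--Paley argument behind Theorem~\ref{l2bdsphere} and Proposition~\ref{prop4.1} over to a \emph{non-integral} amount of differentiation, anchoring it at the integer point $\alpha=-m+1$ via Proposition~\ref{4.4}. By Lemma~\ref{lemma2.3} it suffices to treat $x$ in the dense class $\mathcal D_K(\mathcal M)$, and after splitting $x$ into real/imaginary and then positive/negative parts one may assume $x=x^*$; then $t\mapsto N_t^{a}x$ is a $C^\infty$ map into $L_2(\mathcal M)$ (Section~\ref{forpodd}), and each $N_t^{a}x$ is self-adjoint --- the Riemann--Liouville kernel $(1-s)^{a-1}/\Gamma(a)$, and for $\operatorname{Re}a\le0$ its analytic continuation built from the repeated integrations by parts of Section~\ref{forpodd}, being real-valued, while $\pi$ preserves self-adjointness. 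The range $a\ge1$ is elementary: since $0\le(1-s)^{a-1}\le1$ on $[0,1]$, for $x\ge0$ one has the operator domination
\[
0\le N_t^{a}x\le\Gamma(a)^{-1}N_t^{1}x=\Gamma(a)^{-1}\bigl(\pi(\mu_t)x-t^{-1}B_\zeta x\bigr)\le\Gamma(a)^{-1}\pi(\mu_t)x
\]
by the relation $N^{1}F_x(t)=\pi(\mu_t)x-t^{-1}B_\zeta x$ and $B_\zeta x\ge0$, whence $\bigl\|\sup_{t\ge1}^+N_t^{a}x\bigr\|_2\le\Gamma(a)^{-1}\bigl\|\sup_{t\ge1}^+\pi(\mu_t)x\bigr\|_2\le C_a\|x\|_2$ by Theorem~\ref{thm2.1}.

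\emph{The square-function step for $-m+\tfrac12<a<1$.} Here I would mimic the derivation of Theorem~\ref{l2bdsphere} and Proposition~\ref{prop4.1}. Differentiating $t^{\alpha}N_t^{\alpha}x=I^{\alpha}F_x(t)$ and using $\tfrac{d}{dt}I^{\alpha}=I^{\alpha-1}$ yields the operator identity $N_t^{\alpha-1}x=\alpha N_t^{\alpha}x+t\tfrac{d}{dt}N_t^{\alpha}x$; integrating this by parts in $t$ against appropriate powers of $t$, in the manner of \eqref{ineqgk}, produces for $x\in\mathcal D_K(\mathcal M)$ a representation of $N_t^{a}x$ whose maximal function over $t\ge1$ is controlled by: (i) a fixed boundary vector of $L_2$-norm $\le C\|x\|_2$ (the spectral theorem identifies it as $\int_\Sigma|\cdot|^2\,dm_x$ of a function bounded on $\Sigma$ by Proposition~\ref{propofsphericalfn}); (ii) the operators $N_t^{-m+1}x$, whose maximal function is bounded by Proposition~\ref{4.4}; (iii) the operators $N_t^{a+1}x$ (the exponent having been raised by one), absorbed by the elementary range $a\ge1$ handled above or, if $a+1<1$, by a finite recursion; and (iv) an integral term that the operator Cauchy--Schwarz inequality \eqref{csmei} dominates by the Littlewood--Paley square function
\[
G(x):=\Bigl(\int_1^\infty t\,\bigl|\tfrac{d}{dt}N_t^{a}x\bigr|^{2}\,dt\Bigr)^{1/2}.
\]
The proposition is thereby reduced to the bound $\|G(x)\|_2\le C_a\|x\|_2$.

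\emph{Spectral reduction of the square function.} By $\tau$-linearity and Fubini $\|G(x)\|_2^2=\int_1^\infty t\,\bigl\|\tfrac{d}{dt}N_t^{a}x\bigr\|_2^2\,dt$, and, exactly as in Proposition~\ref{prophighorder} (whose proof, following \cite{Nevo94,Nevo97}, uses only the smoothness of $\mathcal D(\mathcal M)$-vectors together with the spectral theorem for the Gelfand pair $(SO^0(n,1),SO(n))$), one gets $\bigl\|\tfrac{d}{dt}N_t^{a}x\bigr\|_2^2=\int_\Sigma\bigl|\tfrac{d}{dt}n_z^{a}(t)\bigr|^2\,dm_x(z)$, where $n_z^{a}(t):=N^{a}\bigl(\zeta(\cdot)\varphi_z(\cdot)\bigr)(t)$ is the scalar transform of the spherical function $\varphi_z$. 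Hence $\|G(x)\|_2^2=\int_\Sigma\bigl(\int_1^\infty t\,|\tfrac{d}{dt}n_z^{a}(t)|^2\,dt\bigr)\,dm_x(z)$, and since $\int_\Sigma dm_x=\|x\|_2^2$ it suffices to prove that $\int_1^\infty t\,|\tfrac{d}{dt}n_z^{a}(t)|^2\,dt$ is bounded uniformly over $z\in\Sigma$. This is the scalar Littlewood--Paley estimate for analytically continued spherical means proved in \cite{Nevo97} from the asymptotics of Proposition~\ref{propofsphericalfn} --- the delicate input being the near-cancellation of the two $\mathbf c_n$-terms in the expansion of $\varphi_z$ as $z\to0$ --- and it is valid precisely in the stated range (for $n=2m+1$, $-m+\tfrac12$ is the critical exponent $1-\tfrac n2$). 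A density argument (Lemma~\ref{lem2.1}) together with the uniform $L_2$-boundedness of the operators $N_t^{a}$ then extends the inequality from $\mathcal D_K(\mathcal M)$ to all of $L_2(\mathcal M)$.

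\emph{Where the difficulty lies.} The main obstacle is the integration-by-parts representation of the second step when $a$ is negative: since $N^{a}$ is only defined there through the iterated integrations by parts of Section~\ref{forpodd}, one must track every boundary contribution, check that each is a fixed smooth vector with $L_2$-norm $\le C\|x\|_2$, choose the auxiliary powers of $t$ in the integration by parts so that no growing factor appears on $[1,\infty)$ (which, when $a<-1$, obliges one to iterate the reduction through several values of the exponent before landing on Proposition~\ref{4.4}), and verify that the constants in the scalar estimates of \cite{Nevo97}, which worsen as $a\downarrow-m+\tfrac12$, do not force the range to close up. Granting all this, the operator Cauchy--Schwarz step, the spectral reduction, and the appeal to the scalar spherical-function bounds are routine.
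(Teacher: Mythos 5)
There is a genuine gap, and it sits at the central step of your argument: the reduction of the maximal inequality to the derivative-only square function $G(x)=\bigl(\int_1^\infty t\,|\tfrac{d}{dt}N_t^{a}x|^2\,dt\bigr)^{1/2}$, together with the claim that $\sup_{z\in\Sigma}\int_1^\infty t\,|\tfrac{d}{dt}N^{a}(\zeta\varphi_z)(t)|^2\,dt<\infty$ for all $a>-m+\tfrac12$ "as proved in \cite{Nevo97}". That scalar estimate is not in \cite{Nevo97} (what is proved there, and what the paper imports, are the integer-order bounds $\sup_{z}\int_1^\infty t^{2k-1}|\varphi_z^{(k)}(t)|^2dt<\infty$ for $1\le k\le m$ and the related bounds on $[1/4,1]$ and at $t=1$), and in fact it is false precisely in the range that matters. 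On the principal series $z=i\lambda$, $|\lambda|\ge 1$, the best available decay is $|N_t^{a}\varphi_{i\lambda}|\lesssim e^{-c t}|\mathbf{c_n}(i\lambda)|\,|\lambda|^{-a}$ and $|\tfrac{d}{dt}N_t^{a}\varphi_{i\lambda}|\lesssim e^{-ct}|\mathbf{c_n}(i\lambda)|\,|\lambda|^{1-a}$ (cf.\ Lemma~\ref{mtdefnbd}), so $\int_1^\infty t\,|\tfrac{d}{dt}N^{a}\varphi_{i\lambda}|^2dt\sim |\mathbf{c_n}(i\lambda)|^2|\lambda|^{2-2a}\sim|\lambda|^{2(-m+1-a)}$, which is unbounded for $-m+\tfrac12<a<-m+1$, i.e.\ exactly on the stretch by which Proposition~\ref{4.6} improves on the integer-order Proposition~\ref{4.4}. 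The extra half-derivative cannot be recovered from the derivative square function alone: one must balance it against the square function of $N_t^{a}x$ itself (this is what Lemma~\ref{somel2bd} with the spectrally localized choice $\ell=2^{-k}$ accomplishes in Section~\ref{nevost}), or avoid fractional $g$-functions altogether. Your recursion "raise the exponent by one" does not repair this, since the problematic term is the one carrying the derivative at the original order $a$.

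The paper's proof takes the second route and uses only the integer-order inputs. Writing $a=-\delta-m+1$ with $0<\delta<\tfrac12$, it uses the semigroup property $I^{-\delta-m+1}F_x=I^{-\delta}\bigl(I^{-m+1}F_x\bigr)$, splits the $s$-integral at $t/2$ and integrates by parts once, obtaining $N^{-\delta-m+1}F_x(t)=I_t(x)+II_t(x)+III_t(x)$. The boundary term $II_t$ and the far part $I_t$ are handled by the $N^{-m+1}$ maximal bound (Proposition~\ref{4.4} and its variant on $t\ge 1/2$, resp.\ $t>0$), while the near part $III_t$ is controlled via the operator Cauchy--Schwarz inequality \eqref{csmei} in the $s$-variable: the crucial gain of $\tfrac12$ comes from $\int_{1/2}^{1}(1-s)^{-2\delta}ds<\infty$ for $\delta<\tfrac12$, paired with the integer-order square function $\widetilde g_m(x)$, whose $L_2$-bound follows from the cited Nevo estimates at order $k=m$. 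If you want to salvage a square-function route instead, you must use both $\int_1^\infty |N_t^{a}x|^2\,\tfrac{dt}{t}$ and $\int_1^\infty t\,|\tfrac{d}{dt}N_t^{a}x|^2\,dt$ (or Lemma~\ref{somel2bd} with a spectrum-dependent $\ell$), and you would then have to prove the corresponding fractional-order scalar bounds yourself rather than cite them.
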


\begin{prop}\label{4.7}
The maximal inequality
\[
\bigl\|\sup_{t \ge 1}\!^+N_t^{-m+1} x\|_{L_2(\mathcal M)} \leq C_m \|x\|_{L_2(\mathcal M)},
\quad \text{for all } x \in \mathcal{D}(\mathcal M)
\]
implies that for \( \alpha = a + ib \in \mathbb{C} \) with \( a > -m + \frac{1}{2} \), the following vertical line maximal inequality also holds:
\[
\bigl\|\sup_{t \ge 1}\!^+N_t^{a+ib}x\bigr\|_{L_2(\mathcal M)} \leq C_a\, e^{\pi |b|} \|x\|_{L^2(M)},
\quad \text{for all } x\in L_2(\mathcal M).
\]
\end{prop}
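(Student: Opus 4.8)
The approach is Stein's complex interpolation of analytic families of operators, carried out in Pisier's noncommutative vector--valued $L_p$-spaces following the maximal-norm interpolation technique of \cite{JunXu07}. As a first reduction I would use Proposition \ref{FINT} to replace the claim by the uniform bound $\big\|\sup_{t\in J}{}^{+}N_t^{a+ib}x\big\|_2\le C_a e^{\pi|b|}\|x\|_2$ over all finite subsets $J\subseteq[1,\infty)$ and over $x$ in the dense subspace $\mathcal D(\mathcal M)$; on a finite set $J$ the quantity $\big\|\sup_{t\in J}{}^{+}\,\cdot\,\big\|_2$ is an honest norm on the Banach space $L_2(\mathcal M;\ell_\infty(J))$, which is what the interpolation machinery needs. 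The key structural input is that, by the analytic continuation of $\alpha\mapsto N^\alpha(F_x)$ constructed in Section \ref{forpodd} via repeated integration by parts, the map $z\mapsto (N_t^{z}x)_{t\in J}$ is an entire $L_2(\mathcal M;\ell_\infty(J))$-valued function of $z$, of at most exponential growth on vertical strips.

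Next I would establish endpoint estimates on two vertical lines that straddle $\operatorname{Re}(z)=a$. On a left line $\operatorname{Re}(z)=-m+1$: using the semigroup law $I^{\alpha}I^{\beta}=I^{\alpha+\beta}$ one rewrites $N_t^{-m+1+ib}x$ as an imaginary-order fractional integral $N_t^{ib}$ applied to a rescaling of $s\mapsto N_s^{-m+1}x$; the operator $N^{ib}$, expressed in the multiplicative variable $u=\log t$, is a Calderón--Zygmund-type operator whose associated maximal operator is $L_2$-bounded with norm $\lesssim e^{\pi|b|}$ — the exponential factor being exactly the Stirling growth of $1/\Gamma(a+ib)$ along vertical lines — so the hypothesis of the proposition yields $\big\|\sup_{t}{}^{+}N_t^{-m+1+ib}x\big\|_2\lesssim e^{\pi|b|}\|x\|_2$. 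On a right line $\operatorname{Re}(z)=k$ with $k$ a large integer, properties $4$--$5$ of the normalized fractional integrals identify $N_t^{k}$ with a nested iterate of uniform averages, so the maximal bound (again with an $e^{\pi|b|}$ factor from $1/\Gamma$ after composing with $N^{ib}$) follows from Theorem \ref{thm2.1} together with Lemma \ref{somepropmaximalnorm}. Since $-m+1<-m+\tfrac12<a<k$, Stein's interpolation theorem — applied to the analytic family after multiplying by an entire regularizing factor built from reciprocal Gamma functions whose modulus is $\asymp e^{-\pi|b|}$ on the two bounding lines, and divided back at the end — returns the desired bound for every finite $J$, hence for all $x\in L_2(\mathcal M)$ by density of $\mathcal D(\mathcal M)$ and Proposition \ref{FINT}.

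The main obstacle is pushing the maximal norm $\big\|\sup_t{}^{+}\,\cdot\,\big\|_2$ through the complex interpolation. Because this norm is defined through a factorization rather than as an ordinary $L_2$-norm, one must genuinely verify that the $L_2(\mathcal M;\ell_\infty(J))$-valued family $z\mapsto(N_t^z x)_{t\in J}$ is analytic and admissibly growing, which relies on the analytic-continuation identities for $N^z$ holding at the level of $\mathcal M$-valued functions — this is where the $C^\infty$-vector structure of $\mathcal D(\mathcal M)$ enters. More seriously, the left-endpoint estimate is itself a noncommutative oscillatory maximal inequality for the imaginary-order operator $N^{ib}$: one cannot dominate $N^{ib}$ by the positive operator with kernel $|k_{ib}|$, since that kernel is not integrable, so the cancellation of the oscillatory factor $(1-s)^{ib}$ must be exploited honestly (this is the noncommutative analogue of the delicate step in the commutative Stein--Wainger argument). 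A last technical point is bookkeeping: since the $e^{\pi|b|}$ factor must survive into the conclusion rather than be absorbed by an $e^{\delta z^2}$ regularization, the reciprocal-Gamma regularizer has to be tuned so that interpolation reproduces precisely this exponential rate, and one must check it disturbs neither analyticity nor the two endpoint norms.
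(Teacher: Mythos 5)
There is a genuine gap. Your plan hinges on a vertical-line endpoint bound at $\operatorname{Re}(z)=-m+1$ of the form $\bigl\|\sup_{t\ge1}{}^{+}N_t^{-m+1+ib}x\bigr\|_2\lesssim e^{\pi|b|}\|x\|_2$, but the hypothesis of the proposition only provides the case $b=0$. To pass from $b=0$ to general $b$ you invoke an $L_2$ maximal bound for the imaginary-order operator $N^{ib}$ viewed as a Calder\'on--Zygmund operator; as you yourself concede in your obstacle paragraph, this cannot be obtained by dominating the kernel (it is not absolutely integrable), and in the noncommutative setting there is no available substitute: every maximal-norm tool used in this paper (Lemma \ref{somepropmaximalnorm}, the $-a\le x_n\le a$ description, the $L_{p'}(\ell_1)$ duality with positive decompositions) requires positivity, so a maximal inequality for a genuinely oscillatory, non-positive operator would be a new theorem, essentially as hard as the proposition you are trying to prove. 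In other words, your left endpoint is an unproved instance of the conclusion itself. There is also a range defect: interpolating between the lines $\operatorname{Re}(z)=-m+1$ and $\operatorname{Re}(z)=k$ can only yield exponents $a\in(-m+1,k)$, whereas the statement (and its later use in Proposition \ref{oddcriticall2esti} and Theorem \ref{maximalsphericalforodd}) requires the full range $a>-m+\tfrac12$, including $-m+\tfrac12<a\le-m+1$.

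The paper's proof uses no interpolation at all, and the contrast explains why the oscillation issue never arises there. For $a>0$ one simply observes $|(1-s)^{a+ib-1}|=(1-s)^{a-1}$, so testing against positive elements $y_{t_i}$ with $\|(y_{t_i})\|_{L_{p'}(\ell_1)}\le1$ gives $|\tau(N_t^{a+ib}(x)y)|\le C_a e^{\pi|b|}\,\tau(N_t^a x\,y)$, and the duality of Proposition \ref{dulaityofmaximalnorms} together with Proposition \ref{FINT} transfers the maximal bound from $N^a$ (Proposition \ref{4.6}) to $N^{a+ib}$, the factor $e^{\pi|b|}$ coming only from $|\Gamma(a)/\Gamma(a+ib)|$. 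For $-m+\tfrac12<a\le0$ one reruns the integration-by-parts decomposition from the proof of Proposition \ref{4.6} with $-\delta$ replaced by $-\delta+ib$: every kernel modulus is the same as in the real-exponent case, the $b$-dependence enters only through reciprocal Gamma coefficients, and each term is controlled by the $g$-functions and the hypothesized $N^{-m+1}$ bound. So the oscillatory factor is never exploited but discarded, which is legitimate precisely because the analytic continuation has already made all kernels absolutely controlled at the given real part. If you want to salvage your interpolation scheme, you would have to prove your left-endpoint estimate by exactly this kind of kernel-domination argument --- at which point the interpolation becomes unnecessary.
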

The following proposition can be found in \cite{Nevo97}.
\begin{prop}
For \( G = \mathrm{SO}^0(n,1) \), with \( n= 2m + 1 \), the expressions
\[
| \varphi_z^{(k)}(1) | \quad \text{and} \quad \int_1^\infty t^{2k - 1} |\varphi_z^{(k)}(t)| \, dt
\]
are bounded on the real spectrum of the spherical Fourier transform on \( A_\pi \), i.e. $\Sigma$ for all \( 1\leq k\leq m \).
\end{prop}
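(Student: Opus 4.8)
The statement is classical and is, with its proof, contained in \cite[Section~6]{Nevo97}; the plan is to record how it follows from the estimates on Harish--Chandra spherical functions developed there. Since $n=2m+1$ we have $\rho_n=m\in\mathbb{N}$, so the Harish--Chandra $c$-function is rational, $\mathbf{c_n}(z)=c_n\Gamma(z)/\Gamma(z+m)=c_n/\prod_{p=0}^{m-1}(z+p)$, and every expansion below is completely explicit. I will reduce both assertions to the single uniform pointwise bound $|\varphi_z^{(k)}(r)|\le C_{m,k}\,\gamma(z)^{k}e^{-\gamma(z)r}$ for all $r\ge 1$, $1\le k\le m$ and $z\in\Sigma$, where $\gamma(i\lambda):=m$ on the principal part of $\Sigma$ and $\gamma(s):=m-s\in[0,m)$ on the complementary part $\{s:0<s\le m\}$ of \eqref{actualspectrum}. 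Granting this bound, the first assertion is immediate from $|\varphi_z^{(k)}(1)|\le C_{m,k}\sup_{\gamma\ge 0}\gamma^{k}e^{-\gamma}<\infty$, and for the second, with $u=2\gamma(z)t$ when $\gamma(z)>0$, $\int_1^\infty t^{2k-1}|\varphi_z^{(k)}(t)|^{2}\,dt\le C_{m,k}^{2}\gamma(z)^{2k}\int_0^\infty t^{2k-1}e^{-2\gamma(z)t}\,dt=C_{m,k}^{2}\Gamma(2k)2^{-2k}$, while the case $\gamma(z)=0$ (that is $z=\rho_n$, where $\varphi_{\rho_n}\equiv 1$ and $\varphi_{\rho_n}^{(k)}\equiv0$ for $k\ge1$) is trivial; this is exactly the integral appearing in \eqref{spectralhigher}. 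It therefore suffices to verify the pointwise bound, and I split $\Sigma=\{i\lambda:\lambda\ge1\}\cup\{i\lambda:0\le\lambda\le1\}\cup\{s:0<s\le m\}$.

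On $\{i\lambda:\lambda\ge1\}$ I use Proposition~\ref{propofsphericalfn}(4), valid on $|\operatorname{Re} z|\le\tfrac12$ and hence for $z=i\lambda$: $\varphi_{i\lambda}(r)=e^{-mr}(\mathbf{c_n}(i\lambda)\psi_{i\lambda}(r)+\mathbf{c_n}(-i\lambda)\psi_{-i\lambda}(r))$ with $\psi_{\pm i\lambda}(r)=\sum_{k\ge0}a_k(\pm i\lambda)e^{(\pm i\lambda-2k)r}$. For $r\ge1$ the estimate $|a_k|\le Ck^{q}$ lets me differentiate the series term by term, giving $|\psi_{\pm i\lambda}^{(j)}(r)|\le\sum_{k}Ck^{q}(\lambda+2k)^{j}e^{-2kr}\le C_j(1+\lambda^{j})$ uniformly in $r\ge1$. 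Applying the Leibniz rule to $e^{-mr}\psi_{\pm i\lambda}(r)$ and using $|\mathbf{c_n}(\pm i\lambda)|=c_n\prod_{p=0}^{m-1}(p^{2}+\lambda^{2})^{-1/2}\le c_n\lambda^{-m}$ yields $|\varphi_{i\lambda}^{(k)}(r)|\le C_{m,k}e^{-mr}\lambda^{\,k-m}\le C_{m,k}e^{-mr}$ for $1\le k\le m$, $\lambda\ge1$, which is the desired bound with $\gamma=m$ (after adjusting the constant by $m^{k}$). On the bounded piece $\{i\lambda:0\le\lambda\le1\}$ one argues as for $k=1$ in Section~5 (see \cite[Section~6]{Nevo94}): the radial equation $\varphi_z''=(z^{2}-m^{2})\varphi_z-(n-1)\coth r\,\varphi_z'$ expresses $\varphi_z^{(k)}$ through $\varphi_z,\varphi_z'$ with coefficients bounded for $|z|\le1$ and for $r\ge1$, while Proposition~\ref{propofsphericalfn}(5) and differentiation of \eqref{sphercialfnparam} give $|\varphi_z(r)|+|\varphi_z'(r)|\le C(1+r)e^{-mr}$; this yields $|\varphi_z^{(k)}(r)|\le C_k(1+r)^{N_k}e^{-mr}\le C_k'e^{-mr/2}$ for $r\ge1$, which suffices.

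On the complementary piece $\{s:0<s\le m\}$ we have $|z^{2}-m^{2}|=|s^{2}-m^{2}|\le m^{2}$, so the radial equation again reduces matters to $\varphi_s,\varphi_s'$ with controlled constants. Differentiating \eqref{sphercialfnparam} and using that $\psi(r,\theta):=\cosh r-\sinh r\cos\theta$ satisfies $\partial_r^{2}\psi=\psi$ and $|\partial_r\psi|\le\psi$ (since $\psi^{2}-(\partial_r\psi)^{2}=\sin^{2}\theta\ge0$), every $r$-derivative of $\psi^{\,s-m}$ of order $\le m$ is in modulus at most a constant depending only on $m$ (the falling-factorial coefficients $\prod_{i}(m-s+i)$ being bounded because $s-m\in(-m,0]$) times $\psi^{\,s-m}$; hence $|\varphi_s^{(k)}(r)|\le C_{m,k}\varphi_s(r)$, and since $\varphi_s$ is positive definite we get $\varphi_s(r)\le\varphi_s(0)=1$, so $|\varphi_s^{(k)}(1)|\le C_{m,k}$. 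For the weighted integral the decay of $\varphi_s$ is needed: for $0<s\le m-\tfrac12$, Proposition~\ref{propofsphericalfn}(3) gives $\varphi_s(t)\le C(\cosh t)^{s-m}\le C'e^{-t/2}$, and one is done; for $s\in(m-\tfrac12,m]$, where the rate $m-s$ degenerates, I return to the Harish--Chandra expansion, whose dominant term for $t\ge1$ is $\mathbf{c_n}(s)e^{(s-m)t}$, all remaining terms being of size $O(e^{-2}e^{(s-m)t})+O(e^{-2mt})$ uniformly in $t\ge1$ with $\mathbf{c_n}(s)$ bounded on $[m-\tfrac12,m]$; differentiating $k$ times produces the extra factor $(m-s)^{k}$, giving $|\varphi_s^{(k)}(t)|\le C_{m,k}(m-s)^{k}e^{(s-m)t}$ for $t\ge1$, $k\ge1$, which is the required bound with $\gamma(s)=m-s$.

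The main obstacle is precisely this last configuration: the endpoint $s\uparrow\rho_n=m$, where the pointwise decay becomes arbitrarily slow so that the gain of $(m-s)^{k}$ is essential, and (when $m\ge2$) the integer points $s\in\{1,\dots,m-1\}$, at which the two-term expansion of Proposition~\ref{propofsphericalfn}(4) degenerates and must be replaced by its limiting, $\log$-corrected form. Making the refined bound rigorous there relies on the fully explicit description of the spherical functions in odd dimension --- equivalently, $\varphi_z$ being obtained by applying a fixed differential operator, essentially $(\tfrac1{\sinh r}\tfrac{d}{dr})^{m-1}$ up to constants and lower-order terms, to an elementary exponential --- rather than on soft estimates; all of this is carried out in \cite[Section~6]{Nevo97}, and the remaining pieces above are routine.
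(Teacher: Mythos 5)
The paper offers no proof of this proposition at all: it is stated as a quotation from \cite{Nevo97}, so there is no internal argument for your sketch to diverge from. Your proposal is consistent with that source and fills in the routine reductions (principal series via the expansion in Proposition~\ref{propofsphericalfn}(4) together with $|\mathbf{c_n}(i\lambda)|\lesssim|\lambda|^{-\rho_n}$; the bounded imaginary segment via the radial ODE; the complementary series via differentiation under the integral in \eqref{sphercialfnparam} using $|\partial_r\psi|\le\psi$), while correctly isolating the genuinely delicate regime --- $s\uparrow\rho_n$ and the degenerate integer parameters, where the $(\rho_n-s)^{k}$ gain and the log-corrected expansion are needed --- and deferring it to \cite[Section~6]{Nevo97}, which is exactly what the paper does wholesale. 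Two remarks. First, you tacitly replaced the printed integrand $|\varphi_z^{(k)}(t)|$ by $|\varphi_z^{(k)}(t)|^{2}$; this is the right move, since the squared version is what Proposition~\ref{prophighorder} and the $g$-function argument actually use, and your own computation shows the unsquared quantity grows like $(\rho_n-s)^{-k}$ as $s\to\rho_n$, so the statement as literally printed is a typo. Second, two of your intermediate appeals need small repairs: the constant $1+\mathbf{c_n}(s)$ in Proposition~\ref{propofsphericalfn}(3) blows up as $s\to0^{+}$, so on $0<s\le\rho_n-\tfrac12$ you should instead use, e.g., $\psi^{s}\le e^{st}$ to get $\varphi_s(t)\le e^{st}\varphi_0(t)\le B(1+t)e^{(s-\rho_n)t}$ uniformly; and Proposition~\ref{propofsphericalfn}(4) is stated only for $|\operatorname{Re}z|<1$, so for $m\ge2$ its use on $s\in(\rho_n-\tfrac12,\rho_n]$ already requires Nevo's extended (explicit odd-dimensional) form --- you acknowledge this, but it means that part of your argument is not self-contained on the basis of the estimates recorded in this paper, only on the cited reference, just as in the paper itself.
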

\begin{proof}[Proof of Proposition \eqref{4.4}]
The proof of Proposition \eqref{4.4} follows easily from Proposition \eqref{prophighorder} and Proposition \eqref{prop4.1}.
\end{proof}
\begin{proof}[Proof of Proposition \eqref{4.6}]
Writing $n=2m+1,$ fix $0<\delta<\frac{1}{2}$ and $x\in\mathcal{D}(\mathcal{M})_{+}.$ By semigroup property we see that \[
\begin{aligned}
I^{-\delta - m + 1} F_x(t) 
&= I^{-\delta} \big( I^{-m + 1} F_x \big)(t) \\
&= \frac{1}{\Gamma(-\delta)} \int_0^{t/2} (t - s)^{-\delta - 1} I^{-m + 1} F_x(s) \, ds \\
&\quad + \frac{1}{\Gamma(1 - \delta)} \left( \frac{t}{2} \right)^{-\delta} I^{-m + 1} F_x\left( \frac{t}{2} \right) \\
&\quad + \frac{1}{\Gamma(1 - \delta)} \int_{t/2}^{t} (t - s)^{-\delta} \frac{d}{ds} I^{-m + 1} F_x(s) \, ds
\end{aligned}
\]
Multiplying by $t^{\delta+m-1}$ we obtain 
\[
\begin{aligned}
N^{-\delta - m + 1} F_x(t) 
&= \frac{1}{\Gamma(-\delta)} \int_0^{1/2} (1 - s)^{-\delta - 1} N^{-m + 1} F_x(st) \, ds \\
&\quad + \frac{1}{\Gamma(1 - \delta)} \left( \frac{1}{2} \right)^{-\delta} N^{-m + 1} F_x\left( \frac{t}{2} \right) \\
&\quad + \frac{1}{\Gamma(1 - \delta)} \int_{t/2}^{t} (t - s)^{-\delta} t^{\delta - \frac{1}{2}} t^{m - \frac{1}{2}} I^{-m} F_x(s) \, ds=I_t(x)+II_t(x)+III_t(x).
\end{aligned}
\]
We now estimate all the three terms individually. Let us start with the third term. Note that since all the terms are self-adjoint, we obtain for $1\leq t\leq 2s$ by \eqref{csmei}
\begin{equation}
\begin{aligned}
\frac{1}{\Gamma(1 - \delta)} \int_{t/2}^{t} (t - s)^{-\delta} t^{\delta - \frac{1}{2}} t^{m - \frac{1}{2}} I^{-m} F_x(s) \, ds 
&\leq \left| \frac{1}{\Gamma(1 - \delta)} \int_{t/2}^{t} (t - s)^{-\delta} t^{\delta - \frac{1}{2}} t^{m - \frac{1}{2}} I^{-m} F_x(s) \, ds \right| \\
&\leq C_\delta \left( \int_{1/2}^1 (1 - s)^{-2\delta} \, ds \right)^{\frac{1}{2}} 
\left( \int_{t/2}^t t^{2m - 1} |I^{-m} F_x(s)|^2 \, ds \right)^{\frac{1}{2}}\\
&\leq C_\delta 2^{2m-1}\Big(\int_{\frac{1}{2}}^{\infty}s^{2m-1}|F_x^{(m)}(s)|^2\ ds\Big)^{\frac{1}{2}}=C_\delta \widetilde{g}_m(x).
\end{aligned}
\end{equation}
Note that $\|\widetilde{g}_m(x)^2-{g}_m(x)^2\|_1=\|\int_{1/2}^1s^{2m-1}|\frac{d^m}{ds^m}\eta(s)\pi(\sigma_s)x|^2\ ds\|_1=\int_{\Sigma}\int_{1/2}^1s^{2m-1}|\frac{d^m}{ds^m}\varphi_z(s)|^2\ ds\ d\nu_{x}(z).$ Moreover, the set $\{\int_{1/2}^1s^{2m-1}|\frac{d^m}{ds^m}\varphi_z(s)|^2\ ds:z\in \Sigma\}$ is bounded \cite{Nevo97}. Therefore, we obtain as in Proposition \eqref{4.4} that $\|\widetilde{g}_m(x)\|_2\leq C_m\|x\|_2.$ Hence \[\bigl\|\sup_{t \ge 1}\!^+III_t(x)\bigr\|_2\leq C_m\|x\|_2.\] Note we have \[\bigl\|\sup_{t \ge 1}\!^+II_t(x)\bigr\|_2\leq C_\delta\bigl\|\sup_{t \ge 1/2}\!^+N^{-m+1}F_x(t)\bigr\|_2.\]Now proceeding as in Proposition \eqref{prop4.1} we obtain again that \[\bigl\|\sup_{t \ge 1/2}\!^+N^{-m+1}F_x(t)\bigr\|_2\leq C_m\bigl(\|\widetilde{g}_m(x)\|_2+\|F_x^{(m-1)}(1/2)\|_2\bigr).\] Note that by the fact $\{|\frac{d^m}{ds^m}|_{s=\frac{1}{2}}\varphi_z(s)|:z\in\Sigma\}<\infty,$ we obtain from the above equations that \[\bigl\|\sup_{t \ge 1}\!^+II_t(x)\bigr\|_2\leq C_m\|x\|_2.\] Now for the first part, we have \[\bigl\|\sup\limits_{t\geq 1}\!^+I_t(x)\!\|_2\leq C_\delta\bigl\|\sup\limits_{t\geq 0}\!^+N_t^{-m+1}(x)\!\|_2\] notice that by following Proposition \eqref{prop4.1} it is enough to show that \[\|\Big(\int_{1/4}^1 t^{2j-1}\bigl|\frac{d^j}{ds^j}|_{s=t}\eta(s)\pi(\sigma_s)x\bigr|^2\Big)^{\frac{1}{2}}\|_2\leq C_j\|x\|_2\] for all $1\leq j\leq m.$ However, it follows from \cite[Section 2]{Nevo97} that $\sup\{\int_{1/4}^1|t^{2m-1}\frac{d^m}{ds^m}\eta(t)\varphi_z(t)|^2dt:z\in\Sigma\}$ is finite. Hence we also obtain  as in Proposition \eqref{4.4}\[\bigl\|\sup\limits_{t>0}\!^+I_t(x)\!\|_2\leq C_m\|x\|_2.\] This completes the proof of the theorem.
\end{proof}
\begin{proof}[Proof of Proposition \eqref{4.7}]
Suppose $\alpha=a+ib$ with $ a>0.$ We prove that \[\bigl\|\sup_{t \ge 1}\!^+N_t^{a+ib}x\bigr\|_{L_2(\mathcal M)} \leq C_a\, e^{\pi |b|}\bigl\|\sup_{t \ge 1}\!^+N_t^{a}x\bigr\|_{L_2(\mathcal M)},
\quad \text{for all } x \in L_2(\mathcal M)_{+}.
\] To see this consider $y\in L_{p^\prime}(\mathcal{M})_{+}.$
Using the standard estimates for the Gamma function, we have:
\[
\left| \frac{\Gamma(a)}{\Gamma(a+ib)} \right| \leq C_a\exp(\pi|b|).
\] Now we observe that \[|\tau(N_t^{a+ib}(x)y)|=|\frac{1}{\Gamma(\alpha)} \int_{0}^1 (1-s)^{a+ib-1} \tau(F_x(st)y)  \, ds|\leq  C_a\exp(\pi|b|)\tau(N_t^axy).\]
For  finitely many $t_1,\dots,t_n>0$ we notice that for any positive sequence $(y_{t_i})\in L_{p^\prime}(\mathcal M)_{+}$ with $\|(y_{t_i})\|_{L_p(\mathcal M'\ell_1^n)}\leq 1$ we have
\begin{equation}|\sum\tau(N_{t_k}^{a+ib}(x)y_{t_k})|\leq C_a\exp(\pi|b|)\sum\tau(N_{t_k}^{a}xy_k). 
\end{equation}
 Moreover, any element in the unit ball of  $L_p(\mathcal{M},\ell_1^n)$ can be written as sum of eight positive elements in the same unit ball. Hence the claim is proved by duality Proposition \eqref{dulaityofmaximalnorms} and Proposition \eqref{FINT}.
 
 Now when $-m+\frac{1}{2}<\text{Re}\ a\leq 0,$ we can repeat the argument above with the proof of Proposition \eqref{4.6} by replacing $-\delta$ by $-\delta+ib$ to obtain \[\bigl\|\sup_{t \ge 1}\!^+N_t^{a+ib}x\bigr\|_{L_2(\mathcal M)}\leq \bigl|\frac{\Gamma(a)}{\Gamma(a+ib)}\bigr|\bigl\|\sup_{t \ge 1}\!^+N_t^{-m}x\bigr\|_{L_2(\mathcal M)}.\]
 Therefore, from our assumption and foregoing estimate of $\Gamma$ function we obtain the desired estimate whenever $a\in-\mathbb{N}.$ For $a\in-\mathbb{N}$ and $a>-m+\frac{1}{2}$ we can do integration by parts to obtain the desired estimate as above (see \cite{Stein70}. Hence the proof is completed by Proposition \eqref{4.6}.
\end{proof}

We need the following simple lemma.
\begin{lem}\label{oddcriticalLpestimate}
Let $G=SO^0(n,1),$ $n\geq 2$. Then for all $1<p<\infty$ we have that 
\[\bigl\|\sup_{t \ge 1}\!^+N_t^{1+ib}x\bigr\|_p\leq C_p(G)\|x\|_p\]
\end{lem}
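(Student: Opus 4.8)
The plan is to recognize that $N_t^{1+ib}$ is, up to the analytic continuation established above, a smoothed version of the uniform average $\mu_t$, and then to bootstrap the already-proven maximal inequalities for $(\mu_t)$ (Theorem~\ref{thm2.1}) together with the vertical-line trick of Proposition~\ref{4.7}. Concretely, from property~5 in the list of identities for the normalized fractional integrals we have $N_t^1 F_x(t) = \pi(\mu_t)x - \tfrac1t B_\zeta x$ for $t\geq \tfrac12$, where $B_\zeta(x) = \int_0^{1/2}(1-\zeta(s))\pi(\sigma_s)x\,ds$. First I would observe that $B_\zeta$ is a fixed bounded operator on $L_p(\mathcal M)$ for every $1<p<\infty$ (it is an average of the uniformly bounded operators $\pi(\sigma_s)$ against an $L^1$ weight), so that $\big\|\sup_{t\geq 1}{}^+\tfrac1t B_\zeta x\big\|_p \leq \|B_\zeta x\|_p \leq C_p\|x\|_p$. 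Combined with Theorem~\ref{thm2.1} this gives $\big\|\sup_{t\geq 1}{}^+ N_t^1 x\big\|_p \leq C_p\|x\|_p$ for all $1<p<\infty$ and all $x \in \mathcal D(\mathcal M)$, hence for all $x \in L_p(\mathcal M)$ by density (Lemma~\ref{lem2.1}).

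Next I would pass from the real exponent $\alpha = 1$ to the vertical line $\alpha = 1+ib$ by repeating verbatim the duality argument of Proposition~\ref{4.7}, but now on $L_p$ rather than $L_2$. The point is that the pointwise-in-$t$ estimate
\[
\big|\tau\big(N_t^{1+ib}(x)\,y\big)\big| = \Big|\frac{1}{\Gamma(1+ib)}\int_0^1 (1-s)^{ib}\,\tau\big(F_x(st)y\big)\,ds\Big| \leq \Big|\frac{\Gamma(1)}{\Gamma(1+ib)}\Big|\,\tau\big(N_t^1(x)\,y\big)
\]
for positive $x \in L_p(\mathcal M)_+$ and positive $y \in L_{p'}(\mathcal M)_+$ uses only positivity and the Gamma-function bound $|\Gamma(1)/\Gamma(1+ib)| \leq C e^{\pi|b|}$, none of which is special to $p=2$. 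Then, using the duality $L_p(\mathcal M;\ell_1^n)^* = L_{p'}(\mathcal M;\ell_\infty^n)$ (Proposition~\ref{dulaityofmaximalnorms}), the finite-truncation description of the maximal norm (Proposition~\ref{FINT}), and the fact that any element of the unit ball of $L_p(\mathcal M;\ell_1^n)$ is a sum of eight positive ones, one obtains
\[
\big\|\sup_{t\geq 1}{}^+ N_t^{1+ib} x\big\|_p \leq C\,e^{\pi|b|}\,\big\|\sup_{t\geq 1}{}^+ N_t^1 x\big\|_p \leq C_p(G)\,e^{\pi|b|}\,\|x\|_p,
\]
first for positive $x$ and then for general $x$ by splitting into four positive pieces. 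Since the statement of the lemma only asks for a bound $C_p(G)\|x\|_p$ with $b$ fixed, the $e^{\pi|b|}$ factor is harmless and can be absorbed into the constant.

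I do not anticipate a serious obstacle here: the lemma is deliberately phrased as a ``simple lemma,'' and every ingredient (the identity $N^1 F_x = \pi(\mu_t)x - \tfrac1t B_\zeta x$, the $L_p$ maximal inequality for $\mu_t$, and the positivity/duality machinery of Proposition~\ref{4.7}) is already in place. The only mild care needed is (i) checking that $B_\zeta$ is genuinely $L_p$-bounded for the full range $1<p<\infty$ and not merely on $L_2$ --- this follows because $\|\pi(\sigma_s)\|_{L_p\to L_p}\leq 1$ uniformly in $s$ and $\int_0^{1/2}|1-\zeta(s)|\,ds<\infty$ --- and (ii) making sure the density argument is legitimate, i.e. that $N_t^{1+ib}$ extends to a bounded family on all of $L_p(\mathcal M)$; this is immediate once the a~priori bound is proved on the dense subspace $\mathcal D(\mathcal M)$, together with continuity of $t\mapsto N_t^{1+ib}x$ in $L_p$-norm, which holds because $t\mapsto\pi(\sigma_t)x$ is continuous (indeed $C^\infty$) for $x\in\mathcal D(\mathcal M)$. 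With these two observations the proof is complete.
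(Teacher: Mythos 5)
Your proposal is correct and follows essentially the same route as the paper: a positivity/duality argument (as in the first part of Proposition~\ref{4.7}) reduces $N_t^{1+ib}$ to $N_t^1$ at the cost of a factor $Ce^{\pi|b|}$, and then the identity $N_t^1x=\pi(\mu_t)x-\tfrac1t B_\zeta x$ together with Theorem~\ref{thm2.1} finishes the proof. The only difference is that you spell out the $L_p$-boundedness of $B_\zeta$ and the density step, details the paper leaves implicit; note also that in the later interpolation (Theorem~\ref{maximalsphericalforodd}) the explicit $e^{\pi|b|}$ dependence you derive is exactly what is actually used, so keeping it visible (rather than absorbing it into $C_p(G)$) is the better formulation.
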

\begin{proof}
Again using duality as in the first part of Proposition \eqref{4.7} can see that 
\[\bigl\|\sup_{t \ge 1}\!^+N_t^{1+ib}x\bigr\|_p\leq C\bigl\|\sup_{t \ge 1}\!^+N_t^{1}x\bigr\|_p.\]
Note that $N_t^1x=\pi(\mu_t)-\frac{1}{t}B_{\zeta}x.$ Therefore, by Theorem \eqref{thm2.1} the lemma is proved.
\end{proof}
\begin{thm}\label{maximalsphericalforodd}Let $G=SO^0(n,1),$ $n\geq 3$ is odd. Then for all $p>\frac{n}{n-1}$ we have the maximal inequality
\begin{equation}
\bigl\|\sup_{t \ge 1}\!^+\pi(\sigma_t)x\bigr\|_p\leq C_p(G)\|x\|_p
\end{equation}
\end{thm}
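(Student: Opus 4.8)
The plan is to obtain Theorem~\ref{maximalsphericalforodd} by complex interpolation between the $L_2$ maximal bound on the analytic family $N^{a+ib}_t$ and the $L_p$ maximal bound for $N^{1+ib}_t$, following the Stein--Weiss / Junge--Xu interpolation philosophy for analytic families of operators acting on vector-valued noncommutative $L_p$-spaces. Write $n=2m+1$, so $m\geq 2$. From Proposition~\ref{oddcriticall2esti} (via Proposition~\ref{4.7}, using Proposition~\ref{4.4} and the boundedness of $|\varphi_z^{(k)}(1)|$ and $\int_1^\infty t^{2k-1}|\varphi_z^{(k)}(t)|^2\,dt$ on $\Sigma$ quoted from \cite{Nevo97}) we have, on the vertical line $\operatorname{Re}\alpha = a_0$ with $a_0 = -\tfrac n2 + 1 + \varepsilon = -m+\tfrac12+\varepsilon$ for small $\varepsilon>0$,
\[
\bigl\|\sup_{t\geq 1}{}^+ N_t^{a_0+ib}x\bigr\|_2 \leq C_{a_0} e^{\pi|b|}\|x\|_2, \qquad x\in L_2(\mathcal M).
\]
From Lemma~\ref{oddcriticalLpestimate} we have, on the vertical line $\operatorname{Re}\alpha = 1$,
\[
\bigl\|\sup_{t\geq 1}{}^+ N_t^{1+ib}x\bigr\|_p \leq C_p(G)\|x\|_p, \qquad 1<p<\infty .
\]
Both estimates carry the admissible-growth factor $e^{\pi|b|}$, which is harmless for Stein interpolation. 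Since $N^0_t(F_x)(t)=\pi(\sigma_t)x$ for $t\geq \tfrac12$ (item~4 in Section~\ref{forpodd}), and in particular for all $t\geq 1$, it suffices to bound $\sup_{t\geq 1}{}^+ N_t^0 x$ in $L_p$, i.e.\ to interpolate the two displayed estimates at the value $\alpha=0$.

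The interpolation is organized as follows. For a fixed $p_0\in(\tfrac n{n-1},\infty)$ choose $\theta\in(0,1)$ so that $0 = (1-\theta)a_0 + \theta\cdot 1$, i.e.\ $\theta = \dfrac{-a_0}{1-a_0} = \dfrac{m-\tfrac12-\varepsilon}{m+\tfrac12-\varepsilon}$, and choose the interpolation exponent $q$ by $\tfrac1{p_0} = \tfrac{1-\theta}{2} + \tfrac{\theta}{q}$. As $\varepsilon\to 0$ one has $\theta \to \tfrac{2m-1}{2m+1} = \tfrac{n-2}{n}$, so $\tfrac1{p_0}\to \tfrac{1-\theta}2 = \tfrac1n$, and the admissible range $1<q<\infty$ translates precisely into $p_0 > \tfrac{n}{n-1}$; choosing $\varepsilon$ small enough covers any prescribed $p_0$ in that range. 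One then sets up the analytic family $z\mapsto \Phi_z := N^{\alpha(z)}$ with $\alpha(z) = (1-z)a_0 + z\cdot 1$ acting on $x$, viewed through the factorization description of the maximal norm: by Proposition~\ref{FINT} it is enough to work with finitely many radii $t_1,\dots,t_\ell$ and dualize against positive sequences in the unit ball of $L_{p'}(\mathcal M;\ell_1^\ell)$, as was already done in the proof of Proposition~\ref{4.7}. Running Stein's interpolation theorem for the resulting scalar-valued analytic function $z\mapsto \sum_k \tau\big(N^{\alpha(z)}_{t_k}x\, y_{t_k}\big)$ — bounded by $C e^{\pi|b|}$ on $\operatorname{Re} z=0$ (the $L_2$ side, paired against $L_2(\mathcal M;\ell_1)$) and on $\operatorname{Re} z=1$ (the $L_{p_0}$ side) — and taking the supremum over finite families, yields $\bigl\|\sup_{t\geq1}{}^+ N_t^0 x\bigr\|_{p_0}\leq C_{p_0}\|x\|_{p_0}$ for $x\in\mathcal D(\mathcal M)$.

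Finally one removes the restriction to $\mathcal D(\mathcal M)$. Since $N_t^0 x = \pi(\sigma_t)x$ for $t\geq 1$, the estimate just proved says exactly $\bigl\|\sup_{t\geq 1}{}^+\pi(\sigma_t)x\bigr\|_{p_0}\leq C_{p_0}\|x\|_{p_0}$ on the dense subspace $\mathcal D_K(\mathcal M)$ (note $\pi(\sigma_t)\pi(m_K)=\pi(\sigma_t)$, so it suffices to test on $K$-invariant vectors); by Lemma~\ref{lemma2.3} this upgrades to all $x\in L_{p_0}(\mathcal M)$. The main technical obstacle is the interpolation step in the noncommutative vector-valued setting: one must verify that $z\mapsto N^{\alpha(z)}_{t_k}x$ is genuinely analytic into $L_2+L_{p_0}$ with the right boundary growth (this is where the analytic continuation of $N^\alpha$ via repeated integration by parts from Section~\ref{forpodd}, and the admissible exponential bound $e^{\pi|b|}$ from the $\Gamma$-function estimates, are essential), and that the duality reduction through $L_p(\mathcal M;\ell_1)$ is compatible with taking the analytic family inside the pairing — precisely the delicate point handled in \cite{JunXu07} and adapted in Proposition~\ref{4.7}. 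The endpoint count ($p_0>\tfrac n{n-1}$) is sharp and comes entirely from the constraint $a_0 > -\tfrac n2 + 1$ in Proposition~\ref{oddcriticall2esti}, reflecting the $L_2$ mapping properties of the spherical means on $\mathbb H^n$.
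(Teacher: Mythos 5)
Your overall strategy coincides with the paper's: interpolate the vertical-line $L_2$ maximal estimate for $N_t^{a_0+ib}$, $a_0>-\tfrac n2+1$ (Propositions~\ref{oddcriticall2esti} and~\ref{4.7}), against the $L_q$ maximal estimate for $N_t^{1+ib}$ (Lemma~\ref{oddcriticalLpestimate}) along $\alpha(z)=(1-z)a_0+z$, evaluate at $\alpha(\theta)=0$ where $N^0_t(F_x)=\pi(\sigma_t)x$, and dualize through $L_{p'}(\mathcal M;\ell_1)$ using Propositions~\ref{FINT} and~\ref{dulaityofmaximalnorms}. However, the interpolation step as you describe it does not close. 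You run the three-lines argument on $z\mapsto\sum_k\tau\big(N^{\alpha(z)}_{t_k}x\,y_{t_k}\big)$ with $x$ and $(y_{t_k})$ held fixed. The boundary bounds are then of the form $Ce^{\pi|\cdot|}\|x\|_2\,\|y\|_{L_2(\mathcal M;\ell_1)}$ on $\operatorname{Re}z=0$ and $C\|x\|_q\,\|y\|_{L_{q'}(\mathcal M;\ell_1)}$ on $\operatorname{Re}z=1$, and the maximum principle only yields their geometric mean at $z=\theta$; since $\|x\|_p\le\|x\|_2^{1-\theta}\|x\|_q^{\theta}$ (the inequality goes the wrong way) and neither $\|x\|_2$ nor $\|y\|_{L_2(\ell_1)}$ is controlled by the $L_p$, resp.\ $L_{p'}(\ell_1)$, normalizations, this does not produce a bound by $\|x\|_p\|y\|_{L_{p'}(\ell_1)}$. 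The Stein--Weiss mechanism requires deforming the data as well as the operator: one must take $f(z)=u|x|^{\frac{p(1-z)}{2}+\frac{pz}{q}}$ (so that $f(\theta)=x$, $\|f(it)\|_2\le1$, $\|f(1+it)\|_q\le1$) and an analytic family $g(z)$ with $g(\theta)=y$ coming from the complex interpolation of the spaces $L_{p'}(\mathcal M;\ell_1)$ (Proposition 2.5 of \cite{JunXu07}), and then apply the maximum principle to $G(z)=e^{\delta(z^2-\theta^2)}\sum_k\tau\big(N^{\alpha(z)}F_{f(z)}(t_k)\,g_{t_k}(z)\big)$. This is exactly what the paper does; the duality trick of Proposition~\ref{4.7}, which compares $N^{a+ib}$ with $N^{a}$ at a \emph{fixed} real part and a fixed exponent, is not a substitute for it.

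A second, smaller point is the range bookkeeping. Interpolating the $L_2$ line at $\operatorname{Re}\alpha=a_0\approx-\tfrac n2+1$ with the $L_q$ lines at $\operatorname{Re}\alpha=1$, $1<q<\infty$, gives $\tfrac1p=\tfrac{1-\theta}2+\tfrac\theta q$ with $\theta\approx\tfrac{n-2}n$, hence only the range $\tfrac n{n-1}<p<n$, not all $p>\tfrac n{n-1}$ as you assert. The remaining exponents, in particular all $p>2$, must be obtained as in the paper by first treating $p\le2$ and then interpolating the $p=2$ maximal inequality with the trivial $L_\infty$ bound; you should include this reduction. The concluding density/reduction to $K$-invariant elements via Lemma~\ref{lemma2.3} is fine.
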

\begin{proof}[Proof of Theorem \eqref{maximalsphericalforodd}]
It suffices to prove this result in the case \( p \leq 2 \), since the results in the case \( p > 2 \) can be obtained by complex interpolation from the result in the case \( p = 2 \) and the trivial result in the case \( p = \infty \). For \( \theta \in [0, 1] \) and \( q \in [1, 2] \), define the index \( p \) as a function \( p(\theta, q) \) on \( [0, 1] \times [1, 2] \) satisfying the following relationship:
\[
\frac{1}{p(\theta, q)} = \frac{1 - \theta}{2} + \frac{\theta}{q}.
\]
It is easy to check that for fixed \( \theta \in (0, 1) \), the function \( p(\theta, q) \) is strictly increasing in \( q \), and \( p(\theta, (1,2]) = (p(\theta, 1), 2] \). 

We define \( \theta_0 \) such that
\[
0 = (1 - \theta_0)(-n + \tfrac{1}{2}) + \theta_0,
\]
and set \( p_0 = p(\theta_0, 1) \). It is easy to check that
\[
p_0 = \frac{2n - 1}{2n +1}.
\]

Let \( p \) be such that \( p_0 < p \leq 2 \). Then we can find \( \theta < \theta_0 \), \( q \in (1, 2] \), \( b \in \mathbb{R} \) with \( -n + \tfrac{1}{2} < b \), and \( c \in \mathbb{R} \) with \( c \geq 1 \) such that
\[
\frac{1}{p} = \frac{1 - \theta}{2} + \frac{\theta}{q}, \quad 0 = (1 - \theta)b + \theta c.
\]

Let \( x \in L_p(\mathcal{M}) \) and \( y = (y_r)_{r>0} \) with \( \|x\|_p < 1 \) and \( \|(y_r)\|_{L_{p}(\ell_1)} < 1 \). Define
\[
f(z) = u |x|^{\frac{p(1-z)}{2} + \frac{pz}{q}}, \quad \forall z \in \mathbb{C},
\]
where \( x = u |x| \) is the polar decomposition of \( x \).

By complex interpolation of the space \( L^{p'}(\ell_\infty) \) (see, e.g., Proposition 2.5 of \cite{JunXu07}), there exists a function \( g = (g_r)_{r>0} \) continuous on the strip \( \{z : 0 \leq \operatorname{Re}(z) \leq 1\} \) and analytic in its interior such that \( g(\theta) = y \) and
\[
\sup_{t \in \mathbb{R}} \max\left\{ \|g(it)\|_{L^2(\ell_1)}, \|g(1 + it)\|_{L^{q'}(\ell_1)} \right\} < 1.
\]

Now define
\[
G(z) = \exp\left( \delta\left(z^2 - \theta^2 \right) \right) \sum_{r} \tau\left( N^{(1 - z)b + zc} {F}_{f(z)}(r) g_r(z) \right),
\]
where \( \delta > 0 \) is a constant to be chosen.

The function \( G \) is analytic in the open strip. Applying Proposition \eqref{4.7}, we estimate
\begin{align*}
|G(it)| &\leq \exp\left( \delta(-t^2 - \theta^2) \right) \left\| \left( N^{b + i(tc - tb)} {F}_f(it)(r) \right)_r \right\|_{L^2(\ell_\infty)} \cdot \|g(it)\|_{L^2(\ell_1)} \\
&\leq C_a \exp\left( \delta(-t^2 - \theta^2) \right) \exp\left( \pi |tc - tb| \right) \|f(it)\|_2\\
&\leq C_a\exp\left( \delta(-t^2 - \theta^2) \right) \exp\left( \pi |tc - tb| \right).
\end{align*}

Similarly, by Lemma \eqref{oddcriticalLpestimate}, we have
\begin{align*}
|G(1 + it)| &\leq \exp\left( \delta(-(1 + it)^2 - \theta^2) \right) \left\| \left( N^{c + i(tc - tb )}{F}_{f(1 + it)}(r) \right)_r \right\|_{L^q(\ell_\infty)} \cdot \|g(1 + it)\|_{L^{q'}(\ell_1)} \\
&\leq C_{a,q} \exp\left( \delta(1 - t^2 - \theta^2) \right) \exp\left( \pi |tc - tb| \right) \|f(1 + it)\|_q\\
&\leq C_{a,q} \exp\left( \delta(1 - t^2 - \theta^2) \right) \exp\left( \pi |tc - tb| \right).
\end{align*}

Choosing \( \delta \) sufficiently large, we obtain
\[
\sup_{t \in \mathbb{R}} \max\{ |G(it)|, |G(1 + it)| \} < C_{p}.
\]

Consequently, by the maximum principle, \( |G(\theta)| < C_{p} \), i.e.,
\[
\left| \sum_r \tau\left( N^0 F_x(r) y_r \right) \right| < C_{p},
\]
which implies the desired result  by duality as in Proposition \eqref{dulaityofmaximalnorms}.

\end{proof}
\section{Noncommutative ergodic theorem for $SO^0(n,1)$ for $n>2$}\label{nevost}
\subsection{Embedding spherical averages in an analytical family:}
To establish the maximal inequality in \( \mathcal{D}_K(\mathcal M) \), we embed it into an analytic family of operators and then apply the method of analytic interpolation. Given \( x \in \mathcal{D}_K(\mathcal M) \),  and \( \alpha = a + ib \in \mathbb{C} \) with \( a > 0 \) and \( t > 0 \), consider the operator
\begin{equation}\label{Jalpha}
J_t^\alpha (x) = \frac{1}{\Gamma(\alpha)} \int_{0}^\infty \eta(s)s^{\alpha - 1} \pi(\sigma_{t(1+s)})x  \, ds.
\end{equation}
where \( \eta \in C^\infty_c(\mathbb{R}) \) is a cutoff function satisfying:
\[
\eta(s) =
\begin{cases}
1 & \text{for } s \in [0,1], \\
0 & \text{for } s \notin [-1/2, 3/2],
\end{cases}
\]
and \( \Gamma(\alpha) \) is the gamma function. Clearly $\alpha\mapsto J_t^\alpha(x)$ is analytic from the open right-half plane to $L_p(\mathcal M)$ for all $1\leq p<\infty.$

Now we extend the definition of \( J_t^\alpha(x) \) to the complex plane by analytic continuation, using the formula derived from integration by parts:
\begin{align*}
J_t^\alpha (x) 
&= \frac{\pi(\sigma_{2t})x}{\Gamma(\alpha+1)}-\frac{1}{\Gamma(\alpha+1)} \int_{0}^1s^\alpha\frac{d}{ds}\Big(\pi(\sigma_{t(1+s)}x)\Big)ds+\frac{1}{\Gamma(\alpha)}\int_1^\infty\eta(s)s^{\alpha-1}\pi(\sigma_{t(1+s)})x
\end{align*}

This representation is analytic in \( \alpha \) for \( \Re(\alpha)> -1 \), and defines an entire function in \( \alpha \) by further similar analytic continuation. 

Continuing similarly, \( J_t^\alpha f(x) \) can be extended to an entire function of \( \alpha \). Let us define 
\[M_t^\alpha(x)=t^{-\alpha}J_t^\alpha(x).\] We record the following two properties for future purpose.
\begin{itemize}\label{twoimport}
\item[1.]The distribution $f_a(s)=\frac{\eta(s)s^{a-1}}{\Gamma(a)}$ converges to the Dirac measure at $0$ as $a\to 0^+.$ Therefore, we have 
\[M_t^0(x)=\pi(\sigma_t)x.\]
\item[2.]Let $x\geq 0\in \mathcal{D}_K(\mathcal M)$ then \[M_t^1(x)=\frac{1}{t}\int_0^{\frac{3}{2}}\eta(s)\pi(\sigma_{t(1+s)})x\ ds\lesssim\frac{1}{t}\int_0^{3t}\pi(\sigma_s)x ds\lesssim \pi(\mu_{3t})x.\]
\end{itemize} 

\begin{prop}\label{prop2.5}Let $1<p\leq\infty.$ Then there exists $C_p(G)>0$ such that for every $x\in L_p(\mathcal M)$ we have 
\[\big\|\sup\limits_{t\geq 1}\!^+ M_t^{1+ib}x\big\|_p\leq C_p(G)\exp(\pi|b|)\|x\|_p.\]
\end{prop}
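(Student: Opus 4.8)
The goal is a maximal inequality for the analytic family $M_t^{1+ib}$ on the line $\mathrm{Re}\,\alpha=1$, with the expected $e^{\pi|b|}$ growth in $|b|$. The strategy mirrors the first part of the proof of Proposition~\ref{4.7}: reduce the estimate with the oscillatory exponent $1+ib$ to the estimate with exponent $1$ by a duality argument, paying a factor $|\Gamma(1)/\Gamma(1+ib)|\leq C e^{\pi|b|}$ from the standard asymptotics of the Gamma function, and then handle the exponent-$1$ case directly via property~2 of the list preceding the statement, which gives $M_t^1(x)\lesssim \pi(\mu_{3t})x$ pointwise for positive $x\in\mathcal D_K(\mathcal M)$.

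First I would fix $x\in L_p(\mathcal M)_+$ (it suffices to treat positive $x$, since a general element is a linear combination of four positive ones) and, by density (Lemma~\ref{lemma2.3} and the reduction to $\pi(m_K)$-invariant elements), reduce to $x\in\mathcal D_K(\mathcal M)_+$. For finitely many radii $t_1,\dots,t_N\geq 1$ and any positive sequence $(y_{t_k})\in L_{p'}(\mathcal M)_+$ with $\|(y_{t_k})\|_{L_{p'}(\mathcal M;\ell_1^N)}\leq 1$, I would write, using the integral representation \eqref{Jalpha} for $M_t^{1+ib}=t^{-(1+ib)}J_t^{1+ib}$,
\[
\Big|\sum_k \tau\big(M_{t_k}^{1+ib}(x)\,y_{t_k}\big)\Big|
=\Big|\frac{1}{\Gamma(1+ib)}\sum_k t_k^{-(1+ib)}\int_0^\infty \eta(s)s^{ib}\,\tau\big(\pi(\sigma_{t_k(1+s)})x\cdot y_{t_k}\big)\,ds\Big|.
\]
Since $x,y_{t_k}\geq 0$ and $\pi(\sigma_{t_k(1+s)})$ is positivity-preserving and trace-preserving, each scalar $\tau(\pi(\sigma_{t_k(1+s)})x\, y_{t_k})$ is nonnegative, so the modulus of the integral is bounded by $\int_0^\infty \eta(s)s^0\,\tau(\pi(\sigma_{t_k(1+s)})x\,y_{t_k})\,ds$; pulling out $|1/\Gamma(1+ib)|\leq Ce^{\pi|b|}$ yields
\[
\Big|\sum_k \tau\big(M_{t_k}^{1+ib}(x)\,y_{t_k}\big)\Big|\leq C e^{\pi|b|}\sum_k \tau\big(M_{t_k}^{1}(x)\,y_{t_k}\big).
\]
Then by property~2 above, $M_{t_k}^1(x)\lesssim \pi(\mu_{3t_k})x$, and by Theorem~\ref{thm2.1} the family $(\pi(\mu_t)x)_{t>0}$ satisfies the $L_p$ maximal inequality for $1<p<\infty$; combining this with the duality Proposition~\ref{dulaityofmaximalnorms} and the finite-supremum description in Proposition~\ref{FINT} (taking the supremum over all finite $J$ and all admissible $(y_{t_k})$) gives $\big\|\sup_{t\geq1}^+ M_t^{1+ib}x\big\|_p\leq C_p(G)e^{\pi|b|}\|x\|_p$ for $1<p<\infty$. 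The endpoint $p=\infty$ is immediate since each $M_t^{1+ib}$, being built from the contractions $\pi(\sigma_{t(1+s)})$ against an $L^1$ weight whose total mass is $O(e^{\pi|b|})$, is bounded on $\mathcal M$ with norm $O(e^{\pi|b|})$, and the maximal norm on $L_\infty(\mathcal M;\ell_\infty)$ reduces to a uniform operator-norm bound.

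The main obstacle I anticipate is bookkeeping around the cutoff $\eta$ in \eqref{Jalpha}: $\eta$ is supported in $[-1/2,3/2]$ rather than $[0,\infty)$, so one must be slightly careful that the "positivity of the scalar pairing" argument only uses the part of the integral where $s\geq 0$ and that the negative-$s$ contribution (where $\eta$ is supported on $[-1/2,0]$) is handled either by noting it is absent in the defining formula, which integrates over $s\in(0,\infty)$, or by an analogous positivity estimate; as long as one works with the representation \eqref{Jalpha} as written, the domain of integration is $(0,\infty)$ and the issue does not arise. A secondary point is ensuring the passage from positive $x$ to general $x$ does not lose the constant — this is routine since the four-positive-elements decomposition costs only a fixed multiplicative factor — and that the reduction from finite suprema to the genuine maximal norm is legitimate, which is exactly the content of Proposition~\ref{FINT}.
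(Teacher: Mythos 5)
Your argument is correct and is essentially the paper's own proof: both reduce by duality against positive sequences in $L_{p'}(\mathcal M;\ell_1^N)$, use $|1/\Gamma(1+ib)|\leq Ce^{\pi|b|}$, exploit positivity to dominate $M_t^{1+ib}$ (in trace pairing) by $M_t^1\lesssim\pi(\mu_{3t})$, and then invoke Theorem~\ref{thm2.1} together with Propositions~\ref{dulaityofmaximalnorms} and~\ref{FINT}. The only cosmetic differences are that the paper decomposes the dual sequence into eight positive elements rather than decomposing $x$, and it does not isolate the trivial $p=\infty$ endpoint as you do.
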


\begin{proof}
Without loss of generality, we may assume \( x \in\mathcal{S}(\mathcal{M})_{+} \). Let \( y\in L_{p'}(\mathcal{M})_{+} \), where \( p' \) is the conjugate exponent of \( p \), i.e., \( \frac{1}{p} + \frac{1}{p'} = 1 \). 

Using the standard estimates for the Gamma function (see, e.g., Page 79 of \cite{Stein70}), we have:
\[
\left| \frac{1}{\Gamma(a+ib)} \right| \leq C\exp(\pi|b|).
\] Now we observe that \[|\tau(M_t^{1+ib}(x)y)|=|\frac{1}{t^{1+ib}\Gamma(\alpha)} \int_{0}^\infty \eta(s)s^{ib} \tau(\pi(t(1+s))(x)y)  \, ds|\leq  C\exp(\pi|b|)\tau(\pi(\mu_{3t})xy).\]
For  finitely many $t_1,\dots,t_n>0$ we notice that for any positive sequence $(y_{t_i})\in L_{p^\prime}(\mathcal M)_{+}$ with $\|(y_{t_i})\|_{L_p(\mathcal M'\ell_1^n)}\leq 1$ we have
\begin{equation}|\sum\tau(M_t^{1+ib}(x)y_{t_i})|\leq \sum|\tau(M_t^{1+ib}(x)y_{t_i})|\leq C\exp(\pi|b|)\sum\tau(\pi(\mu_{3t_i})(x)y_{t_i})
\end{equation}
Therefore, by Theorem \eqref{thm2.1} we obtain that $|\sum\tau(M_t^{1+ib}(x)y_{t_i})\leq C\exp(\pi|b|)\|x\|_p.$ Moreover, any element in the unit ball of  $L_p(\mathcal{M},\ell_1^n)$ can be written as sum of eight positive elements in the same unit ball. Hence the theorem is proved by duality as in Propsoition \eqref{dulaityofmaximalnorms}.
\end{proof}
In the following proposition we prove an important estimate related to the spherical averages. It is useful to separate the estimate into two parts, corresponding to the \emph{principal series} and the \emph{complementary series} of positive-definite spherical functions.

Recall that the real spectrum of the $*$-algebra $M(G, K)$ is parametrized by:
\[
\Sigma = \{ z = i\lambda \mid \lambda \in \mathbb{R},\, \lambda \geq 0 \} \cup \{ z = s \mid s \in \mathbb{R},\, 0 \leq s \leq \rho_n \},
\]
where $\rho_n = \frac{n-1}{2}$ is a constant depending on the structure of $G$. As usual the character corresponding to the point $z \in \Sigma$ is denoted by $\varphi_z$.
Let $\chi_p$ be the characteristic function of the set
\[
\mathcal{E}_p = \{ z \in \mathcal{E} \mid z = i\lambda,\ \lambda \geq 1 \},
\]
and define $\chi_c(z) = 1 - \chi_p(z)$. Let $E_p$ and $E_c$ denote the spectral projection operators corresponding to the subsets $\mathcal{E}_p$ and $\mathcal{E}_c := \Sigma \setminus \mathcal{E}_p$ respectively.
\begin{lem}\cite{Hon16}\label{somel2bd}
Suppose \( F \) is a \( \mathcal M \)-valued function on \( \mathbb{R} \), which is smooth for \( t \) in an interval \( I \). Then for each \( \ell \) with \( \ell \leq |I| \), we have
\begin{equation} \label{eq:6.13}
|F(t)|^2 \leq 2\ell^{-1} \int_I |F(s)|^2\,ds + 2\ell \int_I |F'(s)|^2\,ds
\end{equation}
for each \( t \in I \).
\end{lem}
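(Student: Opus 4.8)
The plan is to prove a noncommutative Agmon/Sobolev-type interpolation inequality by the usual averaging trick, being careful that operator inequalities (unlike scalar ones) cannot simply be squared. Fix $t \in I$. Since $\ell \le |I|$, one can choose a closed subinterval $J \subseteq I$ with $|J| = \ell$ and $t \in J$ (elementary: we need $[a,b-\ell]\cap[t-\ell,t]\ne\emptyset$, which holds because $a\le t\le b$ and $\ell\le b-a$). For every $s \in J$ the fundamental theorem of calculus for norm-smooth $\mathcal M$-valued functions gives $F(t) = F(s) + \int_s^t F'(u)\,du$ (Bochner integral, legitimate since $F'$ is continuous hence bounded on the compact interval). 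Applying the operator inequality $(a+b)^*(a+b) \le 2a^*a + 2b^*b$ yields, as an inequality in $\mathcal M_+$,
\[
|F(t)|^2 \le 2\,|F(s)|^2 + 2\,\Big|\int_s^t F'(u)\,du\Big|^2 .
\]

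Next I would bound the remainder term by an operator Cauchy--Schwarz inequality in its "already squared" form:
\[
\Big|\int_s^t F'(u)\,du\Big|^2 \le |t-s|\int_{s\wedge t}^{s\vee t} |F'(u)|^2\,du \le \ell \int_I |F'(u)|^2\,du ,
\]
where the first step follows from operator convexity of $x \mapsto x^*x$ (equivalently, testing against an arbitrary vector $\xi \in \mathcal H$ and using the scalar Cauchy--Schwarz inequality for $u \mapsto F'(u)\xi$ in $L_2([s\wedge t,\,s\vee t];\mathcal H)$), and the second uses $|t-s| \le \ell$ (both $s,t \in J$) together with $|F'|^2 \ge 0$ and $[s\wedge t,\,s\vee t] \subseteq I$. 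Note that this is exactly the point where one must avoid squaring the scalar-looking bound $\big|\int g\big| \le (b-a)^{1/2}(\int |g|^2)^{1/2}$ from \eqref{csmei}, since squaring is not operator monotone; working directly with $(\int g)^*(\int g) \le (b-a)\int g^*g$ sidesteps this.

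Finally, I would average the pointwise estimate over $s \in J$ and divide by $\ell = |J|$. Since $|F(t)|^2$ does not depend on $s$, this gives
\[
|F(t)|^2 \le \frac{2}{\ell}\int_J |F(s)|^2\,ds + 2\ell \int_I |F'(u)|^2\,du \le \frac{2}{\ell}\int_I |F(s)|^2\,ds + 2\ell \int_I |F'(s)|^2\,ds ,
\]
using $J \subseteq I$ and positivity of the integrands in the last step, which is the asserted inequality \eqref{eq:6.13}. There is no real obstacle here: the argument is a routine one-dimensional interpolation estimate, and the only subtlety is the one flagged above, namely keeping all Cauchy--Schwarz manipulations in their operator-convex (pre-squared) form so that the bounds remain valid as inequalities between positive operators.
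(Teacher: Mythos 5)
Your proof is correct: the choice of a subinterval $J\subseteq I$ of length $\ell$ containing $t$, the fundamental theorem of calculus, the inequality $(a+b)^*(a+b)\le 2a^*a+2b^*b$, the operator Cauchy--Schwarz bound $\bigl(\int_s^t F'\bigr)^*\bigl(\int_s^t F'\bigr)\le |t-s|\int|F'|^2$, and the averaging over $s\in J$ all go through as inequalities in $\mathcal M_+$, and your care in using the pre-squared form of Cauchy--Schwarz (rather than squaring the bound \eqref{csmei}, which is not operator monotone) is exactly the right precaution. The paper itself gives no proof, citing \cite{Hon16}, and your argument is the same standard averaging proof used there, so there is nothing further to reconcile.
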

Let us define $M_t^\alpha\phi_z$ as
\begin{equation}\label{mtdefn}
M_t^\alpha\phi_z = \int_0^\infty\phi_z ({t(1 + s)})s^{\alpha - 1} \, ds.
\end{equation}
 The function is analytic for $\operatorname{Re} \alpha > 0$ and admits an analytic continuation to an entire function of $\alpha$.
\begin{lem}\cite{NevSt97}\label{mtdefnbd}  For \( |\lambda|\geq 1 \), \( t\geq 1 \), we have:
\begin{align*}
\text{(1)}\quad | M_t^\alpha \varphi_{i\lambda} | &\leq B(G) \, e^{-\frac{1}{2}\rho_{n}t }|\mathbf{c_n}(i\lambda)| \, |\lambda|^{-\operatorname{Re}\alpha}, \\
\text{(2)}\quad |\frac{d}{dt} M_t^\alpha(\varphi_{i\lambda}) | &B(G) \, e^{-\frac{1}{2}\rho_{n}t }|\mathbf{c_n}(i\lambda)| \, |\lambda|^{-\operatorname{Re}\alpha+1}.
\end{align*}
\end{lem}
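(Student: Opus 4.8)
The plan is to read off both bounds from the Harish--Chandra asymptotic expansion of $\varphi_{i\lambda}$ recorded in Proposition~\ref{propofsphericalfn}(4). That expansion applies here since $\operatorname{Re}(i\lambda)=0$ lies inside the strip $|\operatorname{Re} z|\le 1-\delta$, so for $r\ge 1$ we may write
\[
\varphi_{i\lambda}(r)=e^{-\rho_n r}\bigl(\mathbf{c_n}(i\lambda)\psi_{i\lambda}(r)+\mathbf{c_n}(-i\lambda)\psi_{-i\lambda}(r)\bigr),\qquad \psi_{\pm i\lambda}(r)=\sum_{k\ge 0}a_k(\pm i\lambda)\,e^{(\pm i\lambda-2k)r},
\]
with $a_0=1$ and $|a_k(\pm i\lambda)|\le Ck^q$. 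Because $r\ge 1$, the estimate $e^{-2kr}\le e^{-2k}$ makes the series $\sum_{k\ge1}$ absolutely convergent with a bound uniform in $\lambda$, and term-by-term differentiation in $r$ costs only a factor $|\pm i\lambda-2k|\le|\lambda|+2k$, still summable against $e^{-2k}$. Hence $|\psi_{\pm i\lambda}(r)|\le C_0$ and $|\tfrac{d}{dr}\psi_{\pm i\lambda}(r)|\le C_0(1+|\lambda|)$ for $r\ge1$, uniformly in $\lambda$; in other words the whole expansion behaves, for our estimates, like its leading exponential $e^{(\pm i\lambda-\rho_n)r}$ up to a constant (or a factor $1+|\lambda|$ after one $r$-derivative) and the $c$-function weight, and $|\mathbf{c_n}(-i\lambda)|=|\mathbf{c_n}(i\lambda)|$.

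Next I would substitute this into $M_t^\alpha\varphi_{i\lambda}=\int_0^\infty\varphi_{i\lambda}(t(1+s))s^{\alpha-1}\,ds$ from \eqref{mtdefn} and change variables $u=ts$, giving $M_t^\alpha\varphi_{i\lambda}=t^{-\alpha}\int_0^\infty\varphi_{i\lambda}(t+u)\,u^{\alpha-1}\,du$, using the entire analytic continuation in $\alpha$ when $\operatorname{Re}\alpha\le0$. The leading contribution is
\[
t^{-\alpha}\,\mathbf{c_n}(i\lambda)\,e^{(i\lambda-\rho_n)t}\int_0^\infty e^{(i\lambda-\rho_n)u}u^{\alpha-1}\,du\ +\ (\lambda\mapsto-\lambda\ \text{term}),
\]
plus the $k\ge1$ tail, which carries an extra $e^{-2u}$ and is dominated by the same expression. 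The scalar integral is the classical identity $\int_0^\infty e^{-(\rho_n-i\lambda)u}u^{\alpha-1}\,du=\Gamma(\alpha)(\rho_n-i\lambda)^{-\alpha}$ for $\operatorname{Re}\alpha>0$, continued analytically (one integration by parts suffices) to all $\alpha$; its modulus is at most $C_a\,e^{\pi|b|}\,|\rho_n-i\lambda|^{-\operatorname{Re}\alpha}\le C_a\,e^{\pi|b|}\,|\lambda|^{-\operatorname{Re}\alpha}$ for $|\lambda|\ge1$, where $\alpha=a+ib$, the factor $e^{\pi|b|}$ coming from $|\Gamma(a)/\Gamma(a+ib)|\lesssim e^{\pi|b|}$ together with $|(\rho_n-i\lambda)^{-ib}|=e^{b\arg(\rho_n-i\lambda)}$. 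Finally $|t^{-\alpha}|=t^{-a}$ and $t^{-a}e^{-\rho_n t}\le C_a\,e^{-\frac12\rho_n t}$ for $t\ge1$ (since $t^{|a|}e^{-\frac12\rho_n t}$ is bounded), which absorbs the negative powers of $t$ that appear when $a\le0$ and produces exactly the factor $e^{-\frac12\rho_n t}$ of the statement. This proves (1); the constant, which we still denote $B(G)$, in fact also depends on $\operatorname{Re}\alpha$ and grows at most like $e^{\pi|\operatorname{Im}\alpha|}$, while the weight $|\mathbf{c_n}(i\lambda)|$ is carried along unchanged.

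For (2) I would differentiate the representation $M_t^\alpha\varphi_{i\lambda}=t^{-\alpha}\int_0^\infty\varphi_{i\lambda}(t+u)u^{\alpha-1}\,du$ in $t$. The derivative $\tfrac{d}{dt}$ either hits the prefactor $t^{-\alpha}$, producing $-\alpha t^{-\alpha-1}$ and hence a harmless gain of $t^{-1}\le1$, or hits $\varphi_{i\lambda}(t+u)$, whose $r$-derivative is controlled by $(1+|\lambda|)$ times the same exponential profile as in the first paragraph. Either way one picks up at most one extra power of $1+|\lambda|$, so repeating the estimates of (1) verbatim yields $|\tfrac{d}{dt}M_t^\alpha\varphi_{i\lambda}|\le B(G)\,e^{-\frac12\rho_n t}|\mathbf{c_n}(i\lambda)|\,|\lambda|^{-\operatorname{Re}\alpha+1}$. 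The whole argument runs parallel to the corresponding lemma in \cite{NevSt97}. I expect the main obstacle to be the bookkeeping in the analytic continuation: one must extend the oscillatory Gamma-type integral to $\operatorname{Re}\alpha\le0$ while keeping the $\alpha$-dependence sharp (polynomial in $a$, exponential $e^{\pi|b|}$ in $b$), and simultaneously check that the tail $\sum_{k\ge1}$ of the Harish--Chandra series, and its $t$-derivative, is genuinely dominated by the leading term uniformly in $\lambda\ge1$ and $t\ge1$. Both points are routine but must be handled carefully so that the resulting constants feed correctly into the complex-interpolation scheme of Section~\ref{nevost}.
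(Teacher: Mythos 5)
Your argument is correct in outline and is essentially the standard proof from the cited source \cite{NevSt97}: substitute the Harish--Chandra expansion of Proposition~\ref{propofsphericalfn}(4) into \eqref{mtdefn}, evaluate the leading oscillatory term by the Gamma-integral identity $\int_0^\infty e^{-(\rho_n-i\lambda)u}u^{\alpha-1}\,du=\Gamma(\alpha)(\rho_n-i\lambda)^{-\alpha}$ (continued in $\alpha$), dominate the $k\ge 1$ tail by its exponential decay in $k$, and absorb the powers of $t$ into $e^{-\frac12\rho_n t}$; the paper itself gives no proof of this lemma, quoting it directly from Nevo--Stein. Your remark that the constant necessarily depends on $\operatorname{Re}\alpha$ and grows at most like $e^{\pi|\operatorname{Im}\alpha|}$ is accurate and is exactly the dependence tolerated where the lemma is used in Proposition~\ref{mainestil2}.
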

\begin{prop}\label{mainestil2}For every \( x\in \mathcal{D}_K(\mathcal M) \),
 and for all \( a > -\frac{n}{2} + 1 \), we have:
 
\[
\big\|\sup\limits_{t\geq 1}\!^+ M_t^{a + ib}(x)\big\|_{2} \leq C_\alpha(G) \, \exp(\pi |b|) \, \| f \|_{L^2},
\]
provided that \( n(G) > 2 \), where \( n(G) \) denotes the dimension parameter associated to \( G \), and \( C_\alpha(G) \) is a constant depending on
 \( \alpha \) and \( G \).
\end{prop}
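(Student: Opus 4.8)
The plan is to transport the Nevo--Stein analytic-family/square-function scheme to the von Neumann algebra setting: the spectral theorem for the commutative $C^*$-algebra $A_\pi$ (cf.\ \eqref{spectalmethodheart}) reduces every estimate to a bound on the scalar symbols $\varphi_z$, $z\in\Sigma$, and Lemma~\ref{somel2bd}, applied \emph{at the operator level}, manufactures from these a genuine majorant in $L_2(\mathcal M)_+$. Fix $x\in\mathcal D_K(\mathcal M)$. Since $M_t^{a+ib}(x)^*=M_t^{a-ib}(x^*)$ we may assume $x$ self-adjoint, and by Proposition~\ref{FINT} it suffices to bound $\big\|\sup_{t\in J}{}^+M_t^{a+ib}(x)\big\|_2$ uniformly over finite $J\subseteq[1,\infty)$. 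As $M_t^{a+ib}\in A_\pi$ commutes with the spectral projections $E_p$, $E_c$ of $A_\pi$ onto the principal part $\mathcal E_p=\{i\lambda:\lambda\ge 1\}$ and the complementary part $\mathcal E_c=\Sigma\setminus\mathcal E_p$, we treat $E_px$ and $E_cx$ separately. The $1/\Gamma(a+ib)$ (equivalently $1/(a+ib)$) factors produced by the analytic continuation of $M_t^{a+ib}$ are bookkept by Stirling's formula and absorbed into the stated $e^{\pi|b|}$; for $a>0$ this is cleanest via the Gamma-function duality of Proposition~\ref{prop2.5}, which reduces $M_t^{a+ib}$ to $M_t^a$, while for $-\tfrac n2+1<a\le 0$ one keeps $ib$ inside the continued formula exactly as in the proof of Proposition~\ref{oddcriticall2esti}.

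For the principal part, decompose $\mathcal E_p$ into the dyadic annuli $\mathcal E_p^{(j)}=\{i\lambda:2^j\le|\lambda|<2^{j+1}\}$, $j\ge 0$, with spectral projections $E_j$, and put $x_j=E_jx$. On $\mathcal E_p^{(j)}$ the length in Lemma~\ref{somel2bd} is taken to be the \emph{constant} $\ell_j=2^{-j}$ (admissible since $|I|=\infty$); applying that lemma to $s\mapsto M_s^{a+ib}(x_j)$ on $I=[1,\infty)$ gives, for every $t\ge1$, the operator inequality $|M_t^{a+ib}(x_j)|^2\le G_j^2$ with $G_j^2:=2\ell_j^{-1}\!\int_1^\infty|M_s^{a+ib}(x_j)|^2\,ds+2\ell_j\!\int_1^\infty\big|\tfrac{d}{ds}M_s^{a+ib}(x_j)\big|^2\,ds$. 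Taking the trace and using the spectral theorem, $\|G_j\|_2^2$ is the integral over $\mathcal E_p^{(j)}$ against $dm_x$ of $2\ell_j^{-1}\!\int_1^\infty|M_s^{a+ib}\varphi_{i\lambda}|^2\,ds+2\ell_j\!\int_1^\infty|\partial_sM_s^{a+ib}\varphi_{i\lambda}|^2\,ds$; feeding in the two bounds of Lemma~\ref{mtdefnbd}, which contribute $|\mathbf c_n(i\lambda)|^2|\lambda|^{-2a}$ and $|\mathbf c_n(i\lambda)|^2|\lambda|^{-2a+2}$ against an integrable factor $e^{-\rho_n s}$, and using $\ell_j\sim|\lambda|^{-1}$ to balance the two terms, the bracket is $\le C_a e^{2\pi|b|}|\mathbf c_n(i\lambda)|^2|\lambda|^{1-2a}$. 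Since $|\mathbf c_n(i\lambda)|=|c_n|\,|\Gamma(i\lambda)/\Gamma(i\lambda+\rho_n)|\sim|\lambda|^{-\rho_n}$ for $|\lambda|$ large, this is $\le C_a e^{2\pi|b|}2^{\,j(2-n-2a)}$ on $\mathcal E_p^{(j)}$, and the exponent $2-n-2a$ is negative precisely because $a>-\tfrac n2+1$; hence $\|G_j\|_2\le C_a e^{\pi|b|}2^{-\epsilon j}m_x(\mathcal E_p^{(j)})^{1/2}$ with $\epsilon=(n+2a-2)/2>0$. Since $M_t^{a+ib}(x_j)\le G_j$ (and $\ge-G_j$) for every $t$ in the real case, and in general $M_t^{a+ib}(x_j)$ factors as $G_j w_{t,j}$ with $\sup_{t,j}\|w_{t,j}\|\le1$, the family $M_t^{a+ib}(E_px)=\sum_j M_t^{a+ib}(x_j)$ is majorized by $\sum_j G_j$, and $\big\|\sum_j G_j\big\|_2\le\sum_j\|G_j\|_2\le C_a e^{\pi|b|}\big(\sum_j 2^{-2\epsilon j}\big)^{1/2}\big(\sum_j m_x(\mathcal E_p^{(j)})\big)^{1/2}\le C_a e^{\pi|b|}\|x\|_2$. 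This settles the principal part, and it is precisely here that the operator-level square function forces use of the $t$-derivative estimate of Lemma~\ref{mtdefnbd}, and hence the threshold $a>-\tfrac n2+1$.

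For the complementary part $E_cx$ one reruns the same machine with Proposition~\ref{propofsphericalfn} in place of Lemma~\ref{mtdefnbd}. On $\{i\lambda:0\le\lambda<1\}$ the symbol is dominated by $\varphi_0$, which decays like $(1+r)e^{-\rho_n r}$ uniformly, so the constant length $\ell=1$ handles it. The delicate region is $z\nearrow\rho_n$, where $0\le\varphi_z(r)\le(1+\mathbf c_n(z))(\cosh r)^{z-\rho_n}$ decays only at the arbitrarily slow rate $e^{-(\rho_n-z)r}$: one first peels off the atom at $z=\rho_n$ --- the space of $G$-invariant vectors, on which $M_t^{a+ib}$ acts as a $t$-independent scalar --- and then decomposes the rest into the blocks $\{z:\rho_n-z\sim 2^{-j}\}$, on which the optimal \emph{constant} length is $\ell=2^{\,j}$; one checks directly that $2^{-j}\!\int_1^\infty|M_s^{a+ib}\varphi_z|^2\,ds$ and $2^{\,j}\!\int_1^\infty|\partial_sM_s^{a+ib}\varphi_z|^2\,ds$ are both $O(e^{2\pi|b|})$ uniformly over $z$ in that block, so the resulting block majorants $G_j$ satisfy $\sum_j\|G_j\|_2^2\le C e^{2\pi|b|}\sum_j m_x(\text{block}_j)\le Ce^{2\pi|b|}\|x\|_2^2$ by orthogonality of the spectral projections and assemble into a common majorant. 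Adding the principal and complementary estimates proves the Proposition.

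The step I expect to be the main obstacle is exactly this passage from scalar symbol bounds to a genuine majorant in $L_2(\mathcal M)_+$: in the commutative setting one would simply integrate $\sup_{t\ge1}|M_t^{a+ib}\varphi_z|$ against $m_x$, but noncommutatively this is unavailable and the optimal interval length in Lemma~\ref{somel2bd} is spectrum-dependent, which forces the dyadic decomposition of $\Sigma$, the application of Lemma~\ref{somel2bd} block-by-block with a constant $\ell$, the control of the $t$-derivative of the symbol (whence the dimensional threshold), and the recombination of the block majorants via $G_j^2\le\sum_i G_i^2$ together with the orthogonality of the $E_j$. Apart from this, the analytic continuations of $J_t^\alpha$ and $M_t^\alpha$, the $e^{\pi|b|}$ bookkeeping, and the reduction of complex $\alpha$ to real $\alpha$ for $a>0$ are routine and parallel Proposition~\ref{oddcriticall2esti} and the methods of \cite{Hon16, NevSt97, Nevo97}.
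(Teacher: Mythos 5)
Your treatment of the principal series is essentially the paper's own argument: dyadic spectral blocks $I_k=[i2^k,i2^{k+1})$, Lemma~\ref{somel2bd} applied at the operator level with the block-dependent length $\ell\sim 2^{-k}$, the symbol and derivative bounds of Lemma~\ref{mtdefnbd} together with $|\mathbf{c_n}(i\lambda)|\lesssim|\lambda|^{-\rho_n}$, and summation over blocks using orthogonality of the $E_k$; the recombination via a common majorant $\sum_jG_j$ rather than the triangle inequality in the maximal norm is a cosmetic difference. The $e^{\pi|b|}$ bookkeeping and the reduction of complex to real $\alpha$ are also as in the paper.

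The complementary series is where your proposal has a genuine gap. Everything there hinges on the assertion, offered as ``one checks directly'', that on the block $\{z:\rho_n-z\sim 2^{-j}\}$ both $2^{-j}\int_1^\infty|M_s^{a+ib}\varphi_z|^2\,ds$ and $2^{\,j}\int_1^\infty|\partial_s M_s^{a+ib}\varphi_z|^2\,ds$ are $O(e^{2\pi|b|})$ uniformly. The second claim is unsubstantiated and, as stated, fails: $\varphi_z$ is not the pure exponential $e^{-(\rho_n-z)r}$ your balance is calibrated to. It contains rapidly decaying components (the $\mathbf{c_n}(-z)\psi_{-z}$ part and the $k\geq1$ terms of $\psi_z$), and the analytic continuation of $M_t^{a+ib}$ for $a\leq 0$ generates further terms involving derivatives of $\varphi_z(t(1+s))$ on $s\in[0,1]$; the $t$-derivatives of these pieces are of unit size near $t=1$ and do not shrink like $\rho_n-z$, so $\int_1^\infty|\partial_sM_s^{a+ib}\varphi_z|^2\,ds$ stays bounded below and multiplying by $\ell=2^{\,j}$ makes the right-hand side of Lemma~\ref{somel2bd} blow up as $j\to\infty$. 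The cancellation forcing $\partial_t\varphi_z\to 0$ as $z\to\rho_n$ is not quantified by anything you invoke --- the expansion in Proposition~\ref{propofsphericalfn}(4) is only stated for $|\operatorname{Re}z|\leq 1-\delta<1$, hence is not even available near $z=\rho_n$ for $n\geq 3$. This is exactly why the paper (following Nevo) abandons the $\ell$-balancing scheme on $\mathcal{E}_c$ and instead performs iterated integration by parts to write $M_t^{\alpha}(E_cx)$, $-m<a<-m+1$, in terms of $N_t^{-k}(E_cx)$, $0\leq k\leq m$, and $N_t^{1}(E_cx)$: the $N_t^{-k}$ terms are controlled through the Littlewood--Paley functions $g_k$ (Propositions~\ref{prop4.1} and \ref{prophighorder}, Lemma~\ref{lemma7.1}), whose scalar input is the \emph{weighted} bound $\sup_{z\in\mathcal{E}_c}\int_1^\infty t^{2k-1}|\varphi_z^{(k)}(t)|^2\,dt<\infty$ from \cite{Nevo97} --- the weight $t^{2k-1}$ is precisely what renders the slowly decaying part uniformly harmless --- while $N_t^1$ is handled by Theorem~\ref{thm2.1}. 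To salvage your route you would need block-uniform derivative estimates such as $|\partial_tM_t^{a+ib}\varphi_z|\lesssim e^{\pi|b|}\bigl((\rho_n-z)e^{-(\rho_n-z)t}+(\rho_n-z)^{1/2}e^{-ct}\bigr)$ (or a splitting of the symbol with different lengths $\ell$ for the slow and fast pieces, plus a proof that the fast piece's coefficient decays as $z\to\rho_n$), none of which is provided; otherwise you should switch to the paper's $N^{-k}$/$g$-function argument for $E_cx$.
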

We divide the proof of the proposition into two parts: one regarding the principle series and another complementary series.
\begin{proof}[Proof of Proposition \eqref{mainestil2}:The principle series ]
Note that $\pi(m_K)$ is the unit of $A_\pi.$ Moreover, $\pi(m_K)$ is nothing but orthogonal projection onto the closed subspace $\{x\in L_2(\mathcal M):\pi(k)x=x\forall k\in K\}.$ Hence $E_p+E_c=\pi(m_K).$ Therefore, for $x\in\mathcal{D}_K(\mathcal {M}),$ we have
\[
\big\|\sup\limits_{t\geq 1}\!^+M_t^\alpha x\big\|_2 = \big\|\sup\limits_{t\geq 1}\!^+M_t^\alpha (E_px+E_cx)\big\|_2\leq\big\|\sup\limits_{t\geq 1}\!^+M_t^\alpha E_px\big\|+\big\|\sup\limits_{t\geq 1}\!^+M_t^\alpha E_cx\big\|_2.
\]

Applying the lemma to $g(t) = M_t E_p f(x)$, we obtain for all $t>0$ and $L>0$
\[
 |M_t E_p (x)|^2 \leq C L^{-1} \int_1^\infty |M_t E_p (x)|^2 dt + C L \int_L^\infty |\frac{d}{dt}M_t E_p (x)|^2 dt.
\]
Note that $\big\|\sup\limits_{t\geq 1}\!^+M_t^\alpha x\big\|_2\leq \big\|\sup\limits_{t\geq 1}\!^+|M_t^\alpha x|^2\big\|_1^{\frac{1}{2}}$
Hence we obtain that 
\begin{align}\label{l2esimate}
\bigl\|\sup_{t \ge 1}\!^+ M_t^{\alpha} E_p x\bigr\|_2^{2}
&\le C\,L^{-1}
      \Bigl\|\int_{1}^{\infty} |M_t E_p x|^{2}\,dt\Bigr\|_{1}
   +  C\,L
      \Bigl\|\int_{1}^{\infty}
               \Bigl|\tfrac{d}{dt} M_t E_p x\Bigr|^{2}\,dt
      \Bigr\|_{1}
      \\[4pt]\nonumber
&=  C\,L^{-1}
     \int_{1}^{\infty} \|M_t E_p x\|_{2}^{2}\,dt
   +  C\,L
     \int_{1}^{\infty}
          \Bigl\|\tfrac{d}{dt} M_t E_p x\Bigr\|_{2}^{2}\,dt.
\end{align}
Now we use the spectral theory of the algebra $M(G,K)$ to write the right-hand side using the spectral resolution of $E_p f$:
\[
\bigl\|\sup_{t \ge 1}\!^+ M_t^{\alpha} E_p x\bigr\|_2^{2}
\leq C L^{-1} \int_1^\infty \int_\Sigma |M_t^\alpha\phi_z|^2 \, d\nu_x(z) \, dt
+ C L \int_L^\infty \int_\Sigma |\frac{d}{dt}M_t^\alpha\phi_z|^2 \, d\nu_x(z) \, dt.
\]

Here $\nu_x$ is the spectral measure determined by $x$, and $\Sigma$ is the spectrum of the commutative $C^*$-algebra generated by $\pi(M(G,K))$. In particular, $E_p \nu_x$ is the measure determined by $E_p x$. The function in \eqref{mtdefn} defines a multiplier function associated (via the harmonic analysis on $M(G, K)$) with the operator $M_t^\alpha$. The function is analytic for $\operatorname{Re} \alpha > 0$ and admits an analytic continuation to an entire function of $\alpha$. By the functional calculus, the operator $M_t^\alpha$ corresponds on the Fourier transform side to the multiplier defined by the analytic continuation of the of operators $M_t^\alpha$.

Let us consider 
\[
I_k = [i2^k,\, i2^{k+1}) \subset\mathcal{E}_p,\quad \text{for } k \geq 0.
\]
Let \( E_k \) denote the spectral projection corresponding to the indicator function \( \chi_{I_k} \) of the interval \( I_k \). We aim to estimate the \( L_2 \)-norm of the operator \( E_k M_t^\alpha x \), where \( M_t^\alpha \) is the analytic family of operators defined earlier. Note that since \( E_k \) and \( M_t^\alpha \) commute (as both arise from the functional calculus of the commutative unital $C^*$-algebra \( \mathcal{A}_\pi \subset B(L_2(\mathcal{M}) \)), we have
\[
E_k M_t^\alpha x = M_t^\alpha E_k x.
\]
Hence, using the spectral theorem:
\[
\| E_k M_t^\alpha x \|_2^2 = \| M_t^\alpha E_k f \|_2^2 = \int_{I_k} |M_t^\alpha\varphi_z|^2\, d\nu_x(z),
\]
where \( \nu_x \) is the spectral measure associated to the vector \( x \).

Note that for \( |\lambda|\geq 1 \), we have the estimate:
\begin{equation}
|\mathbf{c_n}(i\lambda)| \leq C_n |\lambda|^{-\rho_n}.
\end{equation}
Therefore, by Lemma \eqref{mtdefn}
\[
\int_{I_k} | M_t^\alpha\varphi_z|^2 \, d\nu_x(z) 
\leq B(G)^2 \int_{I_k}\Big( e^{-\frac{1}{2}\rho_{n}t |}{|\mathbf{c_n}(z)||z|^{-\operatorname{Re}\alpha}} \Big)^2\, d\nu_x(z).
\]

Using the decay of \( \mathbf{c_n}(i\lambda) \), we estimate the above as
\[
\|M_t^\alpha E_kx\|_2^2=\int_{I_k} | M_t^\alpha\varphi_z|^2 \, d\nu_x(z) 
\leq B(G)^2 e^{-\rho_{n}t} {2^{-2k(\rho_n + \operatorname{Re}\alpha)} }
\|E_kx\|_2^2
\]

A similar computation yields that
\[
 \| E_k\frac{d}{dt}M_t^\alpha x \|_2^2  \leq B(G)^2 e^{-\rho_{n}t} {2^{-2k(\rho_n + \operatorname{Re}\alpha-1)} }
\|E_kx\|_2^2.
\]
Therefore, we obtain from \eqref{l2esimate} by choosing $L=2^{-k}$ 
\begin{equation}
\bigl\|\sup_{t \ge 1}\!^+ M_t^{\alpha} E_k x\bigr\|_2^{2}\leq 2CB(G)2^{-2k(\rho_n+\operatorname{Re}\alpha-\frac{1}{2})}\|E_kx\|_2^2.
\end{equation}
Note that $\operatorname{Re}\alpha=-\frac{n}{2}+1+\delta,$ $\delta>0$ and $\rho_n=\frac{1}{2}(n-1),$ hence for $k\geq 0$ we obtain $\bigl\|\sup_{t \ge 1}\!^+ M_t^{\alpha} E_k x\bigr\|_2^{2}\leq B2^{-2\delta k}\|E_kx\|_2^2.$ Summing it up over $k\geq 0$ we obtain that \[\bigl\|\sup_{t \ge 1}\!^+ M_t^{\alpha} E_p x\bigr\|_2\leq C_a(G)\|x\|_2\] where the constant $C_a(G)$ depends only on $a$ and $G.$ This completes the proof for the principle series.
\end{proof}

\subsection{Proof of Proposition \eqref{mainestil2}: The complementary series:
}

We follow the notation $N^\alpha(F_x)(t)\equiv N_t^\alpha(x)$ where $x\in \mathcal{D}_K(\mathcal M)$ by putting $\zeta(s)=1-\eta(2s)$ in \eqref{Nalpha} where $\eta$ is as in \eqref{Jalpha}.
\begin{lem}\label{lemma7.1}For all integer $m\geq 0$, there exist $C_m>0$ such that for all $x\in\mathcal{D}_K(\mathcal M)$ we have that 
\begin{equation}
\bigl\|\sup_{t \ge 1}\!^+N^{-m}_t(E_cx)\bigr\|_2\leq C_m\|E_cx\|_2.
\end{equation}
\end{lem}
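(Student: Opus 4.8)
The plan is to reduce the estimate on $N^{-m}_t(E_cx)$ to a scalar statement about the spherical functions $\varphi_z$ with $z$ in the complementary part $\mathcal E_c$ of the spectrum, and then to invoke the spectral calculus together with the decay estimates from Proposition~\ref{propofsphericalfn}. Recall from the list of properties following \eqref{Nalpha} that $N^{-m}_t(x) = t^m\frac{d^m}{ds^m}\big|_{s=t}(\pi(\sigma_s)x)$ for $t\geq\frac12$ (here with the present choice $\zeta(s)=1-\eta(2s)$), so the operator in question differentiates the spherical average $m$ times. Following Proposition~\ref{prop4.1}, it suffices to control the Littlewood--Paley $g$-functions $g_j(E_cx)$ for $1\le j\le m$ together with the single-time terms $\|\frac{d^{j-1}}{ds^{j-1}}|_{s=1}\pi(\sigma_s)(E_cx)\|_2$; indeed the argument of Proposition~\ref{prop4.1} (integration by parts in $t$, followed by \eqref{csmei}) is purely formal and applies verbatim with $x$ replaced by $E_cx$, since $E_c$ commutes with every $\pi(\sigma_t)$, being a spectral projection of $A_\pi$.

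First I would record, exactly as in Proposition~\ref{prophighorder}, that
\[
\|g_j(E_cx)\|_2^2 = \int_{\mathcal E_c}\int_1^\infty t^{2j-1}\,|\varphi_z^{(j)}(t)|^2\,dt\,d\nu_x(z),
\]
and similarly $\big\|\frac{d^{j-1}}{ds^{j-1}}|_{s=1}\pi(\sigma_s)(E_cx)\big\|_2^2 = \int_{\mathcal E_c}|\varphi_z^{(j-1)}(1)|^2\,d\nu_x(z)$, using the spectral identity $\|\frac{d^j}{dt^j}\pi(\sigma_t)(E_cx)\|_2^2 = \int_{\mathcal E_c}|\varphi_z^{(j)}(t)|^2\,d\nu_x(z)$. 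Since $\nu_x(\mathcal E_c)\le\|x\|_2^2$, the lemma will follow once we show that
\[
\sup_{z\in\mathcal E_c}\Big(|\varphi_z^{(j-1)}(1)|^2 + \int_1^\infty t^{2j-1}|\varphi_z^{(j)}(t)|^2\,dt\Big) < \infty\qquad\text{for }0\le j\le m.
\]
The key point is that $\mathcal E_c = \{i\lambda:0\le\lambda\le 1\}\cup\{s:0\le s\le\rho_n\}$ is a \emph{bounded} piece of the spectrum on which the spherical functions are uniformly well-behaved: on the truncated principal part $\{i\lambda:0\le\lambda\le 1\}$ one uses property~(5) of Proposition~\ref{propofsphericalfn}, $|\varphi_{i\lambda}(r)|\le\varphi_0(r)\le B(1+r)e^{-\rho_n r}$, together with the power-series expansion of property~(4) (valid here since $|\operatorname{Re}z|\le 1-\delta$ does \emph{not} hold at $z=\rho_n$, but on the principal-series interval $z=i\lambda$ it does) to differentiate term by term and obtain $|\varphi_{i\lambda}^{(j)}(r)|\lesssim (1+r)e^{-\rho_n r}$; on the complementary interval $\{0<s\le\rho_n\}$ one uses property~(3), $0\le\varphi_s(r)\le (1+\mathbf{c_n}(s))(\cosh r)^{s-\rho_n}$, and again property~(4) away from $s=\rho_n$ (with the endpoint $s=\rho_n$, where $\varphi_{\rho_n}\equiv 1$, handled separately — its derivatives vanish) to get $|\varphi_s^{(j)}(r)|\lesssim e^{-\varepsilon r}$ for some $\varepsilon>0$ uniform over $s$ in a fixed compact subinterval. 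In both regimes the exponential decay makes $\int_1^\infty t^{2j-1}|\varphi_z^{(j)}(t)|^2\,dt$ finite, and uniformly so, and $|\varphi_z^{(j-1)}(1)|$ is trivially bounded.

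The main obstacle is the behaviour near the top of the complementary series, $z$ close to $\rho_n$, where $\mathbf{c_n}(s)=c_n\Gamma(s)/\Gamma(s+\rho_n)$ stays finite and the estimates are tame, so in fact there is no singularity to fight; the only genuine care needed is that the term-by-term differentiation of the Harish--Chandra expansion in property~(4) is legitimate and yields derivative bounds with the same exponential factor $e^{-\rho_n r}$ (the coefficient bound $|a_k(z)|\le Ck^q$ guarantees the differentiated series converges locally uniformly for $r$ bounded away from $0$, which is exactly the region $t\ge 1$ we need). These uniform bounds over the compact set $\mathcal E_c$ are precisely the scalar facts established in \cite{Nevo97, NevSt97}, so I would cite those for the explicit constants. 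Once the supremum above is finite, combining it with the spectral identities and Proposition~\ref{prop4.1} (applied to $E_cx$) gives $\big\|\sup_{t\ge 1}^+N_t^{-m}(E_cx)\big\|_2 \le C_m\|E_cx\|_2$, completing the proof.
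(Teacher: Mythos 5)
Your proposal follows essentially the same route as the paper: reduce via Proposition~\ref{prop4.1} (using that $E_c$ commutes with $\pi(\sigma_t)$) and the spectral identity of Proposition~\ref{prophighorder} to the uniform scalar bound $\sup_{z\in\mathcal{E}_c}\int_1^\infty t^{2k-1}|\varphi_z^{(k)}(t)|^2\,dt<\infty$, which the paper, like you, ultimately imports from \cite[Proposition 8, part (4)]{Nevo97} (the paper dispatches $m=0$ by Theorem~\ref{l2bdsphere} and argues $m>0$ exactly as you do). Two small remarks: reaching $N_t^{-m}$ through Proposition~\ref{prop4.1} requires the $g_j(E_cx)$ bounds for $1\le j\le m+1$ rather than $1\le j\le m$, and your claimed uniform pointwise decay $|\varphi_s^{(j)}(r)|\lesssim e^{-\varepsilon r}$ on the complementary interval fails to be uniform as $s\to\rho_n$ (only the integrated quantities stay uniformly bounded there), but since you defer to the cited scalar estimates anyway this does not affect the argument.
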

\begin{proof}
The case $m=0$ is settled by Theorem \eqref{l2bdsphere} in Section \eqref{$L_2$-bound associated to spherical averages}. Hence it is enough to consider $m> 0.$ Note that by Proposition \eqref{prop4.1} it is enough to prove that $\|g_j(E_cx)\|_2\leq C_j\|E_cx\|_2$ for all $x\in \mathcal{D}_K(\mathcal{M}).$ Hence by Proposition \eqref{prophighorder} it is enough to show that the set for all $k\geq 1,$ there is a constant $C_k>0$ such that 
\begin{equation}\sup\{\int_1^\infty t^{2k-1}|\varphi_z^{(k)}(t)|^2:z\in \mathcal{E}_c\}\leq C_k.
\end{equation}
This bound have been proved in \cite[Proposition 8, part (4)]{Nevo97}.
\end{proof}
\begin{thm} For all $x\in \mathcal{D}_K(\mathcal M)$, and $a>-\frac{n}{2}+1$ we have that
\begin{equation}
\bigl\|\sup_{t \geq 1}\!^+ M^\alpha_t(E_c x)\bigr\|_2\leq C_a(G)\exp(\pi|b|)\|x\|_2
\end{equation}
provided $n(G)>2.$
\end{thm}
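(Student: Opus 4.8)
The plan is to adapt to $M_t^\alpha$, restricted by the spectral projection $E_c$, the fractional-integration machinery of Propositions \eqref{4.4}--\eqref{4.7}: first a maximal bound for $M_t^{a}$ at real $a$ in a large range, proved from Littlewood--Paley $g$-function estimates as in Proposition \eqref{4.6}, and then a passage to complex order $\alpha=a+ib$ costing a factor $e^{\pi|b|}$, as in Proposition \eqref{4.7}. The reason for working with $E_c$ is that the spectral variable then ranges over the bounded low-frequency set $\mathcal{E}_c$, on which the spherical functions and all their derivatives are uniformly controlled, which is exactly the content underlying Lemma \eqref{lemma7.1}.

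For the real-order estimate, fix a real $a$ and an integer $m$ with $-m<a$. Iterating $m$ times the integration-by-parts continuation of $J_t^\alpha$ displayed after \eqref{Jalpha} continues $\alpha\mapsto M_t^\alpha$ analytically past $\operatorname{Re}\alpha=-m$ and expresses $M_t^{a}(E_cx)$ as a finite sum of boundary terms, which evaluate suitably $t$-scaled derivatives of $s\mapsto\pi(\sigma_s)(E_cx)$ at points comparable to $t$, together with Riemann--Liouville integrals of the functions $s^{k}\tfrac{d^{k}}{ds^{k}}\pi(\sigma_s)(E_cx)$, $0\le k\le m$. Each piece is handled exactly as in Proposition \eqref{4.6}: one passes to the spectral side via \eqref{spectalmethodheart}, writing $\|M_t^\alpha(E_cx)\|_2^2=\int_{\mathcal{E}_c}|M_t^\alpha\varphi_z|^2\,d\nu_x(z)$ and similarly for $t$-derivatives; applies the Sobolev-type Lemma \eqref{somel2bd} on $I=[1,\infty)$ with $\ell=1$ to turn pointwise-in-$t$ control into $\sup\limits_{t\geq1}\!^{+}$ control; and, as in Proposition \eqref{prophighorder}, reduces everything to the uniform boundedness over $z\in\mathcal{E}_c$ of the quantities $|\varphi_z^{(k)}(c)|$ (for fixed $c>0$) and $\int_1^\infty t^{2k-1}|\varphi_z^{(k)}(t)|^2\,dt$. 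These bounds are supplied by \cite[Proposition 8]{Nevo97}, the very estimate already invoked in the proof of Lemma \eqref{lemma7.1}; hence $\bigl\|\sup\limits_{t\geq1}\!^{+}M_t^{a}(E_cx)\bigr\|_2\le C_a(G)\|E_cx\|_2$ for every real $a$, in particular for $a>-\tfrac n2+1$.

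To add the imaginary part I would mimic Proposition \eqref{4.7}. When $a>0$, use duality against the unit ball of $L_2(\mathcal M;\ell_1^N)$ (Proposition \eqref{dulaityofmaximalnorms} and the truncation Proposition \eqref{FINT}), together with the decomposition of an arbitrary element into eight positive ones and the elementary bound $\bigl|\Gamma(a)/\Gamma(a+ib)\bigr|\le C_a e^{\pi|b|}$, to dominate $M_t^{a+ib}(E_cx)$ by $M_t^{a}(E_cx)$ up to the factor $C_a e^{\pi|b|}$. When $a\le0$, rerun the continuation formula of the previous paragraph with $-\delta+ib$ in place of each $-\delta$; as in the $a\le0$ case of Proposition \eqref{4.7} this again surfaces a ratio of $\Gamma$-factors and therefore the factor $e^{\pi|b|}$. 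Combining the two steps gives $\bigl\|\sup\limits_{t\geq1}\!^{+}M_t^{a+ib}(E_cx)\bigr\|_2\le C_a(G)e^{\pi|b|}\|x\|_2$ for all $a>-\tfrac n2+1$ and $b\in\mathbb R$, which is the assertion; the standing hypothesis $n(G)>2$ and the restriction $a>-\tfrac n2+1$ are imposed only for uniformity with the companion principal-series estimate in Proposition \eqref{mainestil2} and are not otherwise needed in the argument above.

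The step I expect to be the main obstacle is the real-order estimate: one must check both that Nevo's scalar estimates on the complementary-series spherical functions survive the bookkeeping of the $m$-fold integration by parts, and that they transfer cleanly to the operator-valued $\sup^{+}$-norm. The transfer is precisely what the spectral reduction \eqref{spectalmethodheart}, Lemma \eqref{somel2bd}, and the duality $L_2(\mathcal M;\ell_1)^{*}=L_2(\mathcal M;\ell_\infty)$ accomplish; arranging them so that all constants come out uniform in $z\in\mathcal{E}_c$ and in $b$ is the delicate point, after which the argument is a routine repetition of Section \eqref{forpodd}.
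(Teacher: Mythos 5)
Your overall route is the one the paper takes: the paper also continues $M_t^\alpha$ by iterated integration by parts for $-m<\operatorname{Re}\alpha<-m+1$, obtains
$\big\|\sup_{t\geq1}\!^{+}M_t^\alpha(E_cx)\big\|_2\leq C_a e^{\pi|b|}\big(\sum_{k=0}^{m}\big\|\sup_{t\geq1}\!^{+}N_t^{-k}(E_cx)\big\|_2+\big\|\sup_{t\geq1}\!^{+}N_t^{1}(E_cx)\big\|_2\big)$,
and then concludes from Lemma \eqref{lemma7.1} (which is exactly your real-order step: Propositions \eqref{prop4.1} and \eqref{prophighorder} plus the uniform bounds over $z\in\mathcal{E}_c$ of $\int_1^\infty t^{2k-1}|\varphi_z^{(k)}(t)|^2dt$ from \cite{Nevo97}) together with Theorem \eqref{thm2.1} for the $N^{1}$ term. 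Your treatment of $\operatorname{Re}\alpha\le 0$, rerunning the continuation with $-\delta+ib$ and pulling the factor $e^{\pi|b|}$ out of the $\Gamma$-ratios, is likewise the paper's argument. Two small bookkeeping remarks: your decomposition should retain the $N_t^{1}(E_cx)$ term, which is not controlled by the $g$-function bounds but by the uniform-average maximal inequality of Theorem \eqref{thm2.1}; and Lemma \eqref{somel2bd} is the device used for the principal series, whereas on $\mathcal{E}_c$ the passage to the $\sup^{+}$-norm goes through the self-adjointness/Cauchy--Schwarz argument \eqref{csmei} and the explicit majorants $g_k$.

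One step, however, does not work as written: for $a>0$ you propose to dominate $M_t^{a+ib}(E_cx)$ by $M_t^{a}(E_cx)$ via the positivity/duality trick of Propositions \eqref{prop2.5} and \eqref{4.7}. That trick requires the integrand to be positive: it is applied in the paper only to $x\in L_p(\mathcal M)_{+}$, and the domination $|\tau(M_t^{a+ib}(x)y)|\leq C_a e^{\pi|b|}\,\tau(M_t^{a}(x)y)$ uses $\tau(\pi(\sigma_{u})(x)\,y)\geq 0$ for positive $y$. The spectral projection $E_c$ is an orthogonal projection on $L_2(\mathcal M)$ but is not positivity-preserving, so $E_cx$ (hence $\pi(\sigma_{t(1+s)})E_cx$) need not be positive and the pointwise domination is unavailable. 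The gap is local and repairable: either keep the $1/\Gamma(\alpha+j)$ factors inside the integration-by-parts identity and estimate the operator-valued integrals by \eqref{csmei}, noting that only the scalar kernel is complex and $|(1-s)^{\alpha+j-1}|=(1-s)^{a+j-1}$ (this is the paper's route, and the route you already use for $a\leq0$), or, for $\operatorname{Re}\alpha>0$, simply apply Minkowski's inequality in $L_2(\mathcal M;\ell_\infty)$ to the defining integral and invoke the maximal bound for $\pi(\sigma_t)E_cx$ (Lemma \eqref{lemma7.1} with $m=0$). With that replacement your argument closes and coincides in substance with the paper's proof.
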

\begin{proof}
Let $-m<\operatorname{Re}\alpha<-m+1.$ We can follow the classical proof to obtain after applying integration by parts several times
\begin{equation}
\|\sup_{t \geq 1}\!^+M_t^\alpha(E_cx)\|_2\leq C_a\exp(\pi|b|)\Big(\sum_{k=0}^m\|\sup_{t \geq 1}\!^+N_{t}^{-k}E_cx\|_2+\|\sup_{t \geq 1}\!^+N_{t}^1(E_cx)\|_2\Big).\end{equation}
Therefore, the desired estimate follows from Lemma \eqref{lemma7.1} and Theorem \eqref{thm2.1}.
\end{proof}
\begin{thm}\label{thmconfu} Let $G=SO^0(n,1)$ and $n>2.$ Then for all $\frac{n}{n-1}<p\leq \infty$ we have for all $x\in L_p(\mathcal M)$
\begin{equation}
\|\sup_{t \geq 1}\!^+\pi(\sigma_t)x\|_p\leq C_p\|x\|_p
\end{equation}
\end{thm}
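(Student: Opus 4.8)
The plan is to dispatch the ranges $p=2$, $2<p\le\infty$, and $\tfrac{n}{n-1}<p<2$ separately. The case $p=2$ is exactly Theorem~\ref{l2bdsphere}. For $p=\infty$ each $\pi(\sigma_t)$ is a positive unital contraction on $\mathcal M$, so for self-adjoint $x$ one has $-\|x\|_\infty 1\le\pi(\sigma_t)x\le\|x\|_\infty 1$ for all $t\ge1$, whence $\|\sup_{t\ge1}{}^+\pi(\sigma_t)x\|_\infty\le\|x\|_\infty$, and the general case follows by splitting into real and imaginary parts. For $2<p<\infty$ I would interpolate the $p=2$ and $p=\infty$ estimates in the operator-space sense, using $[L_2(\mathcal M;\ell_\infty(\Lambda)),L_\infty(\mathcal M;\ell_\infty(\Lambda))]_{\vartheta}=L_p(\mathcal M;\ell_\infty(\Lambda))$ and the boundedness of $(\pi(\sigma_t))_{t\in\Lambda}$ on both endpoints for every finite $\Lambda\subset[1,\infty)$, then passing to the full supremum via Proposition~\ref{FINT}.

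The substantive range is $\tfrac{n}{n-1}<p<2$. By Lemma~\ref{lemma2.3} it suffices to prove the bound for $x\in\mathcal D_K(\mathcal M)$, where the entire analytic family $(M_t^\alpha)$ of \eqref{Jalpha} is available with $M_t^0=\pi(\sigma_t)$. Following the Stein-interpolation scheme used for Theorem~\ref{maximalsphericalforodd}, I would let $\alpha$ traverse the affine segment $\alpha(z)=(1-z)\beta_0+z$ over the strip $\{0\le\operatorname{Re}z\le1\}$. A short computation shows that for $p\in(\tfrac{n}{n-1},2)$ one can choose $p_1\in(1,2)$ and $\theta\in(0,\tfrac{n-2}{n})$ with $\tfrac1p=\tfrac{1-\theta}{2}+\tfrac{\theta}{p_1}$; the inequality $p>\tfrac{n}{n-1}$ is precisely the solvability condition, since $\theta\uparrow\tfrac{n-2}{n}$ and $p_1\downarrow1$ forces $\tfrac1p\downarrow\tfrac{n-1}{n}$. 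Setting $\beta_0=-\theta/(1-\theta)$ gives $\operatorname{Re}\beta_0>-\tfrac n2+1$ and $\alpha(\theta)=0$. Writing the polar decomposition $x=u|x|$, put $f(z)=u\,|x|^{\,p(\tfrac{1-z}{2}+\tfrac{z}{p_1})}$, so that $f(\theta)=x$, $\|f(it)\|_2=\|x\|_p^{p/2}$ and $\|f(1+it)\|_{p_1}=\|x\|_p^{p/p_1}$, and pick (via complex interpolation of the spaces $L_{p'}(\ell_\infty)$, Proposition~2.5 of \cite{JunXu07}) a dual analytic family $g=(g_r)$ with $g(\theta)=y$ and uniform control of $\|g(it)\|_{L_2(\ell_1)}$ and $\|g(1+it)\|_{L_{p_1'}(\ell_1)}$.

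With these data I would consider
\[
G(z)=\exp\!\big(\delta(z^2-\theta^2)\big)\sum_r\tau\!\big(M_r^{\alpha(z)}(f(z))\,g_r(z)\big),
\]
analytic in the open strip. On $\operatorname{Re}z=0$ the fractional order is $\operatorname{Re}\alpha(it)=\beta_0>-\tfrac n2+1$, and Proposition~\ref{mainestil2}—extended from $\mathcal D_K(\mathcal M)$ to all $K$-fixed vectors in $L_2(\mathcal M)$ by density, using $M_r^\alpha=M_r^\alpha\pi(m_K)$—bounds $|G(it)|$ by $C\exp(-\delta t^2)\exp(\pi c|t|)$ for a fixed constant $c$, hence uniformly in $t$; on $\operatorname{Re}z=1$ the order equals $1$ and Proposition~\ref{prop2.5} yields the analogous bound. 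Taking $\delta$ sufficiently large and invoking the three-lines principle gives $|G(\theta)|\le C_p$, i.e. $\big|\sum_r\tau(\pi(\sigma_r)x\,y_r)\big|\le C_p$ for all admissible $y$; the duality $L_p(\mathcal M;\ell_1)^*=L_{p'}(\mathcal M;\ell_\infty)$ of Proposition~\ref{dulaityofmaximalnorms} together with Proposition~\ref{FINT} then delivers the maximal inequality on $\mathcal D_K(\mathcal M)$, which Lemma~\ref{lemma2.3} upgrades to all of $L_p(\mathcal M)$.

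The interpolation machinery is essentially a transcription of the $SO^0(2m+1,1)$ argument, so the real content lies in the two endpoint inputs and the parameter bookkeeping: one needs the sharp $L_2$ bound of Proposition~\ref{mainestil2} at fractional order $-\tfrac n2+1$—this is exactly where $n>2$ is used, through the $c$-function decay on the principal series and the $g$-function estimates on the complementary series—and the $L_{p_1}$ bound of Proposition~\ref{prop2.5} at order $1$, both carrying only exponential growth in $b$ that the Gaussian factor $\exp(\delta(z^2-\theta^2))$ absorbs; and one must verify that the constraints forcing $0$ onto the segment $\alpha(z)$ and the interpolated Lebesgue exponent onto $p$ are simultaneously satisfiable precisely for $p>\tfrac{n}{n-1}$. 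This last compatibility, which I expect to be the main point to get right, is what pins down the sharp threshold $\tfrac{n}{n-1}$.
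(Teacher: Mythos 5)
Your proposal follows essentially the same route as the paper: its proof of Theorem~\ref{thmconfu} is precisely the Stein-interpolation scheme of Theorem~\ref{maximalsphericalforodd} run with the analytic family $M_t^{\alpha}$, using Proposition~\ref{mainestil2} on the line $\operatorname{Re}\alpha>-\tfrac n2+1$ and Proposition~\ref{prop2.5} at order $1+ib$, combined with the duality of Proposition~\ref{dulaityofmaximalnorms}, Proposition~\ref{FINT}, the reduction of Lemma~\ref{lemma2.3}, and the range $p\geq 2$ obtained by interpolating the $L_2$ bound with the trivial $L_\infty$ bound. Your parameter bookkeeping ($\beta_0=-\theta/(1-\theta)$ with $\theta<\tfrac{n-2}{n}$, forcing exactly $p>\tfrac{n}{n-1}$) correctly supplies the details the paper leaves implicit.
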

\begin{proof}
The proof of the theorem follows along the same line as proof of the Theorem \eqref{maximalsphericalforodd} by using Proposition \eqref{mainestil2} and Proposition \eqref{prop2.5} and complex interpolation. Hence we skip the proof.
\end{proof}

 \section{Ergodic theorem for Higher rank semisimple Lie group}\label{higherrank}
 Let $\mathcal M$ be a noncommutative probability space with a normal, faithful, tracial state $\tau.$ For, $1\leq p\leq\infty,$ denote $L_{p}^0(\mathcal M):=\{x\in L_p(\mathcal M):\tau(x)=0\}.$ Suppose we have a unitary representation $\pi:G\to L_2(\mathcal M).$ Then, clearly $ L_{2}^0(\mathcal M)$ is a closed subspace of $ L_2(\mathcal M)$ as $x\mapsto\tau(x)$ is a continuous linear functional. Moreover, if $\pi$ is induced by a dynamical system, then $L_{2}^0(\mathcal M)$ is invariant under $\pi$ as $\pi$ is trace-preserving. We call a $W^*$-dynamical system  $(\mathcal M,\tau, G,\pi)$ be a $W^*$-dynamical system to be ergodic if the set of $G$-invariant vectors in $L_2(\mathcal{M})$ is $\mathbb{C}.1.$ In what follows all actions will be considered ergodic.

\subsection{Structure theory for semisimple Lie groups}Let \( G \) be a connected semisimple Lie group with finite center and no nontrivial compact factors. Let \( \mathfrak{g} \) denote its Lie algebra, and suppose
\[
\mathfrak{g} = \bigoplus_{i=1}^N \mathfrak{g}_i,
\]
where each \( \mathfrak{g}_i \) is a simple ideal. Fix a Cartan involution \( \theta \) on \( \mathfrak{g} \), inducing the Cartan decomposition
\[
\mathfrak{g} = \mathfrak{k} \oplus \mathfrak{p},
\]
with \( \mathfrak{k} \) the \( (+1) \)-eigenspace and \( \mathfrak{p} \) the \( (-1) \)-eigenspace of \( \theta \). Let \( \mathfrak{a} \subset \mathfrak{p} \) be a maximal abelian subspace. The corresponding root system is denoted by \( \Sigma = \Sigma(\mathfrak{a}, \mathfrak{g}) \subset \mathfrak{a}^* \), and for each \( \alpha \in \Sigma \), denote the corresponding root space by \( \mathfrak{g}_\alpha \). Then the root space decomposition takes the form
\[
\mathfrak{g} = \mathfrak{m} \oplus \mathfrak{a} \oplus \bigoplus_{\alpha \in \Sigma} \mathfrak{g}_\alpha,
\]
where \( \mathfrak{m} \) is the centralizer of \( \mathfrak{a} \) in \( \mathfrak{k} \). Choose a set of simple roots \( \Delta \subset \Sigma \), which determines a system of positive roots \( \Sigma^+ \subset \Sigma \). Let \( \rho \in \mathfrak{a}^* \) denote the half-sum of the positive roots:
\[
\rho = \frac{1}{2} \sum_{\alpha \in \Sigma^+} \alpha.
\] Let \( K \) be the connected Lie subgroup of \( G \) with Lie algebra \( \mathfrak{k} \), so that \( K \) is a maximal compact subgroup of \( G \). Let \( W = W(\mathfrak{a}, \mathfrak{g}) \) denote the Weyl group associated with the root system. Let \( \mathfrak{a}^+ \) denote a chosen open positive Weyl chamber in \( \mathfrak{a} \), and let \( \overline{\mathfrak{a}^+} \) denote its closure. Define the subgroups \( A = \exp(\mathfrak{a}) \) and \( A^+ = \exp(\mathfrak{a}^+) \), with \( \overline{A^+} = \exp(\overline{\mathfrak{a}^+}) \). The Cartan (or polar) decomposition of \( G \) asserts that
\[
G = K \overline{A^+} K.
\]
For each regular element \( g \in G \), there exists a unique element \( H(g) \in \overline{\mathfrak{a}^+} \) such that
\[
g = k_1 \exp(H(g)) k_2, \quad \text{for some } k_1, k_2 \in K.
\] The Riemannian symmetric space \( G/K \) is equipped with a \( G \)-invariant metric \( d \) induced by the Killing form on \( \mathfrak{g} \). The restriction of which to \( \mathfrak{a} \) defines an inner product, and we have
\[
d(\exp(H) \cdot o, o) = \sqrt{\langle H, H \rangle}, \quad \text{for all } H \in \mathfrak{a},
\]
where \( o = [K] \) is the origin in \( G/K \). Let \( \{H_1, \dots, H_r\} \subset \mathfrak{a} \) be the basis dual to the chosen basis of \( \mathfrak{a}^* \) corresponding to the simple roots, with respect to the inner product.

The Haar measure \( m_G \) on \( G \) can be expressed in Cartan coordinates using the integration formula
\[
\int_G f(g)\, dm_G(g) = \int_K \int_{\overline{\mathfrak{a}^+}} \int_K f(k \exp(H) k')\, \xi(H)\, dm_K(k)\, dH\, dm_K(k'),
\]
for suitable functions \( f \).
Here \( \xi(H) \) is defined for \( H \in\overline{\mathfrak{a}^+} \) by
\[
\xi(H) = \prod_{\alpha \in \Sigma^+} \left( \sinh \alpha(H) \right)^{m_\alpha},
\]
where \( m_\alpha = \dim_{\mathbb{R}} \mathfrak{g}_\alpha \). The measure \( m_K \) refers to Haar measures on  \( K \), and \( dH \) denotes Lebesgue measure on \( \mathfrak{a} \). The function \( |\xi(H)| \) admits a \( W \)-invariant extension to all of \( \mathfrak{a} \), where \( W \) is the Weyl group associated with the root system \( \Sigma \) of \( \mathfrak{a} \) in \( \mathfrak{g} \). Note that \( \xi \) transforms under the action of \( W \) according to the rule:
\[
\xi(w(H)) = (-1)^{\det w} \cdot \xi(H), \quad \text{for all } w \in W.
\]
In particular, \( \xi \) is alternating under the Weyl group action.

\subsection{Ball averages:}Consider the family of radial (ball) averages on \( G \) defined via the Cartan decomposition. For each \( r > 0 \), define the set
\[
\widetilde{B}_r = \left\{ H \in \mathfrak{a} \,\middle|\, d\left( \exp(H) \cdot o, o \right) \leq r \right\} = \left\{ H \in \mathfrak{a} \,\middle|\, \|H\| \leq r \right\},
\]
where \( \|H\| \) denotes the norm induced by the Cartan–Killing form. We define a probability measure \( \beta_r \) on \( G \), supported on the double coset \( K \exp(\widetilde{B}_r) K \), by
\[
\beta_r = \frac{m_K * \left( \int_{\widetilde{B}_r} \delta_{\exp(H)} \, \xi(H) \, dH \right) * m_K}{\int_{\widetilde{B}_r} \xi(H) \, dH},
\]
where \( \delta_{\exp(H)} \) is the Dirac measure at \( \exp(H) \in A \subset G \), and \( \xi(H) \) is the density function from the Cartan integration formula.

Each \( \beta_r \) is a bi-\( K \)-invariant probability measure on \( G \), and under the canonical projection \( \tau: G \to G/K \), the measure \( \beta_r \) corresponds to the normalized Riemannian volume measure on the ball of radius \( r \) centered at \( o \) in the symmetric space \( G/K \).

\subsection{Spherical averages:}For each \( t > 0 \), let \( \sigma_t \) denote the unique bi-\( K \)-invariant probability measure on \( G \) whose projection to the symmetric space \( G/K \) under the canonical map \( \tau : G \to G/K \) coincides with the normalized surface measure on the geodesic sphere of radius \( t \) centered at the base point \( o = [K] \).

This measure can be written explicitly as
\[
\sigma_t = \frac{ \displaystyle\int_{\|H\| = t}| \xi(H)| \, (m_K * \delta_{\exp(H)} * m_K) \, d\omega_t(H) }{ \displaystyle\int_{\|H\| = t} |\xi(H)| \, d\omega_t(H) },
\]
where \( \omega_t \) denotes the rotation-invariant probability measure on the sphere of radius \( t \) in \( \mathfrak{a} \).
Alternatively, changing variables to the unit sphere in \( \mathfrak{a} \), this expression becomes
\[
\sigma_t = \frac{ \displaystyle\int_{\|H\| = 1} |\xi(tH)| \, \sigma_t^H \, d\omega_1(H) }{ \displaystyle\int_{\|H\| = 1} |\xi(tH)| \, d\omega_1(H) },
\]
where \( \sigma_t^H = m_K * \delta_{\exp(tH)} * m_K \), and \( \omega_1 \) is the rotation-invariant probability measure on the unit sphere \( \{ H \in \mathfrak{a} : \|H\| = 1 \} \).
\subsection*{Shell and Directional Averages}

\begin{defn}[Shell Averages]
For \( t \geq 1 \), define the shell average \( \gamma_t \) by
\[
\gamma_t = \int_0^1 \sigma_{t-s} \, ds,
\]
where \( \sigma_t \) denotes the bi-\(K\)-invariant spherical average of radius \( t \). For \( 0 \leq t \leq 1 \), we set
\[
\gamma_t = \gamma_1.
\]
\end{defn}

\begin{defn}[Directional Averages]
Let \( H \in \mathfrak{a}_+ \) be a unit vector (i.e., \( \|H\| = 1 \)). Define the directional spherical average of radius \( t \) as
\[
\sigma^H_t = m_K * \delta_{\exp(tH)} * m_K.
\]

Then for \( t \geq 1 \), define the directional shell average by
\[
\gamma^H_t = \int_0^1 \sigma^H_{t-s} \, ds,
\]
and for \( 0 \leq t \leq 1 \), we define
\[
\gamma^H_t = \gamma^H_1.
\]
\end{defn}

\begin{defn}[Directional Ball Averages]
Given \( b > 0 \) and \( t \geq 1 \), the directional ball average in direction \( H \in \mathfrak{a}_+ \) is defined as
\[
\beta^H_t = e^{-bt} \int_0^t e^{bs} \sigma^H_s \, ds.
\]
\end{defn}
\subsection{Property T} We recall the spectral methods applied to the commutative Banach \( * \)-algebra \( M(G, K) \), consisting of all bounded bi-\( K \)-invariant Borel measures on \( G \). 

Given a continuous unitary representation \( (\pi, \mathcal{H}_\pi) \) of \( G \), the action naturally extends to a \( * \)-homomorphism
\[
\pi: M(G, K) \to \mathrm{End}(\mathcal{H}_\pi).
\]

The spectral analysis of the image \( \pi(M(G, K)) \) involves studying the spectrum of its closure, which is a commutative Banach \( * \)-algebra. The Gelfand spectrum of this algebra can be viewed as a subset of the space of positive-definite spherical functions \( \varphi_\lambda \) on \( G \), which serve as the characters of \( M(G, K) \). Following Kazhdan, one defines the quantity
\[
\|\pi\|_T = \sup \{ \varphi_\lambda(\mu) \,:\, \varphi_\lambda \text{ is a nonconstant positive-definite spherical function} \},
\]
for \( \mu \in M(G,K) \), the space of bi-\( K \)-invariant probability measures on \( G \). For any unitary representation \( \tau \) of \( G \) without invariant vectors, the spectral theorem implies
\[
\|\pi(\mu)\| \leq \|\mu\|_T.
\] However, a fundamental distinction is whether \( \|\beta_1\|_T < 1 \) or \( \|\beta_1\|_T = 1 \). Groups for which \( \|\beta_1\|_T < 1 \) are said to have \textit{Kazhdan’s property T}. It is known that all connected semisimple Lie groups with finite center satisfy property T, unless they contain a factor locally isomorphic to \( \mathrm{SO}(n,1) \) or \( \mathrm{SU}(n,1) \). Moreover, Cowling gave a quantitative version: if \( G \) is a connected semisimple Lie group with no compact factors which has property T, then there exist constants \( b, B > 0 \) such that
\[
\|\beta_t\|_T \leq B e^{-bt}, \quad \text{for all } t \geq 0.
\]
\subsection{Spectral gap} Let \( G \) be a connected simple Lie group with finite center and no compact factors. Let \( (\pi, \mathcal{H}_\pi) \) be a strongly continuous unitary representation of \( G \). We say the representation $(\pi,\mathcal H_\pi)$ has a spectral gap if there exist constants \( \kappa_\pi > 0 \) and \( C > 0 \) such that for every \( H \in \mathfrak{a}_+ \), all vectors \( v, w \in \mathcal{H}_\pi \), and any \( \kappa < \kappa_\pi \), we have the decay estimate:
    \[
    \left| \left\langle \tau(m_K * \delta_{\exp H} * m_K) v, w \right\rangle \right| \leq C e^{-\kappa \rho(H)} \|v\| \|w\|.
    \]
\begin{defn}
Let \( \{\nu_t\}_{t \geq 0} \) be a family of probability measures on a Lie group \( G \). 

\begin{enumerate}
    \item \textbf{Monotonicity:} The family is called \emph{monotone} if there exists a constant \( B > 0 \) such that, for all \( t \geq 1 \),
    \[
    \nu_t \leq B \nu_{\lfloor t \rfloor + 1}
    \]
    in the sense of measures on \( G \), where \( \lfloor t \rfloor \) denotes the integer part of \( t \).

    \item \textbf{Rough Monotonicity:} The family is said to be \emph{roughly monotone} if there exist constants \( B > 0 \) and \( N \in \mathbb{N} \) such that, for all \( t \geq 1 \),
    \[
    \nu_t \leq B \sum_{k = \max(\lfloor t \rfloor - N, 0)}^{\lfloor t \rfloor + N} \nu_{\lfloor t \rfloor + k}.
    \]

    \item \textbf{Uniform Continuity:} The map \( t \mapsto \nu_t \) is called \emph{uniformly continuous} (with respect to the total variation norm) if for every \( \varepsilon > 0 \), there exists \( \delta > 0 \) such that
    \[
    \| \nu_{t + \tau} - \nu_t \| \leq \varepsilon
    \quad \text{whenever } 0 < \tau \leq \delta \text{ and } t \geq 1.
    \]

    \item \textbf{Uniform Hölder Continuity:} The map \( t \mapsto \nu_t \) is said to be \emph{uniformly Hölder continuous} with exponent \( 0 < \alpha \leq 1 \) if there exists a constant \( C > 0 \) such that
    \[
    \| \nu_{t + \varepsilon} - \nu_t \| \leq C |\varepsilon|^\alpha,
    \quad \text{for all } t \geq 1 \text{ and } 0 < \varepsilon \leq \tfrac{1}{2}.
    \]
\end{enumerate}

\noindent
If \( \nu_t \) is bi-\( K \)-invariant for all \( t \), then these regularity conditions are determined solely by the natural projections $\widetilde{\nu_t}$ of $\nu_t$ to measures on $\overline{A^+}.$
\end{defn}

\begin{thm}\label{thm4ofnevostmar}
Let $(\mathcal M,\tau, G,\pi)$ be a $W^*$-dynamical system where $\mathcal M$ is a noncommutative probability space with a faithful normal tracial state $\tau$ and $G$ is a connected semisimple Lie group with finite center and no compact factors. Assume that the averages $\nu_t$ are bi-$K$-invariant, roughly monotone, uniformly Holder continuous with exponent $a$, and satisfy $\|\pi_0(\nu_t)\|\leq B\exp(-t\theta),$ where $\theta>0.$ The the following statements hold.
\begin{itemize}
	\item[(i)] $\big\|\sup\limits_{t\geq 1}\!^+\pi(\nu_t)x\big\|_p\leq C_p\|x\|_p$ for $1<p<\infty.$
	\item[(ii)] 
	Let $1<p<\infty,$ then $\pi(\nu_t)x\to \tau(x)1$, a.u. as $t\to\infty$ for all $x\in L_p(\mathcal M).$
	\item[(iii)] For all $x\in L_p(\mathcal M),$ $1<p<\infty,$
	\[\big\|\sup\limits_{t\geq 1}\!^+\exp\Big(\frac{ua\theta t}{4}\Big)\Big(\pi(\nu_t)x-\tau(x).1\Big)\big\|_r\leq B\|x\|p\] provided $p,r$ and $0<u<1$ satisfy $\frac{1}{p}=\frac{1-u}{q}$ and $\frac{1}{r}=\frac{1-u}{q}+\frac{u}{2}$ for some $1<q<\infty.$
	\item[(iv)]The averages $\pi(\nu_t)x$ converges $\tau(x).1$ in an exponential rate, i.e. for all $t>0$ and $x\in L_p(\mathcal{M})$ we have \begin{equation}\label{meanexponen}
	\|\pi(\nu_t)x-\tau(x).1\|_r\leq C\|x\|_p\exp\Big(-\frac{ua\theta t}{4}\Big)\end{equation} where $C>0$ is a constant not depending on $x.$ Moreover, we have fro all $t_0>1$
	\begin{equation}\label{meanexponen1}
	\big\|\sup\limits_{t\geq t_0}\!^+\Big(\pi(\nu_t)x-\tau(x).1\Big)\|_r\leq B\|x\|_p\exp\Big(-\frac{ua\theta t_0}{4}\Big)
	\end{equation} for all $t>t_0.$
\end{itemize}
\end{thm}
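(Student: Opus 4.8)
\emph{Overview.} Everything is driven by the single hypothesis $\|\pi_0(\nu_t)\|_{L_2^0(\mathcal M)\to L_2^0(\mathcal M)}\le Be^{-\theta t}$, where $L_2^0(\mathcal M)=\{x\in L_2(\mathcal M):\tau(x)=0\}$ is $\pi$-invariant and $\pi_0(\nu_t)$ is the restriction of $\pi(\nu_t)$ to it. Writing $x=\tau(x)1+x_0$ with $x_0\in L_p^0(\mathcal M)$ one has $\pi(\nu_t)x-\tau(x)1=\pi(\nu_t)x_0=\pi_0(\nu_t)x_0$, since $\nu_t\in\mathcal P(G)$ and $\pi$ is trace preserving; so it suffices to analyse $\pi_0(\nu_t)$ on $L_p^0(\mathcal M)$.

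\emph{Step 1 (propagating the gap).} Each $\pi(g)$ is an $L_q$-isometry, hence $\pi(\nu_t)$ is an $L_q$-contraction and $\|\pi_0(\nu_t)\|_{L_q^0\to L_q^0}\le1$ for all $1\le q\le\infty$. Since the trace-preserving conditional expectation onto $\mathbb C1$ is bounded on every $L_q(\mathcal M)$, the spaces $L_q^0(\mathcal M)$ form a compatible interpolation scale, and complex interpolation of the $L_q^0\to L_q^0$ bound against the $L_2^0$-decay gives $\|\pi_0(\nu_t)\|_{L_p^0\to L_p^0}\le B_pe^{-\theta_p t}$ with $\theta_p>0$ for every $1<p<\infty$, while interpolating instead against the trivial bound $\|\pi_0(\nu_t)x\|_2\le Be^{-\theta t}\|x\|_2\le Be^{-\theta t}\|x\|_\infty$ (valid in a probability space) gives $\|\pi_0(\nu_t)\|_{L_p^0\to L_r^0}\le B^ue^{-u\theta t}$ precisely when $\tfrac1p=\tfrac{1-u}{q}$ and $\tfrac1r=\tfrac{1-u}{q}+\tfrac u2$. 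The second estimate is the mean bound \eqref{meanexponen} of (iv), with the stronger exponent $u\theta\ge\tfrac{ua\theta}{4}$.

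\emph{Step 2 (the unweighted maximal inequality and convergence).} For integer radii, $(\pi(\nu_n)x_0)_n$ is the norm-convergent sum in the Banach space $L_p(\mathcal M;\ell_\infty)$ of one-point sequences of norm $\|\pi(\nu_n)x_0\|_p\le B_pe^{-\theta_pn}\|x_0\|_p$, whence $\big\|\sup\limits_n\!^{+}\pi(\nu_n)x_0\big\|_p\le\sum_n B_pe^{-\theta_pn}\|x_0\|_p\le C_p\|x_0\|_p$; the same computation in $L_p(\mathcal M;\ell_\infty^c)$ gives the column version, and tails $\sup_{n\ge n_0}$ carry an extra $e^{-\theta_pn_0}$. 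To reach the continuous parameter I would use rough monotonicity: for $y\in L_p(\mathcal M)_+$ the order inequality $\nu_t\le B\sum_{|k-\lfloor t\rfloor|\le N}\nu_k$ gives $\pi(\nu_t)y\le B(2N+1)\sup\limits_k\!^{+}\pi(\nu_k)y$, and splitting a general element into four positive pieces yields (i). Then (ii) follows from Lemma~\eqref{ptwiselem}: $\int_{t\ge t_0}\|\pi(\nu_t)x-\tau(x)1\|_p^p\,dt\le\int_{t_0}^{\infty}B_p^pe^{-p\theta_pt}\|x_0\|_p^p\,dt\to0$ for every $x$, so the row-maximal inequality gives bilateral a.u.\ convergence and the column-maximal inequality gives a.u.\ convergence, $1<p<\infty$.

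\emph{Step 3 (the weighted $L_2$ maximal inequality, (iii) and \eqref{meanexponen1}).} The delicate point is $\big\|\sup\limits_{t\ge1}\!^{+}e^{\kappa t}\pi(\nu_t)x_0\big\|_2\le C\|x_0\|_2$ for some $\kappa\ge\tfrac{a\theta}{4}$, and here uniform H\"older continuity is essential: the integer-reduction of Step 2 rests on an order inequality available only for positive elements, whereas $\pi(\nu_t)x_0$ need not be self-adjoint. On each interval $[n,n+1]$ I would apply the Sobolev-type inequality of Lemma~\eqref{somel2bd} to $F(t)=\pi(\nu_t)x_0$, bounding $\int_n^{n+1}\|F\|_2^2$ by $B^2e^{-2\theta n}\|x_0\|_2^2$ via the spectral gap and controlling the derivative/oscillation term through the $a$-H\"older modulus $\|\nu_s-\nu_t\|\le C|s-t|^a$ --- either by mollifying $t\mapsto\nu_t$ at a scale balancing the two terms, or by invoking a fractional version of Lemma~\eqref{somel2bd} --- to obtain $\big\|\sup\limits_{t\in[n,n+1]}\!^{+}\pi(\nu_t)x_0\big\|_2\le Ce^{-\kappa'n}\|x_0\|_2$ with $\kappa'$ a definite fraction of $\theta$; summing $\sum_n e^{2\kappa n-2\kappa'n}$ over any $\kappa<\kappa'$ closes the estimate. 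Then (iii) follows by interpolating, in the vector-valued framework of Junge--Xu (Proposition~2.5 of \cite{JunXu07}, the weight carried along), the unweighted $L_q(\mathcal M;\ell_\infty)$ inequality of (i) against the weighted $L_\infty^0\to L_2^0(\mathcal M;\ell_\infty)$ estimate just proved: the source exponent becomes $p$ with $\tfrac1p=\tfrac{1-u}{q}$, the target $r$ with $\tfrac1r=\tfrac{1-u}{q}+\tfrac u2$, and the weight $e^{u\kappa t}$, so (iii) holds since $u\kappa\ge\tfrac{ua\theta}{4}$; restricting the supremum to $t\ge t_0$ and extracting $e^{-u\kappa t_0}$ from the weight yields \eqref{meanexponen1}. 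The main obstacle is precisely this weighted $L_2$ step: turning pointwise-in-$t$ operator decay into maximal-in-$t$ decay with only H\"older (not $C^1$) control of $t\mapsto\nu_t$, and tracking the resulting loss in the exponent down to the stated $\tfrac{a\theta}{4}$.
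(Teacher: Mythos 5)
Your overall architecture matches the paper's: spectral gap plus Riesz--Thorin for the mean decay, integer radii plus rough monotonicity for (i), a weighted maximal estimate interpolated (Stein/Junge--Xu style) against the unweighted $L_q$ inequality for (iii), and a factorization trick for \eqref{meanexponen1}; your triangle-inequality treatment of the integer radii is even a little cleaner than the paper's square-function argument. But the heart of the theorem is exactly the step you flag as "the main obstacle," and there your plan does not close. You aim for a weighted $L_2\to L_2$ maximal inequality $\big\|\sup_{t\ge1}\!^{+}e^{\kappa t}\pi(\nu_t)x_0\big\|_2\le C\|x_0\|_2$ and propose to get it from Lemma~\ref{somel2bd}; that lemma requires a derivative of $t\mapsto\pi(\nu_t)x_0$, which the hypotheses do not provide (only H\"older continuity of $t\mapsto\nu_t$ in total variation), and "mollifying" or "a fractional version" is not an argument. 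More fundamentally, with only $\|x_0\|_2$ on the right there is no way to control the off-grid oscillation $\pi(\nu_t-\nu_{t_i})x_0$ uniformly in $t$: the H\"older hypothesis only gives $\|\pi(\nu_t-\nu_{t_i})x\|_\infty\le\|\nu_t-\nu_{t_i}\|\,\|x\|_\infty$, i.e.\ it is usable precisely when $x\in L_\infty$. The paper therefore proves only a weighted $L_\infty\to L_2$ estimate: it takes an exponentially refined grid ($\approx e^{n\theta/4}$ points in $[n,n+1]$), bounds the grid values by the square function $C(x_0)=\sum_i|e^{\theta t_i/2}\pi(\nu_{t_i})x_0|^2$ with $\tau(C(x_0))\lesssim\|x_0\|_2^2$, and bounds the off-grid part by $\|x\|_\infty e^{-a\theta[t]/4}$ via the H\"older modulus; this yields \eqref{fundal2ineq} with the $L_\infty$ norm on the right, which is exactly the endpoint your own interpolation in Step 3 uses. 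So the fix is to abandon the $L_2\to L_2$ goal (which is neither proved nor needed) and prove the $L_\infty\to L_2$ weighted estimate by the grid-plus-H\"older argument; as written, your Step 3 asserts an estimate it cannot deliver and then silently substitutes the weaker one in the interpolation.

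There is a second, smaller gap in (ii). You deduce a.u.\ convergence from "the column-maximal inequality" for the continuous family, but your passage from integer to continuous radii uses rough monotonicity, i.e.\ order domination $-a\le x_t\le a$, which controls the symmetric $L_p(\mathcal M;\ell_\infty)$ norm but not the column norm $L_p(\mathcal M;\ell_\infty^c)$ (an order bound does not produce a factorization $x_t=y_ta$ with $y_t$ uniformly bounded); moreover the column part of Lemma~\ref{ptwiselem} requires $p\ge2$. The paper instead proves a.u.\ convergence first for $x\in L_\infty(\mathcal M)$, by showing $(\pi(\nu_t)x)_t\in L_2(\mathcal M;\ell_\infty^c)$ through the same exponential-grid/H\"older device (again with $\|x\|_\infty$ entering), and then extends to all of $L_p$ by density using part (i) and Lemma~\ref{ptwiselem}(i). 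Your b.a.u.\ argument and parts (i), \eqref{meanexponen}, and the factorization step for \eqref{meanexponen1} are fine modulo (iii).
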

\begin{proof}

(i). Since $\|\pi_0(\nu_t)\|_{2\to 2}\leq B\exp(-t\theta).$ Therefore, we have 
\[\sum_{n=1}^\infty\|\pi(\nu_n)x\|_2^2\leq c^2\|x\|_2^2\] for all $x\in L_2^0(\mathcal M).$
 Note that for all $1\leq j\leq n,$ $|\pi(\nu_j)x|^2\leq \sum_{j=1}^n|\pi(\nu_j)x|^2$. Therefore, by operator monotonicity of $t\mapsto t^{1/2}$, we have for all $1\leq j\leq n,$ 
\[|\pi(\nu_j)x|\leq\Big(\sum_{j=1}^n|\pi(\nu_j)x|^2\Big)^{1/2}. \] Hence there exists contractions $u_j\in \mathcal M$ such that $\pi(\nu_j)x=u_j\Big(\sum_{j=1}^n|\pi(\nu_j)x|^2\Big)^{1/2}.$
Thus clearly, we have 
\[\big\|\sup\limits_{1\leq j\leq n}\!^+\pi(\nu_j)x\big\|_2\leq \Big(\tau\Big(\sum_{j=1}^n(\pi(\nu_j)x)^2\Big)\Big)^{1/2}=\Big(\sum_{j=1}^n\|\pi(\nu_j)x\|_2^2\Big)^{1/2}\leq c\|x\|_2. \]
 Therefore we have for some constant $K>0$ 
\begin{equation}\label{S}\big\|\sup\limits_{n\geq 1}\!^+\pi(\nu_n)x\big\|_2\leq K\|x\|_2\end{equation} for all $x\in L_2^0(\mathcal M).$

For all $x\in L_2(\mathcal M)$ note that $x-\tau(x)1\in L_2^0(\mathcal M).$ Therefore for $x\in L_2(\mathcal M)$ we have by triangle inequality on $L_2(\mathcal M;\ell_\infty)$
\begin{equation}\label{S1}\big\|\sup\limits_{n\geq 1}\!^+\pi(\nu_n)x\big\|_2\leq \big\|\sup\limits_{n\geq 1}\!^+(\pi(\nu_n)x-\tau(x)1)\big\|_2+|\tau(x)|\big\|\sup\limits_{n\geq 1}\!^+\pi(\nu_n)1\big\|_2 \end{equation}.

 Note that $\big\|\sup\limits_{n\geq 1}\!^+\pi(\nu_n)1\big\|_2=1.$ Therefore, from \eqref{S} and \eqref{S1} we obtain
 
\begin{equation}\big\|\sup\limits_{n\geq 1}\!^+\pi(\nu_n)x\big\|_2\leq K\|x-\tau(x)1\|_2+|\tau(x)|\leq K\|x\|_2+K|\tau(x)|+|\tau(x)|\leq (2K+1)\|x\|_2.
\end{equation}

Now take $x\in L_2(\mathcal M)_{+}.$ We have $\nu_t\leq B\sum_{k=\text{max}([t]-N,0)}^{[t]+N}\nu_{[t]+k}$ as the family $\nu_t$ s roughly monotone. Therefore,
\[\pi(\nu_t)x\leq B\sum_{k=\text{max}([t]-n,0)}^{[t]+N}\pi(\nu_{[t]+k})x.\] Let $a\in L_2(\mathcal M)_{+}$ be such that $\pi(\nu_n)x\leq a$ for all $n\geq 1$.
Thus we have from the above
\[\pi(\nu_t)x\leq B(2N+1)a \] for all $t\geq 1.$ Taking infimum over all $a$'s chosen above, we obtain
\[\big\|\sup\limits_{t\geq 1}\!^+\pi(\nu_t)x\big\|_2\leq B(2N+1)\big\|\sup\limits_{n\geq 1}\!^+\pi(\nu_n)x\big\|_2\leq c_2\|x\|_2.\]

For $p>1,$ define the operator $\pi(\nu_t^0):L_p(\mathcal M)\to L_p(\mathcal M)$ as $\pi(\nu_t^0)x=\pi(\nu_t)x-\tau(x)1.$ Then we have that $\|\pi(\nu_t^0)\|_{1\to 1}\leq 2,$ $\|\pi(\nu_t^0)\|_{\infty\to\infty}\leq 2$ and $\|\pi(\nu_t^0)\|_{2\to 2}\leq C\exp(-\theta t)\|x\|_2.$ Therefore, by Riesz-Thorin complex interpolation, we have
\[\|\pi(\nu_t^0)\|_{p\to p}\leq B_p\|\pi(\nu_t^0)\|_{1\to 1}^{(2/p-1)}\|\pi(\nu_t^0)\|_{2\to 2}^{2(1-1/p)},\ \ 1\leq p\leq 2\] and 
\[\|\pi(\nu_t^0)\|_{p\to p}\leq B_p\|\pi(\nu_t^0)\|_{1\to 1}^{2/p}\|\pi(\nu_t^0)\|_{\infty\to\infty}^{(1-2/p)},\ \ 2\leq p\leq \infty.\]
Therefore, we obtain that for all $1<p\leq\infty$ $\|\pi(\nu_t^0)\|_{p\to p}\leq B_p\exp(-\theta_pt)$ where $\theta_p=2\theta(1-1/p)$ for $1<p\leq 2$ and $\theta_p=2\theta/p$ if $2\leq p\leq\infty.$ 
The maximal inequality now follows by repeating the proof for $p=2.$

(ii). First we show b.a.u. convergence. Fix $1<p<\infty.$ For $1<p<\infty,$ we have from (i), for all $x\in L_p(\mathcal M)$
\[\|\pi(\nu_t)x-x\|_p\leq B_p\exp(-t\theta_p)\|x\|_p\] for all $t>0.$ Now the family $\nu_t$ is uniformly H\"older continuous with exponent $0<a\leq 1.$ Therefore the map $t\mapsto \pi(\nu_t)x$ is continuous from $[1,\infty)$ to $L_p(\mathcal M).$ For any $t_0\geq 1$ we have
\begin{align*}
	\int_{t\geq t_0}\|\pi(\nu_t)x-\tau(x)\|_{p}^p &\leq B_p^p\|x\|_p^p\int_{t_0}^\infty\exp(-tp\theta_p)dt\\
	& =\frac{1}{p\theta_p}B_p^p\|x\|_p^p\exp(-p\theta_pt_0)\to 0,\ \text{as}\ t\to \infty.
\end{align*}
By \Cref{ptwiselem}, $\pi(\nu_t)x\to \tau(x)$, b.a.u. as $t\to\infty.$

Now, we return to a.u. convergence. Note that for all $x\in L_\infty(\mathcal M)\cap L_2(\mathcal M)$, we have the inequality
\[\|\pi(\nu_t)x-\tau(x)\|_2\leq B\exp(-t\theta)\|x-\tau(x)\|_2.\] Choose an increasing sequence $t_i\in\mathbb R^{+}$ such that $\sum_{i=0}^\infty\exp(-\theta t_i/2)<\infty.$ For any $y\in L_2^0(\mathcal M)$ define 
\[C(y):=\sum_{i=1}^\infty\Big|\exp(\theta t_i/2)\pi(\nu_{t_i})y\Big|^2.\] Then $C(y)^{\frac{1}{2}}$ is a well-defined and an element of $L_2(\mathcal M),$ because
\[\tau(C(y))=\sum_{i=1}^\infty\exp(\theta t_i)\tau(|\pi(\nu_{t_i})y|^2)\leq B^2\|y\|_2^2\sum_{i=1}^\infty\exp(-\theta t_i)\] which is finite. 

Note that $|\exp(\theta t_i/2)\pi(\nu_{t_i})(x-\tau(x))|^2\leq C(x-\tau(x)).$ Therefore, we have 
\[\|\pi(\nu_{t_i})(x-\tau(x))\|_2\leq \tau(C(x-\tau(x)))^{\frac{1}{2}}\exp(-\theta t_i/2).\]
For $x\in L_\infty(\mathcal M)$ we show that $(\pi(\nu_t)x)_{t>0}\in L_2(\mathcal M;\ell_\infty^c).$ Note that 
\begin{align}\label{nsm1}
\|(\pi(\nu_t)x-\tau(x))_{t>0}\|_{L_2(\mathcal M;\ell_\infty^c)}&\leq \|(\pi(\nu_t)x-\pi(\nu_{t_i})x)_{t>0}\|_{L_2(\mathcal M;\ell_\infty^c)}+\|\pi(\nu_{t_i})x-\tau(x)\|_2\\\nonumber
&\leq \|(\pi(\nu_t)x-\pi(\nu_{t_i})x)_{t>0}\|_{L_2(\mathcal M;\ell_\infty^c)}+\tau(C(x-\tau(x)))^{\frac{1}{2}}\exp(-\theta t_i/2).
\end{align}
Now we choose the sequence $(t_i)_{i\geq 1}$ as the following. For each $n\in\mathbb N$ divide the interval $[n,n+1]$ into $2+[\exp(n\theta/4)]$ intervals of equal length, and take $(t_i)_{i\geq 1}$ to be the increasing sequence consisting of their endpoints. With this choice of $(t_i)_{\geq 1}$ we have
\[\sum_{n=0}^\infty\sum_{n\leq t_i<n+1}\exp(-\theta t_i/2)\leq\Big([\exp(n\theta/4)]+2\Big)\exp(-\theta n/2).\]
For $[t]=n,$ $t$ is in some subinterval of length $1/2$ (with some modulus $0<a\leq 1$). we have the estimate
\begin{align*}
	\|\pi(\nu_t)x-\pi(\nu_{t_i})x\|_\infty\leq \|x\|_\infty\Big([\exp(n\theta/4)]+2\Big)^{-a}\leq \|x\|_\infty\exp(-[t]a\theta/4).
\end{align*}
Therefore, from \eqref{nsm1} we obtain
\begin{equation}
\|(\pi(\nu_t)x)_{t>0}\|_{L_2(\mathcal M;\ell_\infty^c)}\leq \Big(\|x\|_\infty+\tau(C(x-\tau(x)))^{\frac{1}{2}}\Big)\sup\limits_{t\geq 1}\exp(-[t]a\theta/4)<\infty.
\end{equation}
Therefore, from \Cref{ct1} and the fact that $\pi(\nu_t)x\to \tau(x)$ b.a.u. as $t\to\infty$ for all $x\in L_\infty(\mathcal M),$ we have $\pi(\nu_t)x\to \tau(x)$ a.u. as $t\to\infty.$ Since $L_\infty(\mathcal M)$ is dense in $L_2(\mathcal M)$ we must have by \Cref{ptwiselem} that $\pi(\nu_t)x\to\tau(x)$ a.u. as $t\to\infty$ for all $x\in L_2(\mathcal M).$

(iii).
Note that by repeating the argument in (ii) it follows that 
\[\big\|\sup\limits_{t\geq 1}\!^+ \exp(a\theta t/4)\pi(\nu_t)x\big\|_2\leq\Big(\|x\|_\infty+\tau(C(x-\tau(x)))^{\frac{1}{2}}\Big).\]
Note that it was observe in (ii) that for some constant $B^\prime>0$ which is independent of $x\in L_\infty(\mathcal M)$ we have
\[\tau(C(x-\tau(x)))^{\frac{1}{2}}\leq B^\prime\|x-\tau(x)\|_2\leq B\|x\|_\infty\] where $B>0$ is also independent of $x\in L_\infty(\mathcal M).$ Hence for all $x\in L_\infty(\mathcal M)$ there exists a constant $A>0$ such that
\begin{equation}\label{fundal2ineq}
\big\|\sup\limits_{t\geq 1}\!^+ \exp(a\theta t/4)(\pi(\nu_t)x-\tau(x))\big\|_2\leq A\|x\|_\infty
\end{equation} Also we have a maximal inequality for $1<q\leq \infty$
\begin{equation}\label{fundalqineq}
\big\|\sup\limits_{t\geq 1}\!^+(\pi(\nu_t)x-\tau(x))\|_q\leq C_q\|x\|_q.
\end{equation}

Suppose \(1 < q \leq 2\), and fix \(u \in (0,1)\). Define exponents \(p\) and \(r\) by
\[
\frac{1}{p} = \frac{u}{q}, \qquad \frac{1}{r} = \frac{u}{q} + \frac{1 - u}{2}.
\]
Let \(x \in L_p(\mathcal{M})\) satisfy \(\|x\|_p < 1\), and let \(y \in L_{r'}(\ell_1)\) with \(\|y\|_{L_{r'}(\ell_1)} < 1\).

Let \(g = (g_j)_j\) be a bounded continuous function on the strip
\[
S := \{ z \in \mathbb{C} : 0 \leq \Re z \leq 1 \},
\]
analytic in the interior of \(S\), such that \(g(u) = y\) and
\[
\sup_{s \in \mathbb{R}} \max \left\{ \|g(is)\|_{L_2(\mathcal{M}, \ell_1)},\ \|g(1+is)\|_{L_{q'}(\ell_1)} \right\} < 1.
\]
Define $f(z):=u|x|^{\frac{rz}{q}+\frac{(1-z)r}{2}}.$
Define a scalar-valued function on the strip by
\[
G(z) := \sum_j \exp\left( \frac{a \theta (1-z) t_j}{4} \right) \tau\left( \left( \pi(\nu_{t_j}) f(z) - \tau(f(z)) \right) g_j(z) \right).
\]
Then \(G\) is analytic in the open strip \(0 < \Re z < 1\), and continuous on the closure \(\overline{S}\).

We estimate \(G\) on the boundary of the strip. For real \(s \in \mathbb{R}\), we have
\[
|G(is)| \leq \left\| \left( \exp\left( \frac{a \theta(1 -i s) t_j}{4} \right) (\pi(\nu_{t_j}) f(is) - \tau(f(is))) \right)_j \right\|_{L_2(\mathcal{M}, \ell_\infty)}
\cdot \|g(is)\|_{L_2(\mathcal{M}, \ell_1)}.
\]
Since \(|\exp(i\cdot)| = 1\), we obtain by Lemma \eqref{gettingtomodulus}
\[
|G(is)| \leq \left\| \left( \exp\left( \frac{a \theta t_j}{4} \right) (\pi(\nu_{t_j}) f(is) - \tau(f(is))) \right)_j \right\|_{L_2(\mathcal{M}, \ell_\infty)} \cdot \|g(is)\|_{L_2(\mathcal{M}, \ell_1)}. 
\] 
Using the boundedness assumption on \(g\), and \eqref{fundal2ineq} we obtain
\[
|G(is)| \leq C \|f(is)\|_2 \leq C \|x\|_2.
\]

A similar computation, using the assumption on \(g\) and inequality~\eqref{fundalqineq}, yields the following estimate for the upper boundary of the strip. For all \(s \in \mathbb{R}\), we have
\begin{align*}
|G(1 + is)|
&\leq \left\| \left( \exp\left( \frac{a \theta (-is) t_j}{4} \right) \left( \pi(\nu_{t_j}) f(1 + is) - \tau(f(1 + is)) \right) \right)_j \right\|_{L_q(\mathcal{M}, \ell_\infty)} \cdot \|g(1 + is)\|_{L_{q'}(\mathcal{M}, \ell_1)} \\
&= \left\| \left( \pi(\nu_{t_j}) f(1 + is) - \tau(f(1 + is)) \right)_j \right\|_{L_q(\mathcal{M}, \ell_\infty)} \cdot \|g(1 + is)\|_{L_{q'}(\mathcal{M}, \ell_1)} \\
&\leq C_q.
\end{align*}
Hence by Hadamard's there lines lemma we obtain that \[|G(u)|= \Big|\sum_j \exp\left( \frac{a \theta (1-u) t_j}{4} \right) \tau\left( \left( \pi(\nu_{t_j}) x - \tau(f(z)) \right) y \right)\Big|\leq C_q.\] The required result follows from the duality.

Note that the inequality \eqref{meanexponen} is straightforward to verify. Therefore, we focus on proving \eqref{meanexponen1}.

Consider a factorization of the form
\[
\exp\left(\frac{ua\theta t}{4}\right)(\pi(\nu_t)x - \tau(x) \cdot 1) = a y_t b,
\]
where \( a, b \in L_{2r}(\mathcal{M}) \) and \( y_t \in \mathcal{M} \) is a bounded family.

Observe that for \( t > t_0 \), we have
\[
\pi(\nu_t)x - \tau(x) \cdot 1 = \exp\left(-\frac{ua\theta t_0}{4}\right) a \exp\left(-\frac{ua\theta (t - t_0)}{4}\right) y_t b.
\]
Taking the \( L_r \)-norm, we obtain
\[
\|\pi(\nu_t)x - \tau(x) \cdot 1\|_r \leq \exp\left(-\frac{ua\theta t_0}{4}\right) \|a\|_{2r} \sup_t \|y_t\| \cdot \|b\|_{2r}.
\]
Taking the infimum over all such factorizations yields the desired estimate by applying part (iii).
This completes the proof of the theorem.
\end{proof}
In view of the above theorem and the fact that $\beta_t$ satisfies requried conditions, we obtain the following corolloary.
\begin{cor} Suppose \( G \) is a connected semisimple Lie group with finite center and no compact factors, and that \( G \) satisfies Kazhdan's property (T). Let $(\mathcal M,\tau, G,\pi)$ be a $W^*$-dynamical system where $\mathcal M$ is a noncommutative probability space with a faithful normal tracial state $\tau$.   Then there exists a positive constant \( b(G) \) such that for every \( 0<\theta < b(G) \), the following holds:
\begin{itemize}
	\item[(i)] $\big\|\sup\limits_{t\geq 1}\!^+\pi(\beta_t)x\big\|_p\leq C_p\|x\|_p$ for $1<p<\infty.$
	\item[(ii)] 
	Let $1<p<\infty,$ then $\pi(\beta_t)x\to \tau(x)1$, a.u. as $t\to\infty$ for all $x\in L_p(\mathcal M).$
	\item[(iii)] For all $x\in L_p(\mathcal M),$ $1<p<\infty,$
	\[\big\|\sup\limits_{t\geq 1}\!^+\exp\Big(\frac{ua\theta t}{4}\Big)\Big(\pi(\beta_t)x-\tau(x).1\Big)\big\|_r\leq B\|x\|p\] provided $p,r$ and $0<u<1$ satisfy $\frac{1}{p}=\frac{1-u}{q}$ and $\frac{1}{r}=\frac{1-u}{q}+\frac{u}{2}$ for some $1<q<\infty.$
	\item[(iv)]The averages $\pi(\beta_t)x$ converges $\tau(x).1$ in an exponential rate, i.e. for all $t>0$ and $x\in L_p(\mathcal{M})$ we have \begin{equation}\label{meanexponen}
	\|\pi(\beta_t)x-\tau(x).1\|_r\leq C\|x\|_p\exp\Big(-\frac{ua\theta t}{4}\Big)\end{equation} where $C>0$ is a constant not depending on $x.$ Moreover, we have fro all $t_0>1$
	\begin{equation}\label{meanexponen1}
	\big\|\sup\limits_{t\geq t_0}\!^+\Big(\pi(\beta_t)x-\tau(x).1\Big)\|_r\leq B\|x\|_p\exp\Big(-\frac{ua\theta t_0}{4}\Big)
	\end{equation} for all $t>t_0.$
\end{itemize}
\end{cor}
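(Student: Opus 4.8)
The plan is to obtain the corollary as an immediate consequence of Theorem~\ref{thm4ofnevostmar}, by verifying that the ball averages $(\beta_t)_{t\geq1}$ meet all four of its hypotheses: bi-$K$-invariance, rough monotonicity, uniform H\"older continuity with some exponent $a$, and the spectral decay $\|\pi_0(\beta_t)\|_{2\to2}\leq B\exp(-t\theta)$. Bi-$K$-invariance is built into the definition of $\beta_t$ as a normalization of $m_K*(\cdots)*m_K$, so nothing is needed there. For rough monotonicity I would in fact establish plain monotonicity: projecting to $G/K$, the measure underlying $\beta_t$ is the normalized Riemannian volume on the metric ball of radius $t$ about $o$, and since $\widetilde B_r\subseteq\widetilde B_{\lfloor t\rfloor+1}$ whenever $r\leq t$, it suffices to note that the volume of the ball of radius $\lfloor t\rfloor+1$ is bounded by a constant (depending only on $G$) times that of the ball of radius $\lfloor t\rfloor$. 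This comparison follows from the Cartan integration formula together with the explicit density $\xi(H)=\prod_{\alpha\in\Sigma^+}(\sinh\alpha(H))^{m_\alpha}$, which grows at a controlled exponential rate; one gets $\beta_t\leq B\beta_{\lfloor t\rfloor+1}$ for all $t\geq1$, hence rough monotonicity with $N=1$.

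For uniform H\"older continuity I would take $a=1$. Given $t\geq1$ and $0<\varepsilon\leq\tfrac12$, the difference $\beta_{t+\varepsilon}-\beta_t$, read on $G/K$, consists of a contribution supported on the shell $\widetilde B_{t+\varepsilon}\setminus\widetilde B_t$ and a normalization correction, and both are controlled by $\int_{\widetilde B_{t+\varepsilon}\setminus\widetilde B_t}\xi\,dH$ divided by $\int_{\widetilde B_t}\xi\,dH$. Estimating the numerator by $\lesssim\varepsilon\int_{\|H\|=t}|\xi(H)|\,d\omega_t(H)$ and comparing the surface integral to the volume integral $\int_{\widetilde B_t}\xi\,dH$ (again comparable, by the same exponential growth of $\xi$) yields $\|\beta_{t+\varepsilon}-\beta_t\|\leq C\varepsilon$; so $(\beta_t)$ is uniformly H\"older with exponent $a=1$.

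The genuinely substantial input is the spectral decay, and this is precisely where Kazhdan's property~(T) enters. By Cowling's quantitative form of property~(T) recalled in the subsection on Property~T, there exist constants $b(G),B>0$ with $\|\beta_t\|_T\leq Be^{-b(G)t}$ for all $t\geq0$. Under the standing ergodicity assumption of this section, $L_2^0(\mathcal M)$ carries a strongly continuous unitary representation of $G$ with no nonzero invariant vectors, so the spectral theorem for the commutative $*$-algebra $M(G,K)$ gives $\|\pi_0(\beta_t)\|_{2\to2}\leq\|\beta_t\|_T\leq Be^{-b(G)t}$. Consequently the hypothesis $\|\pi_0(\beta_t)\|\leq B\exp(-t\theta)$ holds for every $0<\theta<b(G)$ (indeed for $\theta=b(G)$). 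Applying Theorem~\ref{thm4ofnevostmar} with $\nu_t=\beta_t$ and $a=1$ then yields conclusions (i)--(iv) exactly as stated, which completes the proof.

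I do not expect a real obstacle here: the argument is a ``check the hypotheses and invoke the theorem'' reduction, and the only deep ingredient beyond routine verification is Cowling's exponential decay estimate, which is quoted. The remaining work is the elementary comparison of ball and sphere volumes in $G/K$ via the Cartan integration formula and the explicit form of $\xi(H)$; the mildly delicate point worth writing out carefully is simply that the normalizing constants $\int_{\widetilde B_t}\xi\,dH$ behave well enough under the small perturbations $t\mapsto\lfloor t\rfloor+1$ and $t\mapsto t+\varepsilon$ to give, respectively, rough monotonicity and Lipschitz (hence H\"older) continuity.
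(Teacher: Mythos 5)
Your proposal is correct and follows essentially the same route as the paper: the corollary is obtained by checking that the ball averages $\beta_t$ are bi-$K$-invariant, (roughly) monotone and uniformly H\"older continuous, and satisfy the spectral decay $\|\pi_0(\beta_t)\|\leq Be^{-\theta t}$ via Cowling's quantitative property (T) together with the section's standing ergodicity assumption, and then invoking Theorem~\ref{thm4ofnevostmar}. The only difference is that you spell out the volume-comparison verifications (via the Cartan integration formula) that the paper leaves implicit, citing \cite{MaNS00}.
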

\begin{thm}\label{integerradii}
Let $G$ be a connected semisimple Lie group with finite center and no compact factors, acting on $\mathcal M$ in a ergodic trace-preserving way. Let $\nu_t$ be any family of bi-$K$-invariant averages on $G$, satisfying the spectral decay estimate
\[
\|\pi_0(\nu_t)\| \leq B e^{-\theta t}, \quad \text{for some } \theta > 0.
\]

Then the following hold.

\begin{enumerate}
    \item (\textbf{Exponential Maximal Inequality}) \\
    For every $x \in L^p(X)$ with $1 < p < \infty$, we have
    \[
    \left\| \sup_{n \geq 1}\!^+  e^{\frac{\theta_p n}{2}} \left( \pi(\nu_n)x - \tau(x).1  \right)  \right\|_{p} \leq C_p \|x\|_{p},
    \]
    where
    \[
    \theta_p =
    \begin{cases}
    2\theta \left(1 - \frac{1}{p} \right), & \text{if } 1 \leq p \leq 2, \\
    \frac{2\theta}{p}, & \text{if } p \geq 2.
    \end{cases}
    \]

    \item (\textbf{Exponential Convergence Rate}) \\
    The averages $\pi(\nu_n)x$ converges $\tau(x).1$ in an exponential rate, i.e. for all $n\in\mathbb{N}$ and $x\in L_p(\mathcal{M})$ we have \begin{equation}\label{meanexponen}
	\|\pi(\nu_n)x-\tau(x).1\|_p\leq C\|x\|_p\exp\Big(-{\frac{1}{2}\theta_p^\prime n}\Big)\end{equation} where $C>0$ is a constant not depending on $x.$ Moreover, we have fro all $n_0>1$
	\begin{equation}\label{meanexponen1}
	\big\|\sup\limits_{n\geq n_0}\!^+\Big(\pi(\nu_t)x-\tau(x).1\Big)\|_r\leq B\|x\|_p\exp\Big(-{\frac{1}{2}\theta_p^\prime n_0}\Big)\Big)
	\end{equation} for all $n>n_0.$

    \item (\textbf{Generalization to Sparse Sequences}) \\
    A similar result holds for any sequence $\nu_{t_n}$ provided that
    \[
    \sum_{n=1}^{\infty} e^{-\frac{\theta_p t_n}{2}} < \infty.
    \]
   
\end{enumerate}

\end{thm}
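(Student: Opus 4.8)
The plan is to follow the proof of Theorem~\ref{thm4ofnevostmar}, but restricted to the integer radii $1,2,3,\dots$ (and more generally to a sequence $(t_n)$), where the regularity hypotheses on $(\nu_t)$ play no role: only the spectral decay $\|\pi_0(\nu_t)\|\le Be^{-\theta t}$ enters, together with interpolation. Throughout write $\pi(\nu_n^0)x:=\pi(\nu_n)x-\tau(x)1=\pi_0(\nu_n)(x-\tau(x)1)$; since $\pi(\nu_n)$ and $x\mapsto\tau(x)1$ are contractions on every $L_q(\mathcal M)$ we have $\|\pi(\nu_n^0)\|_{1\to1},\|\pi(\nu_n^0)\|_{\infty\to\infty}\le 2$, while the hypothesis gives $\|\pi(\nu_n^0)\|_{2\to2}\le Be^{-\theta n}$. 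Riesz--Thorin interpolation then yields $\|\pi(\nu_n^0)\|_{p\to p}\le B_pe^{-\theta_p n}$ for all $1<p<\infty$ with $\theta_p$ exactly as in the statement (interpolating $L_1$ with $L_2$ for $p\le2$, and $L_2$ with $L_\infty$ for $p\ge2$); this already proves the mean-rate bound \eqref{meanexponen}, in fact with the slightly better rate $\theta_p$.

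For the weighted maximal inequality (1) I would first treat $p=2$ exactly as in the proof of Theorem~\ref{thm4ofnevostmar}(i): since $\theta_2=\theta$, we get $\sum_{n\ge1}e^{\theta_2 n}\|\pi(\nu_n^0)x\|_2^2\le B^2\|x\|_2^2\sum_n e^{-\theta n}<\infty$; reducing to $x=x^*$ (so each $\pi(\nu_n^0)x$ is self-adjoint, $\pi(\nu_n)$ being convolution by a symmetric bi-$K$-invariant measure) and using operator monotonicity of $t\mapsto t^{1/2}$ gives $\pm\, e^{\theta_2 j/2}\pi(\nu_j^0)x\le\big(\sum_{j\le N}|e^{\theta_2 j/2}\pi(\nu_j^0)x|^2\big)^{1/2}$, hence $\big\|\sup\limits_{1\le j\le N}\!^+\, e^{\theta_2 j/2}\pi(\nu_j^0)x\big\|_2\le\big(\sum_{j\le N}e^{\theta j}\|\pi(\nu_j^0)x\|_2^2\big)^{1/2}\le C\|x\|_2$ uniformly in $N$, and Proposition~\ref{FINT} finishes $p=2$. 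For $2\le p<\infty$ the same square-root argument applies with $A:=\big(\sum_{j\le N}|e^{\theta_p j/2}\pi(\nu_j^0)x|^2\big)^{1/2}$: here $\|A\|_p^2=\big\|\sum_{j\le N}|e^{\theta_p j/2}\pi(\nu_j^0)x|^2\big\|_{p/2}\le\sum_{j\le N}e^{\theta_p j}\|\pi(\nu_j^0)x\|_p^2\le B_p^2\|x\|_p^2\sum_j e^{-\theta_p j}$ by the triangle inequality in $L_{p/2}$, and the intuitive description of the maximal norm converts $-A\le y_j\le A$ into the desired bound.

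For $1<p<2$ I would pass to the predual: by Proposition~\ref{dulaityofmaximalnorms}, $L_p(\mathcal M;\ell_\infty)=(L_{p'}(\mathcal M;\ell_1))^*$, so it suffices to bound $\big|\sum_n\tau\big(e^{\theta_p n/2}\pi(\nu_n^0)x\cdot w_n\big)\big|$ for $(w_n)$ in the unit ball of $L_{p'}(\mathcal M;\ell_1)$; moving $\pi(\nu_n^0)$ across the trace (it is self-adjoint) and summing, this equals $\big|\tau\big(x\cdot\sum_n e^{\theta_p n/2}\pi(\nu_n^0)w_n\big)\big|\le\|x\|_p\sum_n e^{\theta_p n/2}\|\pi(\nu_n^0)w_n\|_{p'}$, and the key point is the self-duality identity $\theta_p=2\theta(1-\tfrac1p)=2\theta/p'=\theta_{p'}$, which gives $\|\pi(\nu_n^0)w_n\|_{p'}\le B_{p'}e^{-\theta_p n}\|w_n\|_{p'}$; using $\|w_n\|_{p'}\le\|(w_n)\|_{L_{p'}(\mathcal M;\ell_1)}$ the sum is $\le B_{p'}\|x\|_p\big(\sum_n e^{-\theta_p n/2}\big)$. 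This proves (1). Then \eqref{meanexponen1} follows by factorizing the bound from (1): given $\varepsilon>0$ write $(e^{\theta_p n/2}\pi(\nu_n^0)x)_n=a\,(z_n)\,b$ with $a,b\in L_{2p}(\mathcal M)$ and $\|a\|_{2p}\sup_n\|z_n\|_\infty\|b\|_{2p}\le C_p\|x\|_p+\varepsilon$; for $n\ge n_0$ one has $\pi(\nu_n^0)x=e^{-\theta_p n_0/2}\,a\,\big(e^{-\theta_p(n-n_0)/2}z_n\big)\,b$ with $\sup_{n\ge n_0}\|e^{-\theta_p(n-n_0)/2}z_n\|_\infty\le\sup_n\|z_n\|_\infty$, whence $\big\|\sup\limits_{n\ge n_0}\!^+\,\pi(\nu_n^0)x\big\|_p\le e^{-\theta_p n_0/2}(C_p\|x\|_p+\varepsilon)$, and $\varepsilon\to0$ gives the claim (with exponent $\theta_p/2$). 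Finally (3): the only place the specific radii $1,2,3,\dots$ entered is through the convergence of $\sum_n e^{-\theta_p n}$ and $\sum_n e^{-\theta_p n/2}$; replacing $(n)$ by $(t_n)$ with $\sum_n e^{-\theta_p t_n/2}<\infty$ — which also forces $\sum_n e^{-\theta_p t_n}<\infty$, since eventually $e^{-\theta_p t_n}\le e^{-\theta_p t_n/2}$ — all the above computations go through verbatim.

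The main obstacle is the range $1<p<2$ in part (1): there the operator-monotone square-root trick does not close, since it would need $\sum_j|y_j|^2\in L_{p/2}$ with $p/2<1$, where the triangle inequality fails, and there is no strong-type endpoint available at $p=1$ (only a weak-type one). The resolution is the duality computation above, which succeeds precisely because of the relation $\theta_p=\theta_{p'}$ between the interpolated decay exponents at conjugate indices, so that the operator-norm gain on $L_{p'}$ exactly absorbs the exponential weight $e^{\theta_p n/2}$.
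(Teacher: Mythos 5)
Your proposal is correct in substance, but it takes a genuinely different route from the paper for the weighted maximal inequality. The paper's proof (which is only sketched) handles all $1<p<\infty$ at once with a single $p$-th-power square function: it sets $C_p^{\epsilon}(x)=\sum_{n}e^{(1-\epsilon)\theta_p n}\,|\pi(\nu_n)x|^p$, notes $\tau\big(C_p^{\epsilon}(x)\big)\leq B_p^p\|x\|_p^p$ using exactly the interpolated decay $\|\pi(\nu_n^0)\|_{p\to p}\leq B_p e^{-\theta_p n}$ that you also derive, and then dominates each weighted term by $C_p^{\epsilon}(x)^{1/p}$ via operator monotonicity of $t\mapsto t^{1/p}$; since the summation of $|\cdot|^p$ takes place in $L_1$, linearity of the trace replaces the triangle inequality, so the obstruction you identify at $p<2$ (failure of the triangle inequality in $L_{p/2}$) never arises in the paper's argument. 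You instead split the range: for $p\geq 2$ an $\ell_2$-square function with the triangle inequality in $L_{p/2}$, and for $1<p<2$ a duality argument against $L_{p'}(\mathcal M;\ell_1)$ exploiting the identity $\theta_p=\theta_{p'}$. Both routes work; yours has the advantage that for $p>2$ it actually produces the weight $e^{\theta_p n/2}$ stated in the theorem (the $p$-th-power trick, after taking $p$-th roots, only yields the weight $e^{\theta_p n/p}$, which is weaker for $p>2$), while the paper's argument is shorter and uniform in $p$. Your treatment of the mean rate, of the tail maximal estimate in part (2) by factorizing the bound from part (1), and of sparse radii agrees with what the paper intends (it simply refers back to Theorem~\ref{thm4ofnevostmar}). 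One small inaccuracy, which does not create a gap: in the duality step you move $\pi(\nu_n^0)$ across the trace on the grounds that it is self-adjoint; for a general bi-$K$-invariant family in higher rank this can fail (inversion acts on the Cartan projection by $H\mapsto -w_0H$), but the argument is unaffected, since $\tau\big(\pi(\nu_n^0)x\,w\big)=\tau\big(x\,\pi(\check\nu_n^0)w\big)$ with $\check\nu_n(E)=\nu_n(E^{-1})$, and $\|\pi(\check\nu_n^0)\|_{p'\to p'}=\|\pi(\nu_n^0)\|_{p\to p}\leq B_p e^{-\theta_p n}$ is all your computation uses.
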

\begin{proof}
As in the proof of Theorem \eqref{thm4ofnevostmar} we obtain for any $x\in L_(\mathcal M)_{0}$ we have $\|\pi(\nu_t)x\|_p\leq B_p\exp(-{\theta_pt})\|x\|_p$ for some $\theta_p>0.$ Let $1<p<\infty.$ Define \[C_p^{\epsilon}(x):=\sum_{n=0}^\infty \Big(\exp((1-\epsilon)\theta_pn)|\pi(\nu_nx)|^p\Big).\] Note that $C_p^\epsilon(x)$ exists as $\tau(C_p^\epsilon(x))=\sum_{n=0}^\infty\exp((1-\epsilon)\theta_pn)\|\pi(\nu_n)x\|_p^p\leq (B_p(\epsilon)\|x\|_)^p<\infty.$ Now proceeding again as in the proof of Theorem \eqref{thm4ofnevostmar} we obtain 
\begin{equation}
 \left\|\sup_{n \geq 1}\!^+\exp((1-\epsilon)\theta_pn)\pi(\nu_n)x\right\|_p\leq \tau(C_p^{\epsilon}(x))^{\frac{1}{p}}\leq B_p\|x\|_p
\end{equation}
This proves part (1) of the theorem. The rest of the proof of the theorem is very similar to Theorem \eqref{thm4ofnevostmar}. We skip the proof.
\end{proof}	
 
 \begin{thm}\cite{MaNS00}
Let \( G \) be a connected semisimple Lie group with finite center and no compact factors. The following families of bi-\(K\)-invariant probability measures are both roughly monotone and uniformly Hölder continuous:
\begin{enumerate}
    \item The spherical shell averages \( \gamma_t \),
    \item The directional ball averages \( \beta^H_t \) and directional shell averages \( \gamma^H_t \),
    \item The convolutions \( m_K * \nu^L_t * m_K \), where \( \nu^L_t \) denotes either the ball or shell averages on a connected semisimple subgroup \( L \subset G \) (with no compact factors), and is invariant under the Cartan involution.
\end{enumerate}
Moreover, in any measure-preserving action of \( G \) for which the operator norm satisfies the decay estimate
\[
\| \pi_0(\nu_t) \| \leq B e^{-\theta t}, \quad \text{for some } \theta > 0,
\]
each of the above averaging families satisfies the maximal inequality and pointwise convergence properties as established in Theorem \eqref{thm4ofnevostmar}.
\end{thm}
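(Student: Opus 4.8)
The plan is to split the statement into its purely measure-theoretic content and its operator-algebraic content. The assertions that $\gamma_t$, $\beta^H_t$, $\gamma^H_t$ and $m_K*\nu^L_t*m_K$ are roughly monotone and uniformly H\"older continuous are conditions on elements of the commutative algebra $M(G,K)$ alone --- they involve neither the action $\pi$ nor the von Neumann algebra $(\mathcal M,\tau)$ --- so they coincide with the corresponding classical statements. By the remark following the definition of these regularity notions, it suffices to control the projections $\widetilde{\nu_t}$ of these measures to $\overline{A^+}$, and the required estimates are precisely those of Nevo--Stein in \cite{MaNS00}; I would simply invoke them.

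For completeness I would recall the shape of those estimates. Writing $\gamma_t=\int_0^1\sigma_{t-s}\,ds$ exhibits $\gamma_t$ as the average over a unit interval of radii of the geodesic-sphere measures, which on $G/K$ is absolutely continuous with density comparable to $\xi(H)=\prod_{\alpha\in\Sigma^+}(\sinh\alpha(H))^{m_\alpha}$ on the spherical shell $\{t-1\le\|H\|\le t\}$; rough monotonicity then follows by comparing this shell with the union of the $2N+1$ neighbouring integer-radius shells, and uniform H\"older continuity (with some exponent $0<a\le 1$) follows from the local Lipschitz regularity of $\xi$ together with the fact that replacing $t$ by $t+\varepsilon$ displaces the shell by $\varepsilon$ and changes the normalising factor $\int_{\|H\|=t}|\xi(H)|\,d\omega_t(H)$ by $O(\varepsilon)$. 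The directional averages $\beta^H_t=e^{-bt}\int_0^te^{bs}\sigma^H_s\,ds$ and $\gamma^H_t=\int_0^1\sigma^H_{t-s}\,ds$ are one-dimensional averages along the ray $\exp(\mathbb{R}_+H)$, and the same two properties follow from the smoothness of the weights $e^{bs}$ and $1$ by a one-variable computation. For $m_K*\nu^L_t*m_K$ one uses that $\nu^L_t$ is already roughly monotone and uniformly H\"older on $L$ and that bilateral convolution by the fixed probability measure $m_K$ is a contraction on $M(G)$; invariance of $\nu^L_t$ under the Cartan involution guarantees that the projection to $\overline{A^+}$ inherits the regularity.

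With the three families now known to be bi-$K$-invariant (by construction), roughly monotone, and uniformly H\"older continuous with some exponent $a$, the second part is immediate. The hypothesis provides the spectral decay $\|\pi_0(\nu_t)\|\le Be^{-\theta t}$ with $\theta>0$, so all hypotheses of Theorem~\eqref{thm4ofnevostmar} are satisfied, and conclusions (i)--(iv) of that theorem --- the $L_p$ maximal inequality for $1<p<\infty$, bilateral almost uniform convergence of $\pi(\nu_t)x$ to $\tau(x)\cdot 1$, the weighted exponential maximal inequality, and the exponential mean and maximal convergence rates --- transfer verbatim to each of $\gamma_t$, $\beta^H_t$, $\gamma^H_t$ and $m_K*\nu^L_t*m_K$.

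The only genuine work lies in the first part, and it is the same obstacle as in the commutative theory: controlling the regularisation of the singular spherical measure $\sigma_t$ by the unit-length averaging that produces $\gamma_t$ and $\gamma^H_t$, uniformly up to the walls of the Weyl chamber where $\xi(H)$ degenerates. Since this analysis is carried out in \cite{MaNS00} and is unaffected by passing to a noncommutative $W^*$-dynamical system, it can be cited rather than reproduced here; the novelty of the present statement is entirely in the conclusion, which rests on Theorem~\eqref{thm4ofnevostmar}.
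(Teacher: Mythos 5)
Your proposal matches the paper's treatment: the regularity assertions (rough monotonicity and uniform H\"older continuity) are purely statements about the bi-$K$-invariant measures themselves, independent of the noncommutative action, and are quoted from \cite{MaNS00}, while the noncommutative conclusions follow by checking that all hypotheses of Theorem~\ref{thm4ofnevostmar} are met and applying it. The paper offers no argument beyond this citation-plus-application, so your proof is essentially the same as the paper's.
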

\begin{rem}
It has been proved in \cite{MaNS00} that if the action has a spectral gap then the exponential decay condition is satisfied for $\gamma_t,$ $\beta_t^H,$ $m_K*\nu_t^L*m_K$ as in the above theorem and spherical measures $\sigma_t.$ We refer \cite{Ne06} for various examples.
\end{rem}

\textbf{Acknowledgement:} The first author is partially supported by National Natural Science Foundation of
China (No. 12071355, No. 12325105, No. 12031004, No. W2441002). The second named author thanks the DST-INSPIRE Faculty Fellowship
DST/INSPIRE/04/2020/001132 and Prime Minister Early Career Research Grant
Scheme ANRF/ECRG/2024/000699/PMS.  The second named author is very thankful to Swagato K.Ray and Mithun Bhowmik for various discussion around topics related to harmonic analysis on symmetric spaces.

\end{document}